\newcommand{\ph}[2]{{\left({#1}\right)}_{#2}}
\renewcommand*{\bar}{\overline}
\newcommand{\gfp}[1]{\Gamma_p{\left({#1}\right)}}
\newcommand{\biggfp}[1]{\Gamma_p{\bigl({#1}\bigr)}}
\newcommand{\Biggfp}[1]{\Gamma_p{\Bigl({#1}\Bigr)}}
\newcommand{\bin}[2]{\left({\genfrac{}{}{0pt}{}{#1}{#2}}\right)}
\newcommand{\biggbin}[2]{\biggl({\genfrac{}{}{0pt}{}{#1}{#2}}\biggr)}
\theoremstyle{plain}
\newtheorem{theorem}{Theorem}[section]
\newtheorem{lemma}[theorem]{Lemma}
\newtheorem{prop}[theorem]{Proposition}
\newtheorem{cor}[theorem]{Corollary}
\theoremstyle{definition}
\newtheorem{defi}[theorem]{Definition}
\newtheorem{conj}[theorem]{Conjecture}
\numberwithin{equation}{section}
\begin{document}

\title[Supercongruence conjectures of Rodriguez-Villegas]{Supercongruence Conjectures of Rodriguez-Villegas}
\author{Dermot M\lowercase{c}Carthy}  

\address{School of Mathematical Sciences, University College Dublin, Belfield, Dublin 4, Ireland}

\email{dermot.mc-carthy@ucdconnect.ie}

%\url{http://maths.ucd.ie/$\sim$mccarthy}

\subjclass[2000]{Primary: 11F33; Secondary: 33C20, 11S80}

\date{July 20, 2009}

\begin{abstract}
In examining the relationship between the number of points over $\mathbb{F}_p$ on certain Calabi-Yau manifolds and hypergeometric series which correspond to a particular period of the manifold, Rodriguez-Villegas identified 22 possible supercongruences. We provide a framework of congruences covering all 22 cases. Using this framework we prove one of the outstanding supercongruence conjectures between a special value of a truncated ordinary hypergeometric series and the $p$-th Fourier coefficient of a modular form. In the course of this work we also establish two new binomial coefficient-harmonic sum identities.
\end{abstract}

\maketitle

\section{Introduction}
The term \emph{supercongruence} was first introduced by Beukers in \cite{B}. Let $A(n)$ and $B(n)$ be the the numbers defined by 
\begin{align*}
A(n):= \sum_{j=0}^{n} \bin{n+j}{j}^2 \bin{n}{j}^2
&& \textup{and} && B(n):= \sum_{j=0}^{n} \bin{n+j}{j} \bin{n}{j}^2
\end{align*}
for $n\geq0$. These numbers were used by Ap\'ery in his proof of the irrationality of $\zeta(3)$ and $\zeta(2)$ respectively \cite{Ap},\cite{vdP} and are commonly known as the Ap\'ery numbers. Beukers proved that, for $m,r \in \mathbb{Z^+}$ and $p\geq5$ a prime,
\begin{align*}
A(mp^r-1)\equiv A(mp^{r-1}-1) \pmod{p^{3r}}
&& \textup{and} &&
B(mp^r-1)\equiv B(mp^{r-1}-1) \pmod{p^{3r}}.
\end{align*}
He noted also that a weaker version of these congruences, modulo $p^r$, arose in a natural way from formal groups. As the congruences modulo $p^{3r}$ were stronger than those suggested by formal group theory he named them supercongruences. The term has since come to cover individual congruences modulo $p^k$ where $k>1$. For example, Beukers conjectured \cite{B1} that for $p\geq3$ a prime,
\begin{equation}\label{for_ANS}
A\left(\tfrac{p-1}{2}\right)\equiv \gamma(p) \pmod{p^2}
\end{equation}
where $\gamma(p)$ is given by
\begin{equation*}
\eta^{4}(2z) \eta^{4}(4z)=\sum_{n=1}^{\infty} \gamma(n)q^n,
\end{equation*}
$q:=e^{2 \pi i z}$ and
\begin{equation*}
\eta(z):=q^{\frac{1}{24}} \prod_{n=1}^{\infty}(1-q^n)
\end{equation*}
is Dedekind's eta function. This became known as the Ap\'ery number supercongruence and was proved by Ahlgren and Ono \cite{AO}.

In \cite{R} Rodriguez-Villegas examined the relationship between the number of points over $\mathbb{F}_p$ on certain Calabi-Yau manifolds and (truncated) hypergeometric series which correspond to a particular period of the manifold. In doing so, he identified 22 possible supercongruences which can be categorised by the dimension, $D$, of the manifold as outlined below.

We first define the truncated hypergeometric series by
\begin{equation*}
{{_pF_q} \left[ \begin{array}{ccccc} a_1, & a_2, & a_3, & \dotsc, & a_p \vspace{.05in}\\
\phantom{a_1} & b_1, & b_2, & \dotsc, & b_q \end{array}
\Big| \; z \right]}_{m}
:=\sum^{m}_{n=0}
\frac{\ph{a_1}{n} \ph{a_2}{n} \ph{a_3}{n} \dotsm \ph{a_p}{n}}
{\ph{b_1}{n} \ph{b_2}{n} \dotsm \ph{b_q}{n}}
\; \frac{z^n}{{n!}}
\end{equation*}
where $a_i$, $b_i$ and $z$ are complex numbers, with none of the $b_i$ being negative integers or zero, $\ph{a}{0}:=1$ and $\ph{a}{n} := a(a+1)(a+2)\dotsm(a+n-1)$ for positive integers $n$, and, $m$, $p$ and $q$ are positive integers. We also let $\phi(\cdot)$ denote Euler's totient function and $\left(\frac{\cdot}{p}\right)$ the Legendre symbol modulo $p$.

For $D=1$, associated to certain elliptic curves, 4 supercongruences were identified. They were all of the form 
\begin{equation*}
{{_{2}F_1} \left[ \begin{array}{cc} \frac{1}{d}, & 1-\frac{1}{d}\vspace{.05in}\\
\phantom{\frac{1}{d}} & 1 \end{array}
\Big| \; 1 \right]}_{p-1}
\equiv
\left(\frac{-t}{p}\right)
\pmod{p^2}
\end{equation*}
where $\phi(d) \leq 2$, $1\leq t \leq4$ and $p$ is a prime not dividing $d$. These cases have been proven by Mortenson  \cite{M1}, \cite{M2}.

For $D=2$ another 4 supercongruences were identified which related to certain modular K3 surfaces. These were all of the form 
\begin{equation*}
{{_{3}F_2} \left[ \begin{array}{ccc} \frac{1}{2}, & \frac{1}{d}, & 1-\frac{1}{d}\vspace{.05in}\\
\phantom{\frac{1}{d}} & 1, &1 \end{array}
\Big| \; 1 \right]}_{p-1}
\equiv
a(p)
\pmod{p^2}
\end{equation*}
where $\phi(d) \leq 2$, $p$ is a prime not dividing $d$ and $a(p)$ is the $p^{th}$ Fourier coefficient of a weight three modular form on a congruence subgroup of $SL_2(\mathbb{Z})$. For the case $d=2$, $a(p)$ is given by
\begin{equation*}
\eta^{6}(4z)=\sum_{n=1}^{\infty} a(n)q^n ,
\end{equation*}
and the congruence can also be written as
\begin{equation*}
B\left(\frac{p-1}{2}\right) 
\equiv
a(p)
\pmod{p^2}.
\end{equation*}
This supercongruence was first conjectured by Beukers and Stienstra \cite{BS} and was proved by Ahlgren \cite{A}, Ishikawa \cite{I} and Van Hamme \cite{VH}. The other $D=2$ cases are dealt with by Mortenson \cite{M} where they have been proven for $p\equiv 1 \pmod d$ and up to sign otherwise. 

The remaining 14 supercongruence conjectures relate to Calabi-Yau threefolds (i.e. $D=3$). The threefolds in question are complete intersections of hypersurfaces, of which 13 are discussed by Batyrev and van Straten in \cite{BvS}. The supercongruences can be expressed as either
\begin{equation*}
{{_{4}F_3} \left[ \begin{array}{cccc} \frac{1}{d_1}, & 1-\frac{1}{d_1}, & \frac{1}{d_2}, & 1-\frac{1}{d_2}\vspace{.05in}\\
\phantom{\frac{1}{d_1}} & 1, & 1, & 1 \end{array}
\Big| \; 1 \right]}_{p-1}
\equiv
b(p)
\pmod{p^3}
\end{equation*}
where $\phi(d_i) \leq 2$ and $p$ is a prime not dividing $d_i$, or
\begin{equation}\label{type2}
{{_{4}F_3} \left[ \begin{array}{cccc} \frac{1}{d}, & \frac{r}{d}, & 1-\frac{r}{d}, & 1-\frac{1}{d}\vspace{.05in}\\
\phantom{\frac{1}{d_1}} & 1, & 1, & 1 \end{array}
\Big| \; 1 \right]}_{p-1}
\equiv
b(p)
\pmod{p^3}
\end{equation}
where $\phi(d) =4$, $gcd(r,d)=1$, $p$ is a prime not dividing $d$ and $b(p)$ is the $p^{th}$ Fourier coefficient of a weight four modular form on a congruence subgroup of $SL_2(\mathbb{Z})$. To date only one of these cases has been proven (see \cite{K}). It is of the first type with $d_1=d_2=2$ and is an extension of the Ap\'ery number supercongruence (\ref{for_ANS}). Let 
\begin{equation}\label{for_ModForm}
f(z):= f_1(z)+5f_2(z)+20f_3(z)+25f_4(z)+25f_5(z)=\sum_{n=0}^{\infty} c(n) q^n
\end{equation}
where
\begin{align*}
f_1(z)&:=\eta^4(z) \hspace{2pt} \eta^4(5z),\\
f_2(z)&:=\eta^3(z) \hspace{2pt} \eta^4(5z) \hspace{2pt} \eta(25z),\\
f_3(z)&:=\eta^2(z) \hspace{2pt} \eta^4(5z) \hspace{2pt} \eta^2(25z),\\
f_4(z)&:=\eta(z) \hspace{2pt} \eta^4(5z) \hspace{2pt} \eta^3(25z),\\ 
\intertext{and}
f_5(z)&:=\eta^4(5z) \hspace{2pt} \eta^4(25z).
\end{align*}
Then $f$ is a weight four newform in the space of weight four cusp forms on the congruence subgroup $\Gamma_0(25)$.  We now list one of the outstanding conjectures of type (\ref{type2}).
\begin{conj}\label{conj_RV} For a prime $p \neq 5$ 
\begin{equation*}
{_4F_3} \left[ \begin{array}{cccc} \frac{1}{5}, & \frac{2}{5}, & \frac{3}{5}, & \frac{4}{5} \\
\phantom{\frac{1}{5},} & 1, & 1, & 1\end{array}
\Big| \; 1 \right]_{p-1}
\equiv c(p) \pmod {p^3}.
\end{equation*}
\end{conj}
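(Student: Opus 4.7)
The plan is to convert the truncated ${}_4F_3$ into a $p$-adic expression built from Morita's $p$-adic gamma function $\Gamma_p$, identify that expression modulo $p^3$ with a finite-field hypergeometric value via the framework promised in the abstract, match the latter to $c(p)$ using the modular interpretation of the associated Calabi--Yau threefold, and finally handle the crucial refinement from $\bmod p$ to $\bmod p^3$ by means of new binomial-harmonic sum identities.

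First, using $(1/5)_n(2/5)_n(3/5)_n(4/5)_n = (5n)!/(5^{5n}\,n!)$, the summand of the truncated ${}_4F_3$ can be written as $(5n)!/(5^{5n}(n!)^5)$, making explicit its connection with the mirror quintic pencil. I would rewrite each Pochhammer symbol as a ratio $\Gamma_p(a/5+n)/\Gamma_p(a/5)$ of $p$-adic gamma values, after factoring out the unit contribution (noting $p\nmid 5$). The reflection formula $\Gamma_p(x)\Gamma_p(1-x)=\pm 1$ pairs $1/5\leftrightarrow 4/5$ and $2/5\leftrightarrow 3/5$, collapsing many products and placing the sum in the shape of a McCarthy-type $p$-adic analogue of Greene's finite-field hypergeometric function.

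Next, I would invoke the congruence framework developed earlier in the paper to identify this $p$-adic sum, modulo $p^3$, with a Greene-style hypergeometric value over $\mathbb{F}_p$. The Batyrev--van Straten analysis of the quintic complete intersection cited in the introduction identifies $c(p)$ with the trace of Frobenius on the motive cut out from the middle cohomology of the Calabi--Yau threefold in the $d=5$ case, and the point-count on that threefold is itself expressible through the same finite-field hypergeometric function. Matching the two expressions for this common finite-field value would yield the desired congruence.

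The main obstacle is upgrading the analysis from $\bmod p$ to $\bmod p^3$. Taylor-expanding $\Gamma_p(a/5+n)$ to second order in $p$ around each of the $5$-torsion points produces weighted harmonic sums $H_n^{(k)} = \sum_{j=1}^n j^{-k}$ multiplied by products of central binomial coefficients, and the discrepancy between the truncated ${}_4F_3$ and the $p$-adic target only vanishes modulo $p^3$ if very specific combinations of these sums telescope. Proving the two new binomial coefficient-harmonic sum identities required to effect this cancellation, presumably by a careful creative-telescoping or WZ-style argument tailored to the $d=5$ parameter set, is the technical heart of the proof; once they are established, the remaining steps, combined with the known weight-four modularity of $f$, complete the argument.
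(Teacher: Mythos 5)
Your overall architecture mirrors the paper's: (i) rewrite the truncated $_4F_3$ in terms of $\Gamma_p$ and prove a congruence modulo $p^3$ against a character-sum object, with the $\bmod\ p^3$ cancellation effected by two new binomial coefficient--harmonic sum identities (the paper proves these by Chu's partial-fraction method rather than WZ/creative telescoping, but that is a cosmetic difference); (ii) evaluate that character-sum object as $c(p)$ by counting points on the quintic $x_0^5+\dots+x_4^5-5x_0x_1x_2x_3x_4=0$ and invoking Schoen/Meyer. Your Pochhammer identity $(1/5)_n(2/5)_n(3/5)_n(4/5)_n=(5n)!/(5^{5n}n!)$ is correct, and the reflection-formula pairing is indeed how the paper organizes the $\Gamma_p$ products.

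The genuine gap is in your middle step: you propose to identify the $p$-adic sum ``with a Greene-style hypergeometric value over $\mathbb{F}_p$,'' but Greene's $_4F_3$ with characters of order $5$ only exists when $5\mid p-1$, i.e.\ for $p\equiv 1\pmod 5$. The conjecture is asserted for all $p\neq 5$, and for the other three residue classes there is no finite-field hypergeometric function to match against; this is precisely the obstruction that forces the paper to introduce the new $p$-adic hypergeometric series $_{n+1}G$ (Definition \ref{def_GFn}), defined purely via $\Gamma_p$ for every odd $p$ and agreeing with $(-1)^np^n$ times Greene's function only when $p\equiv 1\pmod{d_i}$. The congruence $_4F_3\equiv{_{4}G}-s(p)p\pmod{p^3}$ is then proved for all residue classes (Theorem \ref{thm_4G2}, where the case analysis on $p\bmod 5$ is substantial), and the identification ${_{4}G}(\tfrac15,\tfrac25,\tfrac35,\tfrac45)_p-s(p)p=c(p)$ is an \emph{exact equality}, not a congruence, obtained by converting $_4G$ to Gauss sums via Gross--Koblitz and computing the point count with Hasse--Davenport and generalized Jacobi sums separately in the cases $p\equiv 1$ and $p\not\equiv 1\pmod 5$. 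Without some substitute for Greene's function in the non-split case, your plan only proves the supercongruence for $p\equiv 1\pmod 5$. You should also be aware of the correction term $s(p)\,p$ (with $s(p)=\Gamma_p(\tfrac15)\Gamma_p(\tfrac25)\Gamma_p(\tfrac35)\Gamma_p(\tfrac45)=\pm1$), which does not appear in your sketch: the truncated series is \emph{not} congruent to the $p$-adic/finite-field value itself but to that value minus $s(p)p$, and tracking this term is essential to landing on $c(p)$ rather than $c(p)+s(p)p$.
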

\noindent One of the main results of this paper is the following theorem.
\begin{theorem}\label{thm_DMCMain} Conjecture \ref{conj_RV} is true.
\end{theorem}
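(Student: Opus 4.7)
The plan is to rewrite the truncated hypergeometric series in a $p$-adically natural form using the $p$-adic Gamma function, compare it with a $p$-adic hypergeometric analogue whose values are known to encode traces of Frobenius on the associated Calabi--Yau threefold, and then invoke the modularity of that threefold to identify the trace with $c(p)$.

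I would first replace every Pochhammer symbol $\ph{k/5}{n}$, for $k\in\{1,2,3,4\}$, by a ratio $\gfp{k/5+n}/\gfp{k/5}$, adjusted by a controlled sign and $p$-adic unit coming from the standard relation between $\ph{a}{n}$ and $\gfp{\cdot}$ (which is available because $p\neq 5$, so $1/5$ is a $p$-adic integer). After these substitutions, each summand of ${}_4F_3$ becomes a product of $p$-adic Gamma values whose arguments shift by integers as $n$ ranges over $0,\dotsc,p-1$. The heart of the framework announced in the abstract is then to establish, at this level of generality, a congruence modulo $p^3$ between such a sum and a corresponding $p$-adic hypergeometric expression $H_p$ valid for all primes $p\nmid d$. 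The advantage of $H_p$ over the finite-field hypergeometric function of Greene is that Greene's object requires a multiplicative character of order $5$ on $\mathbb{F}_p^\times$, which exists only when $p\equiv 1\pmod 5$, whereas $H_p$ is defined uniformly for every prime $p\neq 5$.

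To obtain $p^3$ precision in this comparison, one Taylor-expands each $\gfp{k/5+n}$ around a nearby integer. The zeroth-order terms produce $H_p$; the first- and second-order terms produce expressions involving products of binomial coefficients with harmonic-type sums $\sum 1/j$ and $\sum 1/j^2$. Killing these contributions is exactly where the two new binomial coefficient-harmonic sum identities advertised in the abstract are indispensable: I would prove them separately and invoke them here. Once this is done, the Gross--Koblitz formula converts the $\gfp{}$ values inside $H_p$ into Gauss sums, which are precisely the ingredients in counting points on the Calabi--Yau threefold associated to the parameter tuple $(1/5,2/5,3/5,4/5)$. The known modularity of this threefold, matching its $L$-function with that of the weight-four newform $f$ defined in~(\ref{for_ModForm}), then gives $H_p\equiv c(p)\pmod{p^3}$ and completes the argument.

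The main obstacle is the $p^3$ precision. A congruence modulo $p^2$ follows comparatively cleanly from a single Taylor expansion of $\gfp{}$, but reaching $p^3$ requires tracking first and second logarithmic derivatives of $\gfp{}$ simultaneously at all four points $1/5,2/5,3/5,4/5$ and cancelling the resulting harmonic cross terms against the inner truncation of the sum. Without the two new binomial coefficient-harmonic sum identities one cannot collapse the inner sums into the single $p$-adic quantity $H_p$ modulo $p^3$; establishing those identities, and verifying that they are strong enough to absorb all the error terms produced by the four simultaneous expansions, is, in my view, the key technical novelty of the argument.
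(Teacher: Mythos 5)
Your proposal follows essentially the same route as the paper: it defines a $p$-adic hypergeometric analogue via $\Gamma_p$ (the paper's ${_{4}G}$), proves a congruence modulo $p^3$ with the truncated ${_4F_3}$ by Taylor-expanding $\Gamma_p$ and absorbing the harmonic-sum error terms with the two new binomial coefficient-harmonic sum identities, and then uses Gross--Koblitz, Gauss-sum point counting on the quintic threefold, and Schoen's modularity result to identify the result with $c(p)$. The only detail you elide is the correction term $s(p)\hspace{1pt}p$ that separates ${_{4}G}$ from the truncated series, but that surfaces automatically in carrying out the expansion.
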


In addition to proving Theorem \ref{thm_DMCMain}, the purpose of this paper is to develop a framework of congruences which cover all 22 supercongruence cases above. This will be the subject of Section 4 and will form a key part in proving Theorem \ref{thm_DMCMain}. Section 2 recalls some properties of Gauss and Jacobi sums, the $p$-adic gamma function and related ideas. In Section 3 we develop two new binomial coefficient-harmonic sum identities which are needed in Section 4. Finally, the proof of Theorem \ref{thm_DMCMain} appears in Section 5.

%%%%%%%%%%%%%%%%%%%%%%%%%%%%%%%%%%%%%%%%%%%%%%
%%%%%%%%%%%%%%%%%%%%%%%%%%%%%%%%%%%%%%%%%%%%%%
%%%%%%%%%%%%%%%%%%%%%%%%%%%%%%%%%%%%%%%%%%%%%%
%%%%%%%%%%%%%%%%%%%%%%%%%%%%%%%%%%%%%%%%%%%%%%

\section{Preliminaries}
We briefly recall some properties of Gauss and Jacobi sums and the $p$-adic gamma function and also develop some preliminary results which we use in Sections 4 and 5. Throughout, we let $\mathbb{F}_{p}$ denote the finite field with $p$ elements. We extend the domain of all characters $\chi$ on $\mathbb{F}^{*}_{p}$ to $\mathbb{F}_{p}$, by defining $\chi(0):=0$ (including the trivial character $\varepsilon$). 

\subsection{Gauss and Jacobi Sums}
For further details see \cite{BEW} and \cite{IR}, noting that we have adjusted results to take into account $\varepsilon(0)=0$. 
We let $T$ denote a generator for the group of characters on $\mathbb{F}_p$.
Then we have the following orthogonal relations.
\begin{prop}For a character $\chi=T^n$ on $\mathbb{F}_{p}$ we have
\begin{equation}\label{for_TOrthEl}
\sum_{x \in \mathbb{F}_p} \chi(x)=
\sum_{x \in \mathbb{F}_p} T^n(x) =
\begin{cases}
p-1 & \text{if $\chi=T^n = \varepsilon$}  ,\\
0 & \text{if $\chi=T^n \neq \varepsilon$}  ,
\end{cases}
\end{equation}
and
\begin{equation}\label{for_TOrthCh}
\sum_{\chi} \chi(x)=
\sum_{n=0}^{p-2} T^n(x) =
\begin{cases}
p-1 & \text{if $x=1$}  ,\\
0 & \text{if $x \neq 1$}  .
\end{cases}
\end{equation}
\end{prop}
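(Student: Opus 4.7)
The plan is to prove both orthogonality identities using the standard translation trick: multiply the sum by a nontrivial value of the character (or shift the summation index in the dual case) and exploit invariance to force the sum to vanish.

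For \eqref{for_TOrthEl}, I would first discard the $x=0$ term, which contributes zero by the convention $\chi(0)=0$, reducing the question to a sum over $\mathbb{F}_p^{*}$. If $\chi = \varepsilon$, each of the $p-1$ remaining terms is $1$ and the total is $p-1$. Otherwise choose $y \in \mathbb{F}_p^{*}$ with $\chi(y) \neq 1$; since $x \mapsto yx$ permutes $\mathbb{F}_p^{*}$, one obtains
\[
\chi(y) \sum_{x \in \mathbb{F}_p^{*}} \chi(x) = \sum_{x \in \mathbb{F}_p^{*}} \chi(yx) = \sum_{x \in \mathbb{F}_p^{*}} \chi(x),
\]
and dividing by $1-\chi(y) \neq 0$ forces the sum to be zero.

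For \eqref{for_TOrthCh} the argument is formally dual. If $x=0$, every $T^n(0) = 0$ and the sum vanishes; if $x=1$, every $T^n(1) = 1$ and the sum is $p-1$. For $x \in \mathbb{F}_p^{*} \setminus \{1\}$, the key point is that $T(x) \neq 1$. This follows because $T$ generates the character group of the cyclic group $\mathbb{F}_p^{*}$ of order $p-1$: if $T(x)=1$, then every power of $T$, and hence every character, would be trivial at $x$, which in a group with separating characters forces $x=1$. Given $T(x) \neq 1$, reindexing $n \mapsto n+1$ modulo $p-1$ yields $T(x) \sum_{n=0}^{p-2} T^n(x) = \sum_{n=0}^{p-2} T^n(x)$, and again the sum must vanish.

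The only step beyond mechanical manipulation is the separation-of-points observation that $T(x) = 1$ implies $x=1$, which is really where one uses that $\mathbb{F}_p^{*}$ is cyclic and $T$ is a generator; if anything counts as the ``main obstacle'' in this otherwise routine proposition, it is this point. Everything else is the same shifting argument applied once on $\mathbb{F}_p^{*}$ and once on its dual group of characters.
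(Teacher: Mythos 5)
Your argument is correct and complete: both identities follow from the standard translation/reindexing trick, and you correctly isolate the one non-mechanical point, namely that $T(x)=1$ forces $x=1$ because $T$ generates the (cyclic, point-separating) character group of $\mathbb{F}_p^{*}$, together with the convention $\chi(0)=0$ handling the $x=0$ contributions. The paper itself gives no proof of this proposition — it is recalled from the standard references on Gauss and Jacobi sums with only the remark that results are adjusted for $\varepsilon(0)=0$ — so there is nothing to compare against; your proof is exactly the classical one those references use.
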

\noindent We define the additive character $\theta : \mathbb{F}_p \rightarrow \mathbb{C}$ by $\theta(\alpha):=e^{\frac{2 \pi i \alpha}{p}}$.
\noindent It is easy to see that
\begin{equation}\label{for_AddProp}
\theta(a+b)=\theta(a) \theta(b)
\end{equation}
and
\begin{equation}\label{sum_AddChar}
\sum_{x \in \mathbb{F}_p} \theta(x) = 0.
\end{equation}
Recall that for a character $\chi$ on $\mathbb{F}_p$, the Gauss sum $G(\chi)$ is defined by 
\begin{equation*}
G(\chi):= \sum_{x \in \mathbb{F}_p} \chi(x) \theta(x)  .
\end{equation*}
The following important result gives a simple expression for the product of two Gauss sums.
\begin{prop}\label{prop_GaussConj}

\begin{equation}\label{for_GaussConj}
G(\chi)G(\bar{\chi})=
\begin{cases}
\chi(-1) p & \text{if } \chi \neq \varepsilon,\\
1 & \text{if } \chi= \varepsilon.
\end{cases}
\end{equation}
\end{prop}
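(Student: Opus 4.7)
The plan is to compute $G(\chi)G(\bar\chi)$ directly from the definition, using the multiplicativity of $\theta$ in (\ref{for_AddProp}), the orthogonality relation (\ref{for_TOrthEl}), and the vanishing sum (\ref{sum_AddChar}), handling the trivial and non-trivial characters separately.

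For $\chi=\varepsilon$, since $\bar\varepsilon=\varepsilon$ the claim reduces to $G(\varepsilon)^2=1$. The convention $\varepsilon(0)=0$ gives $G(\varepsilon)=\sum_{x\in\mathbb{F}_p^*}\theta(x)$, and splitting off $x=0$ from (\ref{sum_AddChar}) shows this equals $-1$. Squaring yields the result.

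For $\chi\neq\varepsilon$, I would expand the product as a double sum over $x,y\in\mathbb{F}_p^*$, use $\chi(x)\bar\chi(y)=\chi(x/y)$, and make the substitution $x=ty$ with $t,y\in\mathbb{F}_p^*$, obtaining
\begin{equation*}
G(\chi)G(\bar\chi)=\sum_{t\in\mathbb{F}_p^*}\chi(t)\sum_{y\in\mathbb{F}_p^*}\theta\bigl(y(t+1)\bigr).
\end{equation*}
The inner sum equals $p-1$ when $t=-1$ and equals $-1$ for all other $t\in\mathbb{F}_p^*$, the latter following from (\ref{sum_AddChar}) after removing the $y=0$ term. Substituting back gives
\begin{equation*}
G(\chi)G(\bar\chi)=(p-1)\chi(-1)-\sum_{t\in\mathbb{F}_p^*\setminus\{-1\}}\chi(t).
\end{equation*}
Since $\chi\neq\varepsilon$, applying (\ref{for_TOrthEl}) shows $\sum_{t\in\mathbb{F}_p^*}\chi(t)=-\chi(0)=0$, so the remaining sum equals $-\chi(-1)$, and the two terms combine to $\chi(-1)p$, as required.

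There is no genuine obstacle here; this is a standard calculation. The only subtle point is the careful bookkeeping of the $x=0$ and $y=0$ contributions, which arises because the paper adopts the convention $\chi(0)=0$ for all characters including $\varepsilon$, so each application of (\ref{for_TOrthEl}) or (\ref{sum_AddChar}) must be accompanied by an explicit correction term for the missing zero entry.
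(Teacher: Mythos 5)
Your proof is correct. Note that the paper does not actually prove this proposition: it is stated as a recalled standard fact, with the reader referred to \cite{BEW} and \cite{IR} and a remark that the results have been adjusted for the convention $\varepsilon(0)=0$. Your argument is the standard textbook computation, and you have handled the one point the paper flags --- the correction terms coming from $\chi(0)=0$, which produce $G(\varepsilon)=-1$ and hence the value $1$ in the trivial case --- correctly.
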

\noindent Another important product formula for Gauss sums is the Hasse-Davenport formula.
\begin{theorem}[Hasse, Davenport]\label{thm_HD}
Let $\chi$ be a character of order $m$ on $\mathbb{F}_p$ for some positive integer $m$. For a character $\psi$ on $\mathbb{F}_p$ we have
\begin{equation*}
\prod_{i=0}^{m-1} G(\chi^i \psi) = G(\psi^m) \psi^{-m}(m)\prod_{i=1}^{m-1} G(\chi^i).
\end{equation*}
\end{theorem}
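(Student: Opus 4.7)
The plan is to prove the Hasse-Davenport product formula by direct computation with the Gauss sums, following the classical route. Expanding the product on the left-hand side via the definition yields
$$\prod_{i=0}^{m-1} G(\chi^i \psi) = \sum_{x_0, \ldots, x_{m-1} \in \mathbb{F}_p} \psi(x_0 x_1 \cdots x_{m-1}) \prod_{i=0}^{m-1} \chi^i(x_i)\, \theta(x_0 + \cdots + x_{m-1}).$$
The key step is a change of variables that disentangles the $\psi$-dependence from the $\chi$-dependence. I would reparametrize the tuple $(x_0, \ldots, x_{m-1})$ via $x_i = t u_i$, where $t \in \mathbb{F}_p^*$ is a scalar and the auxiliary coordinates satisfy $u_0 + u_1 + \cdots + u_{m-1} = m$ (so $\sum x_i = mt$). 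Then $\prod_i x_i = t^m \prod_i u_i$ and the $t$-sum separates as
$$\sum_{t \in \mathbb{F}_p^*} \psi^m(t)\, \chi^{m(m-1)/2}(t)\, \theta(mt),$$
which, after the substitution $s = mt$ in the definition $G(\psi^m) = \sum_s \psi^m(s) \theta(s)$, yields $\psi^{-m}(m) G(\psi^m)$ — exactly the leading factor on the right-hand side — provided the parity-dependent character $\chi^{m(m-1)/2}$ is handled appropriately (it is trivial when $m$ is odd, since $m \mid m(m-1)/2$; the even case requires extra care, as this power equals a quadratic character).

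The residual factor, a sum over the hyperplane $\{u_0 + \cdots + u_{m-1} = m\}$ depending only on $\chi$, must then equal $\prod_{i=1}^{m-1} G(\chi^i)$. I would verify this by iterating the Jacobi sum identity $J(\alpha, \beta) = G(\alpha) G(\beta)/G(\alpha \beta)$, expressing the residual as a nested Jacobi sum in the characters $\chi, \chi^2, \ldots, \chi^{m-1}$. Since $\chi$ has exact order $m$, each intermediate partial product $\chi^{1+2+\cdots+i}$ for $1 \le i \le m-2$ is non-trivial modulo $m$, so the Jacobi relation applies cleanly at every step, and the resulting chain telescopes to $\prod_{i=1}^{m-1} G(\chi^i)$, matching the right-hand side.

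The main obstacle will be the careful bookkeeping of the parity-dependent character $\chi^{m(m-1)/2}$ and the normalizing factor $\psi^{-m}(m)$ produced by the dilation $t \mapsto t/m$; both must be tracked exactly to recover the stated form. A cleaner alternative, natural in this paper's $p$-adic context, would be to invoke the Gross-Koblitz formula to translate Gauss sums into values of the $p$-adic Gamma function $\Gamma_p$ and then reduce the identity to the classical Gauss multiplication theorem for $\Gamma_p$; this route trades the combinatorial bookkeeping above for a direct application of a well-known $\Gamma_p$-identity, which dovetails with the $p$-adic machinery developed later in the paper.
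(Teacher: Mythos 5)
The first thing to note is that the paper does not prove this statement at all: it is quoted as the classical Hasse--Davenport product relation, with the reader referred to the references cited at the head of Section 2. So your proposal must stand on its own against the known proofs, and as written it does not.

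The gap is in your "key step". After substituting $x_i = tu_i$ with $u_0+\cdots+u_{m-1}=m$, the factor $\psi(x_0x_1\cdots x_{m-1})$ becomes $\psi^m(t)\,\psi(u_0\cdots u_{m-1})$, so the residual hyperplane sum is $\sum_{\sum u_i = m}\psi(u_0\cdots u_{m-1})\prod_i\chi^i(u_i)$: it still depends on $\psi$, and the claimed separation of the $\psi$-dependence from the $\chi$-dependence fails. Proving that this twisted sum is (after extracting $\psi^m(m)$ and the quadratic-character factor) independent of $\psi$ is essentially equivalent to the theorem itself, so the difficulty has only been relocated. Nor can the proposed telescoping via $J(\alpha,\beta)=G(\alpha)G(\beta)/G(\alpha\beta)$ close the gap: rescaling $u_i=mv_i$ turns the residual into a generalised Jacobi sum $J(\psi,\chi\psi,\dotsc,\chi^{m-1}\psi)$ up to explicit character values, and iterating the Jacobi--Gauss identity (Proposition \ref{prop_JactoGauss}) merely rewrites this as $\prod_{i}G(\chi^i\psi)/G(\chi^{m(m-1)/2}\psi^m)$ --- a rearrangement of the quantity you started with, not an evaluation of it. (There is also the smaller omission that tuples with $x_0+\cdots+x_{m-1}=0$ are not reached by your parametrisation and must be treated separately.) No purely formal manipulation of Gauss and Jacobi sums of this kind is known to yield the product relation; every standard proof invokes something deeper, such as Stickelberger's congruence, the lifting relation for Gauss sums over extensions, or the route you mention only in passing: Theorem \ref{thm_GrossKoblitz} together with the multiplication formula of Proposition \ref{prop_pGamma} (4), whose factor $\omega\left(m^{(1-x)(1-p)}\right)$ is exactly what produces $\psi^{-m}(m)$. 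That last route is viable --- the $\Gamma_p$ multiplication formula can be proved independently of Hasse--Davenport, so there is no circularity --- but you would still need to carry out the translation, matching the powers of $\pi$ and the signs on both sides, which your proposal does not do.
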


\noindent We let $G_m := G(T^m)$ for brevity and note that $G_0=G(\varepsilon)=-1$. The next result expresses the additive character as a sum of Gauss sums (see \cite[Lemma II.1.5, page 7]{F}).
\begin{prop}
For $\alpha \in \mathbb{F}_p^*$\hspace{1pt},
\begin{equation}\label{for_AddtoGauss}
\theta(\alpha)= \frac{1}{p-1} \sum_{m=0}^{p-2} G_{-m} T^m(\alpha).
\end{equation}
\end{prop}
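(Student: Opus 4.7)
The plan is straightforward Fourier inversion on $\mathbb{F}_p^*$: substitute the definition of the Gauss sum into the right-hand side, swap the order of summation, and collapse the inner character sum via the orthogonality relation \eqref{for_TOrthCh}. No deep input is needed beyond the orthogonality relations already displayed.

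Concretely, I would begin from the right-hand side and unfold $G_{-m}$:
\begin{equation*}
\sum_{m=0}^{p-2} G_{-m}\, T^m(\alpha)
= \sum_{m=0}^{p-2} \sum_{x \in \mathbb{F}_p} T^{-m}(x)\,\theta(x)\,T^m(\alpha).
\end{equation*}
Since $T^{-m}(0)=0$ under our convention $\varepsilon(0)=0$, the inner sum is effectively over $x \in \mathbb{F}_p^*$, where $T^{-m}(x)T^m(\alpha)=T^m(\alpha x^{-1})$. Interchanging the order of summation yields
\begin{equation*}
\sum_{m=0}^{p-2} G_{-m}\, T^m(\alpha)
= \sum_{x \in \mathbb{F}_p^*} \theta(x) \sum_{m=0}^{p-2} T^m(\alpha x^{-1}).
\end{equation*}

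Now \eqref{for_TOrthCh} evaluates the inner sum as $p-1$ when $\alpha x^{-1}=1$ and $0$ otherwise, so only the term $x=\alpha$ survives. This leaves $(p-1)\,\theta(\alpha)$, and dividing through by $p-1$ gives the stated identity.

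There is essentially no hard step; the only point to be careful about is the convention $\varepsilon(0):=0$, which is needed both to legitimately restrict the $x$-sum to $\mathbb{F}_p^*$ and to be consistent with the value $G_0 = -1$ (rather than $0$) used implicitly in the formula for the $m=0$ term. Once this bookkeeping is acknowledged, the identity follows immediately from the character orthogonality on $\mathbb{F}_p^*$.
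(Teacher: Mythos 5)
Your argument is correct: unfolding $G_{-m}$, restricting to $x \in \mathbb{F}_p^*$ via the convention $\varepsilon(0)=0$, swapping sums, and collapsing with the orthogonality relation (2.2) is exactly the standard proof, and your attention to $G_0 = -1$ is the right bookkeeping. The paper does not prove this proposition itself but cites Fuselier's thesis for it, and the argument there is the same Fourier-inversion computation you give, so there is nothing to add.
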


We now introduce generalised Jacobi sums.
Let $\chi_1, \chi_2, \dotsc, \chi_k$ be characters on $\mathbb{F}_{p}$. Then the generalised Jacobi sum $J(\chi_1, \chi_2, \dotsc, \chi_k)$ is defined by
\begin{equation}\label{for_GenJacSum}
J(\chi_1, \chi_2, \dotsc, \chi_k):= \sum_{\substack{t_1+t_2+ \dotsm + t_k=1\\ t_i \in \mathbb{F}_p}} \chi_1(t_1) \chi_2(t_2) \dotsm \chi_k(t_k).
\end{equation}
When $k=2$ this reduces to the ordinary Jacobi sum which has the following properties.
\begin{prop}\label{prop_JacBasic}
For a non-trivial character $\chi$ and trivial character $\varepsilon$ we have\\[3pt]
\noindent \textup{(1)}
$J(\varepsilon,\varepsilon)=p-2$;\\[3pt]
\noindent \textup{(2)}
$J(\varepsilon, \chi)=-1$; and\\[3pt]
\noindent \textup{(3)}
$J(\chi,\bar{\chi}) = -\chi(-1)$.
\end{prop}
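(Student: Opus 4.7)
The plan is to prove all three parts by direct evaluation of the defining sum \eqref{for_GenJacSum} with $k=2$, combined with the orthogonality relation \eqref{for_TOrthEl} and the convention $\varepsilon(0)=0$.

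For part (1), I would observe that $\varepsilon(t_1)\varepsilon(t_2)$ equals $1$ exactly when both $t_1$ and $t_2=1-t_1$ are non-zero, and equals $0$ otherwise. The admissible values of $t_1$ are therefore $\mathbb{F}_p\setminus\{0,1\}$, giving $|J(\varepsilon,\varepsilon)|=p-2$.

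For part (2), I would use the substitution $u=1-t_1$ to write
\[
J(\varepsilon,\chi)=\sum_{t_1\in\mathbb{F}_p}\varepsilon(t_1)\chi(1-t_1)=\sum_{u\in\mathbb{F}_p,\,u\neq 1}\chi(u)=\sum_{u\in\mathbb{F}_p}\chi(u)-\chi(1),
\]
which equals $0-1=-1$ by the orthogonality relation \eqref{for_TOrthEl} applied to the non-trivial $\chi$.

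For part (3), I would similarly restrict to $t_1\notin\{0,1\}$ (so that neither factor is trivially zero) and combine the two characters using $\bar{\chi}(1-t_1)=\chi((1-t_1)^{-1})$, so that
\[
J(\chi,\bar{\chi})=\sum_{t_1\neq 0,1}\chi\!\left(\tfrac{t_1}{1-t_1}\right).
\]
The map $t_1\mapsto u=t_1/(1-t_1)$ is a bijection from $\mathbb{F}_p\setminus\{0,1\}$ onto $\mathbb{F}_p\setminus\{0,-1\}$, so the sum becomes $\sum_{u\in\mathbb{F}_p}\chi(u)-\chi(-1)=-\chi(-1)$, again by \eqref{for_TOrthEl}.

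None of the three parts presents a real obstacle — the whole proposition is a textbook-level calculation. The only subtlety is keeping track of the convention $\varepsilon(0)=0$, which is why the ``degenerate'' terms $t_1=0$ and $t_1=1$ must be carefully excluded before invoking orthogonality.
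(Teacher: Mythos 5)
Your computation is correct in all three parts, and the handling of the convention $\varepsilon(0)=0$ (excluding $t_1=0$ and $t_1=1$ before invoking orthogonality, and noting that the missing value $u=0$ in part (3) contributes nothing since $\chi(0)=0$) is exactly the adjustment the paper alludes to when it says the standard results from \cite{BEW} and \cite{IR} have been modified for this convention. The paper itself gives no proof of this proposition, deferring to those references, and your direct evaluation of the defining sum is precisely the standard argument, so there is nothing to add.
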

\noindent The following proposition gives a reduction formula in cases where $k \geq 2$.
\begin{prop}\label{prop_JacRed}
\begin{equation*}
J(\chi_1, \chi_2, \dotsc, \chi_k)=
\begin{cases}
J(\chi_1 \chi_2 \dotsm \chi_{k-1}, \chi_k)J(\chi_1, \chi_2, \dotsc, \chi_{k-1})
& \quad \text{if } \chi_1\chi_2\dotsm\chi_{k-1} \neq \varepsilon, \\[18pt]
(p-1)^{k-1}-
\begin{cases}
p \; J(\chi_1, \chi_2, \dotsc, \chi_{k-1}) & \text{if } \chi_k \neq \varepsilon,\\[9pt]
J(\chi_1, \chi_2, \dotsc, \chi_{k-1})  & \text{if } \chi_k=\varepsilon.
\end{cases}\\[-34pt]
&\quad \text{if } \chi_1, \chi_2, \dotsc, \chi_{k-1} \text{ }\\[0pt]
&\quad \text{are all trivial},\\[18pt]
\begin{cases}
-p \; J(\chi_1, \chi_2, \dotsc, \chi_{k-1}) & \text{if }\chi_k \neq \varepsilon,\\[9pt]
-J(\chi_1, \chi_2, \dotsc, \chi_{k-1})  & \text{if }\chi_k=\varepsilon.
\end{cases}
& \quad \text{otherwise}.
\end{cases}\\[12pt]
\end{equation*}
\end{prop}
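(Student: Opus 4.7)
The plan is to peel off the last variable. Setting $s=t_1+\dotsm+t_{k-1}$ and $t_k=1-s$, I would rewrite (\ref{for_GenJacSum}) as
\begin{equation*}
J(\chi_1,\dotsc,\chi_k)=\sum_{s\in\mathbb{F}_p}\chi_k(1-s)\,A(s),\qquad A(s):=\sum_{\substack{t_1+\dotsm+t_{k-1}=s\\ t_i\in\mathbb{F}_p}}\chi_1(t_1)\dotsm\chi_{k-1}(t_{k-1}),
\end{equation*}
and handle the $s\neq 0$ and $s=0$ contributions separately. For $s\neq 0$, the substitution $t_i=su_i$ gives $A(s)=\chi(s)\,J(\chi_1,\dotsc,\chi_{k-1})$, where $\chi:=\chi_1\chi_2\dotsm\chi_{k-1}$. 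Because $\chi(0)=0$ by convention, one then has
\begin{equation*}
\sum_{s\neq 0}\chi_k(1-s)\,A(s)=J(\chi_1,\dotsc,\chi_{k-1})\sum_{s\in\mathbb{F}_p}\chi(s)\chi_k(1-s)=J(\chi_1,\dotsc,\chi_{k-1})\,J(\chi,\chi_k).
\end{equation*}

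In Case~1 ($\chi\neq\varepsilon$), a second scaling argument $t_i\mapsto\lambda t_i$ applied directly to $A(0)$ gives $A(0)=\chi(\lambda)A(0)$ for every $\lambda\in\mathbb{F}_p^*$; choosing $\lambda$ with $\chi(\lambda)\neq 1$ forces $A(0)=0$, and the total collapses to $J(\chi_1\chi_2\dotsm\chi_{k-1},\chi_k)\,J(\chi_1,\dotsc,\chi_{k-1})$, which is the first branch of the proposition.

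In Cases~2 and~3 we have $\chi=\varepsilon$, so the scaling substitution gives $A(s)=J(\chi_1,\dotsc,\chi_{k-1})$ for every $s\neq 0$, while the change of variables $s\mapsto 1-s$ combined with (\ref{for_TOrthEl}) shows that $\sum_{s\neq 0}\chi_k(1-s)$ equals $-1$ when $\chi_k\neq\varepsilon$ and $p-2$ when $\chi_k=\varepsilon$. Everything then comes down to evaluating $A(0)$, for which I would use the factorisation
\begin{equation*}
\sum_{s\in\mathbb{F}_p}A(s)=\prod_{i=1}^{k-1}\biggl(\,\sum_{t_i\in\mathbb{F}_p}\chi_i(t_i)\biggr),
\end{equation*}
which by (\ref{for_TOrthEl}) equals $(p-1)^{k-1}$ in Case~2 (all $\chi_i$ trivial) and $0$ in Case~3 (some factor vanishes). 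Combining this with $\sum_{s\neq 0}A(s)=(p-1)J(\chi_1,\dotsc,\chi_{k-1})$ pins down $A(0)$, and reassembling the two contributions recovers the remaining four sub-cases by direct arithmetic.

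I do not anticipate a serious obstacle; the proof will be careful bookkeeping built on three ingredients: the scaling substitution, the orthogonality relation (\ref{for_TOrthEl}), and the convention $\chi(0)=0$. The delicate spot is the $s=0$ contribution, where the scaling substitution fails and where the split $\chi=\varepsilon$ with all $\chi_i$ trivial (Case~2) versus with some $\chi_i$ nontrivial (Case~3) is precisely what produces the two distinct second and third branches of the statement.
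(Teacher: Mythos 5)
The paper states this proposition without proof, citing the standard references (Berndt--Evans--Williams and Ireland--Rosen) with the remark that the formulas have been adjusted for the convention $\varepsilon(0)=0$; so there is no in-paper argument to compare against. Your proposal is a correct and complete proof: the decomposition $J(\chi_1,\dotsc,\chi_k)=\sum_s\chi_k(1-s)A(s)$, the scaling evaluation $A(s)=\chi(s)J(\chi_1,\dotsc,\chi_{k-1})$ for $s\neq0$, the vanishing of $A(0)$ when $\chi\neq\varepsilon$, and the determination of $A(0)$ via $\sum_sA(s)=\prod_i\sum_{t_i}\chi_i(t_i)$ together reproduce all five sub-cases (e.g. for all $\chi_i$ trivial and $\chi_k\neq\varepsilon$ one gets $-J+(p-1)^{k-1}-(p-1)J=(p-1)^{k-1}-pJ$ with $J=J(\chi_1,\dotsc,\chi_{k-1})$, and similarly for the rest). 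This is essentially the standard textbook derivation, correctly tracking the $\varepsilon(0)=0$ convention.
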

\noindent Special cases of this proposition yield the following corollaries.
\begin{cor}\label{cor_JacRedSmall}
For $\chi_1\chi_2\dotsm\chi_k$ trivial but at least one of $\chi_1, \chi_2, \dotsc, \chi_k$ non-trivial,
\begin{equation*}
 J(\chi_1, \chi_2, \dotsc, \chi_k)=-\chi_k(-1)  J(\chi_1, \chi_2, \dotsc, \chi_{k-1}) \; .
 \end{equation*}
\end{cor}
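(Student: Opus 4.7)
The plan is to deduce the corollary as a direct case analysis from Proposition \ref{prop_JacRed}, splitting on whether $\chi_1\chi_2\dotsm\chi_{k-1}$ is trivial. The hypothesis $\chi_1\chi_2\dotsm\chi_k = \varepsilon$ pins down $\chi_k$ in terms of the other characters, which is what lets the reduction collapse to the claimed clean expression.

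First, suppose $\chi_1\chi_2\dotsm\chi_{k-1} \neq \varepsilon$. Since the full product is trivial, this forces $\chi_k = \overline{\chi_1\chi_2\dotsm\chi_{k-1}}$, and in particular $\chi_k \neq \varepsilon$. The first clause of Proposition \ref{prop_JacRed} then gives
\begin{equation*}
J(\chi_1, \dotsc, \chi_k) = J(\chi_1\chi_2\dotsm\chi_{k-1}, \chi_k)\, J(\chi_1, \dotsc, \chi_{k-1}) = J(\overline{\chi_k}, \chi_k)\, J(\chi_1, \dotsc, \chi_{k-1}).
\end{equation*}
By Proposition \ref{prop_JacBasic}(3), $J(\overline{\chi_k}, \chi_k) = -\overline{\chi_k}(-1)$, and since $\chi_k(-1) = \pm 1$ we have $\overline{\chi_k}(-1) = \chi_k(-1)$, yielding the desired identity.

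Next, suppose $\chi_1\chi_2\dotsm\chi_{k-1} = \varepsilon$. Combined with the triviality of the full product this forces $\chi_k = \varepsilon$, so $\chi_k(-1) = 1$. The hypothesis that at least one of the $\chi_i$ is non-trivial then ensures that at least one of $\chi_1, \dotsc, \chi_{k-1}$ is non-trivial, so we land in the third (``otherwise'') clause of Proposition \ref{prop_JacRed} with $\chi_k = \varepsilon$:
\begin{equation*}
J(\chi_1, \dotsc, \chi_k) = -J(\chi_1, \dotsc, \chi_{k-1}) = -\chi_k(-1)\, J(\chi_1, \dotsc, \chi_{k-1}).
\end{equation*}

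There is no real obstacle here; the only point requiring care is to verify that the two cases of the hypothesis ``$\chi_1\chi_2\dotsm\chi_k$ trivial with some $\chi_i$ non-trivial'' exhaust precisely the first and third branches of Proposition \ref{prop_JacRed} (the second branch, where all of $\chi_1, \dotsc, \chi_{k-1}$ are trivial, would force $\chi_k = \varepsilon$ as well and is excluded), and that the resulting expressions agree on the common formula $-\chi_k(-1)\,J(\chi_1, \dotsc, \chi_{k-1})$.
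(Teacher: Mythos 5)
Your proof is correct and follows exactly the route the paper intends: the paper states the corollary as an immediate special case of Proposition \ref{prop_JacRed} without writing out the details, and your case split on whether $\chi_1\chi_2\dotsm\chi_{k-1}$ is trivial, combined with Proposition \ref{prop_JacBasic}(3) and the observation $\overline{\chi_k}(-1)=\chi_k(-1)$, is precisely that verification.
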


\begin{cor}For $\chi_1, \chi_2, \dotsc, \chi_k$ all trivial,
\begin{equation*}
J(\chi_1, \chi_2, \dotsc, \chi_k) =\frac{(p-1)^k+(-1)^{k+1}}{p}.
\end{equation*}
\end{cor}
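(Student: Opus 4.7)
The plan is to deduce the identity by induction on $k$, using Proposition \ref{prop_JacRed} as the recursive engine. Set $J_k := J(\underbrace{\varepsilon,\dots,\varepsilon}_{k})$. For the base case, I would use $k=2$: Proposition \ref{prop_JacBasic}(1) gives $J_2 = p-2$, which agrees with $\tfrac{(p-1)^2-1}{p}=p-2$ (one can equally start at $k=1$, reading $J_1=\varepsilon(1)=1$ off the definition (\ref{for_GenJacSum})).

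For the inductive step, observe that in the statement of Proposition \ref{prop_JacRed}, when $\chi_1,\dots,\chi_{k-1}$ are all trivial and $\chi_k=\varepsilon$, we land in the middle clause with the ``$\chi_k=\varepsilon$'' sub-case, producing the clean recurrence
\[
J_k=(p-1)^{k-1}-J_{k-1}.
\]
Feeding in the inductive hypothesis $J_{k-1}=\tfrac{(p-1)^{k-1}+(-1)^k}{p}$, clearing denominators and factoring out $(p-1)^{k-1}$ gives
\[
J_k=\frac{(p-1)^{k-1}(p-1)-(-1)^k}{p}=\frac{(p-1)^k+(-1)^{k+1}}{p},
\]
which closes the induction.

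There is no serious obstacle here; the only thing to watch is the sign bookkeeping $(-1)^k \leftrightarrow (-1)^{k+1}$ when substituting. As a sanity check (or indeed an alternative proof), one can bypass the proposition entirely: by (\ref{for_GenJacSum}), $J_k$ counts tuples $(t_1,\dots,t_k)\in(\mathbb{F}_p^*)^k$ with $t_1+\cdots+t_k=1$, and inclusion--exclusion on the coordinates that vanish gives $J_k=\sum_{j=0}^{k-1}(-1)^j\binom{k}{j}p^{k-j-1}=\tfrac{1}{p}\bigl[(p-1)^k-(-1)^k\bigr]$, matching the formula. The inductive route is preferable in context since it advertises the corollary as an immediate consequence of the preceding proposition.
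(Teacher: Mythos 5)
Your proposal is correct and matches the paper's intended derivation: the paper presents this corollary as a special case of Proposition \ref{prop_JacRed}, and the recurrence $J_k=(p-1)^{k-1}-J_{k-1}$ from the middle clause (all $\chi_i$ trivial, $\chi_k=\varepsilon$) together with the base case $J(\varepsilon,\varepsilon)=p-2$ is exactly the calculation being invoked. The sign bookkeeping and the alternative counting check are both right.
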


\noindent We can relate generalised Jacobi sums to Gauss sums via the following.
\begin{prop}\label{prop_JactoGauss}
For $\chi_1, \chi_2, \dotsc, \chi_k$ not all trivial,\\
\begin{equation*}
J(\chi_1, \chi_2, \dotsc, \chi_k)=
\begin{cases}
\dfrac{G(\chi_1)G(\chi_2)\dotsc G(\chi_k)}{G(\chi_1 \chi_2 \dotsm \chi_k)}
& \qquad \text{if } \chi_1 \chi_2 \dotsm \chi_k \neq \varepsilon,\\[18pt]
-\dfrac{G(\chi_1)G(\chi_2)\dotsc G(\chi_k)}{p}
&\qquad \text{if }\chi_1 \chi_2 \dotsm \chi_k = \varepsilon \: .
\end{cases}
\end{equation*}
\end{prop}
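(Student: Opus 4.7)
The plan is a direct expansion of the product $G(\chi_1)\cdots G(\chi_k)$. By definition,
\[
G(\chi_1)\cdots G(\chi_k) = \sum_{x_1,\dots,x_k \in \mathbb{F}_p} \chi_1(x_1)\cdots\chi_k(x_k)\,\theta(x_1+\cdots+x_k),
\]
and I would reorganize this sum by the value $s = x_1+\cdots+x_k$. For $s \neq 0$, the substitution $x_i = s\,t_i$ forces $t_1+\cdots+t_k = 1$, and multiplicativity of the $\chi_i$ (combined with the convention $\chi(0)=0$, which makes terms with some $x_i = 0$ or $t_i = 0$ vanish on both sides) extracts a factor of $(\chi_1\cdots\chi_k)(s)$. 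Summing $(\chi_1\cdots\chi_k)(s)\theta(s)$ over $s \in \mathbb{F}_p^*$ gives $G(\chi_1\cdots\chi_k)$ in both possible cases, since the $s=0$ term contributes nothing by convention; in particular $\varepsilon(0)=0$ ensures $G(\varepsilon)=-1$. Thus the $s\neq 0$ contribution is $J(\chi_1,\dots,\chi_k)\,G(\chi_1\cdots\chi_k)$.

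Next I would compute the $s=0$ contribution $N_0$. Since $\chi_k(0)=0$, I may fix $x_k \in \mathbb{F}_p^*$ and set $t_i = -x_i/x_k$ for $i<k$, giving $t_1+\cdots+t_{k-1}=1$. A short multiplicativity calculation produces
\[
N_0 = (\chi_1\cdots\chi_{k-1})(-1)\,J(\chi_1,\dots,\chi_{k-1})\sum_{x\in\mathbb{F}_p^*}(\chi_1\cdots\chi_k)(x),
\]
where the inner character sum equals $p-1$ or $0$ according as $\chi_1\cdots\chi_k$ is trivial or not, by \eqref{for_TOrthEl}. Combining the two contributions,
\[
G(\chi_1)\cdots G(\chi_k) = J(\chi_1,\dots,\chi_k)\,G(\chi_1\cdots\chi_k) + N_0.
\]

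When $\chi_1\cdots\chi_k \neq \varepsilon$, $N_0=0$ and $G(\chi_1\cdots\chi_k)$ is nonzero (its modulus is $\sqrt{p}$ by Proposition~\ref{prop_GaussConj}), so dividing yields the first case. When $\chi_1\cdots\chi_k=\varepsilon$ we have $G(\chi_1\cdots\chi_k)=-1$ and $N_0=(p-1)(\chi_1\cdots\chi_{k-1})(-1)\,J(\chi_1,\dots,\chi_{k-1})$. Here the hypothesis that not all $\chi_i$ are trivial is exactly what is needed to invoke Corollary~\ref{cor_JacRedSmall}, which allows me to rewrite $J(\chi_1,\dots,\chi_{k-1}) = -\chi_k(-1)\,J(\chi_1,\dots,\chi_k)$. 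Using the observation that $\chi_1\cdots\chi_k=\varepsilon$ forces $(\chi_1\cdots\chi_{k-1})(-1)=\chi_k(-1)$ and that $\chi_k(-1)^2=1$, the two terms telescope to $-p\,J(\chi_1,\dots,\chi_k)$, giving the second case.

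The main care is in the bookkeeping: the substitutions $x_i = s t_i$ and $t_i = -x_i/x_k$ need to remain valid termwise when some $x_i$ or $t_i$ vanishes, but this is automatic once one invokes the convention $\chi(0)=0$ set up at the start of the Preliminaries, since every such degenerate term then vanishes on both sides. Apart from this, the only substantive ingredients are the orthogonality relation \eqref{for_TOrthEl}, the non-vanishing of $G(\chi)$ for $\chi\neq\varepsilon$, and Corollary~\ref{cor_JacRedSmall}.
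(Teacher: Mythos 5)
Your argument is correct. The paper itself gives no proof of Proposition \ref{prop_JactoGauss} --- it is stated in the preliminaries with a pointer to \cite{BEW} and \cite{IR} (adjusted for the convention $\varepsilon(0)=0$) --- and what you have written is exactly the standard derivation those sources use: expand $G(\chi_1)\dotsm G(\chi_k)$ via $\theta(a+b)=\theta(a)\theta(b)$, split on $s=x_1+\dotsb+x_k$ being zero or not, and rescale. Your handling of the two delicate points is also right: the convention $\chi(0)=0$ makes the $s=0$ term of $\sum_s(\chi_1\dotsm\chi_k)(s)\theta(s)$ vanish even when the product character is trivial (so that sum really is $G(\chi_1\dotsm\chi_k)=-1$ in that case), and in the trivial-product case the identity $J(\chi_1,\dotsc,\chi_{k-1})=-\chi_k(-1)J(\chi_1,\dotsc,\chi_k)$ from Corollary \ref{cor_JacRedSmall} together with $(\chi_1\dotsm\chi_{k-1})(-1)=\chi_k(-1)$ and $\chi_k(-1)^2=1$ collapses $-J+N_0$ to $-pJ$; the "not all trivial" hypothesis is used precisely where you say it is. No gaps.
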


\noindent We now develop some results involving generalised Jacobi sums which we use in Section 5.
\begin{lemma}\label{lem_JacConsMul}
Let $T$ be a generator for the group of characters on $\mathbb{F}_p$. For a prime $p\equiv1 \pmod5$ and $a,b,c,r \in \mathbb{Z}\setminus\{0\}$,
\begin{equation*}
\sum_{k=1}^4 J\left(T^{akt},T^{bkt},T^{ckt}\right)=\sum_{k=1}^4 J\left(T^{arkt},T^{brkt},T^{crkt}\right)
\end{equation*}
where $t=\frac{p-1}{5}$.
\end{lemma}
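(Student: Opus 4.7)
The plan is to reduce this identity to a simple re-indexing argument on the cyclic group of order $5$. Since $T$ generates the character group of $\mathbb{F}_p^*$, which has order $p-1 = 5t$, the character $\chi := T^t$ has order exactly $5$. Consequently $T^{akt} = \chi^{ak}$, and this character depends only on the residue $ak \bmod 5$; the same holds for $T^{bkt}$ and $T^{ckt}$. Therefore each summand $J(T^{akt}, T^{bkt}, T^{ckt})$ is an arithmetic function of the triple $(ak, bk, ck) \bmod 5$, and likewise on the right-hand side each summand depends only on $(ark, brk, crk) \bmod 5$.

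Under the (implicit) hypothesis $\gcd(r, 5) = 1$ — which must be assumed, since otherwise $T^{arkt}, T^{brkt}, T^{crkt}$ collapse to $\varepsilon$ and the equality fails in general — the map $k \mapsto rk \bmod 5$ is a bijection of $(\mathbb{Z}/5\mathbb{Z})^\times = \{1,2,3,4\}$. Explicitly, for each $k \in \{1,2,3,4\}$ let $k' \in \{1,2,3,4\}$ be the unique representative with $k' \equiv rk \pmod 5$; then $(ark, brk, crk) \equiv (ak', bk', ck') \pmod 5$, so
\begin{equation*}
J\bigl(T^{arkt},T^{brkt},T^{crkt}\bigr) = J\bigl(T^{ak't},T^{bk't},T^{ck't}\bigr).
\end{equation*}
As $k$ ranges over $\{1,2,3,4\}$, so does $k'$, so the right-hand sum is merely a permutation of the left-hand sum.

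There is essentially no technical obstacle: the argument is pure bookkeeping. The only care needed is to justify that $T^{akt}$ depends on $ak$ only modulo $5$, which is immediate once we observe that $T$ has order exactly $p-1 = 5t$; this also guarantees that the conclusion holds uniformly whether or not $5$ divides any of $a$, $b$, $c$ individually, since in that event the corresponding slots become trivial on both sides together.
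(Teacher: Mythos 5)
Your proof is correct and is essentially the paper's own argument, which cites exactly the two facts you elaborate: $T^{nt}=T^{mt}$ when $n\equiv m \pmod 5$, and multiplication by $r$ permutes $\mathbb{F}_5^*$ so the right-hand sum is a reordering of the left. Your observation that the statement implicitly requires $5\nmid r$ is a fair catch --- the paper's appeal to ``$\mathbb{F}_5^*$ is a multiplicative group'' tacitly assumes $r$ is a unit modulo $5$, and in the only application ($r=4$ in the proof of Theorem \ref{thm_GStoMod}) this holds.
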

\begin{proof}
The result follows from the facts that $T^{nt}=T^{mt}$ when $n\equiv m \pmod 5$ and $\mathbb{F}_5^*$ is a multiplicative group.
\end{proof}

\begin{lemma}\label{lem_JacTsum}
Let $T$ be a generator for the group of characters on $\mathbb{F}_p$. For a prime $p\equiv1 \pmod5$ and $a,b,c \in \mathbb{Z}$ such that $a+c, b+c \not\equiv 0 \pmod5$,
\begin{equation*}
\sum_{e=0}^{p-2} T^e(-1) J(T^{-e+at},T^{-e+bt},T^{e+ct})=-(p-1)
\end{equation*}
where $t=\frac{p-1}{5}$.
\end{lemma}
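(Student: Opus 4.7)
The natural plan is to expand the Jacobi sum by its definition \eqref{for_GenJacSum}, swap the order of summation, and exploit the character orthogonality relation \eqref{for_TOrthCh} applied in the variable $e$.

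Writing $S$ for the left-hand side, and noting that $T^{-e}(x) = T^e(x)^{-1}$ for $x \in \mathbb{F}_p^*$ while any term with $t_i = 0$ contributes $0$ (since $\chi(0) = 0$ for every character, trivial included), we obtain
\begin{equation*}
S = \sum_{\substack{t_1+t_2+t_3=1 \\ t_i \in \mathbb{F}_p^*}} T^{at}(t_1) T^{bt}(t_2) T^{ct}(t_3) \sum_{e=0}^{p-2} T^e\!\left(\frac{-t_3}{t_1 t_2}\right).
\end{equation*}
By \eqref{for_TOrthCh}, the inner sum is $p-1$ if $t_3 = -t_1 t_2$ and $0$ otherwise. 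Combining $t_3 = -t_1 t_2$ with $t_1 + t_2 + t_3 = 1$ gives the clean factorisation $(1-t_1)(1-t_2) = 0$, so the surviving triples are exactly $\{(1, s, -s) : s \in \mathbb{F}_p^*\} \cup \{(s, 1, -s) : s \in \mathbb{F}_p^*\}$, with the single overlap $(1,1,-1)$.

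Applying inclusion--exclusion and using $\chi(1) = 1$ and multiplicativity, we get
\begin{equation*}
\frac{S}{p-1} = T^{ct}(-1) \Biggl[ \sum_{s \in \mathbb{F}_p^*} T^{(b+c)t}(s) + \sum_{s \in \mathbb{F}_p^*} T^{(a+c)t}(s) - 1 \Biggr].
\end{equation*}
The hypotheses $a+c, b+c \not\equiv 0 \pmod 5$ ensure that $T^{(a+c)t}$ and $T^{(b+c)t}$ are non-trivial characters (since $t = \frac{p-1}{5}$ generates the cyclic subgroup of characters of order dividing $5$), so by \eqref{for_TOrthEl} both character sums vanish. This leaves $S = -(p-1) T^{ct}(-1)$.

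Finally, one observes that $T(-1) = -1$ (as $T$ is a generator and $-1$ has order $2$ in $\mathbb{F}_p^*$), hence $T^{ct}(-1) = (-1)^{ct}$. Since $p$ is an odd prime with $p \equiv 1 \pmod 5$, necessarily $p \equiv 1 \pmod{10}$, so $t = \frac{p-1}{5}$ is even and $T^{ct}(-1) = 1$, yielding $S = -(p-1)$. The only real obstacle is bookkeeping: correctly isolating the constraint $(1-t_1)(1-t_2) = 0$, handling the overlap term, and verifying the parity of $t$ so that the spurious sign from $T^{ct}(-1)$ disappears.
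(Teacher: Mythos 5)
Your proposal is correct and follows essentially the same route as the paper: expand the generalised Jacobi sum, swap the order of summation, apply the orthogonality relation in $e$ to isolate the triples with $t_3=-t_1t_2$ (equivalently $(1-t_1)(1-t_2)=0$), and use the hypotheses on $a+c$, $b+c$ together with the evenness of $t$ to conclude. The only cosmetic difference is that you spell out why $t$ is even and why $T^{ct}(-1)=1$ in slightly more detail than the paper does.
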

\begin{proof}
\begin{align*}
\sum_{e=0}^{p-2} T^e(-1) J(T^{-e+at},T^{-e+bt},T^{e+ct})
&=\sum_{e=0}^{p-2} T^e(-1)  \sum_{\substack{t_1+t_2+ t_3=1\\ t_i \in \mathbb{F}_p^*}} T^{-e+at}(t_1) T^{-e+bt}(t_2) T^{e+ct}(t_3)\\
&=\sum_{e=0}^{p-2} T^e(-1)  \sum_{\substack{t_1+t_2+ t_3=1\\ t_i \in \mathbb{F}_p^*}}T^{-e}\left(\frac{t_1t_2}{t_3}\right) T^t({t_1}^a{ t_2}^b {t_3}^c)\\
&= \sum_{\substack{t_1+t_2+ t_3=1\\ t_i \in \mathbb{F}_p^*}} T^t({t_1}^a{ t_2}^b {t_3}^c) \sum_{e=0}^{p-2} T^{-e}\left(-\frac{t_1t_2}{t_3}\right)\\
&= (p-1) \sum_{\substack{t_1+t_2+ t_3=1\\ t_i \in \mathbb{F}_p^*\\-\frac{t_1t_2}{t_3}=1}} T^t({t_1}^a{ t_2}^b {t_3}^c) \qquad \qquad \qquad \text{by (\ref{for_TOrthCh}).}
\end{align*}
All possible triples $(t_1,t_2,t_3)$ satisfying the conditions of the summation can be represented by $(t_1,1,-t_1)$ and $(1,t_2,-t_2)$, not counting $(1,1,-1)$ twice. So
\begin{align*}
\sum_{e=0}^{p-2} T^e(-1) & J(T^{-e+at},T^{-e+bt},T^{e+ct})\\
&\qquad \qquad \qquad =
(p-1) \left[\sum_{t_1\in \mathbb{F}_p^*} T^t({t_1}^{a+c}\: (-1)^c) + \sum_{t_2\in \mathbb{F}_p^*} T^t({t_2}^{b+c} \:(-1)^c) - T^t((-1)^c) \right]\\
&\qquad \qquad \qquad=(p-1)\; T((-1)^{tc}) \left[\sum_{t_1\in \mathbb{F}_p^*} T^{(a+c)t}(t_1) + \sum_{t_2\in \mathbb{F}_p^*} T^{(b+c)t}(t_2) - 1 \right].
\end{align*}
Now $a+c, b+c \not\equiv 0 \pmod5$ so $T^{(a+c)t}$, $T^{(b+c)t} \neq \varepsilon$ and $\displaystyle \sum_{t_1\in \mathbb{F}_p^*} T^{(a+c)t}(t_1), \sum_{t_2\in \mathbb{F}_p^*} T^{(b+c)t}(t_2)$ both equal $0$ by (\ref{for_TOrthEl}). For $p \equiv 1 \pmod 5$, $t$ is even and so $T((-1)^{tc})=1$. Overall we get
\begin{equation*}
\sum_{e=0}^{p-2} T^e(-1) J(T^{-e+at},T^{-e+bt},T^{e+ct})=-(p-1)
\end{equation*}
as required.
\end{proof}

\begin{cor}\label{cor_GaussTsum}
Let $T$ be a generator for the group of characters on $\mathbb{F}_p$. For a prime $p\equiv1 \pmod5$ and $a,b,c \in \mathbb{Z}$ such that $a+c, b+c \not\equiv 0 \pmod5$,
\begin{equation*}
\sum_{e=0}^{p-2} G_{-e+at} G_{-e+bt} G_{e+ct} G_{e-(a+b+c)t)}=-p(p-1)
\end{equation*}
where $t=\frac{p-1}{5}$.
\end{cor}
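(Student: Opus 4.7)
The plan is to convert the Gauss sum product into a three-variable Jacobi sum, then apply Lemma~\ref{lem_JacTsum} directly.

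First I would observe that the four subscripts sum to zero: $(-e+at)+(-e+bt)+(e+ct)+(e-(a+b+c)t)=0$, so the product of the four characters $T^{-e+at}$, $T^{-e+bt}$, $T^{e+ct}$, $T^{e-(a+b+c)t}$ equals $\varepsilon$. Before applying the trivial-product case of Proposition~\ref{prop_JactoGauss}, I need to rule out the degenerate possibility that all four characters are simultaneously trivial. This would force $(a-b)t\equiv 0$ and $(a+c)t\equiv 0\pmod{p-1}$, i.e.\ $5\mid a+c$, which contradicts the hypothesis $a+c\not\equiv 0\pmod 5$. Hence for every $e\in\{0,1,\dots,p-2\}$ the hypotheses of Proposition~\ref{prop_JactoGauss} apply in the ``trivial product'' case, giving
\begin{equation*}
G_{-e+at}\,G_{-e+bt}\,G_{e+ct}\,G_{e-(a+b+c)t}=-p\,J\bigl(T^{-e+at},T^{-e+bt},T^{e+ct},T^{e-(a+b+c)t}\bigr).
\end{equation*}

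Next I would invoke Corollary~\ref{cor_JacRedSmall} to peel off the last character. Since the four characters have trivial product but (as just checked) are not all trivial, the corollary gives
\begin{equation*}
J\bigl(T^{-e+at},T^{-e+bt},T^{e+ct},T^{e-(a+b+c)t}\bigr)=-T^{e-(a+b+c)t}(-1)\,J\bigl(T^{-e+at},T^{-e+bt},T^{e+ct}\bigr).
\end{equation*}
Because $p$ is odd and $p\equiv 1\pmod 5$, the integer $t=(p-1)/5$ is even, so $T^{-(a+b+c)t}(-1)=(-1)^{-(a+b+c)t}=1$. Therefore $T^{e-(a+b+c)t}(-1)=T^e(-1)$, and combining the last two displays yields
\begin{equation*}
G_{-e+at}\,G_{-e+bt}\,G_{e+ct}\,G_{e-(a+b+c)t}=p\,T^e(-1)\,J\bigl(T^{-e+at},T^{-e+bt},T^{e+ct}\bigr).
\end{equation*}

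Finally I would sum over $e$ from $0$ to $p-2$, pull out the factor of $p$, and apply Lemma~\ref{lem_JacTsum} (whose hypotheses $a+c,b+c\not\equiv 0\pmod 5$ are exactly those given here) to conclude
\begin{equation*}
\sum_{e=0}^{p-2}G_{-e+at}\,G_{-e+bt}\,G_{e+ct}\,G_{e-(a+b+c)t}=p\cdot(-(p-1))=-p(p-1).
\end{equation*}

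There is no real obstacle here; the corollary is a straightforward bookkeeping exercise. The only subtlety worth checking carefully is the non-triviality condition needed to justify the trivial-product form of Proposition~\ref{prop_JactoGauss} and the reduction in Corollary~\ref{cor_JacRedSmall}, which is precisely where the hypothesis $a+c\not\equiv 0\pmod 5$ gets used (the $b+c$ condition is then used inside Lemma~\ref{lem_JacTsum} itself).
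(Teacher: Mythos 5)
Your proposal is correct and follows essentially the same route as the paper's proof: convert the product of four Gauss sums to a generalised Jacobi sum via Proposition~\ref{prop_JactoGauss}, reduce to a three-variable Jacobi sum with Corollary~\ref{cor_JacRedSmall}, use the evenness of $t$ to replace $T^{e-(a+b+c)t}(-1)$ by $T^e(-1)$, and finish with Lemma~\ref{lem_JacTsum}. Your explicit verification that the four characters are never simultaneously trivial is a slightly more careful rendering of the paper's closing remark, but the argument is the same.
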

\begin{proof}
Using Proposition \ref{prop_JactoGauss} and Corollary \ref{cor_JacRedSmall} we see that
\begin{align*}
\sum_{e=0}^{p-2} G_{-e+at} G_{-e+bt} G_{e+ct} G_{e-(a+b+c)t)}
&=-p\sum_{e=0}^{p-2} J\left(T^{-e+at}, T^{-e+bt}, T^{e+ct}, T^{e-(a+b+c)t)}\right) \\
&=p\sum_{e=0}^{p-2} T^{e-(a+b+c)t}(-1) J\left(T^{-e+at}, T^{-e+bt}, T^{e+ct}\right) \\
&=p\sum_{e=0}^{p-2} T^{e}(-1) J\left(T^{-e+at}, T^{-e+bt}, T^{e+ct}\right) \\
&=-p(p-1)
\end{align*}
as at least one of $T^{-e+at}, T^{-e+bt}$ or $T^{e+ct}$ is non-trivial due to the conditions imposed on $a,b$ and $c$.
\end{proof}

We now recall a formula for counting points using the additive character.
If $f(x_1, x_2, \ldots x_n) \in \mathbb{F}_p[x_1, x_2, \ldots x_n]$, then the number of points, $N_p^*$, in $\mathbb{A}^n(\mathbb{F}_p)$ satisfying
$f(x_1, x_2, \ldots x_n) =0$ is given by
\begin{equation}\label{for_CtgPts}
p N_p^* = p^n +\sum_{y \in \mathbb{F}_p^*} \sum_{x_1, x_2, \ldots x_n \in \mathbb{F}_p}
\theta(y \: f(x_1, x_2, \ldots x_n)) \; .
\end{equation}
Notice also that if  $f(x_0, x_1, x_2, \ldots x_n) \in \mathbb{F}_p[x_0, x_1, x_2, \ldots x_n]$, then
the number of points, $N_p$, in $\mathbb{P}^n(\mathbb{F}_p)$, satisfying $f(x_0, x_1, x_2, \ldots x_n)=0$ can be determined by
\begin{equation}\label{for_CtgPtsSplit}
N_p=N_p^0 +N_p^1
\end{equation}
where $N_p^0$ is the number of points in $\mathbb{P}^{n-1}(\mathbb{F}_p)$ on $f(0, x_1, x_2 \ldots x_n)=0$ and $N_p^1$ is the number of points in $\mathbb{A}^n(\mathbb{F}_p)$ on $f(1, x_1, x_2 \ldots x_n)=0$.

\subsection{$p$-adic preliminaries}
We first define the Teichm\"{u}ller character to be the primitive character $\omega: \mathbb{F}_p \rightarrow \mathbb{Z}_p$ satisfying $\omega(x) \equiv x \pmod p$ for all $x \in \{0,1, \ldots, p-1\}$.
It also satisfies
\begin{equation}\label{prop_TechChar}
\omega(x) \equiv x^{p^{n-1}} \pmod {p^n}.
\end{equation}
We now recall the $p$-adic gamma function. For further details see \cite{Ko}.
Let $p$ be an odd prime.  For $n \in \mathbb{N}$, we define the $p$-adic gamma function as
\begin{align*}
\gfp{n} &:= {(-1)}^n \prod_{\substack{j<n\\p \nmid j}} j \\
\intertext{and extend to all $x \in\mathbb{Z}_p$ by setting}
\gfp{x} &:= \lim_{n \rightarrow x} \gfp{n}
\end{align*}
where $n$ runs through any sequence of positive integers $p$-adically approaching $x$ and 
$\gfp{0}:=1$. 
This limit exists, is independent of how $n$ approaches $x$ and determines a continuous function
on $\mathbb{Z}_p$.
We now state some basic properties of the $p$-adic gamma function.
\begin{prop}\label{prop_pGamma}
Let $x, y \in \mathbb{Z}_{p}$ and $n \in \mathbb{N}$. Then\\[6pt]
\textup{(1)}
$\gfp{x+1}=
\begin{cases}
-x \hspace{1pt} {\gfp{x}} & \quad \text{if } x \in \mathbb{Z}_p^* \; ,\\
- {\gfp{x}} & \quad \text{otherwise} \;.
\end{cases}$
\\[6pt]

\noindent \textup{(2)}
$\Gamma_p(x)\Gamma_p(1-x) = {(-1)}^{x_0}$,
where $x_0 \in \{1,2, \dotsc, {p}\}$ satisfies $x_0 \equiv x \pmod {p}$ .
\\

\noindent \textup{(3)}
If $x \equiv y \pmod{p^n}$, then $\gfp{x} \equiv \gfp{y} \pmod{p^n}$.
\end{prop}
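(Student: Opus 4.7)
The three properties are standard facts about the $p$-adic gamma function (see \cite{Ko}); the common strategy is to verify each identity on the dense subset $\mathbb{N} \subset \mathbb{Z}_p$ using the explicit product definition, and then extend to all of $\mathbb{Z}_p$ by continuity of $\Gamma_p$.

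For (1), I would take $n \in \mathbb{N}$ and compare the products defining $\Gamma_p(n+1) = (-1)^{n+1} \prod_{j < n+1,\, p \nmid j} j$ and $\Gamma_p(n) = (-1)^{n} \prod_{j < n,\, p \nmid j} j$. The sign flips, and the product picks up the extra factor $n$ precisely when $p \nmid n$. This immediately yields the two cases on $\mathbb{N}$, and since the maps $x \mapsto \Gamma_p(x+1)$, $x \mapsto -x\,\Gamma_p(x)$, and $x \mapsto -\Gamma_p(x)$ are continuous, the functional equation extends to $x \in \mathbb{Z}_p^*$ and $x \in p\mathbb{Z}_p$ respectively.

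For (2), by continuity it suffices to take $x = x_0 \in \{1,2,\dotsc,p\}$. Applying (1) repeatedly to $\Gamma_p(1-x_0)$, one steps the argument through $2-x_0, 3-x_0, \dotsc, 0$ up to $\Gamma_p(1) = -1$; each step contributes a factor of the form $-j$ or $-1$ according to whether $j \in \mathbb{Z}_p^*$ or $j \in p\mathbb{Z}_p$. Multiplying the resulting expression by $\Gamma_p(x_0) = (-1)^{x_0}\prod_{j<x_0,\,p\nmid j} j$ produces a telescoping cancellation of all the non-trivial factors, and what survives is the sign $(-1)^{x_0}$.

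For (3), by continuity and density of $\mathbb{N}$ it suffices to show that $m \equiv n \pmod{p^k}$ with $m, n \in \mathbb{N}$ and $m = n + ap^k > n$ implies the congruence. The ratio
\begin{equation*}
\frac{\Gamma_p(m)}{\Gamma_p(n)} = (-1)^{ap^k} \prod_{\substack{n \leq j < m \\ p \nmid j}} j
\end{equation*}
splits into $a$ blocks of $p^k$ consecutive integers. In each block, the terms coprime to $p$ reduce modulo $p^k$ to a permutation of $(\mathbb{Z}/p^k\mathbb{Z})^*$, whose product is $-1 \pmod{p^k}$ by the generalized Wilson theorem. Collecting signs yields $(-1)^{a(p^k+1)} \equiv 1 \pmod{p^k}$ for odd $p$. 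The main technical points are the sign bookkeeping in the telescoping argument of (2) and the invocation of the generalized Wilson identity in (3); both are routine in the $p$-adic analysis literature.
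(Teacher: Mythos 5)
The paper itself offers no proof of this proposition; it is quoted as standard from Koblitz's book, so there is no in-paper argument to compare against. Your overall strategy --- verify each identity on the dense subset $\mathbb{N}\subset\mathbb{Z}_p$ using the defining product and extend by continuity --- is the standard one. Parts (1) and (3) are sound as sketched: in (1) the ratio of the two products is exactly $-n$ or $-1$ according to whether $p\nmid n$, and the continuity argument is legitimate because $\mathbb{N}\cap\mathbb{Z}_p^*$ and $\mathbb{N}\cap p\mathbb{Z}_p$ are dense in $\mathbb{Z}_p^*$ and $p\mathbb{Z}_p$ respectively; in (3) the decomposition of $\prod_{n\le j<m,\,p\nmid j}j$ into $a$ blocks each congruent to a full set of units modulo $p^k$, together with the generalized Wilson theorem $\prod_{u\in(\mathbb{Z}/p^k\mathbb{Z})^*}u\equiv-1\pmod{p^k}$ and the fact that $\Gamma_p(n)\in\mathbb{Z}_p^*$, is exactly the right mechanism.

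The gap is in (2). Continuity lets you pass from a \emph{dense} subset of $\mathbb{Z}_p$ to all of $\mathbb{Z}_p$, but $\{1,2,\dotsc,p\}$ is a finite set; verifying the identity only at $x=x_0$ says nothing about the other points of the residue class $x_0+p\mathbb{Z}_p$, so the sentence ``by continuity it suffices to take $x=x_0$'' is false as stated. You must check the identity for all $n\in\mathbb{N}$ (the right-hand side $(-1)^{x_0}$ is locally constant, so a dense set does suffice). For $n>p$ your telescoping from $\Gamma_p(1-n)$ up to $\Gamma_p(1)$ passes through several multiples of $p$, each of which contributes a bare factor $-1$ rather than $-j$; it is precisely this extra bookkeeping that makes the final answer depend on $n$ only through its representative $x_0\in\{1,\dotsc,p\}$ rather than being $(-1)^n$, and your sketch never confronts it. A clean repair: set $f(x)=\Gamma_p(x)\Gamma_p(1-x)$ and apply (1) at $x$ and at $-x$ to get $f(x+1)=-f(x)$ for $x\in\mathbb{Z}_p^*$ and $f(x+1)=f(x)$ for $x\in p\mathbb{Z}_p$; since $(-1)^{x_0}$ satisfies the same step relation and $f(1)=\Gamma_p(1)\Gamma_p(0)=-1$, induction gives the identity on all of $\mathbb{N}$ and continuity finishes the proof.
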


\noindent \textup{(4)}
For $m\in\mathbb{Z}^{+}$, $p \nmid m$ and $x=\frac{r}{p-1}$ with $0\leq r \leq p-1$ then
\begin{equation*}
\prod_{h=0}^{m-1} \gfp{\tfrac{x+h}{m}}=\omega\left(m^{(1-x)(1-p)}\right)
\gfp{x} \prod_{h=1}^{m-1} \gfp{\tfrac{h}{m}}\;.
\end{equation*}

\noindent
We consider the logarithmic derivatives of $\Gamma_p$. For $x \in \mathbb{Z}_{p}$, define
$$G_1(x):= \dfrac{\Gamma_p^{\prime}(x)}{\gfp{x}} \qquad \textup{and} \qquad
 G_2(x):= \dfrac{\Gamma_p^{\prime \prime}(x)}{\gfp{x}}.$$ These also satisfy some basic properties  which we state below (see \cite{AO}, \cite{CDE} and \cite{K}).

\begin{prop}\label{prop_pGammaG}
Let $x \in \mathbb{Z}_{p}$. Then\\[6pt]
\textup{(1)}
$G_1(x+1) - G_1(x) =
\begin{cases}
 1/ x & \quad \text{if } x \in \mathbb{Z}_p^* \;,\\
0 & \quad \text{otherwise}   \; .
\end{cases}$
\\[6pt]

\noindent \textup{(2)}
$G_1(x+1)^2 - G_2(x+1) - G_1(x)^2 + G_2(x)=
\begin{cases}
 1/ x^2 & \quad \text{if } x \in \mathbb{Z}_p^* \;,\\
0 & \quad \text{otherwise}   \; .
\end{cases}$
\\[6pt]

\noindent \textup{(3)}
$G_1(x) = G_1(1-x)$.
\\

\noindent \textup{(4)}
$G_1(x)^2 - G_2(x) = - G_1(1-x)^2 + G_2(1-x)$.\\
\end{prop}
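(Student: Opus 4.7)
My plan is that all four identities follow by differentiating, once or twice, the two defining functional equations for $\Gamma_p$ recorded in Proposition~\ref{prop_pGamma}: the shift $\Gamma_p(x+1) = -x\,\Gamma_p(x)$ (with the prefactor replaced by $-1$ when $x \notin \mathbb{Z}_p^*$) and the reflection $\Gamma_p(x)\Gamma_p(1-x) = (-1)^{x_0}$. Since $\Gamma_p$ is $p$-adically differentiable on $\mathbb{Z}_p$, these manipulations are legitimate, and the rest is bookkeeping with the product rule.

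For parts (1) and (2) I start from the shift identity. In the case $x \in \mathbb{Z}_p^*$, one differentiation yields $\Gamma_p'(x+1) = -\Gamma_p(x) - x\,\Gamma_p'(x)$, and dividing by $\Gamma_p(x+1) = -x\,\Gamma_p(x)$ immediately gives $G_1(x+1) - G_1(x) = 1/x$. A second differentiation gives $\Gamma_p''(x+1) = -2\Gamma_p'(x) - x\,\Gamma_p''(x)$, and dividing again by $\Gamma_p(x+1)$ produces the shift $G_2(x+1) - G_2(x) = 2G_1(x)/x$. Combining this with the algebraic identity
\[
G_1(x+1)^2 - G_1(x)^2 = \bigl(G_1(x+1)-G_1(x)\bigr)\bigl(G_1(x+1)+G_1(x)\bigr) = \tfrac{1}{x}\bigl(2G_1(x)+\tfrac{1}{x}\bigr),
\]
obtained from part (1), yields $1/x^2$ after cancellation. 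When $x \notin \mathbb{Z}_p^*$ the prefactor in the shift is just $-1$, so $\Gamma_p$, $\Gamma_p'$ and $\Gamma_p''$ all pick up the same common sign, and both $G_1$ and $G_2$ are unchanged, giving the stated zero on the right.

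For parts (3) and (4) the crucial observation is that the right-hand side $(-1)^{x_0}$ of the reflection identity is locally constant on $\mathbb{Z}_p$: the integer $x_0 \in \{1,\dots,p\}$ depends only on $x \bmod p$, so the function is constant on each of the $p$ residue classes, and its $p$-adic derivative of every order vanishes. Differentiating the reflection identity once thus gives $\Gamma_p'(x)\Gamma_p(1-x) = \Gamma_p(x)\Gamma_p'(1-x)$, and dividing by $\Gamma_p(x)\Gamma_p(1-x)$ yields (3). Differentiating a second time produces
\[
\Gamma_p''(x)\Gamma_p(1-x) - 2\Gamma_p'(x)\Gamma_p'(1-x) + \Gamma_p(x)\Gamma_p''(1-x) = 0,
\]
and dividing again by $\Gamma_p(x)\Gamma_p(1-x)$ gives $G_2(x) - 2G_1(x)G_1(1-x) + G_2(1-x) = 0$. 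Substituting (3) to replace $G_1(1-x)$ by $G_1(x)$ and rearranging turns this into $G_1(x)^2 - G_2(x) = -G_1(1-x)^2 + G_2(1-x)$, which is (4).

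The only real subtlety is justifying the vanishing of the derivatives of $(-1)^{x_0}$; once one observes that this map is locally constant in the $p$-adic topology, the rest of the argument is a routine application of the product rule together with the two functional equations. I do not anticipate any further obstacle.
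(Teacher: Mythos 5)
Your proof is correct and follows essentially the same route as the paper: parts (1) and (3) come from differentiating the shift and reflection functional equations of $\Gamma_p$, and parts (2) and (4) from differentiating once more. The only cosmetic difference is that you take a second derivative of the original functional equations and then eliminate the cross terms algebraically, whereas the paper differentiates the already-derived identities (1) and (3) directly (using $G_1' = G_2 - G_1^2$), which amounts to the same computation.
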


\begin{proof}
(1) and (3) are obtained from differentiating the results in Proposition \ref{prop_pGamma} (1) and (2) respectively, while (2) and (4) follow from differentiating (1) and (3).
\end{proof}

\noindent We also have some congruence properties of the $p$-adic gamma function and its logarithmic derivatives as follows.
\begin{prop}\label{prop_pGammaCong}
Let $p \geq 7$ be a prime, $x \in \mathbb{Z}_{p}$ and $z \in p  \hspace{1pt} \mathbb{Z}_{p}$ . Then\\[6pt]
\textup{(1)}
$G_1(x)$, $G_2(x) \in \mathbb{Z}_p$.\\[-3pt]

\noindent \textup{(2)}
$\Gamma_p(x+z) \equiv \Gamma_p(x) \left(1+zG_1(x) +\frac{z^2}{2} G_2(x) \right) \pmod{p^3}$.\\[-3pt]

\noindent \textup{(3)}
$\Gamma_p^{\prime}(x+z) \equiv \Gamma_p^{\prime}(x) +z \Gamma_p^{\prime \prime} (x) \pmod{p^2}$.
\end{prop}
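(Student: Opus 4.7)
The plan is to deduce all three parts from a single input from Morita's theory of the $p$-adic gamma function: for every $x \in \mathbb{Z}_p$, $\Gamma_p$ is $p$-adically analytic in a neighborhood of $x$, and for $z \in p\,\mathbb{Z}_p$ the Taylor expansion
\begin{equation*}
\Gamma_p(x+z) \;=\; \sum_{k=0}^{\infty} \frac{\Gamma_p^{(k)}(x)}{k!}\, z^k
\end{equation*}
converges, with every coefficient $\Gamma_p^{(k)}(x)/k!$ lying in $\mathbb{Z}_p$. This integrality is the substantive input and is what I would import from the standard references \cite{Ko}, \cite{AO}, \cite{CDE}, \cite{K}. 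Granted it, each term in the sum has $p$-adic valuation at least $v_p(z^k) \geq k$.

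For part (1), I would observe that $\Gamma_p(x) \in \mathbb{Z}_p^{\times}$ for every $x \in \mathbb{Z}_p$: the values $\Gamma_p(n)$ at positive integers are, up to sign, products of integers coprime to $p$ and hence units, and $\mathbb{Z}_p^{\times}$ is closed in $\mathbb{Z}_p$, so continuity of $\Gamma_p$ extends the unit property to all of $\mathbb{Z}_p$. Since $p \geq 7$, the denominators $1!$ and $2!$ are units in $\mathbb{Z}_p$, so integrality of the Taylor coefficients forces $\Gamma_p'(x), \Gamma_p''(x) \in \mathbb{Z}_p$; dividing by the unit $\Gamma_p(x)$ then yields $G_1(x), G_2(x) \in \mathbb{Z}_p$.

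For part (2), I would truncate the Taylor expansion after the quadratic term. For $z \in p\,\mathbb{Z}_p$,
\begin{equation*}
\Gamma_p(x+z) - \biggl(\Gamma_p(x) + z\,\Gamma_p'(x) + \tfrac{z^2}{2}\,\Gamma_p''(x)\biggr) \;=\; \sum_{k \geq 3} \frac{\Gamma_p^{(k)}(x)}{k!}\, z^k \;\in\; p^3\,\mathbb{Z}_p,
\end{equation*}
because every summand on the right has valuation at least $k \geq 3$. Dividing by the unit $\Gamma_p(x)$ recasts the left-hand side in the stated form, with the factor $\tfrac{1}{2}$ making sense because $p \geq 7$ ensures $2 \in \mathbb{Z}_p^{\times}$. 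Part (3) is analogous: termwise differentiation of the Taylor expansion gives
\begin{equation*}
\Gamma_p'(x+z) \;=\; \sum_{j=0}^{\infty} (j+1)\,\frac{\Gamma_p^{(j+1)}(x)}{(j+1)!}\, z^j
\end{equation*}
with coefficients in $\mathbb{Z}_p$, and truncating at order $j=2$ leaves a tail in $p^2\,\mathbb{Z}_p$ without any need to invert $2$.

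The whole $p$-adic analytic content of the proposition is concentrated in the integrality statement $\Gamma_p^{(k)}(x)/k! \in \mathbb{Z}_p$, and this is the step I would expect to be the main obstacle if one were to reprove everything from scratch rather than cite Morita; it is established by combining Mahler's theorem for continuous functions $\mathbb{Z}_p \to \mathbb{Z}_p$ with the explicit convergence-radius analysis of Morita's construction. Once that is in hand, parts (1)--(3) are elementary truncations of Taylor series, and the hypothesis $p \geq 7$ enters only to absorb the denominators $2$ and $6$ arising from the quadratic and cubic Taylor coefficients.
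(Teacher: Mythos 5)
The paper itself gives no proof of Proposition \ref{prop_pGammaCong}: it is stated as a known congruence property, with the references \cite{AO}, \cite{CDE} and \cite{K} supplied a few lines earlier for exactly this circle of facts, so there is no in-paper argument to measure your proposal against. What you write is, in substance, the argument underlying those references. Granting the one imported input --- that $\Gamma_p$ is locally analytic on each residue disc with Taylor coefficients $\Gamma_p^{(k)}(x)/k!\in\mathbb{Z}_p$ --- your deductions are all correct: $\Gamma_p(x)\in\mathbb{Z}_p^{\times}$ follows by continuity since $\mathbb{Z}_p^{\times}$ is closed in $\mathbb{Z}_p$, the tail $\sum_{k\geq 3}\Gamma_p^{(k)}(x)\,z^{k}/k!$ lies in $p^{3}\mathbb{Z}_p$ because each coefficient is integral and $z\in p\,\mathbb{Z}_p$, and termwise differentiation gives part (3) with coefficients $(j+1)\,\Gamma_p^{(j+1)}(x)/(j+1)!\in\mathbb{Z}_p$. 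You are right to isolate the coefficient integrality as the substantive step; it is genuinely a theorem of Morita's theory (see \cite{Ko}) and is not recoverable from Mahler's theorem alone, so a self-contained treatment would either cite it, as you do, or argue directly from the defining product $\Gamma_p(n)=(-1)^{n}\prod_{j<n,\ p\nmid j}j$ by expanding $\Gamma_p(x+z)/\Gamma_p(x)$ for integer approximations of $x$ and $z$ --- the route effectively taken in \cite{CDE} for the mod $p^{2}$ statement and in \cite{K} for the mod $p^{3}$ refinement. One small inaccuracy at the end of your plan: no denominator $6$ ever needs to be inverted, since the cubic term is absorbed into the tail precisely via the integrality of $\Gamma_p^{(3)}(x)/3!$; the hypothesis $p\geq 7$ is stronger than parts (1)--(3) require and is carried for consistency with the rest of Section 2.
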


\begin{cor}\label{cor_pGammaCong1}
Let $p \geq 7$ be a prime, $x \in \mathbb{Z}_{p}$ and $z \in p  \hspace{1pt} \mathbb{Z}_{p}$ . Then\\[6pt]
\noindent \textup{(1)}
$\Gamma_p^{\prime}(x+z) \equiv \Gamma_p^{\prime}(x)  \pmod{p}$.\\[-3pt]

\noindent \textup{(2)}
$\Gamma_p^{\prime\prime}(x+z) \equiv \Gamma_p^{\prime\prime}(x)  \pmod{p}$.\\[-3pt]

\noindent \textup{(3)}
$G_1(x+z) \equiv G_1(x)  \pmod{p}$.\\[-3pt]

\noindent \textup{(4)}
$G_2(x+z) \equiv G_2(x)  \pmod{p}$.
\end{cor}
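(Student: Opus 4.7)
The plan is to derive all four parts from Proposition \ref{prop_pGammaCong} by mechanical reduction modulo $p$, viewing its congruences as truncated Taylor expansions.

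Part (1) is immediate from Proposition \ref{prop_pGammaCong}(3): the correction term $z\,\Gamma_p^{\prime\prime}(x)$ lies in $p\mathbb{Z}_p$ because $z \in p\mathbb{Z}_p$ and $\Gamma_p^{\prime\prime}(x) = \Gamma_p(x)\,G_2(x)$ is in $\mathbb{Z}_p$, using $G_2(x) \in \mathbb{Z}_p$ from Proposition \ref{prop_pGammaCong}(1) together with the standard fact that $\Gamma_p(x) \in \mathbb{Z}_p^{*}$. For part (3), Proposition \ref{prop_pGammaCong}(2) collapses to $\Gamma_p(x+z) \equiv \Gamma_p(x) \pmod{p}$, since both $zG_1(x)$ and $\tfrac{z^2}{2}G_2(x)$ lie in $p\mathbb{Z}_p$; inverting this mod $p$ and combining with (1) via the identity $G_1(y) = \Gamma_p^{\prime}(y)/\Gamma_p(y)$ produces (3).

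Parts (2) and (4) go through by exactly the same scheme, one derivative higher. The Taylor pattern underlying Proposition \ref{prop_pGammaCong}(2)--(3) continues with $\Gamma_p^{\prime\prime}(x+z) \equiv \Gamma_p^{\prime\prime}(x) + z\,\Gamma_p^{\prime\prime\prime}(x) \pmod{p^2}$; granting $\Gamma_p^{\prime\prime\prime}(x) \in \mathbb{Z}_p$, reducing modulo $p$ yields (2). Then (4) follows from (2) exactly as (3) did from (1), now using $G_2(y) = \Gamma_p^{\prime\prime}(y)/\Gamma_p(y)$ together with the mod-$p$ equality $\Gamma_p(x+z) \equiv \Gamma_p(x)$ already observed above.

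The one step requiring work beyond Proposition \ref{prop_pGammaCong} itself is the next-order Taylor-type congruence for $\Gamma_p^{\prime}$ together with the integrality of $\Gamma_p^{\prime\prime\prime}$ used in part (2); this should be a routine one-order refinement of the arguments given for Proposition \ref{prop_pGammaCong}(3) in the references \cite{AO}, \cite{CDE} and \cite{K}, and I expect it to be the main (and only) obstacle in the proof.
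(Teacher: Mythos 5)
Your proposal is correct and follows essentially the same route as the paper: part (1) from Proposition \ref{prop_pGammaCong} (3) together with the integrality of $\Gamma_p^{\prime\prime}(x)$, parts (3) and (4) from (1), (2) and the definitions of $G_1$, $G_2$ (using $\Gamma_p(x+z)\equiv\Gamma_p(x)\pmod{p}$), and part (2) deferred to the same one-order-higher refinement that the paper itself attributes to the methods of Proposition 2.3 in \cite{K}. The only cosmetic difference is that you aim for the full congruence $\Gamma_p^{\prime\prime}(x+z)\equiv\Gamma_p^{\prime\prime}(x)+z\,\Gamma_p^{\prime\prime\prime}(x)\pmod{p^2}$ where only the mod $p$ statement is needed.
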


\begin{proof}
By definition, $\Gamma_p(x) \in \mathbb{Z}_p^{*}$. Thus, by Proposition \ref{prop_pGammaCong} (1) and the definitions of $G_1(x)$ and $G_2(x)$, we see that $\Gamma_p^{\prime}(x)$ and $\Gamma_p^{\prime\prime}(x) \in \mathbb{Z}_p$.
Observe that (1) then follows from Proposition \ref{prop_pGammaCong} (3).
For (2), one uses similar methods to Proposition 2.3 in \cite{K}.
Finally (3) \& (4) follow from (1)  and (2) and the definitions of $G_1(x)$ and $G_2(x)$.
\end{proof}

\begin{cor}\label{cor_pGammaCong2}
Let $p \geq 7$ be a prime, $x \in \mathbb{Z}_{p}$ and $z \in p  \hspace{1pt} \mathbb{Z}_{p}$ . Then
\begin{equation*}
G_1(x)\equiv G_1(x+z)+z\left(G_1(x+z)^2 - G_2(x+z)\right)
\pmod{p^2}.
\end{equation*}
\end{cor}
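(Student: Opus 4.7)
The statement is a first-order Taylor-type expansion of $G_1$ about $x+z$, and my plan is to derive it directly from the mod-$p^3$ expansion of $\Gamma_p$ and the mod-$p^2$ expansion of $\Gamma_p'$ given in Proposition~\ref{prop_pGammaCong}.

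First I would apply Proposition~\ref{prop_pGammaCong}(2) with the substitution $(x,z) \mapsto (x+z, -z)$, which is permissible since $-z \in p\,\mathbb{Z}_p$, to write $\Gamma_p(x)$ modulo $p^3$ as $\Gamma_p(x+z)$ times the unit $1 - zG_1(x+z) + \tfrac{z^2}{2}G_2(x+z)$. In parallel, Proposition~\ref{prop_pGammaCong}(3) under the same substitution gives
\[
\Gamma_p'(x) \equiv \Gamma_p'(x+z) - z\,\Gamma_p''(x+z) \pmod{p^2}.
\]
By Proposition~\ref{prop_pGammaCong}(1) all quantities involved lie in $\mathbb{Z}_p$, with $\Gamma_p(x+z) \in \mathbb{Z}_p^*$, so the congruences combine cleanly under multiplication and inversion.

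Next I would invert the expansion of $\Gamma_p(x)$ via a short geometric-series expansion (the bracketed factor lies in $1 + p\,\mathbb{Z}_p$), and then form the quotient $G_1(x) = \Gamma_p'(x)\,\Gamma_p(x)^{-1}$ modulo $p^2$. Using the definitions $\Gamma_p'(x+z) = \Gamma_p(x+z)\,G_1(x+z)$ and $\Gamma_p''(x+z) = \Gamma_p(x+z)\,G_2(x+z)$ to eliminate the derivatives, the quotient collapses to
\[
G_1(x) \equiv \bigl(G_1(x+z) - zG_2(x+z)\bigr)\bigl(1 + zG_1(x+z)\bigr) \pmod{p^2},
\]
and expanding gives exactly the claimed identity, since the remaining $z^2$ cross-term lies in $p^2\,\mathbb{Z}_p$.

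The only real subtlety, and the main point I would pay attention to, is precision bookkeeping: Proposition~\ref{prop_pGammaCong}(2) is a mod-$p^3$ statement with a genuine $z^2$ correction, whereas Proposition~\ref{prop_pGammaCong}(3) is only mod $p^2$. The plan works because $z^2 \in p^2\,\mathbb{Z}_p$: every $z^2$ contribution arising either from the inversion of $\Gamma_p(x)$ or from the final expansion vanishes modulo $p^2$, so the mismatch of precisions between (2) and (3) is harmless and no additional input beyond Proposition~\ref{prop_pGammaCong} is required.
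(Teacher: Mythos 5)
Your proposal is correct and follows essentially the same route as the paper: both arguments expand $\Gamma_p$ and $\Gamma_p'$ via Proposition \ref{prop_pGammaCong} (2) and (3), form the quotient defining $G_1(x)$, and discard $z^2$ terms modulo $p^2$. The only cosmetic differences are that you expand about $x+z$ directly (via the substitution $(x,z)\mapsto(x+z,-z)$) and invert by a geometric series, whereas the paper expands about $x$, rationalizes the resulting quotient by multiplying by a conjugate factor, and then shifts arguments modulo $p$ using Corollary \ref{cor_pGammaCong1}; both are valid.
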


\begin{proof}
By Proposition \ref{cor_pGammaCong1} we see that
\begin{align*}
G_1(x)= \frac{\Gamma_p^{\prime}(x)}{\gfp{x}}
\equiv \frac{\Gamma_p^{\prime}(x+z) - z \Gamma_p^{\prime \prime} (x)}{\Gamma_p(x+z) - z \Gamma_p^{\prime}(x)}
\pmod{p^2}.
\end{align*}
Multiplying the numerator and denominator by ${\Gamma_p(x+z) + z \Gamma_p^{\prime}(x)}$ we get that
\begin{align*}
G_1(x)
&\equiv \frac{
\Gamma_p^{\prime}(x+z) \Gamma_p(x+z) + z 
\left(\Gamma_p^{\prime}(x) \Gamma_p^{\prime}(x+z)-\Gamma_p(x+z) \Gamma_p^{\prime \prime} (x)\right)
}
{\Gamma_p(x+z)^2}\\
&\equiv \frac{
\Gamma_p^{\prime}(x+z) \Gamma_p(x+z) + z 
\left(\Gamma_p^{\prime}(x+z) \Gamma_p^{\prime}(x+z)-\Gamma_p(x+z) \Gamma_p^{\prime \prime} (x+z)\right)
}
{\Gamma_p(x+z)^2}\\
&\equiv G_1(x+z) +z \left(G_1(x+z)^2-G_2(x+z)\right)
\pmod{p^2}.
\end{align*}
\end{proof}

The Gross-Koblitz formula \cite{GK} allows us to convert between Gauss sums and the $p$-adic gamma function. Let $\zeta_p=\theta(1)=e^{\frac{2 \pi i}{p}}$ and $\pi \in \mathbb{C}_p$ be the fixed root of $x^{p-1}+p=0$ satisfying $\pi \equiv \zeta_p-1 \pmod{{(\zeta_p-1)}^2}$. Then we have the following result.
\begin{theorem}[Gross, Koblitz]\label{thm_GrossKoblitz}
\begin{equation*}
G(\bar{\omega}^j)=-\pi^j \: \gfp{\tfrac{j}{p-1}}
\end{equation*}
for $0 \leq j \leq p-2$.
\end{theorem}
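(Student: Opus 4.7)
The plan is to follow the original Dwork-style derivation used by Gross and Koblitz, which reduces the identity to an analysis of the coefficients of a $p$-adic analytic splitting function. I would introduce an Artin--Hasse-type exponential $F(X)$, convergent on a disc strictly containing the closed unit disc of $\mathbb{C}_p$, with coefficients in $\mathbb{Z}_p[\pi]$, and satisfying $F(1) = \zeta_p$; a standard choice is $F(X) := \exp\bigl(\pi(X - X^p)\bigr)$, for which the hypotheses $\pi \equiv \zeta_p - 1 \pmod{(\zeta_p-1)^2}$ and $\pi^{p-1} = -p$ together force $F(1) = \zeta_p$. Writing $\hat a := \omega(a)$ for the Teichm\"uller lift of $a \in \mathbb{F}_p$, one then checks that $F(\hat a) = \zeta_p^a = \theta(a)$ for every $a$.

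With $F(X) = \sum_{n \geq 0} \gamma_n X^n$, I would substitute into the defining sum for $G(\bar\omega^j)$ and interchange the order of summation:
\begin{equation*}
G(\bar\omega^j) = \sum_{a \in \mathbb{F}_p^*} \bar\omega^j(a)\, F(\hat a)
= \sum_{n \geq 0} \gamma_n \sum_{a \in \mathbb{F}_p^*} \omega(a)^{n-j}.
\end{equation*}
By the orthogonality relation (\ref{for_TOrthEl}) applied to $\omega^{n-j}$, and recalling that $\omega$ has order $p-1$, the inner sum equals $p-1$ when $(p-1) \mid (n-j)$ and vanishes otherwise. Hence
\begin{equation*}
G(\bar\omega^j) = (p-1) \sum_{k \geq 0} \gamma_{j + k(p-1)}.
\end{equation*}

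The final step is to identify this sum with $-\pi^j \gfp{j/(p-1)}/(p-1)$. The coefficients $\gamma_n$ can be computed explicitly from the Artin--Hasse factorisation of $F$, and, since $\pi^{p-1}=-p$, each $\gamma_{j+k(p-1)}$ can be written as $\pi^j$ times a $p$-adic rational number involving only factorials of integers coprime to $p$. Matching this expansion term by term against Morita's definition of $\gfp{\cdot}$, extended continuously to $\mathbb{Z}_p$, and invoking Wilson's theorem $(p-1)! \equiv -1 \pmod p$ at each level of the Artin--Hasse tower, produces the right-hand side $-\pi^j\gfp{j/(p-1)}$.

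The main obstacle is this last step. Stickelberger's theorem readily supplies the $\pi$-adic valuation of $G(\bar\omega^j)$, so the real content is the identification of the unit $G(\bar\omega^j)/\pi^j$ with $-\gfp{j/(p-1)}$. This requires a careful tracking of the Artin--Hasse coefficients through both the exponential expansion and Morita's continuity definition of $\Gamma_p$; it is precisely here that the $p$-adic gamma function enters the picture, and this is where the technical weight of the Gross--Koblitz proof lies.
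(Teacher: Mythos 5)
The paper does not prove this statement: Theorem \ref{thm_GrossKoblitz} is quoted verbatim from Gross and Koblitz \cite{GK} and used as a black box, so there is no internal proof to measure your attempt against. Judged on its own terms, your outline follows the standard ``elementary'' route to the formula via Dwork's splitting function (the approach one finds in Koblitz's lecture notes and in Robert's later simplification, rather than the cohomological argument of the original paper), and the first two stages are sound: $F(X)=\exp(\pi(X-X^p))$ does converge on a disc of radius $p^{(p-1)/p^2}>1$, Dwork's lemma does give $F(\omega(a))=\theta(a)$, and interchanging the sums and applying orthogonality of $\omega^{n-j}$ correctly reduces the Gauss sum to $(p-1)\sum_{k\geq 0}\gamma_{j+k(p-1)}$.

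The gap is that everything after that point is asserted rather than proved, and it is precisely there that the theorem lives. The claim that each $\gamma_{j+k(p-1)}$ equals $\pi^{j}$ times a $p$-adic rational ``involving only factorials of integers coprime to $p$'' does not follow from the coefficient formula for $\exp(\pi(X-X^p))$ without real work: the naive valuation bound $\operatorname{ord}_p\gamma_n\geq n(p-1)/p^2$ is not even enough to extract the factor $\pi^{j}$ from the higher terms, let alone to control the resulting series modulo powers of $p$, and ``invoking Wilson's theorem at each level of the Artin--Hasse tower'' is a placeholder for the genuinely delicate comparison with Morita's interpolation that defines $\gfp{\cdot}$. You identify this as ``the main obstacle'' yourself, which is honest, but it means the proposal is an accurate roadmap of a known proof rather than a proof: the step that actually forces the unit $G(\bar{\omega}^{j})/\pi^{j}$ to equal $-\gfp{\tfrac{j}{p-1}}$ (and not merely something congruent to it modulo $p$, which is all that Stickelberger plus a first-order computation yields) is missing. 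For the purposes of this paper the correct move is the one the author makes, namely to cite \cite{GK}; if you do want a self-contained argument, the cleanest complete version of the strategy you sketch is Robert's ``The Gross--Koblitz formula revisited,'' which carries out the coefficient identification in full.
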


We now introduce some notation for a rational number's basic representative modulo $p$.
\begin{defi}
For a prime $p$ and $\frac{a}{b} \in \mathbb{Q} \cap \mathbb{Z}_p^*$ we define $rep_p(\frac{a}{b}) \in \{0,1,\cdots, p-1\}$ such that
\begin{equation*}
rep_p(\tfrac{a}{b}) \equiv \tfrac{a}{b} \pmod p.
\end{equation*}
\end{defi}
\noindent We will drop the subscript $p$ when it is clear from the context. We have the following basic properties of $rep(\cdot)$.

\begin{prop}\label{prop_repOneminus}
Let $p$ be a prime with $1\leq m < d < p$. Then
\begin{equation*}
rep(1-\tfrac{m}{d})=p+1-rep(\tfrac{m}{d}).
\end{equation*}
\end{prop}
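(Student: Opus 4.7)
The plan is to set $r := rep(\tfrac{m}{d})$ and argue that $p+1-r$ lies in the set $\{0,1,\dotsc,p-1\}$ and is congruent to $1-\tfrac{m}{d}$ modulo $p$; by uniqueness this forces $rep(1-\tfrac{m}{d})=p+1-r$, which is the claim.

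By definition, $r \in \{0,1,\dotsc,p-1\}$ and $r \equiv \tfrac{m}{d} \pmod{p}$. First I would rule out the two extreme values $r=0$ and $r=1$. If $r=0$, then $\tfrac{m}{d} \equiv 0 \pmod{p}$, and since $d \in \mathbb{Z}_p^*$ (as $1 \leq d < p$), this forces $m \equiv 0 \pmod{p}$, contradicting $1 \leq m < p$. If $r=1$, then $m \equiv d \pmod{p}$, so $p \mid d-m$; but $0 < d-m < p$, again a contradiction. Hence $r \in \{2,3,\dotsc,p-1\}$.

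This range for $r$ guarantees $p+1-r \in \{2,3,\dotsc,p-1\} \subseteq \{0,1,\dotsc,p-1\}$, and clearly
\begin{equation*}
p+1-r \equiv 1-r \equiv 1-\tfrac{m}{d} \pmod{p}.
\end{equation*}
Since $rep(1-\tfrac{m}{d})$ is the unique element of $\{0,1,\dotsc,p-1\}$ congruent to $1-\tfrac{m}{d}$ modulo $p$, we conclude $rep(1-\tfrac{m}{d})=p+1-r=p+1-rep(\tfrac{m}{d})$.

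There is no real obstacle here; the entire content of the lemma is the mild verification that the naive shift $r \mapsto p+1-r$ does not land outside $\{0,1,\dotsc,p-1\}$, which is exactly what the conditions $1 \leq m < d < p$ are designed to ensure by excluding $r=0$ and $r=1$.
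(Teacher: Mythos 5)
Your proof is correct and follows essentially the same route as the paper's: both arguments exclude $rep(\tfrac{m}{d})=0$ (since $m\not\equiv 0\pmod p$) and $rep(\tfrac{m}{d})=1$ (since $m\equiv d\pmod p$ would contradict $m<d<p$), then observe that $p+1-rep(\tfrac{m}{d})$ lies in the correct range and is congruent to $1-\tfrac{m}{d}$ modulo $p$. No issues.
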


\begin{proof}
As $1\leq m <p$, $m \not\equiv 0 \pmod p$ and hence $rep(\tfrac{m}{d}) \neq 0$. Also $rep(\tfrac{m}{d}) \neq 1$, as otherwise $m \equiv d \pmod p$ which contradicts $m<d<p$. Therefore $2\leq rep(\tfrac{m}{d}) \leq p-1$ and $2\leq p+1-rep(\tfrac{m}{d}) \leq p-1$. Also
\begin{equation*}
p+1-rep(\tfrac{m}{d}) \equiv 1-rep(\tfrac{m}{d}) \equiv 1- \tfrac{m}{d} \pmod p.
\end{equation*}
\end{proof}

\begin{lemma}\label{lem_repGenformula}
Let $p$ be a prime with $1\leq m < d < p$. If $p\equiv a \pmod d$ then
\begin{equation*}
rep(\tfrac{m}{d})=\frac{pt+m}{d}
\end{equation*}
where $t$ is the smallest positive integer such that $d\mid ta+m$.
\end{lemma}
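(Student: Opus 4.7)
The plan is to verify that the candidate value $N := (pt+m)/d$ satisfies all three defining conditions of $rep(m/d)$: it is an integer, it lies in $\{0,1,\dots,p-1\}$, and it is congruent to $m/d$ modulo $p$.

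First I would confirm integrality. Since $p \equiv a \pmod d$, we have $pt + m \equiv at + m \pmod d$, and by the choice of $t$ this is divisible by $d$, so $N \in \mathbb{Z}$. Next, the congruence $N \equiv m/d \pmod p$ is immediate: $dN = pt + m \equiv m \pmod p$, and since $d \in \mathbb{Z}_p^*$ (because $d < p$), dividing gives $N \equiv m/d \pmod p$.

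The only point that needs a bit of care is the bound $0 \le N \le p-1$. Because $p$ is prime and $d < p$, we have $\gcd(p,d)=\gcd(a,d)=1$, so $a$ is a unit modulo $d$ and the congruence $at \equiv -m \pmod d$ has a unique solution in $\{1,2,\dots,d\}$. The value $t=d$ would force $d \mid m$, which is impossible since $1 \le m < d$, so in fact the smallest positive $t$ satisfies $1 \le t \le d-1$. Then
\begin{equation*}
N = \frac{pt+m}{d} \le \frac{p(d-1)+m}{d} = p - \frac{p-m}{d} < p,
\end{equation*}
since $p > m$, while $N \ge (p+1)/d > 0$. Being an integer, $1 \le N \le p-1$.

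Combining these three facts, $N$ is the unique element of $\{0,1,\dots,p-1\}$ congruent to $m/d \pmod p$, hence $rep(m/d) = N$ as claimed. I do not foresee a genuine obstacle; the only subtle step is justifying $t \le d-1$, which is what rules out the boundary case $N = p$.
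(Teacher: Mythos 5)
Your proof is correct and follows essentially the same route as the paper: verify that $(pt+m)/d$ is an integer, is congruent to $m/d$ modulo $p$, and lies in $\{0,\dots,p-1\}$, the last point reducing to $t\le d-1$. Your justification of $t\le d-1$ via the invertibility of $a$ modulo $d$ is in fact cleaner than the paper's minimality/division argument, whose written form is slightly garbled.
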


\begin{proof}
Certainly $\frac{pt+m}{d} \equiv \frac{m}{d} \pmod{p}$. By assumption $0\leq\frac{pt+m}{d} \in \mathbb{Z}$. Note $\frac{pt+m}{d} < p$ if and only if $1\leq m<p(d-t)$. Therefore $\frac{pt+m}{d} < p$ as $m<p$ and $d>t$. If $d=t$ then $d\mid m$ but $m<d$. If $d>t$ then $d=qt+r$ for some $q,r \in \mathbb{Z}$ with $q\geq1$ and  $0\leq r <t$. Then  $d\mid ra+m$ contradicting the choice of $t$.
\end{proof}

\begin{cor}\label{cor_repGenformula}
Let $p$ be a prime such that $p\equiv a \pmod d$ with $a< d < p$. Then
\begin{equation*}
rep(\tfrac{a}{d})=p-\lfloor \tfrac{p-1}{d} \rfloor
\end{equation*}
and
\begin{equation*}
rep(\tfrac{d-a}{d})=\lfloor \tfrac{p-1}{d} \rfloor +1 \;.
\end{equation*}
\end{cor}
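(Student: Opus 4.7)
The plan is to derive both identities as direct consequences of Lemma \ref{lem_repGenformula} and Proposition \ref{prop_repOneminus}, with the arithmetic reducing to locating the correct value of $t$ in the lemma.

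First I would verify the hypotheses of Lemma \ref{lem_repGenformula} for $m=a$. Since $p$ is prime and $d<p$, we have $d\nmid p$, so $a\not\equiv 0\pmod d$ and therefore $1\leq a<d<p$. Moreover, $\gcd(a,d)=\gcd(p,d)=1$. Applying the lemma, I need the smallest positive $t$ with $d\mid ta+a=(t+1)a$; because $\gcd(a,d)=1$, this reduces to $d\mid t+1$, giving $t=d-1$. Then
\begin{equation*}
rep(\tfrac{a}{d})=\frac{p(d-1)+a}{d}=p-\frac{p-a}{d}.
\end{equation*}

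Next I would identify $\frac{p-a}{d}$ with $\lfloor\frac{p-1}{d}\rfloor$. Writing $p=qd+a$ with $q\geq 1$, we get $\frac{p-a}{d}=q$ and $\frac{p-1}{d}=q+\frac{a-1}{d}$; since $1\leq a\leq d-1$ forces $0\leq\frac{a-1}{d}<1$, we conclude $\lfloor\frac{p-1}{d}\rfloor=q$. This yields the first formula
\begin{equation*}
rep(\tfrac{a}{d})=p-\lfloor\tfrac{p-1}{d}\rfloor.
\end{equation*}

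For the second identity, I would simply apply Proposition \ref{prop_repOneminus} with $m=a$, using that $\tfrac{d-a}{d}=1-\tfrac{a}{d}$, to obtain
\begin{equation*}
rep(\tfrac{d-a}{d})=p+1-rep(\tfrac{a}{d})=\lfloor\tfrac{p-1}{d}\rfloor+1.
\end{equation*}
No step presents a genuine obstacle; the only point requiring care is the coprimality argument that pins down $t=d-1$, after which both formulas are immediate.
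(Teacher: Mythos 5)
Your proof is correct and follows essentially the same route as the paper: apply Lemma \ref{lem_repGenformula} with $t=d-1$ to get $rep(\tfrac{a}{d})=p-\tfrac{p-a}{d}=p-\lfloor\tfrac{p-1}{d}\rfloor$, then obtain the second identity from Proposition \ref{prop_repOneminus}. The only difference is that you spell out the coprimality argument pinning down $t=d-1$ and the identification $\tfrac{p-a}{d}=\lfloor\tfrac{p-1}{d}\rfloor$, both of which the paper leaves implicit.
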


\begin{proof}
From Lemma \ref{lem_repGenformula} with $t=d-1$ we see that $rep(\tfrac{a}{d})=p-\tfrac{p-a}{d} =p-\lfloor \tfrac{p-1}{d} \rfloor$. The second result follows from Proposition \ref{prop_repOneminus}.
\end{proof}

\noindent We now use these properties to develop further results concerning the $p$-adic gamma function.
\begin{prop}\label{prop_gammapmd}
For a prime $p$ with $1\leq m < d < p$,
\begin{equation*}
\gfp{\tfrac{m}{d}+j}=
\begin{cases}
(-1)^j \; \biggfp{\frac{m}{d}} \ph{\frac{m}{d}}{j} & \qquad \text{if } 0\leq j\leq p- rep(\tfrac{m}{d}),\\[6pt]
(-1)^j \; \biggfp{\frac{m}{d}} \ph{\frac{m}{d}}{j} {\bigl({\tfrac{m}{d}+p-rep(\tfrac{m}{d})}\bigr)}^{-1} & \qquad \text{if } p- rep(\tfrac{m}{d})+1\leq j\leq p-1.\\
\end{cases}
\end{equation*} 
\end{prop}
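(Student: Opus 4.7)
The plan is a straightforward induction on $j$ using the functional equation in Proposition \ref{prop_pGamma}(1). The key observation is to identify the unique index $k_0 \in \{0,1,\ldots,p-1\}$ at which the recursion ``skips'' a factor. Since $\frac{m}{d} \equiv rep(\tfrac{m}{d}) \pmod{p}$, we have $\frac{m}{d}+k \in p\mathbb{Z}_p$ iff $k \equiv -rep(\tfrac{m}{d}) \pmod p$, and within $\{0,1,\ldots,p-1\}$ this happens only at $k_0 := p - rep(\tfrac{m}{d})$. Because $1 \leq m < d < p$ forces $1 \leq rep(\tfrac{m}{d}) \leq p-1$, we also have $1 \leq k_0 \leq p-1$, so both ranges appearing in the statement are nonempty and exactly one ``skip'' can occur for $j \leq p-1$.

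For $0 \leq j \leq k_0$, every step $k=0,1,\ldots,j-1$ satisfies $\tfrac{m}{d}+k \in \mathbb{Z}_p^{*}$, so iterating the first branch of Proposition \ref{prop_pGamma}(1) yields
\begin{equation*}
\gfp{\tfrac{m}{d}+j} \;=\; \gfp{\tfrac{m}{d}} \prod_{k=0}^{j-1}\Bigl(-\bigl(\tfrac{m}{d}+k\bigr)\Bigr) \;=\; (-1)^j \gfp{\tfrac{m}{d}} \ph{\tfrac{m}{d}}{j},
\end{equation*}
which is the first case.

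For $k_0+1 \leq j \leq p-1$, the same telescoping applies, producing $j$ factors of $-1$, but the single step at $k=k_0$ must use the second branch of Proposition \ref{prop_pGamma}(1) (since $\tfrac{m}{d}+k_0 \in p\mathbb{Z}_p$), which contributes no factor of $(\tfrac{m}{d}+k_0)$. All remaining steps contribute $(\tfrac{m}{d}+k)$, so
\begin{equation*}
\gfp{\tfrac{m}{d}+j} \;=\; (-1)^j \gfp{\tfrac{m}{d}} \prod_{\substack{0 \leq k \leq j-1 \\ k \neq k_0}}\bigl(\tfrac{m}{d}+k\bigr) \;=\; (-1)^j \gfp{\tfrac{m}{d}} \ph{\tfrac{m}{d}}{j}\bigl(\tfrac{m}{d}+k_0\bigr)^{-1},
\end{equation*}
the final rewriting being an identity in $\mathbb{Q}_p$: the omitted factor $\tfrac{m}{d}+k_0$ lies in $p\mathbb{Z}_p$, but its appearance in $\ph{\tfrac{m}{d}}{j}$ cancels it exactly.

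There is no substantive obstacle here; the argument is purely a bookkeeping exercise in applying Proposition \ref{prop_pGamma}(1) repeatedly. The only subtle point is justifying the division by $\tfrac{m}{d}+k_0$ in the second case, which is legitimate precisely because that factor is present in the Pochhammer symbol and is being cancelled, leaving an element of $\mathbb{Z}_p$ on the right-hand side.
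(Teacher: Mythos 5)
Your proof is correct and takes essentially the same route as the paper: identify the unique index $k_0=p-rep(\tfrac{m}{d})$ in $\{0,\dotsc,p-1\}$ at which $\tfrac{m}{d}+k\in p\,\mathbb{Z}_p$, then iterate Proposition \ref{prop_pGamma}~(1), with the second branch used only at that one step. Your write-up simply fills in the bookkeeping that the paper leaves implicit.
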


\begin{proof}
For $j=0$ the result is trivial. Assume $j>0$. For $0\leq t \leq j-1$,
\begin{equation*}
\tfrac{m}{d} +t \in p \hspace{1pt} \mathbb{Z}_p \Longleftrightarrow rep(\tfrac{m}{d}) +t \in p \hspace{1pt} \mathbb{Z}_p \Longleftrightarrow rep(\tfrac{m}{d}) +t = p \Longleftrightarrow t = p -rep(\tfrac{m}{d}).
\end{equation*}
Using Proposition \ref{prop_pGamma} (1) the result follows.\\
\end{proof}

\begin{lemma}\label{lem_gammapd2}
For a prime $p$ with $1<d<p$ and $\phi(d)\leq 2$,
\begin{equation*}
\gfp{\tfrac{1}{d}+j}\gfp{\tfrac{d-1}{d}+j}=
\begin{cases}
\biggfp{\frac{1}{d}} \biggfp{\frac{d-1}{d}} \ph{\frac{1}{d}}{j} \ph{\frac{d-1}{d}}{j} &  \text{if }0\leq j\leq \lfloor \frac{p-1}{d}\rfloor, \\[6pt]
\biggfp{\frac{1}{d}} \biggfp{\frac{d-1}{d}} \ph{\frac{1}{d}}{j} \ph{\frac{d-1}{d}}{j} \bigl(\frac{d}{p}\bigr) &  \text{if }\lfloor \frac{p-1}{d}\rfloor+1\leq j\leq p-\lfloor \frac{p-1}{d}\rfloor-1,\\[6pt]
\biggfp{\frac{1}{d}} \biggfp{\frac{d-1}{d}} \ph{\frac{1}{d}}{j} \ph{\frac{d-1}{d}}{j} \bigl(\frac{d^2}{(d-1)p^2}\bigr)& \text{if }p-\lfloor \frac{p-1}{d}\rfloor\leq j\leq p-1.\\[6pt]
\end{cases}
\end{equation*} 
\end{lemma}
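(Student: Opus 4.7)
My plan is to apply Proposition~\ref{prop_gammapmd} separately to the two factors $\gfp{\tfrac{1}{d}+j}$ (with $m=1$) and $\gfp{\tfrac{d-1}{d}+j}$ (with $m=d-1$), and then multiply. Throughout, set $k:=\lfloor\tfrac{p-1}{d}\rfloor$.

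The hypothesis $\phi(d)\le 2$ combined with $\gcd(p,d)=1$ forces either $p\equiv 1\pmod d$ or $p\equiv d-1\pmod d$. Corollary~\ref{cor_repGenformula} then yields
\begin{equation*}
\{rep(\tfrac{1}{d}),\,rep(\tfrac{d-1}{d})\}=\{p-k,\,k+1\},
\end{equation*}
with the specific assignment depending on the case. In particular the critical indices $p-rep(\tfrac{m}{d})$ appearing in Proposition~\ref{prop_gammapmd} are $k$ and $p-k-1$ in some order, so the range $0\le j\le p-1$ splits into exactly the three subranges appearing in the statement.

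Next I would handle each subrange. On $[0,k]$ neither factor picks up a correction, yielding the first case directly. On $[p-k,p-1]$ both factors do; the two inverses $(\tfrac{1}{d}+k)^{-1}$ and $(\tfrac{d-1}{d}+p-k-1)^{-1}$ (or the swapped pair in the other case) simplify to $\tfrac{d}{p}$ and $\tfrac{d}{(d-1)p}$ after substituting $p=dk+1$ respectively $p=dk+d-1$, and their product is $\tfrac{d^2}{(d-1)p^2}$ as required.

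The main point to track carefully is the middle subrange $[k+1,p-k-1]$, where exactly one of the two gamma factors carries a correction, but which one depends on the residue class of $p$ modulo $d$. The verification comes down to the two identities $(\tfrac{1}{d}+k)^{-1}=\tfrac{d}{dk+1}=\tfrac{d}{p}$ and $(\tfrac{d-1}{d}+k)^{-1}=\tfrac{d}{dk+d-1}=\tfrac{d}{p}$, so the correction $\tfrac{d}{p}$ arises uniformly in both cases. The degenerate case $d=2$ is then automatic: $k=\tfrac{p-1}{2}$ makes the middle subrange empty, and the top-range factor $\tfrac{4}{p^2}$ matches the square of the single correction $\tfrac{2}{p}$, as it must since the two gamma factors coincide.
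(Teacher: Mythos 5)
Your proposal is correct and follows essentially the same route as the paper: both apply Proposition \ref{prop_gammapmd} to the two factors and use Corollary \ref{cor_repGenformula} to identify $\{rep(\tfrac{1}{d}),rep(\tfrac{d-1}{d})\}=\{p-\lfloor\tfrac{p-1}{d}\rfloor,\lfloor\tfrac{p-1}{d}\rfloor+1\}$, which produces the three subranges and the correction factors $\tfrac{d}{p}$ and $\tfrac{d}{(d-1)p}$. The only cosmetic difference is that the paper relabels the arguments as $\tfrac{a}{d}$ and $\tfrac{d-a}{d}$ with $p\equiv a\pmod d$ to make the case analysis uniform, whereas you track the two residue classes explicitly; your verification of the inverses and of the degenerate case $d=2$ is accurate.
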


\begin{proof}
For $d=2$ we need only show 
\begin{equation*}
\gfp{\tfrac{1}{2}+j}^2=
\begin{cases}
\biggfp{\frac{1}{2}}^2 \ph{\frac{1}{2}}{j}^2  & \quad \text{if } 0\leq j\leq  \frac{p-1}{2}, \\[6pt]
\biggfp{\frac{1}{2}}^2 \ph{\frac{1}{2}}{j}^2 \bigl(\frac{2}{p}\bigr)^2 & \quad \text{if } \frac{p+1}{2}\leq j\leq p-1.\\
\end{cases}
\end{equation*} 
This follows from Proposition \ref{prop_gammapmd} and the fact that $rep(\tfrac{1}{2})= \tfrac{p+1}{2}$.
We now consider when $\phi(d)=2$. If $p\equiv a \pmod d$ then $a \in \{1,d-1\}$. Therefore
\begin{equation*}
\gfp{\tfrac{1}{d}+j}\gfp{\tfrac{d-1}{d}+j}=\gfp{\tfrac{a}{d}+j}\gfp{\tfrac{d-a}{d}+j}.
\end{equation*}
By Proposition \ref{prop_gammapmd} and Corollary \ref{cor_repGenformula} we see that
\begin{equation*}
\gfp{\tfrac{a}{d}+j}\gfp{\tfrac{d-a}{d}+j}
=
\begin{cases}
\biggfp{\frac{1}{d}} \biggfp{\frac{d-1}{d}} \ph{\frac{1}{d}}{j} \ph{\frac{d-1}{d}}{j} & \text{if }0\leq j\leq \lfloor \frac{p-1}{d}\rfloor, \\[6pt]
\biggfp{\frac{1}{d}} \biggfp{\frac{d-1}{d}} \ph{\frac{1}{d}}{j} \ph{\frac{d-1}{d}}{j} \bigl(\frac{d}{p}\bigr) &  \text{if }\lfloor \frac{p-1}{d}\rfloor+1\leq j\leq p-\lfloor \frac{p-1}{d}\rfloor-1,\\[6pt]
\biggfp{\frac{1}{d}} \biggfp{\frac{d-1}{d}} \ph{\frac{1}{d}}{j} \ph{\frac{d-1}{d}}{j} \bigl(\frac{d^2}{(d-1)p^2}\bigr)& \text{if } p-\lfloor \frac{p-1}{d}\rfloor\leq j\leq p-1.\\[6pt]
\end{cases}
\end{equation*} 
\end{proof}

\noindent For $i$, $n \in \mathbb{N}$, we define the generalised harmonic sums, ${H}^{(i)}_{n}$, by
\begin{equation*}
{H}^{(i)}_{n}:= \sum^{n}_{j=1} \frac{1}{j^i}
\end{equation*}
and ${H}^{(i)}_{0}:=0$. We can now use the above to develop some congruences for use in Section 4.

\begin{lemma}\label{lem_ProdGammap2}
Let $p\geq 7$ be a prime with $1\leq m < d < p$. Choose $m_1 \in \{m, d-m\}$ such that $rep(\frac{m_1}{d}) = Max\left(rep(\frac{m}{d}) ,rep(1-\frac{m}{d})\right)$ and $m_2=d-m_1$. Then for $j<rep(\frac{m_1}{d})$,
\begin{multline*}
\frac{\gfp{\frac{m}{d}+j}\gfp{1-\frac{m}{d}+j}}{\gfp{\tfrac{m}{d}}\gfp{1-\tfrac{m}{d}}{j!}^2} \equiv (-1)^{j} \bin{rep(\frac{m_1}{d})-1+j}{j} \bin{rep(\frac{m_1}{d})-1}{j}\Bigl[\delta\Bigr]\\
\cdot \left[1-\Bigl(rep(\tfrac{m_1}{d})-\tfrac{m_1}{d}\Bigr)\left(H_{rep(\frac{m_1}{d})-1+j}^{(1)}-H_{rep(\frac{m_2}{d})-1+j}^{(1)}-\delta \right)\right]
\pmod{p^2}
\end{multline*}
where 
\begin{align*}
\delta=
\begin{cases}
1 & \text{if } 0\leq j \leq rep(\frac{m_2}{d})-1,\\
\frac{1}{p} & \text{if }rep(\frac{m_2}{d}) \leq j < rep(\frac{m_1}{d}).
\end{cases}
\end{align*}
\end{lemma}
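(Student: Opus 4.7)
The plan is to rewrite the left-hand side via Proposition~\ref{prop_gammapmd} and then Taylor-expand the resulting Pochhammer symbols around their integer residues modulo $p^2$. Abbreviate $r_1:=rep(m_1/d)$, $r_2:=rep(m_2/d)$, $\alpha:=r_1-\frac{m_1}{d}$ and $\beta:=r_2-\frac{m_2}{d}$; by Proposition~\ref{prop_repOneminus} we have $r_1+r_2=p+1$, so $\alpha,\beta\in p\mathbb{Z}_p$ with $\alpha+\beta=p$.

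First I would apply Proposition~\ref{prop_gammapmd} separately to $\Gamma_p(\frac{m_1}{d}+j)$ and $\Gamma_p(\frac{m_2}{d}+j)$. Since $j<r_1$, $\Gamma_p(\frac{m_2}{d}+j)$ always falls in the first case, whereas $\Gamma_p(\frac{m_1}{d}+j)$ enters the second case precisely when $j\geq r_2$. After cancellation of the $(-1)^j$s and the $\Gamma_p(\frac{m_i}{d})$s the left-hand side becomes
\[
\frac{(\frac{m_1}{d})_j(\frac{m_2}{d})_j}{(j!)^2}\cdot\begin{cases}1,&0\leq j<r_2,\\ \beta^{-1},&r_2\leq j<r_1,\end{cases}
\]
where $\beta=\frac{m_1}{d}+p-r_1=p-\alpha$, and in the second row $\beta^{-1}$ is understood via formal cancellation against the factor at $k=r_2-1$ in $(\frac{m_1}{d})_j$, namely $r_1+(r_2-1)-\alpha=\beta$.

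Next I would Taylor-expand each Pochhammer symbol. Writing $\frac{m_i}{d}+k=(r_i+k)-\alpha_i$ with $\alpha_1=\alpha$ and $\alpha_2=\beta$, and factoring $(r_i+k)(1-\alpha_i/(r_i+k))$ whenever $r_i+k\not\equiv 0\pmod p$, the first-order terms give (for $j<r_1$)
\[
\left(\tfrac{m_2}{d}\right)_j\equiv\tfrac{(r_2+j-1)!}{(r_2-1)!}\left(1-\beta\bigl(H^{(1)}_{r_2+j-1}-H^{(1)}_{r_2-1}\bigr)\right)\pmod{p^2},
\]
and an analogous formula for $(\frac{m_1}{d})_j$ (with the $\beta$-factor extracted when $j\geq r_2$, leaving $\sum_{k\neq r_2-1}(r_1+k)^{-1}=H^{(1)}_{r_1+j-1}-H^{(1)}_{r_1-1}-1/p$, exactly the source of the $-\delta=-1/p$ in the statement). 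I would then convert the leading coefficient to the target shape via the analogous expansion
\[
\binom{r_2+j-1}{j}=\binom{p+j-r_1}{j}\equiv(-1)^j\binom{r_1-1}{j}\left(1-p\bigl(H^{(1)}_{r_1-1}-H^{(1)}_{r_1-1-j}\bigr)\right)\pmod{p^2}.
\]

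Finally, combining the pieces and simplifying via $\alpha+\beta=p$ together with the classical symmetries $H^{(1)}_{r_1-1}\equiv H^{(1)}_{r_2-1}$ and $H^{(1)}_{r_1-1-j}\equiv H^{(1)}_{r_2-1+j}\pmod p$ (both obtained from $H^{(1)}_{p-1-n}\equiv H^{(1)}_n\pmod p$), every cross-term of the form $\alpha\cdot(\text{harmonic difference vanishing mod }p)$ or $p\cdot(\text{same})$ dies modulo $p^2$, collapsing the expression to the claimed form. The main obstacle is the subcase $r_2\leq j<r_1$ (i.e.\ $\delta=1/p$): one must simultaneously verify $p$-integrality of the leading $\binom{r_1+j-1}{j}/p$ (which holds because $r_1(r_1+1)\cdots(r_1+j-1)$ contains $p$ exactly once in this range) and track the $-1/p$ correction inside the harmonic bracket, keeping all denominators coherent in $\mathbb{Z}_p/p^2\mathbb{Z}_p$ throughout.
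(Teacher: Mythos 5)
Your proposal is correct and reaches the congruence by a genuinely different route from the paper. The paper never passes through Pochhammer symbols: it Taylor-expands each $\gfp{\tfrac{m_i}{d}+j}$ about the integer point $rep(\tfrac{m_1}{d})+j$, respectively $rep(\tfrac{m_2}{d})-p+j$, using Proposition \ref{prop_pGammaCong}(2), so the first-order correction appears as $-\bigl(rep(\tfrac{m_1}{d})-\tfrac{m_1}{d}\bigr)\bigl(G_1(rep(\tfrac{m_1}{d})+j)-G_1(rep(\tfrac{m_2}{d})+j)\bigr)$ and is telescoped into the harmonic-sum difference via Proposition \ref{prop_pGammaG}(1), while the leading factor $\gfp{rep(\tfrac{m_1}{d})+j}\gfp{1-rep(\tfrac{m_1}{d})+j}$ is evaluated in one stroke with the reflection formula, Proposition \ref{prop_pGamma}(2). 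You instead convert to $\ph{m_i/d}{j}$ via Proposition \ref{prop_gammapmd} and expand the linear factors by hand. This is more elementary (no $\Gamma_p'$ or $G_1$ is needed), and your bookkeeping of the single $p$-divisible factor at $k=rep(\tfrac{m_2}{d})-1$ and of the resulting $p$-integrality is right; the price is the extra congruence $\binom{p+j-r_1}{j}\equiv(-1)^j\binom{r_1-1}{j}\bigl(1-p(H^{(1)}_{r_1-1}-H^{(1)}_{r_1-1-j})\bigr)$ and the symmetries $H^{(1)}_{p-1-n}\equiv H^{(1)}_{n}\pmod{p}$ needed to reconcile your two leading binomials with the stated ones, which the paper gets for free from the reflection formula.

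One remark on the endgame: your computation yields an inner correction equal to $\tfrac{1}{p}$ only for $rep(\tfrac{m_2}{d})\leq j<rep(\tfrac{m_1}{d})$ and equal to $0$ otherwise, whereas the printed statement reuses the symbol $\delta$ (which equals $1$ in the first range) inside the harmonic bracket. The $0$-versus-$\tfrac{1}{p}$ version is what the paper's own telescoping of $G_1$ actually produces, and it is the version invoked in the proof of Theorem \ref{thm_4G1} (where it is renamed $\delta_i$ and defined to be $0$ in the first range); the literal statement fails already at $j=0$, $d=2$. So you have proved the intended (and used) form of the lemma; just do not force the final simplification to match the printed $\delta$ in the range $0\leq j\leq rep(\tfrac{m_2}{d})-1$.
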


\begin{proof}
By Proposition \ref{prop_pGammaCong} (2) we get that
\begin{align*}
&\gfp{\tfrac{m}{d}+j}\gfp{1-\tfrac{m}{d}+j}\\[3pt]
&=\gfp{\tfrac{m_1}{d}+j}\gfp{\tfrac{m_2}{d}+j}\\[3pt]
& \equiv
\left[\gfp{rep(\tfrac{m_1}{d})+j}-\left(rep(\tfrac{m_1}{d})-\tfrac{m_1}{d}\right)
\Gamma_p^{\prime}\left(rep(\tfrac{m_1}{d})+j\right) \right]\\[3pt]
& \quad \; \cdot
\left[\gfp{rep(\tfrac{m_2}{d})-p+j}-\left(rep(\tfrac{m_2}{d})-p-\tfrac{m_2}{d}\right)
\Gamma_p^{\prime}\left(rep(\tfrac{m_2}{d})+j\right) \right]\\[3pt]
& \equiv \gfp{rep(\tfrac{m_1}{d})+j} \gfp{1-rep(\tfrac{m_1}{d})+j} -
\left(rep(\tfrac{m_1}{d})-\tfrac{m_1}{d}\right)\\[3pt]
& \quad \; \cdot
\left[\Gamma_p^{\prime}\left(rep(\tfrac{m_1}{d})+j\right) \gfp{rep(\tfrac{m_2}{d})-p+j}
-\Gamma_p^{\prime}\left(rep(\tfrac{m_2}{d})+j\right) \gfp{rep(\tfrac{m_1}{d})+j} \right]
\pmod{p^2} ,
\end{align*}
as $rep(\tfrac{m_2}{d})-p-\tfrac{m_2}{d}=-\left(rep(\tfrac{m_1}{d})-\tfrac{m_1}{d}\right)$ and $rep(\tfrac{m_2}{d})-p=1-rep(\tfrac{m_1}{d})$ by Proposition \ref{prop_repOneminus}.\\[3pt]
Using Proposition \ref{prop_pGamma} (3) we have
\begin{align*}
\Gamma_p^{\prime}&\left(rep(\tfrac{m_1}{d})+j\right) \gfp{rep(\tfrac{m_2}{d})-p+j}
-\Gamma_p^{\prime}\left(rep(\tfrac{m_2}{d})+j\right) \gfp{rep(\tfrac{m_1}{d})+j}\\[3pt]
&\equiv
\gfp{rep(\tfrac{m_1}{d})+j} \gfp{rep(\tfrac{m_2}{d})+j}
\left[G_1\left(rep(\tfrac{m_1}{d})+j\right)  -G_1\left(rep(\tfrac{m_2}{d})+j\right)\right]\\[3pt]
&\equiv
\gfp{rep(\tfrac{m_1}{d})+j} \gfp{1-rep(\tfrac{m_1}{d})+j}
\left[G_1\left(rep(\tfrac{m_1}{d})+j\right)  -G_1\left(rep(\tfrac{m_2}{d})+j\right)\right]
\pmod{p}.
\end{align*}
So
\begin{multline*}
\gfp{\tfrac{m}{d}+j}\gfp{1-\tfrac{m}{d}+j}
\equiv
\gfp{rep(\tfrac{m_1}{d})+j} \gfp{1-rep(\tfrac{m_1}{d})+j}\left[1-\left(rep(\tfrac{m_1}{d})-\tfrac{m_1}{d}\right)
\right. \\[3pt] \left.
\left(G_1\left(rep(\tfrac{m_1}{d})+j\right)  -G_1\left(rep(\tfrac{m_2}{d})+j\right)\right)\right]
\pmod{p^2}.
\end{multline*}
By Proposition \ref{prop_pGammaG} (1),
\begin{equation*}
G_1\left(rep(\tfrac{m_1}{d})+j\right)  -G_1\left(rep(\tfrac{m_2}{d})+j\right)
= H_{rep(\frac{m_1}{d})-1+j}^{(1)}-H_{rep(\frac{m_2}{d})-1+j}^{(1)}-\delta.
\end{equation*}
For $j<rep(\frac{m_1}{d})$, Proposition \ref{prop_pGamma} gives us
\begin{align*}
\gfp{rep(\tfrac{m_1}{d})+j} \gfp{1-rep(\tfrac{m_1}{d})+j}
&= (-1)^{rep(\frac{m_1}{d})-j} \frac{\gfp{rep(\tfrac{m_1}{d})+j}}{\gfp{rep(\tfrac{m_1}{d})-j}}\\[3pt]
&=(-1)^{rep(\frac{m_1}{d})-j} \frac{\left(rep(\tfrac{m_1}{d})-1+j\right)!}{\left(rep(\tfrac{m_1}{d})-1-j\right)!} \;\Big[ \delta \Bigr].
\end{align*}
Noting that
\begin{equation*}
\frac{\left(rep(\tfrac{m_1}{d})-1+j\right)!}{\left(rep(\tfrac{m_1}{d})-1-j\right)! \hspace{2pt} {j!}^2}
= \bin{rep(\frac{m_1}{d})-1+j}{j} \bin{rep(\frac{m_1}{d})-1}{j} \;
\end{equation*}
and
\begin{equation*}
\gfp{\tfrac{m}{d}}\gfp{1-\tfrac{m}{d}}=(-1)^{rep(\frac{m_1}{d})}
\end{equation*}
gives the required result.\\
\end{proof}

\begin{lemma}\label{lem_SumGammap2}
Let $p\geq 7$ be a prime with $1\leq m < d < p$. Choose $m_1 \in \{m, d-m\}$ such that $rep(\frac{m_1}{d}) = Max\left(rep(\frac{m}{d}) ,rep(1-\frac{m}{d})\right)$ and $m_2=d-m_1$. Then for $j<rep(\frac{m_1}{d})$,
\begin{multline*}
G_1\left(\tfrac{m}{d}+j\right)+G_1\left(1-\tfrac{m}{d}+j\right) -2G_1(1+j)
\equiv
H_{rep(\frac{m_1}{d})-1+j}^{(1)}+H_{rep(\frac{m_1}{d})-1-j}^{(1)}-2\hspace{1pt} H_{j}^{(1)}-\alpha\\[3pt]
+\left(rep(\tfrac{m_1}{d})-\tfrac{m_1}{d}\right) 
\left(H_{rep(\frac{m_1}{d})-1+j}^{(2)}-H_{rep(\frac{m_2}{d})-1+j}^{(2)}-\beta \right)
\pmod{p^2}
\end{multline*}
where 
\begin{align*}
\alpha=
\begin{cases}
0 & \text{if }0\leq j \leq rep(\frac{m_2}{d})-1,\\
\frac{1}{p} &\text{if } rep(\frac{m_2}{d}) \leq j < rep(\frac{m_1}{d}).
\end{cases}
& \qquad and \qquad
\beta=
\begin{cases}
0 &\text{if } 0\leq j \leq rep(\frac{m_2}{d})-1,\\
\frac{1}{p^2} &\text{if } rep(\frac{m_2}{d}) \leq j < rep(\frac{m_1}{d}).
\end{cases}
\end{align*}\\[-12pt]
\end{lemma}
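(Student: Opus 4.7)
The plan parallels that of Lemma \ref{lem_ProdGammap2}, but carries the expansion one order further. Write $G_1(\tfrac{m}{d}+j) = G_1(\tfrac{m_1}{d}+j)$ and $G_1(1-\tfrac{m}{d}+j) = G_1(\tfrac{m_2}{d}+j)$, and set $z := rep(\tfrac{m_1}{d}) - \tfrac{m_1}{d} \in p\hspace{1pt}\mathbb{Z}_p$. By Proposition \ref{prop_repOneminus}, $\tfrac{m_2}{d} = \bigl(1 - rep(\tfrac{m_1}{d})\bigr) + z$ with $1 - rep(\tfrac{m_1}{d}) = rep(\tfrac{m_2}{d}) - p$, so $\tfrac{m_1}{d}+j$ and $\tfrac{m_2}{d}+j$ are the $p$-adic shifts by $-z$ and $+z$ of the integer points $rep(\tfrac{m_1}{d})+j$ and $1-rep(\tfrac{m_1}{d})+j$ respectively. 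Applying Corollary \ref{cor_pGammaCong2} to each, then invoking Proposition \ref{prop_pGammaG}(3) and (4) at the positive integer $rep(\tfrac{m_1}{d})-j$ (positive since $j<rep(\tfrac{m_1}{d})$) to convert $G_1(1-rep(\tfrac{m_1}{d})+j)$ into $G_1(rep(\tfrac{m_1}{d})-j)$ and to flip the sign of $G_1^2-G_2$ at that argument, I obtain
\begin{equation*}
G_1\bigl(\tfrac{m}{d}+j\bigr) + G_1\bigl(1-\tfrac{m}{d}+j\bigr) \equiv G_1\bigl(rep(\tfrac{m_1}{d})+j\bigr) + G_1\bigl(rep(\tfrac{m_1}{d})-j\bigr) + z\,S \pmod{p^2},
\end{equation*}
where $S$ is the sum of $G_1^2-G_2$ evaluated at the two integer arguments $rep(\tfrac{m_1}{d})\pm j$ (the two sign reversals leave $+z$ in front of both terms).

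Next I would telescope Proposition \ref{prop_pGammaG}(1): for $1\leq n<2p$ the identity reads $G_1(n+1)-G_1(1)=H_n^{(1)}$ if $n<p$ and $H_n^{(1)}-\tfrac{1}{p}$ if $n\geq p$. Subtracting $2G_1(1+j)$, and noting that $rep(\tfrac{m_1}{d})-1+j\geq p$ iff $j\geq rep(\tfrac{m_2}{d})$ (from $rep(\tfrac{m_1}{d})+rep(\tfrac{m_2}{d})=p+1$) while $rep(\tfrac{m_1}{d})-1-j$ and $j$ always lie in $[0,p-2]$, recovers precisely the $H^{(1)}$ sums and the piecewise constant $\alpha$ on the right-hand side. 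For the $zS$ term, since $z\in p\hspace{1pt}\mathbb{Z}_p$ I only need $S$ modulo $p$; telescoping Proposition \ref{prop_pGammaG}(2) gives $G_1(n+1)^2-G_2(n+1)=G_1(1)^2-G_2(1)+H_n^{(2)}$ (or the same with an extra $-\tfrac{1}{p^2}$ when $n\geq p$). A short preliminary check shows $G_1(1)^2=G_2(1)$: combine Proposition \ref{prop_pGammaG}(2) at $x=0$, which gives $G_2(0)=G_2(1)$, with Proposition \ref{prop_pGammaG}(3) and (4) at $x=1$, forcing $G_1(1)^2-G_2(1)=-\bigl(G_1(1)^2-G_2(1)\bigr)$. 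Hence $S\equiv H_{rep(m_1/d)-1+j}^{(2)}+H_{rep(m_1/d)-1-j}^{(2)}-\beta\pmod p$. Finally, the reindexing $k\mapsto p-k$ together with $H_{p-1}^{(2)}\equiv 0\pmod p$ (for $p\geq 5$) yields $H_{rep(m_1/d)-1-j}^{(2)}\equiv -H_{rep(m_2/d)-1+j}^{(2)}\pmod p$, producing the stated $H^{(2)}$ difference.

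The main obstacle will be the bookkeeping of the piecewise corrections $\alpha$ and $\beta$: one single threshold $j=rep(\tfrac{m_2}{d})$ has to govern both the crossing $rep(\tfrac{m_1}{d})-1+j\geq p$ in the $H^{(1)}$ telescoping and the analogous crossing in the $H^{(2)}$ telescoping, so that the single $1/p$ and $1/p^2$ terms hidden in the "large" harmonic sums are compensated exactly by $\alpha$ and $\beta$. The identification $G_1(1)^2=G_2(1)$ is the one piece of $p$-adic arithmetic needed beyond direct telescoping and reindexing.
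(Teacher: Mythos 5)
Your proof is correct and follows essentially the same route as the paper's: expand each $G_1$ via Corollary \ref{cor_pGammaCong2} about the integer points $rep(\tfrac{m_1}{d})+j$ and $1-rep(\tfrac{m_1}{d})+j$, use the reflection $G_1(x)=G_1(1-x)$, and telescope Proposition \ref{prop_pGammaG} (1), (2) with the single threshold $j=rep(\tfrac{m_2}{d})$ producing $\alpha$ and $\beta$. The one place you deviate is the quadratic correction: the paper telescopes the difference of $G_1^2-G_2$ directly between $rep(\tfrac{m_2}{d})+j$ and $rep(\tfrac{m_1}{d})+j$, whereas you reflect it into a sum at the arguments $rep(\tfrac{m_1}{d})\pm j$ and compensate with the two extra facts $G_1(1)^2=G_2(1)$ and $H^{(2)}_{p-1-n}\equiv -H^{(2)}_{n}\pmod{p}$ --- both of which are correct and recover the stated form.
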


\begin{proof}
Using Corollary \ref{cor_pGammaCong1} and Corollary \ref{cor_pGammaCong2} we see that
\begin{multline*}
G_1\left(\tfrac{m_1}{d}+j\right)\equiv G_1\left(rep(\tfrac{m_1}{d})+j\right)\\+\left(rep(\tfrac{m_1}{d})-\tfrac{m_1}{d}\right)
\left(G_1\left(rep(\tfrac{m_1}{d_1})+j\right)^2-G_2\left(rep(\tfrac{m_1}{d})+j\right)\right)
\pmod{p^2}
\end{multline*}
and
\begin{multline*}
G_1\left(\tfrac{m_2}{d}+j\right)
\equiv G_1\left(rep(\tfrac{m_2}{d})-p+j\right)\\
+\left(rep(\tfrac{m_2}{d})-p-\tfrac{m_2}{d}\right)
\left(G_1\left(rep(\tfrac{m_2}{d_1})+j\right)^2-G_2\left(rep(\tfrac{m_2}{d})+j\right)\right)
\pmod{p^2}.\\[-6pt]
\end{multline*}

\noindent We note again that $rep(\tfrac{m_2}{d_1})-p=1-rep(\tfrac{m_1}{d_1})$ and $rep(\tfrac{m_2}{d})-p-\tfrac{m_2}{d}=-(rep(\tfrac{m_1}{d})-\tfrac{m_1}{d})$.
Therefore, using Proposition \ref{prop_pGammaG} (3),
\begin{align*}
&G_1\left(\tfrac{m}{d}+j\right)+G_1\left(1-\tfrac{m}{d}+j\right)
\\[4pt]&\;
=G_1\left(\tfrac{m_1}{d}+j\right)+G_1\left(\tfrac{m_2}{d}+j\right)\\[4pt]
& \; \equiv
G_1\left(rep(\tfrac{m_1}{d})+j\right)+G_1\left(rep(\tfrac{m_1}{d})-j\right)+
\left(rep(\tfrac{m_1}{d})-\tfrac{m_1}{d}\right)
\\[4pt]& \;
\left(G_1\left(rep(\tfrac{m_1}{d_1})+j\right)^2-G_2\left(rep(\tfrac{m_1}{d})+j\right)
-G_1\left(rep(\tfrac{m_2}{d_1})+j\right)^2+G_2\left(rep(\tfrac{m_2}{d})+j\right)\right)
\pmod{p^2}.
\end{align*}

\pagebreak
\noindent By Proposition \ref{prop_pGammaG} (1), (2) we get that
\begin{align*}
G_1\left(rep(\tfrac{m_1}{d})+j\right)-G_1(1+j)
=
H_{rep(\frac{m_1}{d})-1+j}^{(1)}- H_{j}^{(1)}-
\alpha,
\end{align*}
\begin{multline*}
G_1\left(rep(\tfrac{m_1}{d_1})+j\right)^2-G_2\left(rep(\tfrac{m_1}{d})+j\right)
-G_1\left(rep(\tfrac{m_2}{d_1})+j\right)^2+G_2\left(rep(\tfrac{m_2}{d})+j\right)\\
=
H_{rep(\frac{m_1}{d})-1-j}^{(2)}-H_{rep(\frac{m_2}{d})-1-j}^{(2)}-\beta
\end{multline*}
and, for $0\leq j < rep(\tfrac{m_1}{d})$,
\begin{align*}
G_1\left(rep(\tfrac{m_1}{d})-j\right)-G_1(1+j)
=
H_{rep(\frac{m_1}{d})-1-j}^{(1)}- H_{j}^{(1)}.
\end{align*}
The result follows.
\end{proof}

%\pagebreak[4]

%%%%%%%%%%%%%%%%%%%%%%%%%%%%%%%%%%%%%%%%%%%%%%
%%%%%%%%%%%%%%%%%%%%%%%%%%%%%%%%%%%%%%%%%%%%%%
%%%%%%%%%%%%%%%%%%%%%%%%%%%%%%%%%%%%%%%%%%%%%%
%%%%%%%%%%%%%%%%%%%%%%%%%%%%%%%%%%%%%%%%%%%%%%

\section{Binomial Coefficient-Harmonic Sum Identities}
We establish two new binomial coefficient-harmonic sum identities using the partial fraction decomposition method of Chu (see \cite{C} or \cite{C1} for example). We first develop two algebraic identities of which the binomial coefficient-harmonic sum identities are limiting cases.

\begin{theorem}\label{Thm_BinHarId1}
Let $x$ be an indeterminate and $m,n$ positive integers with $m\geq n$. Then
\begin{multline}\label{BinHarIdx1}
\frac{x \ph{1-x}{n} \ph{1-x}{m}}{\ph{x}{n+1} \ph{x}{m+1}}= \frac{1}{x} + \\
\sum_{k=1}^{n} \biggbin{m+k}{k} \biggbin{m}{k} \biggbin{n+k}{k} \biggbin{n}{k}
 \left\{ {\frac{-k}{(x+k)^2} + \frac{1+k \left(H_{m+k}^{(1)} +H_{m-k}^{(1)} + H_{n+k}^{(1)} + H_{n-k}^{(1)} -4H_k^{(1)}\right)}{x+k}} \right\} \\
 +\sum_{k=n+1}^{m} \frac{(-1)^{k-n}}{x+k} \biggbin{m+k}{k} \biggbin{m}{k} \biggbin{n+k}{k} \Big/ \biggbin{k-1}{n} .
\end{multline}
\end{theorem}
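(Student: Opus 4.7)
My plan is to prove (\ref{BinHarIdx1}) by a partial fraction decomposition in the indeterminate $x$, matching Laurent coefficients at every pole. First I would observe that the numerator of the LHS has degree $1+m+n$ while the denominator $\ph{x}{n+1}\ph{x}{m+1}$ has degree $m+n+2$, so the LHS tends to $0$ as $|x|\to\infty$; the RHS clearly does as well. The denominator factors as $x^{2}\prod_{k=1}^{n}(x+k)^{2}\prod_{k=n+1}^{m}(x+k)$, and after the numerator cancels one power of $x$, the LHS has exactly the pole structure of the RHS: a simple pole at $x=0$, double poles at $x=-k$ for $1\le k\le n$, and simple poles at $x=-k$ for $n+1\le k\le m$. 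Once the Laurent coefficients are shown to agree at every pole, the difference LHS$-$RHS is a rational function with no poles that vanishes at infinity, hence is identically zero.

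For the simple pole at $x=0$, the residue is $1$ by direct computation, since $\ph{x}{n+1}/x\to n!$, $\ph{x}{m+1}/x\to m!$, $\ph{1-x}{n}\to n!$, and $\ph{1-x}{m}\to m!$, matching the $1/x$ on the RHS. For a simple pole at $x=-k$ with $n+1\le k\le m$, I compute $\lim_{x\to -k}(x+k)f(x)$ directly as a product of the nonvanishing factors; tracking the sign $(-1)^{n+1}$ from $\ph{-k}{n+1}=(-1)^{n+1}k!/(k-n-1)!$ and $(-1)^{k}$ from $\ph{x}{m+1}/(x+k)$ at $x=-k$, then rewriting the resulting factorials as $\binom{m+k}{k}\binom{m}{k}\binom{n+k}{k}/\binom{k-1}{n}$, reproduces the stated coefficient with sign $(-1)^{k-n}$.

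The bulk of the work is the double-pole contributions at $x=-k$ for $1\le k\le n$. I would set $h(x):=(x+k)^{2}f(x)$, which is regular at $x=-k$, so $f(x)=h(-k)(x+k)^{-2}+h'(-k)(x+k)^{-1}+O(1)$ near $x=-k$. A direct evaluation gives $h(-k)=-k\binom{m+k}{k}\binom{m}{k}\binom{n+k}{k}\binom{n}{k}$, which matches the $(x+k)^{-2}$ coefficient on the RHS. For $h'(-k)$ I would apply logarithmic differentiation:
\begin{equation*}
\frac{h'(x)}{h(x)}=\frac{1}{x}-\sum_{j=1}^{n}\frac{1}{j-x}-\sum_{j=1}^{m}\frac{1}{j-x}-\sum_{\substack{0\le j\le n\\ j\ne k}}\frac{1}{x+j}-\sum_{\substack{0\le j\le m\\ j\ne k}}\frac{1}{x+j}.
\end{equation*}
Evaluating at $x=-k$ and regrouping the four sums into the harmonic differences $H^{(1)}_{n+k}-H^{(1)}_{k}$, $H^{(1)}_{m+k}-H^{(1)}_{k}$, $H^{(1)}_{n-k}-H^{(1)}_{k}$, $H^{(1)}_{m-k}-H^{(1)}_{k}$ rearranges $h'(-k)/h(-k)$ to $-\tfrac{1}{k}+4H^{(1)}_{k}-H^{(1)}_{n+k}-H^{(1)}_{n-k}-H^{(1)}_{m+k}-H^{(1)}_{m-k}$. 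Multiplying by $h(-k)$ then produces precisely the bracketed factor $1+k\bigl(H^{(1)}_{m+k}+H^{(1)}_{m-k}+H^{(1)}_{n+k}+H^{(1)}_{n-k}-4H^{(1)}_{k}\bigr)$ times $\binom{m+k}{k}\binom{m}{k}\binom{n+k}{k}\binom{n}{k}$. The main obstacle is exactly this harmonic-sum regrouping together with the sign bookkeeping at the finite poles; once these are handled carefully, matching of all Laurent coefficients is complete and the identity follows.
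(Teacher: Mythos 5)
Your proposal is correct and follows essentially the same route as the paper: a partial fraction decomposition of $f(x)$ in $x$, with the residue at $x=0$, the simple-pole coefficients for $n+1\le k\le m$, and the double-pole data $h(-k)$, $h'(-k)$ computed by the same limits (the paper's evaluation of $C_k=\lim_{x\to-k}\frac{d}{dx}[(x+k)^2f(x)]$ is exactly your logarithmic differentiation, and your harmonic-sum regrouping and sign bookkeeping check out). No gaps.
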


\begin{proof}
Using partial fraction decomposition we can write
\begin{equation*}
f(x):= \frac{x \ph{1-x}{n} \ph{1-x}{m}}{\ph{x}{n+1} \ph{x}{m+1}}= \frac{A}{x} + \sum_{k=1}^{n}
\biggl\{ \frac{B_k}{(x+k)^2} + \frac{C_k}{x+k} \biggr\} +  \sum_{k=n+1}^{m} \frac{D_k}{x+k}.
\end{equation*}

\noindent We now isolate the coefficients $A, B_k, C_k$ and $D_k$ by taking various limits of $f(x)$ as follows.
\begin{equation*}
A = \lim_{x \to 0} x f(x) 
= \lim_{x \to 0}\frac{\ph{1-x}{n} \ph{1-x}{m}}{\ph{1+x}{n} \ph{1+x}{m}}
=1.
\end{equation*}
For $1 \leq k \leq n$,
\begin{align*}
B_k &= \lim_{x \to-k} (x+k)^2 f(x)
=\lim_{x \to-k} \frac{x \ph{1-x}{n} \ph{1-x}{m}}{\ph{x}{k}^2 \ph{x+k+1}{n-k} \ph{x+k+1}{m-k}}\\[9pt]
&=\frac{-k \ph{k+1}{n} \ph{k+1}{m}}{\ph{-k}{k}^2 \ph{1}{n-k} \ph{1}{m-k}}
=-k  \biggbin{m+k}{k} \biggbin{m}{k} \biggbin{n+k}{k} \biggbin{n}{k},
\end{align*}
and, using L'H\^{o}pital's rule,
\begin{align*}
C_k &=  \lim_{x \to-k} \frac{(x+k)^2 f(x) - B_k}{x+k}\\[6pt]
&=\lim_{x \to-k} \frac{d}{dx}\Biggl[(x+k)^2 f(x)\Biggr]\\[6pt]
&=\lim_{x \to-k}  \frac{d}{dx}\left[\frac{x \ph{1-x}{n} \ph{1-x}{m}}{\ph{x}{k}^2 \ph{x+k+1}{n-k} \ph{x+k+1}{m-k}}\right]\\[6pt]
&=\lim_{x \to-k} \left[\frac{ \ph{1-x}{n} \ph{1-x}{m}}{\ph{x}{k}^2 \ph{x+k+1}{n-k} \ph{x+k+1}{m-k}}\right]
\left[1-x \left(\sum_{s=1}^{n}(-x+s)^{-1}+\sum_{s=1}^{m}(-x+s)^{-1}\right.\right.\\[6pt]
&\qquad \qquad \qquad \qquad \qquad \qquad \qquad \left.\left.+\sum_{s=1}^{n-k}(x+k+s)^{-1}+\sum_{s=1}^{m-k}(x+k+s)^{-1}+2\sum_{s=0}^{k-1}(x+s)^{-1}\right)\right]\\[6pt]
&=\left[\frac{ \ph{1+k}{n} \ph{1+k}{m}}{\ph{-k}{k}^2 \ph{1}{n-k} \ph{1}{m-k}}\right]
\left[1+k \left(\sum_{s=1}^{n}(k+s)^{-1}+\sum_{s=1}^{m}(k+s)^{-1}\right.\right.\\[6pt]
&\qquad \qquad \qquad\qquad \qquad\qquad \qquad\qquad \qquad \qquad 
\left.\left.+\sum_{s=1}^{n-k}(s)^{-1}+\sum_{s=1}^{m-k}(s)^{-1}+2\sum_{s=0}^{k-1}(-k+s)^{-1}\right)\right]\\[6pt]
&= \biggbin{m+k}{k} \biggbin{m}{k} \biggbin{n+k}{k} \biggbin{n}{k} \left[1+k \left(H_{m+k}^{(1)} +H_{m-k}^{(1)} + H_{n+k}^{(1)} + H_{n-k}^{(1)} -4H_k^{(1)}\right)\right].\\[-4pt]
\end{align*}

\noindent Similarly, for $n+1 \leq k \leq m$,
\begin{align*}
D_k &= \lim_{x \to-k} (x+k) f(x)
=\lim_{x \to-k} \frac{x \ph{1-x}{n} \ph{1-x}{m}}{\ph{x}{n+1} \ph{x}{k} \ph{x+k+1}{m-k}}\\[6pt]
&=\frac{-k \ph{k+1}{n} \ph{k+1}{m}}{\ph{-k}{n+1} \ph{-k}{k} \ph{1}{m-k}}
=(-1)^{k-n}  \biggbin{m+k}{k} \biggbin{m}{k} \biggbin{n+k}{k} \Big/ \biggbin{k-1}{n}.
\end{align*}
\end{proof}

\begin{cor}\label{Cor_BinHarId1}
Let $m,n$ be positive integers with $m\geq n$. Then
\begin{multline*}
(-1)^{m+n}  =
\sum_{k=0}^{n} \biggbin{m+k}{k} \biggbin{m}{k} \biggbin{n+k}{k} \biggbin{n}{k}
 \biggl[ 1+k \left(H_{m+k}^{(1)} +H_{m-k}^{(1)} + H_{n+k}^{(1)} + H_{n-k}^{(1)} -4H_k^{(1)}\right) \biggr]\\
\notag +\sum_{k=n+1}^{m} (-1)^{k-n} \biggbin{m+k}{k} \biggbin{m}{k} \biggbin{n+k}{k} \Big/ \biggbin{k-1}{n} .
\end{multline*}
\end{cor}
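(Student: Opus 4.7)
The plan is to deduce Corollary 3.2 from Theorem 3.1 by multiplying through by $x$ and letting $x \to \infty$. Both sides of (\ref{BinHarIdx1}) are rational functions of $x$, and the left-hand side decays like $1/x$ at infinity (as will be verified below), so extracting the coefficient of $1/x$ in the asymptotic expansion on both sides will yield a clean numerical identity matching the form of the corollary exactly.

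First I would analyze the asymptotics of the left-hand side of (\ref{BinHarIdx1}) as $x \to \infty$. Using $(1-x)_n = (1-x)(2-x)\cdots(n-x) = (-1)^n x^n\bigl(1 + O(1/x)\bigr)$ and $(x)_{n+1} = x^{n+1}\bigl(1 + O(1/x)\bigr)$, together with the analogous asymptotics for the $m$ factors, one obtains
\begin{equation*}
\frac{x \ph{1-x}{n}\ph{1-x}{m}}{\ph{x}{n+1}\ph{x}{m+1}} = \frac{(-1)^{m+n}}{x} + O\!\left(\frac{1}{x^2}\right).
\end{equation*}
Therefore $\lim_{x\to\infty} x\cdot f(x) = (-1)^{m+n}$, giving the left-hand side of the corollary.

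Next I would multiply the right-hand side of (\ref{BinHarIdx1}) by $x$ and take $x\to\infty$ term by term. The constant term $1/x$ contributes $1$; each $(x+k)^{-2}$ term contributes $0$; each $(x+k)^{-1}$ term contributes its numerator. Thus
\begin{equation*}
(-1)^{m+n} = 1 + \sum_{k=1}^{n} C_k + \sum_{k=n+1}^{m} D_k,
\end{equation*}
where $C_k$ and $D_k$ are the coefficients computed in the proof of Theorem~\ref{Thm_BinHarId1}. Finally, observing that the $k=0$ summand of the first sum in the corollary is $\binom{m}{0}^2\binom{n}{0}^2[1+0]=1$, the constant $1$ can be absorbed, producing precisely
\begin{equation*}
(-1)^{m+n} = \sum_{k=0}^{n} \biggbin{m+k}{k}\biggbin{m}{k}\biggbin{n+k}{k}\biggbin{n}{k}\Bigl[1 + k\bigl(H_{m+k}^{(1)} + H_{m-k}^{(1)} + H_{n+k}^{(1)} + H_{n-k}^{(1)} - 4H_k^{(1)}\bigr)\Bigr] + \sum_{k=n+1}^{m} D_k.
\end{equation*}

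There is no real obstacle here; the only subtlety is being careful that the asymptotic expansion of the left-hand side really is $O(1/x)$ rather than having a $O(1)$ piece (which would make $\lim_{x\to\infty} x f(x)$ infinite or force additional cancellation on the right). This is guaranteed by the degree count: the numerator $x \cdot (1-x)_n(1-x)_m$ is a polynomial of degree $m+n+1$ while the denominator $(x)_{n+1}(x)_{m+1}$ has degree $m+n+2$, so $f(x) = O(1/x)$ and the extraction of the leading asymptotic coefficient is rigorous.
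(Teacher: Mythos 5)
Your proposal is correct and is exactly the paper's argument: the paper's proof of this corollary consists of the single line ``Multiply both sides of (\ref{BinHarIdx1}) by $x$ and take the limit as $x \to \infty$,'' and you have simply supplied the routine asymptotic verification (degree count giving $f(x)=(-1)^{m+n}/x+O(1/x^2)$) and the absorption of the constant $1$ into the $k=0$ summand. No issues.
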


\begin{proof}
Multiply both sides of (\ref{BinHarIdx1}) by $x$ and take the limit as $x \to \infty$.
\end{proof}

\begin{theorem}\label{Thm_BinHarId2}
Let $x$ be an indeterminate, $p,m, n$ positive integers with $p\geq m\geq n\geq\frac{p}{2}$ and $C_1, C_2 \in \mathbb{Q}$ some constants. Then
\begin{multline}\label{BinHarIdx2}
\frac{x \ph{1-x}{n} \ph{1-x}{m}}{\ph{x}{n+1} \ph{x}{m+1}}
\left[C_1 \sum_{s=p-n}^{n} (-x+s)^{-1} + C_2 \sum_{s=p-m}^{m} (-x+s)^{-1} \right]\\[6pt]
\shoveleft = \frac{C_1\left(H_n^{(1)} - H_{p-n-1}^{(1)}\right)  + C_2 \left(H_m^{(1)} - H_{p-m-1}^{(1)}\right)}{x}\\[6pt]
+ \sum_{k=1}^{n} \biggbin{m+k}{k} \biggbin{m}{k} \biggbin{n+k}{k} \biggbin{n}{k}
 \left\{ \frac{-k\left(C_1\left(H_{k+n}^{(1)} - H_{k+p-n-1}^{(1)}\right)  + C_2 \left(H_{k+m}^{(1)} - H_{k+p-m-1}^{(1)}\right)\right)}{(x+k)^2}
\right. \\[6pt] \left. 
+ \frac{\left(1+k \left(H_{m+k}^{(1)} +H_{m-k}^{(1)} + H_{n+k}^{(1)} + H_{n-k}^{(1)} -4H_k^{(1)}\right)\right)
\left(C_1\left(H_{k+n}^{(1)} - H_{k+p-n-1}^{(1)}\right) \right)}{x+k} 
\right.\\[6pt] \left.
+ \frac{\left(1+k \left(H_{m+k}^{(1)} +H_{m-k}^{(1)} + H_{n+k}^{(1)} + H_{n-k}^{(1)} -4H_k^{(1)}\right)\right)
\left( C_2 \left(H_{k+m}^{(1)} - H_{k+p-m-1}^{(1)}\right)\right)}{x+k} 
\right.\\[6pt] \left.
\shoveright \qquad \qquad \qquad \qquad \qquad \qquad
- \frac{k\left(C_1\left(H_{k+n}^{(2)} - H_{k+p-n-1}^{(2)}\right)  + C_2 \left(H_{k+m}^{(2)} - H_{k+p-m-1}^{(2)}\right)\right)}{x+k}\right\} \\[6pt]
\shoveleft +\sum_{k=n+1}^{m} \frac{(-1)^{k-n}}{x+k} \biggbin{m+k}{k} \biggbin{m}{k} \biggbin{n+k}{k} \Big/ \biggbin{k-1}{n} 
\left(C_1\left(H_{k+n}^{(1)} - H_{k+p-n-1}^{(1)}\right)
\right.\\ \left.
  + C_2 \left(H_{k+m}^{(1)} - H_{k+p-m-1}^{(1)}\right)\right).
\end{multline}
\end{theorem}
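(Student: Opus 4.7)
The plan is to mimic the partial fraction argument used to prove Theorem \ref{Thm_BinHarId1}. Writing the left-hand side of (\ref{BinHarIdx2}) as $F(x)=f(x)\hspace{1pt}g(x)$, where $f(x)=\frac{x\ph{1-x}{n}\ph{1-x}{m}}{\ph{x}{n+1}\ph{x}{m+1}}$ is the rational function treated in Theorem \ref{Thm_BinHarId1} and
\begin{equation*}
g(x) := C_1\sum_{s=p-n}^{n}(-x+s)^{-1} + C_2\sum_{s=p-m}^{m}(-x+s)^{-1},
\end{equation*}
the first task is to verify that $F(x)$ has exactly the same pole set as $f(x)$: a simple pole at $x=0$, double poles at $x=-k$ for $1\leq k\leq n$, and simple poles at $x=-k$ for $n+1\leq k\leq m$. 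The factor $g(x)$ is regular at every non-positive integer, and although it has simple poles at the positive integers in $\{p-n,\dotsc,n\}\cup\{p-m,\dotsc,m\}\subseteq\{1,\dotsc,m\}$, at each such point $f(x)$ has a zero of order at least one that absorbs the pole. A quick degree count also shows $F(x)=O(1/x^2)$ as $x\to\infty$, so no polynomial part arises.

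Next I would write
\begin{equation*}
F(x) = \frac{A}{x} + \sum_{k=1}^{n}\left\{\frac{B_k}{(x+k)^2}+\frac{C_k}{x+k}\right\} + \sum_{k=n+1}^{m}\frac{D_k}{x+k}
\end{equation*}
and extract each coefficient by limits, re-using the residues of $f(x)$ already calculated in the proof of Theorem \ref{Thm_BinHarId1}. For the simple poles one gets $A = 1\cdot g(0)$ and $D_k = D_k^{(f)}\cdot g(-k)$, with the elementary identity $\sum_{s=a}^{b}(k+s)^{-i}=H_{k+b}^{(i)}-H_{k+a-1}^{(i)}$ converting $g(0)$ and $g(-k)$ into the $H^{(1)}$-differences appearing in the statement. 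For the double poles the coefficient $B_k = B_k^{(f)}\cdot g(-k)$ gives the $1/(x+k)^2$ term at once, and the Leibniz rule gives
\begin{equation*}
C_k \;=\; \lim_{x\to -k}\frac{d}{dx}\bigl[(x+k)^2 F(x)\bigr] \;=\; C_k^{(f)}\hspace{1pt}g(-k) + B_k^{(f)}\hspace{1pt}g'(-k),
\end{equation*}
where $B_k^{(f)}$ and $C_k^{(f)}$ are taken verbatim from Theorem \ref{Thm_BinHarId1} and $g'(-k)=C_1\bigl(H_{k+n}^{(2)}-H_{k+p-n-1}^{(2)}\bigr)+C_2\bigl(H_{k+m}^{(2)}-H_{k+p-m-1}^{(2)}\bigr)$. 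The term $C_k^{(f)}\hspace{1pt}g(-k)$ produces the $H^{(1)}$-contributions in the $1/(x+k)$ coefficient and $B_k^{(f)}\hspace{1pt}g'(-k)$ produces the $H^{(2)}$-contributions, matching (\ref{BinHarIdx2}) term by term.

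I do not anticipate any genuine obstacle: the proof is essentially a templated application of the partial fraction machinery from the previous theorem to $f(x)g(x)$. The only bookkeeping subtlety is correctly translating the two ranges $s=p-n,\dotsc,n$ and $s=p-m,\dotsc,m$ into the shifted generalised harmonic sums $H_{k+n}^{(i)}-H_{k+p-n-1}^{(i)}$ and $H_{k+m}^{(i)}-H_{k+p-m-1}^{(i)}$ at every step, and assembling the resulting $C_k$ into the three grouped summands exhibited in the statement.
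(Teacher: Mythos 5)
Your proposal is correct and follows essentially the same route as the paper: a partial fraction decomposition of the left-hand side with coefficients extracted by limits, where the $C_k$ computation via the Leibniz rule (your $C_k^{(f)}g(-k)+B_k^{(f)}g'(-k)$) is exactly the product-rule expansion the paper carries out explicitly. Re-using the residues $A^{(f)},B_k^{(f)},C_k^{(f)},D_k^{(f)}$ from Theorem \ref{Thm_BinHarId1} is just a tidier bookkeeping of the identical calculation.
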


\begin{proof}
Using partial fraction decomposition we can write
\begin{align*}
f(x):&=\frac{x \ph{1-x}{n} \ph{1-x}{m}}{\ph{x}{n+1} \ph{x}{m+1}}
\left[C_1 \sum_{s=p-n}^{n} (-x+s)^{-1} + C_2 \sum_{s=p-m}^{m} (-x+s)^{-1} \right]\\[6pt]
&=\frac{A}{x} + \sum_{k=1}^{n} \biggl\{ \frac{B_k}{(x+k)^2} + \frac{C_k}{x+k} \biggr\} +  \sum_{k=n+1}^{m} \frac{D_k}{x+k}.
\end{align*}
Similar to the proof of Theorem \ref{Thm_BinHarId1} we isolate the coefficients $A, B_k, C_k$ and $D_k$ by taking various limits of $f(x)$ as follows.
\begin{align*}
A = \lim_{x \to 0} x f(x) 
&= \lim_{x \to 0}\frac{\ph{1-x}{n} \ph{1-x}{m}}{\ph{1+x}{n} \ph{1+x}{m}}
\left[C_1 \sum_{s=p-n}^{n} (-x+s)^{-1} + C_2 \sum_{s=p-m}^{m} (-x+s)^{-1} \right]\\[6pt]
&= C_1 \lim_{x \to 0} \sum_{s=p-n}^{n} \frac{\ph{1-x}{n} \ph{1-x}{m}}{\ph{1+x}{n} \ph{1+x}{m} (s-x)}
+ C_2 \lim_{x \to 0} \sum_{s=p-m}^{m} \frac{\ph{1-x}{n} \ph{1-x}{m}}{\ph{1+x}{n} \ph{1+x}{m} (s-x)}\\[6pt]
&=C_1\left(H_n^{(1)} - H_{p-n-1}^{(1)}\right)  + C_2 \left(H_m^{(1)} - H_{p-m-1}^{(1)}\right).
\end{align*}

\noindent For $1 \leq k \leq n$,
\begin{align*}
B_k &= \lim_{x \to-k} (x+k)^2 f(x)\\
&=\lim_{x \to-k} \frac{x \ph{1-x}{n} \ph{1-x}{m}}{\ph{x}{k}^2 \ph{x+k+1}{n-k} \ph{x+k+1}{m-k}}
\left[C_1 \sum_{s=p-n}^{n} (-x+s)^{-1} + C_2 \sum_{s=p-m}^{m} (-x+s)^{-1} \right]\\[6pt]
&=\frac{-k \ph{k+1}{n} \ph{k+1}{m}}{\ph{-k}{k}^2 \ph{1}{n-k} \ph{1}{m-k}}
\left[C_1 \sum_{s=p-n}^{n} (k+s)^{-1} + C_2 \sum_{s=p-m}^{m} (k+s)^{-1} \right]\\[6pt]
&=-k  \biggbin{m+k}{k} \biggbin{m}{k} \biggbin{n+k}{k} \biggbin{n}{k}
\Biggl[C_1\left(H_{k+n}^{(1)} - H_{k+p-n-1}^{(1)}\right)  + C_2 \left(H_{k+m}^{(1)} - H_{k+p-m-1}^{(1)}\right) \Biggr],
\end{align*}
and
\begin{align*}
C_k &=\lim_{x \to-k} \frac{d}{dx}\Biggl[(x+k)^2 f(x)\Biggr]\\[6pt]
&=\lim_{x \to-k}  \frac{d}{dx}\left[\frac{x \ph{1-x}{n} \ph{1-x}{m}}{\ph{x}{k}^2 \ph{x+k+1}{n-k} \ph{x+k+1}{m-k}}\right]
\left[C_1 \sum_{s=p-n}^{n} (-x+s)^{-1} + C_2 \sum_{s=p-m}^{m} (-x+s)^{-1} \right]\\[6pt]
&=\lim_{x \to-k} \left[\frac{x \ph{1-x}{n} \ph{1-x}{m}}{\ph{x}{k}^2 \ph{x+k+1}{n-k} \ph{x+k+1}{m-k}}\right]
\left[C_1 \sum_{s=p-n}^{n} (-x+s)^{-2} + C_2 \sum_{s=p-m}^{m} (-x+s)^{-2} \right]\\[6pt]
&\quad \;+\left[C_1 \sum_{s=p-n}^{n} (-x+s)^{-1} + C_2 \sum_{s=p-m}^{m} (-x+s)^{-1} \right]
\left[\frac{ \ph{1-x}{n} \ph{1-x}{m}}{\ph{x}{k}^2 \ph{x+k+1}{n-k} \ph{x+k+1}{m-k}}\right]\\[6pt]
&\quad \; \left[1-x \left(\sum_{s=1}^{n}(-x+s)^{-1}+\sum_{s=1}^{m}(-x+s)^{-1}
+\sum_{s=1}^{n-k}(x+k+s)^{-1}+\sum_{s=1}^{m-k}(x+k+s)^{-1}
\right. \right. \\ &\qquad \qquad \qquad \qquad \qquad \qquad \qquad \qquad \qquad \qquad \qquad \qquad \qquad \qquad \qquad  \quad \left. \left.
+2\sum_{s=0}^{k-1}(x+s)^{-1}\right)\right]\\[6pt]
&=\left[\frac{ \ph{1+k}{n} \ph{1+k}{m}}{\ph{-k}{k}^2 \ph{1}{n-k} \ph{1}{m-k}}\right]
\left[-k \left(C_1 \sum_{s=p-n}^{n} (k+s)^{-2} + C_2 \sum_{s=p-m}^{m} (k+s)^{-2} \right)+\right.\\[6pt]
& \quad \; \left. \left(C_1 \sum_{s=p-n}^{n} (k+s)^{-1} + C_2 \sum_{s=p-m}^{m} (k+s)^{-1} \right)
\left(1+k \left(\sum_{s=1}^{n}(k+s)^{-1}+\sum_{s=1}^{m}(k+s)^{-1} \right.\right.\right.\\[6pt]
&\qquad \qquad \qquad \qquad \qquad \qquad \qquad \qquad \qquad  \qquad\left.\left.\left.
+\sum_{s=1}^{n-k}(s)^{-1} +\sum_{s=1}^{m-k}(s)^{-1}+2\sum_{s=0}^{k-1}(-k+s)^{-1}\right)\right)\right]\\[6pt]
&= \biggbin{m+k}{k} \biggbin{m}{k} \biggbin{n+k}{k} \biggbin{n}{k} 
\biggl[-k\left(C_1\left(H_{k+n}^{(2)} - H_{k+p-n-1}^{(2)}\right)  + C_2 \left(H_{k+m}^{(2)} - H_{k+p-m-1}^{(2)}\right)\right) \\[6pt]
&\quad\;+\left(1+k \left(H_{m+k}^{(1)} +H_{m-k}^{(1)} + H_{n+k}^{(1)} + H_{n-k}^{(1)} -4H_k^{(1)}\right)\right)
\\[6pt] &\quad \; \qquad \qquad \qquad \qquad  \qquad \qquad \qquad \qquad 
\left(C_1(H_{k+n}^{(1)} - H_{k+p-n-1}^{(1)})  + C_2 (H_{k+m}^{(1)} - H_{k+p-m-1}^{(1)})\right)
\biggr].
\end{align*}

\noindent For $n+1 \leq k \leq m$,
\begin{align*}
D_k &= \lim_{x \to-k} (x+k) f(x)\\
&=\lim_{x \to-k} \frac{x \ph{1-x}{n} \ph{1-x}{m}}{\ph{x}{n+1} \ph{x}{k} \ph{x+k+1}{m-k}}
\left[C_1 \sum_{s=p-n}^{n} (-x+s)^{-1} + C_2 \sum_{s=p-m}^{m} (-x+s)^{-1} \right]\\[6pt]
&=\frac{-k \ph{k+1}{n} \ph{k+1}{m}}{\ph{-k}{n+1} \ph{-k}{k} \ph{1}{m-k}}
\left[C_1 \sum_{s=p-n}^{n} (k+s)^{-1} + C_2 \sum_{s=p-m}^{m} (k+s)^{-1} \right]\\[6pt]
&=(-1)^{k-n}  \biggbin{m+k}{k} \biggbin{m}{k} \biggbin{n+k}{k} \Big/ \biggbin{k-1}{n}
\left(C_1\left(H_{k+n}^{(1)} - H_{k+p-n-1}^{(1)}\right) 
\right. \\[6pt] & \left. \qquad \qquad \qquad \qquad  \qquad \qquad \qquad \qquad \qquad \qquad \qquad \qquad \qquad
 + C_2 \left(H_{k+m}^{(1)} - H_{k+p-m-1}^{(1)}\right)\right).
\end{align*}

\end{proof}

\begin{cor}\label{Cor_BinHarId2}
Let $p,m, n$ be positive integers with $p\geq m\geq n\geq\frac{p}{2}$ and $C_1, C_2 \in \mathbb{Q}$ some constants. Then\begin{align*}
& 0 =  \sum_{k=0}^{n} \biggbin{m+k}{k} \biggbin{m}{k} \biggbin{n+k}{k} \biggbin{n}{k} 
\small \biggl[ \left(1+k \left(H_{m+k}^{(1)} +H_{m-k}^{(1)} + H_{n+k}^{(1)} + H_{n-k}^{(1)} -4H_k^{(1)}\right)\right)
\\[5pt]
&\quad\; \left(C_1\left(H_{k+n}^{(1)} - H_{k+p-n-1}^{(1)}\right)  + C_2 \left(H_{k+m}^{(1)} - H_{k+p-m-1}^{(1)}\right)\right) -k\left(C_1\left(H_{k+n}^{(2)} - H_{k+p-n-1}^{(2)}\right)
\right.\\[5pt]&\notag \qquad \qquad \qquad \qquad \qquad \qquad \qquad \qquad \qquad \qquad \qquad \qquad \qquad \left.
 + C_2 \left(H_{k+m}^{(2)} - H_{k+p-m-1}^{(2)}\right)\right) \biggr]
 \\[5pt]
&\quad\;+ \sum_{k=n+1}^{m} (-1)^{k-n} \biggbin{m+k}{k} \biggbin{m}{k} \biggbin{n+k}{k} \Big/ \biggbin{k-1}{n}
\left(C_1\left(H_{k+n}^{(1)} - H_{k+p-n-1}^{(1)}\right) 
\right. \\[5pt] & \left. \qquad \qquad \qquad \qquad  \qquad \qquad \qquad \qquad \qquad \qquad \qquad \qquad \qquad \quad
+ C_2 \left(H_{k+m}^{(1)} - H_{k+p-m-1}^{(1)}\right)\right) .
\end{align*}
\end{cor}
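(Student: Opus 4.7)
The plan is to derive Corollary~\ref{Cor_BinHarId2} from Theorem~\ref{Thm_BinHarId2} by exactly the same device that is used to deduce Corollary~\ref{Cor_BinHarId1} from Theorem~\ref{Thm_BinHarId1}: multiply both sides of (\ref{BinHarIdx2}) by $x$ and let $x\to\infty$.

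First I would verify that the left-hand side of (\ref{BinHarIdx2}), after multiplication by $x$, tends to $0$. A degree count shows that $\frac{x\ph{1-x}{n}\ph{1-x}{m}}{\ph{x}{n+1}\ph{x}{m+1}}$ has numerator degree $1+n+m$ and denominator degree $n+m+2$, so it behaves like $(-1)^{n+m}/x$ for large $x$. The bracket $C_1\sum_{s=p-n}^{n}(-x+s)^{-1}+C_2\sum_{s=p-m}^{m}(-x+s)^{-1}$ is a finite sum of terms each of size $O(1/x)$, hence is itself $O(1/x)$. Therefore the LHS of (\ref{BinHarIdx2}) is $O(1/x^2)$, and $x\cdot\textup{LHS}\to 0$.

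Next I would pass to the limit term-by-term on the right-hand side. Since $x\cdot A/x\to A$, each $x\cdot B_k/(x+k)^2\to 0$, $x\cdot C_k/(x+k)\to C_k$, and $x\cdot D_k/(x+k)\to D_k$, the identity collapses to
\[
A+\sum_{k=1}^{n}C_k+\sum_{k=n+1}^{m}D_k=0,
\]
so in particular all $B_k$ contributions disappear. Substituting the explicit formulas for $A$, $C_k$ and $D_k$ recorded in the proof of Theorem~\ref{Thm_BinHarId2} gives almost verbatim the identity claimed in the corollary.

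Finally, I would observe that the $k=0$ summand in the main sum of Corollary~\ref{Cor_BinHarId2} is precisely $A$: the four binomial coefficients collapse to $1$, the overall factor of $k$ annihilates both the $H^{(2)}$-correction and the product of $1+k(H_{m+k}^{(1)}+\cdots-4H_k^{(1)})$ with the $H^{(1)}$-differences, leaving only $C_1(H_n^{(1)}-H_{p-n-1}^{(1)})+C_2(H_m^{(1)}-H_{p-m-1}^{(1)})=A$. Absorbing $A$ into the outer sum as the $k=0$ term then yields the stated identity. The one point that needs a little care is the asymptotic cancellation that forces $x\cdot\textup{LHS}\to 0$, since both factors decay only like $1/x$ individually; I would make this degree count explicit. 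The remainder is routine substitution of limits already computed inside the proof of the theorem.
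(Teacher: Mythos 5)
Your proposal is correct and is exactly the paper's argument: the paper's proof of Corollary \ref{Cor_BinHarId2} consists precisely of multiplying (\ref{BinHarIdx2}) by $x$ and letting $x\to\infty$, which kills the $B_k$ terms and leaves $0=A+\sum_{k=1}^{n}C_k+\sum_{k=n+1}^{m}D_k$ with the $k=0$ summand absorbing $A$. Your added degree count showing the left-hand side is $O(1/x^2)$ is a correct justification of the step the paper leaves implicit.
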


\begin{proof}
Multiply both sides of (\ref{BinHarIdx2}) by $x$ and take the limit as $x \to \infty$.
\end{proof}

%%%%%%%%%%%%%%%%%%%%%%%%%%%%%%%%%%%%%%%%%%%%%%%%
%%%%%%%%%%%%%%%%%%%%%%%%%%%%%%%%%%%%%%%%%%%%%%%%
%%%%%%%%%%%%%%%%%%%%%%%%%%%%%%%%%%%%%%%%%%%%%%%%

\section{Framework of Congruences}
In \cite{G}, Greene introduced the notion of general hypergeometric series over finite fields or \emph{Gaussian hypergeometric series}. We introduce two definitions from \cite{G}. The first definition is a finite field analogue of the binomial coefficient. For characters $A$ and $B$ of $\mathbb{F}_{p}$, define $\bin{A}{B}$ by
\begin{equation*}\label{FF_Binomial}
\binom{A}{B} := \frac{B(-1)}{p} J(A, \bar{B})= \frac{B(-1)}{p} \sum_{x \in \mathbb{F}_{p}} A(x) \bar{B}(1-x)
\end{equation*}
where $\bar{B}(x)$ is the complex conjugate of $B(x)$. The second definition is a finite field analogue of ordinary hypergeometric series. For characters $A_0,A_1,\dotsc, A_n$ and $B_1, \dotsc, B_n$ on $\mathbb{F}_{p}$ and 
$x \in \mathbb{F}_{p}$, define the Gaussian hypergeometric series by
\begin{equation*}
{_{n+1}F_n} {\left( \begin{array}{cccc} A_0, & A_1, & \dotsc, & A_n \\
\phantom{A_0} & B_1, & \dotsc, & B_n \end{array}
\Big| \; x \right)}_{p}
:= \frac{p}{p-1} \sum_{\chi} \binom{A_0 \chi}{\chi} \binom{A_1 \chi}{B_1 \chi}
\dotsm \binom{A_n \chi}{B_n \chi} \chi(x)
\end{equation*}
where the summation is over all characters $\chi$ on $\mathbb{F}_{p}$.

These series are analogous to classical hypergeometric series and have played an important role in proving many of the supercongruence conjectures already established. The main approach taken has been to form a congruence between the ordinary and Gaussian hypergeometric series and then relate the latter to the other side of the particular supercongruence. One of the main results in this process has been Theorem 1 in \cite{M}.
The wording of this theorem would suggest that the result is valid for any choice of character $\rho_i$ of order $d_i$. However, the proof would indicate that a refinement to the statement of the theorem is necessary which specifies that $\rho_i$ is the character of order $d_i$ on $\mathbb{F}_p$ given by $\bar{\omega}^{\frac{p-1}{d_i}}$. A similar refinement is also required to its corollaries and to Theorem 1 in \cite{M2}, of which Theorem 1 in \cite{M} is a generalisation. The proofs of the supercongruences which rely on these results are still valid due to the fact that $\phi(d)=2$ in each of these cases.

However, many results using this approach are restricted to primes of a certain type (e.g. $p\equiv 1 \pmod d$ in some of the $D=2$ cases described in Section 1). We would like to develop some generalisation which does not have such restrictions. Using the definitions above, the relationship between Jacobi and Gauss sums, and the Gross-Koblitz formula, we can express certain Gaussian hypergeometric series in terms of the $p$-adic gamma function, as follows.

\begin{align*}
&(-1)^n \: p^n \:  {_{n+1}F_n}  {\left( \begin{array}{cccc} \rho_1^{m_1}, & \rho_2^{m_2}, & \dotsc, & \rho_{n+1}^{m_{n+1}} \\
\phantom{\rho_1^{m_1}} & \varepsilon, & \dotsc, & \varepsilon \end{array}
\Big| \; 1 \right)}_{p}\\
&= \frac{-1}{p-1} \sum_{k=0}^{n+1} (-p)^k \sum_{j=m_kr_k+1}^{m_{k+1}r_{k+1}} \omega^{j(n+1)}(-1) 
{\biggfp{\tfrac{j}{p-1}}}^{n+1}
\prod_{\substack{i=1\\i>k}}^{n+1} \frac{\biggfp{\frac{m_i}{d_i}-\frac{j}{p-1}}}{\biggfp{\frac{m_i}{d_i}}}
\prod_{\substack{i=1\\i\leq k}}^{n+1} \frac{\biggfp{\frac{d_i+m_i}{d_i}-\frac{j}{p-1}}}{\biggfp{\frac{m_i}{d_i}}} \; ,
\end{align*}
where $\rho_i$ is the character of order $d_i$ on $\mathbb{F}_p$ given by $\bar{\omega}^{\frac{p-1}{d_i}}$, $0<\frac{m_1}{d_1} \leq \frac{m_2}{d_2} \leq \dots \leq \frac{m_{n+1}}{d_{n+1}}<1$, $r_i:=\frac{p-1}{d_i}$, $m_0:=-1$, $m_{n+2}:=p-2$ and $d_0=d_{n+2}:=p-1$. For $x \in \mathbb{Q}$ we let $\lfloor x \rfloor$ denote the greatest integer less than or equal to $x$. We then define the following generalisation.
\begin{defi}\label{def_GFn}
For $p$ an odd prime and $0<\frac{m_1}{d_1} \leq \frac{m_2}{d_2} \leq \dots \leq \frac{m_{n+1}}{d_{n+1}}<1$,  we define the $p$-adic hypergeometric series, ${_{n+1}G}$, by
\begin{align*}
&{_{n+1}G} \left( \tfrac{m_1}{d_1}, \tfrac{m_2}{d_2}, \dotsc, \tfrac{m_{n+1}}{d_{n+1}} \right)_p\\
&:= \frac{-1}{p-1} \sum_{k=0}^{n+1} (-p)^k \sum_{j=\lfloor m_k r_k \rfloor +1}^{\lfloor m_{k+1}r_{k+1}\rfloor} \omega^{j(n+1)}(-1) 
{\biggfp{\tfrac{j}{p-1}}}^{n+1}
\prod_{\substack{i=1\\i>k}}^{n+1} \frac{\biggfp{\frac{m_i}{d_i}-\frac{j}{p-1}}}{\biggfp{\frac{m_i}{d_i}}}
\prod_{\substack{i=1\\i\leq k}}^{n+1} \frac{\biggfp{\frac{d_i+m_i}{d_i}-\frac{j}{p-1}}}{\biggfp{\frac{m_i}{d_i}}} \; ,
\end{align*}
where $r_i:=\frac{p-1}{d_i}$, $m_0:=-1$, $m_{n+2}:=p-2$, $d_0=d_{n+2}:=p-1$.
\end{defi}
\noindent In practice we can write the arguments of the $G$ function in any order on the understanding that its meaning is based on the function with the arguments listed in ascending order.

It easy to see from the definition of the $G$ function that we have the following relationship with the Gaussian hypergeometric series.
\begin{prop}\label{prop_GtoGHS}
If $p$ is a prime with $p \equiv 1 \pmod {d_i}$ then
\begin{align*}
{_{n+1}G} \left( \tfrac{m_1}{d_1}, \tfrac{m_2}{d_2}, \dotsc, \tfrac{m_{n+1}}{d_{n+1}} \right)_p
=(-1)^n \: p^n \:  {_{n+1}F_n}  {\left( \begin{array}{cccc} \rho_1^{m_1}, & \rho_2^{m_2}, & \dotsc, & \rho_{n+1}^{m_{n+1}} \\
\phantom{\rho_1^{m_1}} & \varepsilon, & \dotsc, & \varepsilon \end{array}
\Big| \; 1 \right)}_{p} \;,
\end{align*}
where $\rho_i$ is the character of order $d_i$ on $\mathbb{F}_p$ given by $\bar{\omega}^{\frac{p-1}{d_i}}$.
\end{prop}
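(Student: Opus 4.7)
The statement compares two formulas for the same Gaussian hypergeometric series, so the plan is to reduce the identity to the displayed formula for $(-1)^n p^n \, {_{n+1}F_n}(\cdots)_p$ appearing just before Definition \ref{def_GFn}, and then observe that under the hypothesis the two right-hand sides coincide term by term.

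First I would handle the easy (but essential) reduction. The assumption $p \equiv 1 \pmod{d_i}$ forces $r_i = (p-1)/d_i \in \mathbb{Z}$, so $m_i r_i \in \mathbb{Z}$ and $\lfloor m_k r_k \rfloor = m_k r_k$ for every $k$. Consequently the inner summation bounds $j = \lfloor m_k r_k\rfloor + 1, \ldots, \lfloor m_{k+1} r_{k+1}\rfloor$ in Definition \ref{def_GFn} are literally the integer bounds $m_k r_k + 1, \ldots, m_{k+1} r_{k+1}$ appearing in the displayed formula for $(-1)^n p^n \, {_{n+1}F_n}(\cdots)_p$. Once the latter formula is established, the proposition is immediate.

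The remaining task is therefore to derive the displayed formula. Starting from the definition of $_{n+1}F_n$, expand each Gaussian binomial $\binom{A_i\chi}{B_i\chi}$ as a Jacobi sum via $\binom{A}{B} = \frac{B(-1)}{p} J(A,\bar{B})$, parameterise $\chi = \bar{\omega}^{j}$ for $j \in \{0, 1, \ldots, p-2\}$, and collect the resulting $\omega^{j}(-1)$ signs into the uniform factor $\omega^{j(n+1)}(-1)$ displayed in the formula. Each Jacobi sum is then converted to a ratio of Gauss sums by Proposition \ref{prop_JactoGauss}, which has two branches according to whether the product character is trivial; finally, each Gauss sum $G(\bar{\omega}^{s})$ is rewritten as $-\pi^{s}\,\gfp{\tfrac{s}{p-1}}$ by the Gross--Koblitz formula (Theorem \ref{thm_GrossKoblitz}), and the powers of $\pi$ collapse using $\pi^{p-1} = -p$, leaving the overall factor $(-1)^n p^n$ on the left and the global factor $-1/(p-1)$ on the right.

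The main obstacle, and the source of the two-level summation structure, is the bookkeeping that determines exactly which Jacobi sums fall into the ``trivial product character'' branch of Proposition \ref{prop_JactoGauss}. For $\chi = \bar{\omega}^{j}$, the relevant products $A_i \chi = \bar{\omega}^{m_i r_i + j}$ must be reduced modulo $p-1$; the key combinatorial observation is that as $j$ ranges over $[m_k r_k + 1,\, m_{k+1} r_{k+1}]$, exactly $k$ of the reduced exponents ``wrap around,'' each wrap contributing a factor of $-p$ (through the second branch of Proposition \ref{prop_JactoGauss}) and simultaneously converting the $\gfp{\frac{m_i}{d_i} - \frac{j}{p-1}}$ factor into $\gfp{\frac{d_i + m_i}{d_i} - \frac{j}{p-1}}$. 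This matches the two products over $i \le k$ and $i > k$ in Definition \ref{def_GFn} and explains the $(-p)^k$ prefactor. Once one checks that the wrap count is indeed constant on each interval $[m_k r_k + 1,\, m_{k+1} r_{k+1}]$, that the endpoints $m_0 := -1$ and $m_{n+2} := p-2$ correctly cover the full range $j \in \{0,\ldots,p-2\}$, and that the $\bigl(\gfp{j/(p-1)}\bigr)^{n+1}$ factor arises from the $n+1$ denominator Gauss sums $G(\overline{B_i\chi}) = G(\omega^{j})$ in the Jacobi-to-Gauss conversion, the formula follows and the proposition is proved.
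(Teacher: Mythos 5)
Your proposal is correct and follows essentially the same route as the paper: the paper treats Proposition \ref{prop_GtoGHS} as immediate from Definition \ref{def_GFn} together with the displayed identity preceding it, and your opening reduction (that $p\equiv 1 \pmod{d_i}$ makes every $\lfloor m_k r_k\rfloor = m_k r_k$, so the two expressions agree term by term) is exactly that argument, with the remainder of your sketch supplying the derivation of the displayed identity that the paper asserts without proof. One small correction to your bookkeeping: the factor $(-p)^k$ is produced by $\pi^{p-1}=-p$ in the Gross--Koblitz conversion (each wrap of an exponent past $p-1$ shifts the power of $\pi$ by $p-1$), not by the trivial-product branch of Proposition \ref{prop_JactoGauss} --- that branch is never triggered here since no $\rho_i^{m_i}$ is trivial --- although you do also cite the correct mechanism when you note that the powers of $\pi$ collapse.
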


\noindent We now state some congruences between the $G$ function and truncated ordinary hypergeometric series which we will prove later in this section.

\begin{theorem}\label{thm_2G}
For a prime $p$ with $1 < d < p$ and $\phi(d) \leq 2$,
\begin{align*}
{_{2}G} \left( \tfrac{1}{d}, 1-\tfrac{1}{d} \right)_p
&\equiv
{{_{2}F_1} \left[ \begin{array}{cc} \frac{1}{d}, & 1-\frac{1}{d}\vspace{.05in}\\
\phantom{\frac{1}{d}} & 1 \end{array}
\Big| \; 1 \right]}_{p-1}
\pmod {p^2} .
\end{align*}
\end{theorem}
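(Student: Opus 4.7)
The plan is to unwrap the definition of $_{2}G$ given in Definition \ref{def_GFn} and reduce it modulo $p^{2}$ to the truncated $_{2}F_{1}$. With $n+1=2$, $m_{1}/d_{1}=1/d$, $m_{2}/d_{2}=(d-1)/d$, so $r_{1}=r_{2}=(p-1)/d$, the sum decomposes into three inner sums indexed by $k\in\{0,1,2\}$ with overall factors $1$, $-p$, $p^{2}$ respectively. Since every $\gfp{\cdot}$ value is a $p$-adic unit and the outer prefactor $-1/(p-1)\equiv 1+p\pmod{p^{2}}$, the $k=2$ piece is immediately $\equiv 0\pmod{p^{2}}$, so only $k=0,1$ need to be analyzed.

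The key substitution is $j/(p-1)=-j+\epsilon_{j}$ with $\epsilon_{j}=-jp/(p-1)\in p\mathbb{Z}_{p}$ and $\epsilon_{j}\equiv jp\pmod{p^{2}}$. Applying Proposition \ref{prop_pGammaCong}(2) (and dropping the $\epsilon_{j}^{2}G_{2}$ term as $O(p^{2})$) yields, for each relevant shift $a$,
\[
\gfp{a+j/(p-1)}\equiv \gfp{a-j}\bigl(1+\epsilon_{j}\,G_{1}(a-j)\bigr)\pmod{p^{2}}.
\]
For the $k=1$ piece I would additionally use $\gfp{x+1}=-x\gfp{x}$ from Proposition \ref{prop_pGamma}(1), valid throughout the $k=1$ range because $1/d-j/(p-1)$ is a $p$-adic unit there, to convert $\gfp{(d+1)/d-j/(p-1)}$ into $-(1/d-j/(p-1))\gfp{1/d-j/(p-1)}$. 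After these steps every gamma has argument either a negative integer (so $\gfp{-j}=1/j!$ via Proposition \ref{prop_pGamma}(2)) or an integer shift of $1/d$ or $(d-1)/d$, and Lemma \ref{lem_gammapd2} then expresses each summand as $(1/d)_{j}(1-1/d)_{j}/(j!)^{2}$ times a range-dependent scalar: $1$ on its first range (which coincides with the $k=0$ range of Definition \ref{def_GFn}) and $d/p$ on its middle range. The $-p$ prefactor of the $k=1$ piece absorbs the $1/p$ in $d/p$, producing a summand of the same $p$-adic order as the range-$B$ terms of the truncated $_{2}F_{1}$.

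The main obstacle will be tracking the sub-leading corrections — the $G_{1}$-linear terms from the Taylor expansion and the $p$-part of $-1/(p-1)$ — and showing that their total matches the truncated $_{2}F_{1}$ modulo $p^{2}$. I expect the necessary cancellations to come from the symmetry $G_{1}(x)=G_{1}(1-x)$ (Proposition \ref{prop_pGammaG}(3)) together with the inherent interchangeability $1/d\leftrightarrow (d-1)/d$ in the summand. A further subtlety is that when $d\nmid p-1$, the $_{2}G$'s $k$-ranges and the ranges of Lemma \ref{lem_gammapd2} differ by one boundary index; this must be resolved by a direct comparison using Proposition \ref{prop_repOneminus} and Corollary \ref{cor_repGenformula}, verifying that the resulting discrepancy is already $O(p^{2})$ and hence harmless.
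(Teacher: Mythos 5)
Your structural reduction follows the paper's own route almost exactly: expand Definition \ref{def_GFn}, discard the $k=2$ block as $O(p^2)$, write $\tfrac{j}{p-1}=-j-jp+O(p^2)$ and Taylor-expand via Proposition \ref{prop_pGammaCong}(2), convert $\gfp{\tfrac{d+1}{d}-\tfrac{j}{p-1}}$ with Proposition \ref{prop_pGamma}(1), and compare against the truncated series through Lemma \ref{lem_gammapd2}. The gap is at the final step, which you describe only as an expectation that the sub-leading corrections cancel via the symmetry $G_1(x)=G_1(1-x)$ and the interchangeability of $\tfrac1d$ and $\tfrac{d-1}{d}$. That is not how the proof closes, and no such pairwise cancellation exists. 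After the bookkeeping (including the genuine mismatch on the middle range, where the $G$-side carries a factor $\tfrac1d+j$ but the $F$-side only $\tfrac1d$, leaving an un-cancelled $j$-term of unit order that must be carried along rather than "absorbed"), the whole theorem reduces to the single nontrivial congruence
\begin{equation*}
\sum_{j=0}^{p-1}\ph{j+1}{p-\lfloor\frac{p-1}{d}\rfloor-1}\ph{j+1}{\lfloor\frac{p-1}{d}\rfloor}\Bigl[1+j\Bigl(H^{(1)}_{p-\lfloor\frac{p-1}{d}\rfloor-1+j}+H^{(1)}_{\lfloor\frac{p-1}{d}\rfloor+j}-2H^{(1)}_{j}\Bigr)\Bigr]\equiv 0\pmod{p},
\end{equation*}
where the two harmonic sums arise from $G_1$ via Proposition \ref{prop_pGammaG}(1); the symmetry $G_1(x)=G_1(1-x)$ has already been spent in reaching this expression and yields nothing further. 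The paper proves this identity by observing that the summand is exactly $P(j):=\frac{d}{dj}\bigl[j\,\ph{j+1}{p-\lfloor\frac{p-1}{d}\rfloor-1}\ph{j+1}{\lfloor\frac{p-1}{d}\rfloor}\bigr]$, a polynomial of degree $p-1$ whose coefficient of $j^{p-1}$ is $p$ (the antiderivative being monic of degree $p$), so that $\sum_{j=0}^{p-1}P(j)\equiv -a_{p-1}\equiv 0\pmod p$ by the classical evaluation of $\sum_{j=1}^{p-1}j^k\bmod p$. Without this derivative/power-sum argument, or an equivalent binomial-coefficient--harmonic-sum identity, your proof does not close.
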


\begin{theorem}\label{thm_3G}
For a prime $p$ with $1 < d < p$ and $\phi(d) \leq 2$,
\begin{align*}
{_{3}G} \left( \tfrac{1}{2}, \tfrac{1}{d} , 1-\tfrac{1}{d} \right)_p
&\equiv
{{_{3}F_2} \left[ \begin{array}{ccc} \frac{1}{2}, & \frac{1}{d}, & 1-\frac{1}{d}\vspace{.05in}\\
\phantom{\frac{1}{d}} & 1, &1 \end{array}
\Big| \; 1 \right]}_{p-1}
\pmod {p^2} .
\end{align*}
\end{theorem}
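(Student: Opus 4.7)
The plan is to adapt the argument for Theorem~\ref{thm_2G}, augmenting it by the extra parameter $\tfrac{1}{2}$. The overall strategy is to unfold the piecewise ${_3G}$-definition into a single sum over $j \in \{0,1,\dotsc,p-2\}$, expand each $p$-adic gamma value to first order in $p$, and identify the result as the truncated ${_3F_2}$ modulo $p^2$.

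First I would order the arguments as $\tfrac{1}{d} \le \tfrac{1}{2} \le 1-\tfrac{1}{d}$ (all three coinciding when $d=2$) and write out the ${_3G}$-definition. To collapse the outer sum over $k$, I would apply Proposition~\ref{prop_pGamma}(1) in the form $\gfp{1+x} = -x\,\gfp{x}$ to each factor $\gfp{1+\tfrac{m_i}{d_i} - \tfrac{j}{p-1}}$ with $i \le k$, which combines with $(-p)^k$ to give a prefactor $p^k \prod_{i \le k}\bigl(\tfrac{m_i}{d_i} - \tfrac{j}{p-1}\bigr)$; this turns ${_3G}$ into a single summation over $j$. Using $\tfrac{1}{p-1} \equiv -(1+p) \pmod{p^2}$ together with Proposition~\ref{prop_pGammaCong}(2), I would then expand
\[
\gfp{\tfrac{m_i}{d_i} - \tfrac{j}{p-1}} \equiv \gfp{\tfrac{m_i}{d_i}+j}\bigl(1 + jp\,G_1(\tfrac{m_i}{d_i}+j)\bigr) \pmod{p^2},
\]
and similarly $\gfp{\tfrac{j}{p-1}}^3 \equiv \bigl(1 - 3jp\,G_1(1+j)\bigr)/(j!)^3 \pmod{p^2}$, using $\gfp{-j}=1/j!$ and $G_1(-j)=G_1(1+j)$. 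Applying Proposition~\ref{prop_gammapmd} to the $\tfrac{1}{2}$-factor and Lemma~\ref{lem_gammapd2} to the paired factor $\gfp{\tfrac{1}{d}+j}\gfp{1-\tfrac{1}{d}+j}$ converts the resulting gamma product into Pochhammer symbols, multiplied by region-dependent correction factors whose transitions match the piece-boundaries introduced in Step~1.

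Assembling everything with $\omega^{3j}(-1)=(-1)^j$ and the prefactor $\tfrac{-1}{p-1}\equiv 1+p \pmod{p^2}$, the leading contribution at each $j$ becomes $(\tfrac{1}{2})_j(\tfrac{1}{d})_j(1-\tfrac{1}{d})_j/(j!)^3$, i.e., the $j$-th term of the truncated ${_3F_2}$. The $O(p)$ corrections combine into
\[
jp\bigl[G_1(\tfrac{1}{2}+j)+G_1(\tfrac{1}{d}+j)+G_1(1-\tfrac{1}{d}+j)-3G_1(1+j)\bigr]
\]
plus a contribution from the $(1+p)$ prefactor. I would show these vanish modulo $p^2$ by applying Lemma~\ref{lem_SumGammap2} once with $m/d = 1/d$ and, with the result halved, once with $m/d = 1/2$, so that the three $G_1$ arguments pair against $3G_1(1+j)$ as two balanced sums whose harmonic sums cancel in each region. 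The $d=2$ case uses both applications with $m/d = 1/2$.

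The main obstacle is the boundary bookkeeping. At piece-transitions the absorbed prefactor $p^k\prod_{i \le k}(\tfrac{m_i}{d_i}-\tfrac{j}{p-1})$ and the ``wrap-around'' factors of Lemma~\ref{lem_gammapd2} (which carry $\tfrac{d}{p}$ and $\tfrac{d^2}{(d-1)p^2}$) both become $p$-singular, and one must verify that they multiply out to precisely match the $p$-divisibility pattern of the Pochhammer symbols in the truncated ${_3F_2}$. This forces a case-split into the four regions corresponding to $k = 0,1,2,3$, with the $\pmod{p^2}$ expansions carried separately in each region, making careful use of the region-dependent $\alpha,\delta$ parameters of Lemmas~\ref{lem_ProdGammap2} and~\ref{lem_SumGammap2} to account for every $O(p)$ contribution.
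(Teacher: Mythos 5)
Your setup (Steps 1--3) follows the paper's route for Theorem \ref{thm_2G}: unfold the piecewise definition of ${_3G}$, absorb the $(-p)^k$ prefactors via Proposition \ref{prop_pGamma}(1), expand each $\gfp{\tfrac{m_i}{d_i}-\tfrac{j}{p-1}}$ to first order with Proposition \ref{prop_pGammaCong}(2), and match the leading term with the truncated ${_3F_2}$ via Lemma \ref{lem_gammapd2}. Up to that point the reduction is sound, and one is left with showing that a residual sum of the shape
\begin{equation*}
\sum_{j}
\ph{j+1}{\frac{p-1}{2}} \ph{j+1}{p-\lfloor \frac{p-1}{d} \rfloor-1}\ph{j+1}{\lfloor \frac{p-1}{d} \rfloor}
\left[1+j\left(H_{\frac{p-1}{2}+j}^{(1)} + H_{p-\lfloor \frac{p-1}{d} \rfloor-1+j}^{(1)} +  H_{\lfloor \frac{p-1}{d} \rfloor +j}^{(1)}-3 H_{j}^{(1)}\right)\right]
\end{equation*}
vanishes modulo $p$ (after the boundary terms are disposed of).

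The gap is in how you propose to kill this sum. You claim the $O(p)$ corrections vanish because, after applying Lemma \ref{lem_SumGammap2} to the pair $\{\tfrac1d,1-\tfrac1d\}$ and (halved) to $\tfrac12$, ``the three $G_1$ arguments pair against $3G_1(1+j)$ as two balanced sums whose harmonic sums cancel in each region.'' They do not: Lemma \ref{lem_SumGammap2} only \emph{rewrites} $G_1(\tfrac{m}{d}+j)+G_1(1-\tfrac{m}{d}+j)-2G_1(1+j)$ as an explicit, generically nonzero combination of harmonic sums (already at $j=0$ it equals $2H^{(1)}_{rep(\frac{m_1}{d})-1}\neq 0$). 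There is no pointwise or region-by-region cancellation; the bracketed quantity $1+jA(j)$ is nonzero for individual $j$, and only the full weighted sum over $j=0,\dotsc,p-1$ vanishes modulo $p$. The paper's mechanism for this — which your proposal is missing entirely — is the polynomial-derivative trick: the summand is exactly $P(j):=\frac{d}{dj}\bigl[\,j\,\ph{j+1}{\frac{p-1}{2}} \ph{j+1}{p-\lfloor \frac{p-1}{d} \rfloor-1}\ph{j+1}{\lfloor \frac{p-1}{d} \rfloor}\bigr]$, a polynomial of degree $\tfrac{3(p-1)}{2}$, so by (\ref{exp_sums}) one has $\sum_{j=0}^{p-1}P(j)\equiv -a_{p-1}\pmod p$; and since the Pochhammer product equals $\sum_k \frac{a_k}{k+1}j^k$ with $p$-integral coefficients, the coefficient $a_{p-1}$ is $p$ times a $p$-integer, hence $\equiv 0$. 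Without this (or an equivalent identity such as those of Section 3), your argument does not close.
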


\begin{theorem}\label{thm_4G1}
For a prime $p$ with $1 < d_i < p$ and $\phi(d_i) \leq 2$,
\begin{align*}
{_{4}G} \left(\tfrac{1}{d_1} , 1-\tfrac{1}{d_1}, \tfrac{1}{d_2} , 1-\tfrac{1}{d_2}\right)_p
&\equiv
{{_{4}F_3} \left[ \begin{array}{cccc} \frac{1}{d_1}, & 1-\frac{1}{d_1}, & \frac{1}{d_2}, & 1-\frac{1}{d_2}\vspace{.05in}\\
\phantom{\frac{1}{d_1}} & 1, & 1, & 1 \end{array}
\Big| \; 1 \right]}_{p-1}
+s(p)\hspace{1pt}p
\pmod {p^3} ,
\end{align*}
where $s(p):=\biggfp{\frac{1}{d_1}}\biggfp{\frac{d_1-1}{d_1}}\biggfp{\frac{1}{d_2}}\biggfp{\frac{d_2-1}{d_2}}=(-1)^{\lfloor \frac{p-1}{d_1} \rfloor+\lfloor \frac{p-1}{d_2} \rfloor}$.
\end{theorem}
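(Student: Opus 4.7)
My plan is to expand Definition \ref{def_GFn} term-by-term and match the result against $\left[{_4F_3}\right]_{p-1}$ modulo $p^3$. Without loss of generality assume $d_1\geq d_2$, so the four arguments of $_{4}G$ appear in ascending order as $\tfrac{1}{d_1}\leq\tfrac{1}{d_2}\leq 1-\tfrac{1}{d_2}\leq 1-\tfrac{1}{d_1}$. Because every value of $\Gamma_p$ is a $p$-adic unit, the factor $(-p)^k$ in Definition \ref{def_GFn} forces the $k=3$ and $k=4$ pieces to vanish modulo $p^3$; it therefore suffices to handle $k=0$ modulo $p^3$, $k=1$ modulo $p^2$, and $k=2$ modulo $p$.

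To pass from the $G$-function variable $\tfrac{j}{p-1}$ to an ordinary integer shift, I would use the identity $\tfrac{j}{p-1}=-j+\tfrac{jp}{p-1}$ with $\tfrac{jp}{p-1}\in p\mathbb{Z}_p$, then apply Proposition \ref{prop_pGammaCong} (2) to rewrite each $\gfp{x-\tfrac{j}{p-1}}=\gfp{x+j}\bigl(1+zG_1(x+j)+\tfrac{z^2}{2}G_2(x+j)\bigr)$ with $z=-\tfrac{jp}{p-1}$, and likewise $\gfp{\tfrac{j}{p-1}}^{4}=\gfp{-j}^{4}(1+\cdots)$, where $\gfp{-j}=1/j!$ for $0\leq j\leq p-1$ (a consequence of Proposition \ref{prop_pGamma} (1), (2)). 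Lemma \ref{lem_gammapd2} then packages each product $\gfp{\tfrac{1}{d_i}+j}\gfp{1-\tfrac{1}{d_i}+j}$ as $\gfp{\tfrac{1}{d_i}}\gfp{1-\tfrac{1}{d_i}}\cdot\ph{\tfrac{1}{d_i}}{j}\ph{1-\tfrac{1}{d_i}}{j}$ multiplied by one of $\{1,\tfrac{d_i}{p},\tfrac{d_i^{2}}{(d_i-1)p^{2}}\}$ according to whether $j$ lies in the initial, middle, or terminal range cut out by $\lfloor(p-1)/d_i\rfloor$. The leading constants across the two pairs recombine into the truncated $_4F_3$ provided the $1/p$ and $1/p^{2}$ singularities cancel between ranges; what survives of the $1/p^{2}$ piece in the extreme $j$-range contributes the stated $s(p)\,p$ correction, with the sign $(-1)^{\lfloor(p-1)/d_1\rfloor+\lfloor(p-1)/d_2\rfloor}$ emerging from a parity count of $rep(1/d_i)$ via Proposition \ref{prop_pGamma} (2).

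All remaining contributions take the shape $\sum_j \binom{*}{*}^{2}\binom{*}{*}^{2}\cdot H^{(\bullet)}_{*}$, which is precisely the form handled by Corollaries \ref{Cor_BinHarId1} and \ref{Cor_BinHarId2} (with $n=\lfloor(p-1)/d_1\rfloor$, $m=\lfloor(p-1)/d_2\rfloor$, and $C_1,C_2\in\mathbb{Q}$ chosen so that the coefficients of $H^{(1)}_{k+n}$ and $H^{(1)}_{k+m}$ match the ones produced by the $G_1,G_2$ expansions). Telescoping via these identities eliminates all would-be $O(p)$ and $O(p^{2})$ obstructions. The $k=1$ contribution is analysed modulo $p^{2}$ by the same mechanism, now using Lemmas \ref{lem_ProdGammap2} and \ref{lem_SumGammap2} for the logarithmic-derivative expansions, and its residue combines with the remainder from $k=0$ to leave exactly $s(p)\,p$; the $k=2$ contribution only needs mod-$p$ accuracy and drops out after the same cancellation.

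The principal obstacle will be the range-dependent combinatorial bookkeeping: with two independent parameters $d_1,d_2$, each $\Gamma_p$-product splits into three $j$-ranges, so the inner sum breaks into up to nine sub-sums, each of which must be matched against the correct portion of $\left[{_4F_3}\right]_{p-1}$ and collapsed by the correct specialisation of Corollary \ref{Cor_BinHarId2}. Verifying that the second-order corrections coming from $G_1^{2}-G_2$ across all these pieces aggregate to precisely the $s(p)\,p$ term, with no spurious $O(p^{2})$ residue, is where I expect the real work.
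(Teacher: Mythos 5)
Your overall strategy is the paper's: expand Definition \ref{def_GFn} using $\tfrac{j}{p-1}=-j-jp-jp^2-\cdots$, apply Proposition \ref{prop_pGammaCong} (2) and Lemma \ref{lem_gammapd2} to match against the truncated ${}_4F_3$, and cancel the residue with the Section 3 identities. But there is a genuine gap in the claim that \emph{all} remaining contributions have the shape handled by Corollaries \ref{Cor_BinHarId1} and \ref{Cor_BinHarId2}. The second-order correction at the $p^2$ level of the $k=0$ piece (together with the $p$ level of $k=1$) produces a term $j^2B(j)$ with $B(j)=\tfrac{1}{2}\bigl[A(j)^2-\sum_{k}\bigl(G_1(m_k+j)^2-G_2(m_k+j)-G_1(1+j)^2+G_2(1+j)\bigr)\bigr]$; modulo $p$ this is the \emph{square} of the symmetric harmonic sum $\mathcal{H}(j)$ minus a sum of $H^{(2)}$'s. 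Corollary \ref{Cor_BinHarId2} is linear in the $C_1,C_2$-weighted harmonic factor (and in the application $C_i=rep(m_i)-m_i\in p\mathbb{Z}_p$), so no choice of $C_1,C_2$ makes it absorb $\mathcal{H}(j)^2$. The paper disposes of these quadratic terms by an entirely separate device: the relevant summand is recognised as $P(j)+Q(j)$ where $P(j)=\frac{d}{dj}\bigl[\,j\prod_{i}\ph{j+1}{p-\lfloor\frac{p-1}{d_i}\rfloor-1}\ph{j+1}{\lfloor\frac{p-1}{d_i}\rfloor}\bigr]$ and $Q(j)$ is $\tfrac{j}{2}$ times the second derivative of the same bracket; the sum is extended over $j=0,\dots,p-1$ and the power-sum congruence $\sum_{j=1}^{p-1}j^k\equiv-1$ or $0\pmod p$ reduces everything to the coefficients of $j^{p-1}$ and $j^{2p-2}$, which cancel between $P$ and $Q$. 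Without this (or an equivalent mechanism) your plan stalls exactly at the quadratic harmonic terms.

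Two further corrections. The parameters fed into Corollaries \ref{Cor_BinHarId1} and \ref{Cor_BinHarId2} must be $m=rep(m_1)-1$ and $n=rep(m_2)-1$, i.e.\ the \emph{large} representatives $p-\lfloor\frac{p-1}{d_i}\rfloor-1$, not $\lfloor\frac{p-1}{d_i}\rfloor$: the binomial coefficients delivered by Lemma \ref{lem_ProdGammap2} are $\bin{rep(m_i)-1+j}{j}\bin{rep(m_i)-1}{j}$, and the hypothesis $n\geq p/2$ of Theorem \ref{Thm_BinHarId2} fails for your choice. Also, the $s(p)\,p$ term does not arise as a surviving $1/p^2$ piece in the extreme $j$-range; it is the value $(-1)^{rep(m_1)+rep(m_2)-2}=(-1)^{\lfloor\frac{p-1}{d_1}\rfloor+\lfloor\frac{p-1}{d_2}\rfloor}$ of $\sum_{j}Bin(j)\bigl[1+j\mathcal{H}(j)\bigr]$ supplied by Corollary \ref{Cor_BinHarId1}, after one checks that the tail $n+1\leq k\leq m$ of that identity lies in $p^2\mathbb{Z}_p$.
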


\begin{theorem}\label{thm_4G2}
For a prime $p$ with $d< p$, $\phi(d) =4$ and $gcd(r,d)=1$,
\begin{align*}
{_{4}G} \left(\tfrac{1}{d} , \tfrac{r}{d}, 1-\tfrac{r}{d} , 1-\tfrac{1}{d}\right)_p
&\equiv
{{_{4}F_3} \left[ \begin{array}{cccc} \frac{1}{d}, & \frac{r}{d}, & 1-\frac{r}{d}, & 1-\frac{1}{d}\vspace{.05in}\\
\phantom{\frac{1}{d_1}} & 1, & 1, & 1 \end{array}
\Big| \; 1 \right]}_{p-1}
+s(p)\hspace{1pt} p
\pmod {p^3},
\end{align*}
where $s(p) := \gfp{\tfrac{1}{d}} \gfp{\tfrac{r}{d}}\gfp{\tfrac{d-r}{d}}\gfp{\tfrac{d-1}{d}}$.
\end{theorem}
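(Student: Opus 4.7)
The plan is to follow the same strategy used for Theorem \ref{thm_4G1}, adapted to the $\phi(d)=4$ setting. Starting from the definition of $_4G$, split the outer sum into the five ranges $k=0,1,2,3,4$ and, within each range, write the translated arguments of $\Gamma_p$ in the form $\tfrac{m_i}{d_i}+j+z_j$, where $z_j:=-\tfrac{jp}{p-1}\in p\,\mathbb{Z}_p$ (using the identity $-\tfrac{1}{p-1}=1-\tfrac{p}{p-1}$). Proposition \ref{prop_pGammaCong}(2) then expands each $\Gamma_p\bigl(\tfrac{m_i}{d_i}+j+z_j\bigr)$ modulo $p^3$ in terms of $\Gamma_p(\tfrac{m_i}{d_i}+j)$ and its logarithmic derivatives, and Proposition \ref{prop_gammapmd} rewrites $\Gamma_p(\tfrac{m_i}{d_i}+j)$ as $(-1)^j\,\Gamma_p(\tfrac{m_i}{d_i})\,(\tfrac{m_i}{d_i})_j$ up to an extra factor of $\bigl(\tfrac{m_i}{d_i}+p-rep(\tfrac{m_i}{d_i})\bigr)^{-1}$ once $j$ passes the breakpoint $p-rep(\tfrac{m_i}{d_i})$. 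The four arguments $\tfrac{1}{d}$, $\tfrac{r}{d}$, $\tfrac{d-r}{d}$, $\tfrac{d-1}{d}$ produce four such breakpoints whose positions are determined by the residue of $p$ modulo $d$ via Lemma \ref{lem_repGenformula}.

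Collecting terms, the ``leading'' contribution — coming from the regime where none of the four $\Gamma_p$ factors has jumped and one takes only the $z_j^0$ piece in each Taylor expansion — produces exactly the truncated $_4F_3$, since the summand reduces to the product of Pochhammer symbols $(\tfrac{1}{d})_j(\tfrac{r}{d})_j(\tfrac{d-r}{d})_j(\tfrac{d-1}{d})_j/(j!)^4$ after the factors $\Gamma_p(\tfrac{m_i}{d_i})$ cancel against the prefactor. The factor $\Gamma_p(\tfrac{j}{p-1})^4$ and the $\omega^{4j}(-1)$ prefactor are massaged using Proposition \ref{prop_pGamma}(4) with $m=p-1$. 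All remaining terms carry an explicit factor of $p$ — coming from either $z_j$, $z_j^2$, the $1/p$ jumps in Proposition \ref{prop_gammapmd}, or the $(-p)^k$ in the outer sum — and involve $G_1$, $G_2$ and harmonic sums. Corollaries \ref{cor_pGammaCong1} and \ref{cor_pGammaCong2} move the arguments of $G_1,G_2$ to integer points, where Proposition \ref{prop_pGammaG}(1)--(2) converts them into explicit harmonic sums $H^{(1)}_n$ and $H^{(2)}_n$. Finally, the two binomial--harmonic identities of Section~3 (Corollaries \ref{Cor_BinHarId1} and \ref{Cor_BinHarId2}) are invoked to collapse the entire non-$_4F_3$ contribution into the single term $s(p)\,p\pmod{p^3}$; note that Proposition \ref{prop_pGamma}(2) shows $s(p)=(-1)^{rep(1/d)+rep(r/d)}$, which is crucial for sign tracking.

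The main obstacle is managing the four independent breakpoints. In the $\phi(d)\leq 2$ case treated in Theorem \ref{thm_4G1}, Lemma \ref{lem_gammapd2} pairs the arguments into two complementary pairs whose breakpoints coincide, drastically simplifying the case analysis and reducing the problem to two coordinated jumps. That tool is unavailable when $\phi(d)=4$ because $rep(\tfrac{1}{d})$ and $rep(\tfrac{r}{d})$ can be any distinct values in $\{1,\dotsc,p-1\}$ depending on the four possible residues of $p\pmod{d}$. Consequently, for each $k\in\{0,1,2,3,4\}$ one must individually track how many of the four breakpoints have been crossed and assemble the resulting contributions over $j$. The binomial--harmonic sum identities of Section~3 were engineered precisely to collapse these multi-parameter sums into the clean $s(p)p$ correction, and verifying that this collapse works uniformly across all the jump configurations is the technical heart of the proof.
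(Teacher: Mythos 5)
Your plan follows the paper's proof of this theorem in all its main lines: the same expansion of Definition \ref{def_GFn} via Proposition \ref{prop_pGammaCong}(2), the same breakpoint analysis through Lemma \ref{lem_repGenformula} and Proposition \ref{prop_gammapmd}, the same passage to harmonic sums via $G_1$ and $G_2$, and the same final appeal to Corollaries \ref{Cor_BinHarId1} and \ref{Cor_BinHarId2}. There is, however, one genuine missing ingredient. The Section~3 identities do \emph{not} collapse the entire non-${_4F_3}$ contribution. After matching the two expansions there remains, at the coefficient of $p^2$ in the expansion of ${_4G}$, a residual sum of the shape $\sum_j \frac{\prod_k\Gamma_p(m_k+j)}{\prod_k\Gamma_p(m_k)\,j!^4}\bigl[1+2jA(j)+j^2B(j)\bigr]$ (together with its continuations past the breakpoints) which has no counterpart in the truncated series and must be shown to vanish modulo $p$. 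The paper handles this by a completely separate device: it writes the mod-$p$ reduction of this sum as $\sum_{j=0}^{p-1}\bigl(P(j)+Q(j)\bigr)$ for the explicit polynomials $P(j)=\frac{d}{dj}\bigl[j\prod_k\ph{j+1}{rep(m_k)-1}\bigr]$ and $Q(j)=\frac{j}{2}\frac{d^2}{dj^2}\bigl[j\prod_k\ph{j+1}{rep(m_k)-1}\bigr]$, and evaluates it via the power-sum congruences $\sum_{j=1}^{p-1}j^k\equiv -1$ or $0\pmod p$ together with the leading coefficients of the monic polynomial being differentiated. Corollaries \ref{Cor_BinHarId1} and \ref{Cor_BinHarId2} are reserved for the order-$p$ terms, which must hold modulo $p^2$ and which produce the $s(p)$ and the $0$ respectively. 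Without this polynomial/power-sum step your outline cannot close the mod $p^3$ congruence.

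A smaller correction: the four breakpoints are not ``any distinct values.'' The paper shows that $\bigl\{rep(\tfrac{1}{d}),rep(\tfrac{r}{d}),rep(\tfrac{d-r}{d}),rep(\tfrac{d-1}{d})\bigr\}$ still splits into two complementary pairs each summing to $p+1$, namely $\bigl\{\lfloor\tfrac{p-1}{d}\rfloor+1,\,p-\lfloor\tfrac{p-1}{d}\rfloor\bigr\}$ and $\bigl\{r\lfloor\tfrac{p-1}{d}\rfloor+1+\lfloor\tfrac{ar}{d}\rfloor,\,p-r\lfloor\tfrac{p-1}{d}\rfloor-\lfloor\tfrac{ar}{d}\rfloor\bigr\}$, and it is exactly this pairing that makes Lemmas \ref{lem_ProdGammap2} and \ref{lem_SumGammap2} --- the tools that actually replace Lemma \ref{lem_gammapd2} here --- applicable. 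The genuinely new complications in the $\phi(d)=4$ case are instead the boundary sums arising when $rep(m_3)-1$ or $rep(m_2)-1$ exceeds $\lfloor\tfrac{r(p-1)}{d}\rfloor$ or $\lfloor(d-r)\tfrac{p-1}{d}\rfloor$ by one; these produce extra single-term sums that the paper disposes of by computing $j+\tfrac{1}{d}$ and $j+\tfrac{r}{d}$ explicitly in each residue class of $p$ modulo $d$ and showing the terms lie in $p^2\mathbb{Z}_p$. Your plan correctly anticipates that a case analysis is needed but does not identify where it actually bites.
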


\noindent These congruences cover exactly the 22 truncated hypergeometric series outlined in Section 1. Using Proposition \ref{prop_GtoGHS} it is easy to see the following corollaries.
\begin{cor}\label{cor_2G}
For a prime $p \equiv 1 \pmod {d}$ and $\phi(d) \leq 2$ with $d>1$,
\begin{align*}
-p \:  {_{2}F_1}  {\left( \begin{array}{cc} \rho, & \bar{\rho} \\
\phantom{\rho} & \varepsilon \end{array}
\Big| \; 1 \right)}_{p}
\equiv
{{_{2}F_1} \left[ \begin{array}{cc} \frac{1}{d}, & 1-\frac{1}{d}\vspace{.05in}\\
\phantom{\frac{1}{d}} & 1 \end{array}
\Big| \; 1 \right]}_{p-1}
\pmod {p^2} ,
\end{align*}
where $\rho$ is a character of order $d$ on $\mathbb{F}_p$.
\end{cor}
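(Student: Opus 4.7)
The plan is to deduce Corollary \ref{cor_2G} directly from Theorem \ref{thm_2G} and Proposition \ref{prop_GtoGHS}; this is essentially a translation step and I do not anticipate any substantive obstacle.

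First I would specialise Proposition \ref{prop_GtoGHS} to the case $n=1$ with parameters $(m_1,d_1)=(1,d)$ and $(m_2,d_2)=(d-1,d)$. The hypothesis $p\equiv 1\pmod d$ ensures the proposition applies, and the character distinguished in its statement is $\rho:=\bar\omega^{(p-1)/d}$, which has exact order $d$. Since $\rho_1^{m_1}=\rho$ and $\rho_2^{m_2}=\rho^{d-1}=\bar\rho$, with $(-1)^n p^n=-p$ when $n=1$, the proposition yields
\[
{_{2}G}\!\left(\tfrac{1}{d},\, 1-\tfrac{1}{d}\right)_p \;=\; -p\cdot {_{2}F_1}\!{\left(\begin{array}{cc} \rho, & \bar{\rho} \\ \phantom{\rho} & \varepsilon \end{array}\Big|\; 1\right)}_{p}.
\]

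Next I would invoke Theorem \ref{thm_2G}, which identifies the left-hand side above with the truncated classical hypergeometric series
\[
{{_{2}F_1} \left[ \begin{array}{cc} \frac{1}{d}, & 1-\frac{1}{d}\\ \phantom{\frac{1}{d}} & 1 \end{array}\Big|\; 1 \right]}_{p-1}
\]
modulo $p^{2}$. Combining the two displays gives precisely the congruence asserted in the corollary.

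The only subtlety worth a sentence is that the corollary is phrased for an arbitrary character $\rho$ of order $d$, while Proposition \ref{prop_GtoGHS} produces the specific character $\bar\omega^{(p-1)/d}$. Because $\phi(d)\leq 2$, the only characters of order $d$ are $\rho$ and $\bar\rho$ (and these coincide when $d=2$), and the Gaussian hypergeometric function ${_{2}F_1}(\rho,\bar\rho;\varepsilon\,|\,1)_p$ is symmetric in its two upper parameters; so the right-hand side of the corollary does not depend on which order-$d$ character is chosen, and the argument above delivers the stated conclusion uniformly.
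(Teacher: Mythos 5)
Your proposal is correct and follows exactly the route the paper intends: the paper derives Corollary \ref{cor_2G} by combining Theorem \ref{thm_2G} with Proposition \ref{prop_GtoGHS} specialised to $n=1$, $(m_1,d_1)=(1,d)$, $(m_2,d_2)=(d-1,d)$, just as you do. Your closing remark on the well-definedness of the choice of $\rho$ (via $\phi(d)\leq 2$ and the symmetry of the Gaussian ${_2F_1}$ in its upper parameters when the lower parameter is $\varepsilon$) is a correct and worthwhile observation that the paper leaves implicit.
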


\begin{cor}\label{cor_3G}
For a prime $p \equiv 1 \pmod {d}$ and $\phi(d) \leq 2$ with $d>1$,
\begin{align*}
p^2 \:  {_{3}F_2}  {\left( \begin{array}{ccc} \psi, & \rho, & \bar{\rho} \\
\phantom{\psi} & \varepsilon, & \varepsilon \end{array}
\Big| \; 1 \right)}_{p}
\equiv
{{_{3}F_2} \left[ \begin{array}{ccc} \frac{1}{2}, & \frac{1}{d}, & 1-\frac{1}{d}\vspace{.05in}\\
\phantom{\frac{1}{d}} & 1, &1 \end{array}
\Big| \; 1 \right]}_{p-1}
\pmod {p^2} ,
\end{align*}
where $\psi$ is the character of order 2 and $\rho$ is a character of order $d$ on $\mathbb{F}_p$.
\end{cor}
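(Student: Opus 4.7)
The plan is to derive this corollary directly from Theorem \ref{thm_3G} and Proposition \ref{prop_GtoGHS}, as the text already signals. The two hypotheses, $p$ odd and $p \equiv 1 \pmod{d}$, are exactly what is needed so that both Teichm\"uller-type characters $\psi := \bar{\omega}^{(p-1)/2}$ of order $2$ and $\rho := \bar{\omega}^{(p-1)/d}$ of order $d$ are well defined on $\mathbb{F}_p$; this is precisely the setup in which Proposition \ref{prop_GtoGHS} identifies the $p$-adic ${}_{3}G$ function with a Gaussian hypergeometric series.

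Concretely, I would argue in two short steps. First, by Theorem \ref{thm_3G},
\[
{_{3}G}\!\left(\tfrac{1}{2}, \tfrac{1}{d}, 1-\tfrac{1}{d}\right)_p \;\equiv\; {_{3}F_2}\!\left[\tfrac{1}{2}, \tfrac{1}{d}, 1-\tfrac{1}{d}; 1, 1 \,\Big|\, 1\right]_{p-1} \pmod{p^2}.
\]
Second, I would apply Proposition \ref{prop_GtoGHS} to the left-hand side with $n=2$; since $(-1)^n p^n = p^2$, this rewrites the ${}_{3}G$ function as
\[
{_{3}G}\!\left(\tfrac{1}{2}, \tfrac{1}{d}, 1-\tfrac{1}{d}\right)_p \;=\; p^{2}\cdot {_{3}F_2}\!\left(\psi, \rho, \bar{\rho}; \varepsilon, \varepsilon \,\Big|\, 1\right)_p.
\]
Chaining these two relations immediately yields the asserted congruence modulo $p^2$.

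One small point that warrants mention is the apparent mismatch between the corollary's statement, which allows $\rho$ to be \emph{any} character of order $d$, and Proposition \ref{prop_GtoGHS}, which prescribes the specific choice $\bar{\omega}^{(p-1)/d}$. This is harmless because $\phi(d) \leq 2$ forces the only characters of order $d$ to be $\rho$ and its conjugate $\bar{\rho}$, and the Gaussian ${}_{3}F_2$ with parameters $\psi, \rho, \bar{\rho}$ is invariant under the swap $\rho \leftrightarrow \bar{\rho}$; thus the statement is independent of which order-$d$ character is labelled $\rho$.

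There is no genuine obstacle in this corollary: it is a pure repackaging, and all of the analytic content has already been absorbed into the proof of Theorem \ref{thm_3G}. The main conceptual point to get right is verifying the sign $(-1)^{n}$ and power of $p$ coming from Proposition \ref{prop_GtoGHS}, both of which work out cleanly for $n=2$.
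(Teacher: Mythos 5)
Your proposal is correct and is exactly the paper's route: the paper derives Corollary \ref{cor_3G} by combining Theorem \ref{thm_3G} with Proposition \ref{prop_GtoGHS} for $n=2$, where $(-1)^2p^2=p^2$, just as you do. Your added remark on the harmlessness of the choice of $\rho$ versus $\bar{\rho}$ when $\phi(d)\leq 2$ is consistent with the paper's own discussion of the refinement needed in Mortenson's theorems.
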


\begin{cor}\label{cor_4G1}
For a prime $p \equiv 1 \pmod {d_i}$ and $\phi(d) \leq 2$ with $d>1$,
\begin{align*}
-p^3 \:  {_{4}F_3}  {\left( \begin{array}{cccc} \rho_1, & \bar{\rho_1}, & \rho_2, & \bar{\rho_2} \\
\phantom{\rho_1} & \varepsilon, & \varepsilon, & \varepsilon \end{array}
\Big| \; 1 \right)}_{p}
\equiv
{{_{4}F_3} \left[ \begin{array}{cccc} \frac{1}{d_1}, & 1-\frac{1}{d_1}, & \frac{1}{d_2}, & 1-\frac{1}{d_2}\vspace{.05in}\\
\phantom{\frac{1}{d_1}} & 1, & 1, & 1 \end{array}
\Big| \; 1 \right]}_{p-1}
+s(p)\hspace{1pt}p
\pmod {p^3} ,
\end{align*}
where $s(p):=\biggfp{\frac{1}{d_1}}\biggfp{\frac{d_1-1}{d_1}}\biggfp{\frac{1}{d_2}}\biggfp{\frac{d_2-1}{d_2}}=(-1)^{\lfloor \frac{p-1}{d_1} \rfloor+\lfloor \frac{p-1}{d_2} \rfloor}$ and $\rho_i$ is a character of order $d_i$ on $\mathbb{F}_p$.
\end{cor}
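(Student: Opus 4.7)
The plan is straightforward: Corollary \ref{cor_4G1} follows immediately by combining Theorem \ref{thm_4G1} with Proposition \ref{prop_GtoGHS}. Since $p \equiv 1 \pmod{d_i}$ for each $i$, the proposition applies directly to convert the $_{4}G$ function into a Gaussian $_{4}F_3$ times a power of $p$, after which Theorem \ref{thm_4G1} immediately supplies the desired congruence.

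First, I would invoke Proposition \ref{prop_GtoGHS} with $n = 3$ and parameters $\tfrac{1}{d_1}, 1-\tfrac{1}{d_1}, \tfrac{1}{d_2}, 1-\tfrac{1}{d_2}$ (rearranged into ascending order if necessary, which the remark immediately following Definition \ref{def_GFn} permits). Writing $1-\tfrac{1}{d_i} = \tfrac{d_i-1}{d_i}$ and using that $\rho_i := \bar{\omega}^{(p-1)/d_i}$ has exact order $d_i$, the upper-row characters appearing in the Gaussian series are $\rho_i^{1} = \rho_i$ and $\rho_i^{d_i-1} = \bar{\rho}_i$. With $n = 3$ the sign-power is $(-1)^3 p^3 = -p^3$, so the proposition yields
\begin{equation*}
{_{4}G}\left(\tfrac{1}{d_1}, 1-\tfrac{1}{d_1}, \tfrac{1}{d_2}, 1-\tfrac{1}{d_2}\right)_p = -p^3 \cdot {_{4}F_3}\left(\begin{array}{cccc} \rho_1, & \bar{\rho}_1, & \rho_2, & \bar{\rho}_2 \\ \phantom{\rho_1} & \varepsilon, & \varepsilon, & \varepsilon \end{array}\Big|\;1\right)_p.
\end{equation*}
The apparent mismatch with the corollary's phrasing, which allows $\rho_i$ to be \emph{any} character of order $d_i$, is harmless: the Gaussian $_{4}F_3$ on the right-hand side carries each $\rho_i$ alongside its conjugate $\bar{\rho}_i$, so replacing $\rho_i$ by another generator of the cyclic order-$d_i$ subgroup of the dual of $\mathbb{F}_p^{*}$ merely permutes the pair $\{\rho_i, \bar{\rho}_i\}$ and leaves the series unchanged.

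Finally, I would substitute this identity into Theorem \ref{thm_4G1} and transpose $-p^3 \cdot {_{4}F_3}(\cdots)_{p}$ onto the left, which produces precisely the congruence in Corollary \ref{cor_4G1} modulo $p^3$. The entire arithmetic content of the corollary is already contained in Theorem \ref{thm_4G1}; the derivation here is a pure notational translation between the $p$-adic $_{4}G$ formalism of Definition \ref{def_GFn} and the classical Gaussian $_{n+1}F_n$, so no genuine obstacle arises at this step. The real work, of course, lies in establishing Theorem \ref{thm_4G1} itself, which depends on the binomial coefficient--harmonic sum identities of Section 3 and the $p$-adic gamma congruences of Section 2.
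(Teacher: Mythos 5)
Your proposal is correct and is exactly the route the paper intends: the corollary is obtained by substituting the identity of Proposition \ref{prop_GtoGHS} (with $n=3$, giving the factor $(-1)^3p^3=-p^3$) into the congruence of Theorem \ref{thm_4G1}. Your added remark that the choice of order-$d_i$ character is immaterial because the series contains each $\rho_i$ together with $\bar{\rho}_i$ (and $\phi(d_i)\leq 2$) is a correct and worthwhile clarification, consistent with the paper's own discussion of the refinement needed in Mortenson's statements.
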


\begin{cor}\label{cor_4G2}
For a prime $p \equiv 1 \pmod {d}$ and $\phi(d) = 4$ and $gcd(r,d)=1$,
\begin{align*}
-p^3 \:  {_{4}F_3}  {\left( \begin{array}{cccc} \rho, & \bar{\rho}, & \rho^r, & \bar{\rho}^r \\
\phantom{\rho_1} & \varepsilon, & \varepsilon, & \varepsilon \end{array}
\Big| \; 1 \right)}_{p}
\equiv
{{_{4}F_3} \left[ \begin{array}{cccc} \frac{1}{d}, & \frac{r}{d}, & 1-\frac{r}{d}, & 1-\frac{1}{d}\vspace{.05in}\\
\phantom{\frac{1}{d_1}} & 1, & 1, & 1 \end{array}
\Big| \; 1 \right]}_{p-1}
+s(p)\hspace{1pt} p
\pmod {p^3},
\end{align*}
where $s(p) := \gfp{\tfrac{1}{d}} \gfp{\tfrac{r}{d}}\gfp{\tfrac{d-r}{d}}\gfp{\tfrac{d-1}{d}}$ and $\rho$ is a character of order $d$ on $\mathbb{F}_p$.
\end{cor}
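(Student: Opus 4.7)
My plan is to obtain Corollary \ref{cor_4G2} as an immediate consequence of Theorem \ref{thm_4G2} combined with Proposition \ref{prop_GtoGHS}; no new argument is needed beyond unwinding the definitions.

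First I would invoke Theorem \ref{thm_4G2}, whose hypotheses ($d<p$, $\phi(d)=4$, $\gcd(r,d)=1$) are all assumed in the corollary. This yields the mod $p^{3}$ congruence
\begin{equation*}
{_{4}G}\left(\tfrac{1}{d},\tfrac{r}{d},1-\tfrac{r}{d},1-\tfrac{1}{d}\right)_{p}
\equiv
{{_{4}F_3}\left[\begin{array}{cccc}\tfrac{1}{d}, & \tfrac{r}{d}, & 1-\tfrac{r}{d}, & 1-\tfrac{1}{d}\\ & 1, & 1, & 1\end{array}\Big|\;1\right]}_{p-1}
+s(p)\,p\pmod{p^{3}}.
\end{equation*}

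Next I would exploit the additional hypothesis $p\equiv1\pmod d$ of the corollary. Under this assumption $(p-1)/d\in\mathbb{Z}$ and the character $\rho:=\bar\omega^{(p-1)/d}$ has order exactly $d$ on $\mathbb{F}_{p}^{*}$. Proposition \ref{prop_GtoGHS}, applied with $n=3$, with all four $d_{i}$ equal to $d$, and with $(m_{1},m_{2},m_{3},m_{4})=(1,r,d-r,d-1)$, then gives the exact identity
\begin{equation*}
{_{4}G}\left(\tfrac{1}{d},\tfrac{r}{d},1-\tfrac{r}{d},1-\tfrac{1}{d}\right)_{p}
=-p^{3}\cdot{_{4}F_{3}}{\left(\begin{array}{cccc}\rho, & \rho^{r}, & \rho^{d-r}, & \rho^{d-1}\\ & \varepsilon, & \varepsilon, & \varepsilon\end{array}\Big|\;1\right)}_{p}.
\end{equation*}
Since $\rho$ has order $d$, we have $\rho^{d-1}=\bar\rho$ and $\rho^{d-r}=\bar\rho^{r}$, so the four numerator characters are precisely $\rho,\bar\rho,\rho^{r},\bar\rho^{r}$ in some order, matching the statement of the corollary. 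Substituting this equality into the congruence above completes the proof.

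Because the corollary is merely a specialisation of two previously stated results, I do not foresee any serious obstacle; the only points to verify are that the parameters of the $_{4}G$ lie in $(0,1)$ (guaranteed by $1\le r\le d-1$ together with $\phi(d)=4$, so $d\ge5$), that the remark following Definition \ref{def_GFn} lets us list the arguments in any order, and the elementary character translations $\rho^{d-r}=\bar\rho^{r}$ and $\rho^{d-1}=\bar\rho$.
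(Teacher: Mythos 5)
Your proposal is correct and matches the paper's own derivation: the paper simply remarks that the corollaries follow from Theorem \ref{thm_4G2} together with Proposition \ref{prop_GtoGHS}, which is exactly the specialisation you carry out (including the choice $\rho=\bar{\omega}^{(p-1)/d}$ and the identifications $\rho^{d-1}=\bar{\rho}$, $\rho^{d-r}=\bar{\rho}^{r}$).
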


\noindent Note that Corollaries \ref{cor_2G} and \ref{cor_3G} coincide with Theorem 1 in  \cite{M2} and Corollary 2 in \cite{M} (after the above refinement is made) when $\phi(d)=2$.\\

We now prove Theorems \ref{thm_2G} to \ref{thm_4G2}.
\begin{proof}[Proof of Theorem \ref{thm_2G}]
One easily checks the result for $p<7$. Let $p\geq7$ be a prime.
Reducing Definition \ref{def_GFn} modulo $p^2$ and noting that $\omega(-1) \equiv  -1 \pmod{p^2}$ by (\ref{prop_TechChar}) we get
\begin{align*}
&{_{2}G} \left( \tfrac{1}{d}, 1-\tfrac{1}{d} \right)_p
\equiv \frac{-1}{p-1} \left[
\sum_{j=0}^{\lfloor \frac{p-1}{d} \rfloor} 
\frac{{\biggfp{\tfrac{j}{p-1}}}^{2} \biggfp{\frac{1}{d}-\frac{j}{p-1}}\biggfp{\frac{d-1}{d}-\frac{j}{p-1}}}
{\biggfp{\frac{1}{d}}\biggfp{\frac{d-1}{d}}} \right. \\
&\left. \qquad \qquad\qquad \qquad \qquad \qquad \qquad
-p \sum_{j=\lfloor \frac{p-1}{d} \rfloor+1}^{\lfloor (d-1)\frac{p-1}{d} \rfloor} 
\frac{{\biggfp{\tfrac{j}{p-1}}}^{2} \biggfp{\frac{d+1}{d}-\frac{j}{p-1}}\biggfp{\frac{d-1}{d}-\frac{j}{p-1}}}
{\biggfp{\frac{1}{d}}\biggfp{\frac{d-1}{d}}}
\right] \pmod{p^2}.
\end{align*}
We note that $\frac{-1}{p-1}\equiv 1+p \pmod{p^2}$ and each of the $p$-adic gamma function products above are in $\mathbb{Z}_p^*$. Using Propositions \ref{prop_pGamma} and \ref{prop_pGammaCong} to expand the terms involved, we have
\begin{align*}
{_{2}G} & \left( \tfrac{1}{d}, 1-\tfrac{1}{d} \right)_p\\
&\equiv \sum_{j=0}^{\lfloor \frac{p-1}{d} \rfloor} 
\frac{{\biggfp{-j-jp}}^{2} \biggfp{\frac{1}{d}+j+jp}\biggfp{\frac{d-1}{d}+j+jp}}
{\biggfp{\frac{1}{d}}\biggfp{\frac{d-1}{d}}} \\
&+p \left[
\sum_{j=0}^{\lfloor \frac{p-1}{d} \rfloor} 
\frac{{\biggfp{-j}}^{2} \biggfp{\frac{1}{d}+j}\biggfp{\frac{d-1}{d}+j}}
{\biggfp{\frac{1}{d}}\biggfp{\frac{d-1}{d}}}
- \sum_{j=\lfloor \frac{p-1}{d} \rfloor+1}^{\lfloor (d-1)\frac{p-1}{d} \rfloor} 
\frac{{\biggfp{-j}}^{2} \biggfp{\frac{d+1}{d}+j}\biggfp{\frac{d-1}{d}+j}}
{\biggfp{\frac{1}{d}}\biggfp{\frac{d-1}{d}}}
\right]\\
&\equiv \sum_{j=0}^{\lfloor \frac{p-1}{d} \rfloor} 
\frac{\biggfp{\frac{1}{d}+j+jp}\biggfp{\frac{d-1}{d}+j+jp}}
{\biggfp{\frac{1}{d}}\biggfp{\frac{d-1}{d}}{\biggfp{1+j+jp}}^{2}} \\
&+p \left[
\sum_{j=0}^{\lfloor \frac{p-1}{d} \rfloor} 
\frac{\biggfp{\frac{1}{d}+j}\biggfp{\frac{d-1}{d}+j}}
{\biggfp{\frac{1}{d}}\biggfp{\frac{d-1}{d}}{\biggfp{1+j}}^{2}}
- \sum_{j=\lfloor \frac{p-1}{d} \rfloor+1}^{\lfloor (d-1)\frac{p-1}{d} \rfloor} 
\frac{\biggfp{\frac{d+1}{d}+j}\biggfp{\frac{d-1}{d}+j}}
{\biggfp{\frac{1}{d}}\biggfp{\frac{d-1}{d}}{\biggfp{1+j}}^{2}}
\right]\\
&\equiv \sum_{j=0}^{\lfloor \frac{p-1}{d} \rfloor} 
\frac{\biggfp{\frac{1}{d}+j}\biggfp{\frac{d-1}{d}+j}
\left[1+jp\left(G_1(\frac{1}{d}+j)+G_1(\frac{d-1}{d}+j)\right)\right]}
{\biggfp{\frac{1}{d}}\biggfp{\frac{d-1}{d}}{\biggfp{1+j}}^{2}
\left[1+2\hspace{1pt}jp\hspace{1pt}G_1(1+j)\right]} \\
&+p \left[
\sum_{j=0}^{\lfloor \frac{p-1}{d} \rfloor} 
\frac{\biggfp{\frac{1}{d}+j}\biggfp{\frac{d-1}{d}+j}}
{\biggfp{\frac{1}{d}}\biggfp{\frac{d-1}{d}}{\biggfp{1+j}}^{2}}
- \sum_{j=\lfloor \frac{p-1}{d} \rfloor+1}^{\lfloor (d-1)\frac{p-1}{d} \rfloor} 
\frac{\biggfp{\frac{d+1}{d}+j}\biggfp{\frac{d-1}{d}+j}}
{\biggfp{\frac{1}{d}}\biggfp{\frac{d-1}{d}}{\biggfp{1+j}}^{2}}
\right] \pmod {p^2}.\\
\end{align*}
By multiplying the left-hand side above and below by ${1-2\hspace{1pt}jp \hspace{1pt} G_1(1+j)}$ we see that
\begin{align*}
\frac{1+jp\left(G_1(\frac{1}{d}+j)+G_1(\frac{d-1}{d}+j)\right)}{1+2\hspace{1pt}jp\hspace{1pt}G_1(1+j)}
\equiv 1+jp  \left(G_1(\tfrac{1}{d}+j) + G_1(\tfrac{d-1}{d}+j) - 2\hspace{1pt}G_1(1+j)\right) \pmod{p^2}.
\end{align*}
We define $$A(j):=G_1\left(\tfrac{1}{d}+j\right) + G_1\left(\tfrac{d-1}{d}+j\right) - 2\hspace{1pt} G_1\bigl(1+j\bigr).$$

\noindent Consider $\gfp{\frac{d+1}{d}+j}=\gfp{1+\frac{1}{d}+j}$ for $\lfloor \frac{p-1}{d} \rfloor+1 \leq j \leq  {\lfloor (d-1)\frac{p-1}{d} \rfloor}$. We first note that
\begin{equation*}
\tfrac{1}{d}+j \in p\mathbb{Z}_p \Longleftrightarrow rep\left(\tfrac{1}{d}\right)+j \in p\mathbb{Z}_p \Longleftrightarrow rep\left(\tfrac{1}{d}\right)+j =p \Longleftrightarrow j=p-rep\left(\tfrac{1}{d}\right)\;.
\end{equation*}
If $p\equiv 1 \pmod d$ then using Corollary \ref{cor_repGenformula} we get that 
\begin{equation*}
p-rep\left(\tfrac{1}{d}\right)=\lfloor \tfrac{p-1}{d} \rfloor <  \lfloor \tfrac{p-1}{d} \rfloor +1\;.
\end{equation*}
Similarly, if $p\equiv d-1 \pmod d$,
\begin{equation*}
p-rep\left(\tfrac{1}{d}\right)=p-\lfloor \tfrac{p-1}{d} \rfloor -1= p- \lceil \tfrac{p-1}{d} \rceil >  p- \lceil \tfrac{p-1}{d} \rceil-1 = {\lfloor (d-1)\tfrac{p-1}{d} \rfloor} \;.
\end{equation*}
(For $x \in \mathbb{Q}$, we let $\lceil x \rceil$ denote the least integer greater than or equal to $x$.)
Then it follows from Proposition \ref{prop_pGamma} (1) that 
\begin{equation}\label{for_GammapPlus1}
\gfp{\tfrac{d+1}{d}+j}=-\left(\tfrac{1}{d}+j\right)\gfp{\tfrac{1}{d}+j}
\end{equation}
for $\lfloor \frac{p-1}{d} \rfloor+1 \leq j \leq  {\lfloor (d-1)\frac{p-1}{d} \rfloor}$. Also 
\begin{equation}\label{for_GammapPlus}
\gfp{1+j}=(-1)^{1+j} j!
\end{equation}
 for $j<p$.
Therefore, 
\begin{multline}\label{for_2G}
{_{2}G} \left( \tfrac{1}{d}, 1-\tfrac{1}{d} \right)_p
\equiv \sum_{j=0}^{\lfloor \frac{p-1}{d} \rfloor} 
\frac{\biggfp{\frac{1}{d}+j}\biggfp{\frac{d-1}{d}+j}
\left[1+jp\hspace{1pt} A(j)\right]}
{\biggfp{\frac{1}{d}}\biggfp{\frac{d-1}{d}}{j!}^{2}} \\
+p \left[
\sum_{j=0}^{\lfloor \frac{p-1}{d} \rfloor} 
\frac{\biggfp{\frac{1}{d}+j}\biggfp{\frac{d-1}{d}+j}}
{\biggfp{\frac{1}{d}}\biggfp{\frac{d-1}{d}}{j!}^{2}}
+ \sum_{j=\lfloor \frac{p-1}{d} \rfloor+1}^{\lfloor (d-1)\frac{p-1}{d} \rfloor} 
\frac{\biggfp{\frac{1}{d}+j}\biggfp{\frac{d-1}{d}+j}\bigl(\frac{1}{d}+j\bigr)}
{\biggfp{\frac{1}{d}}\biggfp{\frac{d-1}{d}}{j!}^{2}}
\right]
\pmod {p^2} .
\end{multline}
By definition,
\begin{equation*}
{{_{2}F_1} \left[ \begin{array}{cc} \frac{1}{d}, & 1-\frac{1}{d}\vspace{.05in}\\
\phantom{\frac{1}{d}} & 1 \end{array}
\Big| \; 1 \right]}_{p-1}
=\sum_{j=0}^{p-1} \frac{\ph{\frac{1}{d}}{j} \ph{\frac{d-1}{d}}{j}}{{j!}^{2}} 
\equiv \sum_{j=0}^{p-\lfloor \tfrac{p-1}{d} \rfloor - 1} \frac{\ph{\frac{1}{d}}{j} \ph{\frac{d-1}{d}}{j}} {{j!}^{2}} \pmod {p^2},
\end{equation*}
as we can see from Lemma \ref{lem_gammapd2} that $\ph{\frac{1}{d}}{j} \ph{\frac{d-1}{d}}{j} \in p^2\mathbb{Z}_p$ for $p-\lfloor \tfrac{p-1}{d} \rfloor \leq j \leq p-1$.
Using Lemma \ref{lem_gammapd2} again then gives us
\begin{multline}\label{for_2F1}
{{_{2}F_1} \left[ \begin{array}{cc} \frac{1}{d}, & 1-\frac{1}{d}\vspace{.05in}\\
\phantom{\frac{1}{d}} & 1 \end{array}
\Big| \; 1 \right]}_{p-1}
\equiv
\sum_{j=0}^{\lfloor \frac{p-1}{d} \rfloor} \frac{\biggfp{\frac{1}{d}+j}\biggfp{\frac{d-1}{d}+j}}
{\biggfp{\frac{1}{d}}\biggfp{\frac{d-1}{d}}{j!}^{2}}\\
+p\sum_{j=\lfloor \frac{p-1}{d} \rfloor +1}^{p-\lfloor \tfrac{p-1}{d} \rfloor - 1} \frac{\biggfp{\frac{1}{d}+j}\biggfp{\frac{d-1}{d}+j} \bigl(\frac{1}{d}\bigr)}
{\biggfp{\frac{1}{d}}\biggfp{\frac{d-1}{d}}{j!}^{2}}
\pmod {p^2}.
\end{multline}

\noindent Combining (\ref{for_2G}) and (\ref{for_2F1}) it suffices to show
\begin{multline*}
\sum_{j=0}^{\lfloor \frac{p-1}{d} \rfloor} 
\frac{\biggfp{\frac{1}{d}+j}\biggfp{\frac{d-1}{d}+j}\left[1+j\hspace{1pt} A(j)\right]}
{\biggfp{\frac{1}{d}}\biggfp{\frac{d-1}{d}}{j!}^{2}}
+ \sum_{j=\lfloor \frac{p-1}{d} \rfloor+1}^{\lfloor (d-1)\frac{p-1}{d} \rfloor} 
\frac{\biggfp{\frac{1}{d}+j}\biggfp{\frac{d-1}{d}+j}\bigl(j\bigr)}
{\biggfp{\frac{1}{d}}\biggfp{\frac{d-1}{d}}{j!}^{2}}
\\
- \sum_{j=\lfloor (d-1)\frac{p-1}{d} \rfloor+1}^{p-\lfloor \tfrac{p-1}{d} \rfloor - 1} 
\frac{\biggfp{\frac{1}{d}+j}\biggfp{\frac{d-1}{d}+j}\bigl(\frac{1}{d}\bigr)}
{\biggfp{\frac{1}{d}}\biggfp{\frac{d-1}{d}}{j!}^{2}}
\equiv 0 \pmod {p}.
\end{multline*}

\noindent We now examine $A(j)$, $\biggfp{\frac{1}{d}+j}\biggfp{\frac{d-1}{d}+j}$ and $\biggfp{\frac{1}{d}}\biggfp{\frac{d-1}{d}}$. We first note that $\{rep\left(\tfrac{1}{d}\right),rep\left(\tfrac{d-1}{d}\right)\}=\{p-\lfloor \tfrac{p-1}{d} \rfloor, \lfloor \tfrac{p-1}{d} \rfloor +1\}$. This can seen from Corollary \ref{cor_repGenformula} and the fact that if $p\equiv a \pmod {d}$ then $a \in \{1,d-1\}$ as $\phi(d)\leq2$.
Using Corollary \ref{cor_pGammaCong1} (3) and Proposition \ref{prop_pGammaG} (1) we see that
\begin{align*}
A(j)&=G_1\left(\tfrac{1}{d}+j\right) + G_1\left(\tfrac{d-1}{d}+j\right) - 2\hspace{1pt} G_1\bigl(1+j\bigr)\\
&\equiv G_1\left(rep\left(\tfrac{1}{d}\right)+j\right) + G_1\left(rep\left(\tfrac{d-1}{d}\right)+j\right) - 2\hspace{1pt} G_1\bigl(1+j\bigr)\\
&\equiv G_1\left(p-\lfloor \tfrac{p-1}{d} \rfloor+j\right) + G_1\left(\lfloor \tfrac{p-1}{d} \rfloor +1+j\right) - 2\hspace{1pt} G_1\bigl(1+j\bigr)\\
&\equiv H_{p-\lfloor \frac{p-1}{d} \rfloor-1+j}^{(1)} +  H_{\lfloor \frac{p-1}{d} \rfloor +j}^{(1)}-2 \hspace{1pt} H_{j}^{(1)}-
\begin{cases}
0 & \text{if } 0\leq j\leq \lfloor \frac{p-1}{d}\rfloor, \\[6pt]
\frac{1}{p} & \text{if } \lfloor \frac{p-1}{d}\rfloor+1\leq j\leq p-\lfloor \frac{p-1}{d}\rfloor-1,\\[6pt]
\frac{2}{p} & \text{if } p-\lfloor \frac{p-1}{d}\rfloor\leq j\leq p-1,\\[6pt]
\end{cases}
\pmod{p}.
\end{align*}
Using Proposition \ref{prop_pGamma} we get that
\begin{align*}
&\biggfp{\tfrac{1}{d}+j}\biggfp{\tfrac{d-1}{d}+j}\\
&\qquad \equiv \biggfp{rep\left(\tfrac{1}{d}\right)+j}\biggfp{rep\left(\tfrac{d-1}{d}\right)+j}\\
&\qquad\equiv \biggfp{p-\lfloor \tfrac{p-1}{d} \rfloor+j}\biggfp{\lfloor \tfrac{p-1}{d} \rfloor +1+j}\\
&\qquad\equiv \left(p-\lfloor \tfrac{p-1}{d} \rfloor-1+j\right)! \left(\lfloor \tfrac{p-1}{d} \rfloor +j\right)! (-1)^{p+1+2j}
\begin{cases}
1 & \text{if } 0\leq j\leq \lfloor \frac{p-1}{d}\rfloor ,\\[6pt]
\frac{1}{p} & \text{if }  \lfloor \frac{p-1}{d}\rfloor+1\leq j\leq p-\lfloor \frac{p-1}{d}\rfloor-1,\\[6pt]
\frac{1}{p^2} & \text{if }  p-\lfloor \frac{p-1}{d}\rfloor\leq j\leq p-1,\\[6pt]
\end{cases}\\
&\qquad \equiv \left(p-\lfloor \tfrac{p-1}{d} \rfloor-1+j\right)! \left(\lfloor \tfrac{p-1}{d} \rfloor +j\right)!
\begin{cases}
1 &\text{if } 0\leq j\leq \lfloor \frac{p-1}{d}\rfloor, \\[6pt]
\frac{1}{p} &\text{if }  \lfloor \frac{p-1}{d}\rfloor+1\leq j\leq p-\lfloor \frac{p-1}{d}\rfloor-1,\\[6pt]
\frac{1}{p^2} &\text{if }  p-\lfloor \frac{p-1}{d}\rfloor\leq j\leq p-1,\\[6pt]
\end{cases}
\pmod{p},
\end{align*}
and
\begin{align*}
\biggfp{\tfrac{1}{d}}\biggfp{\tfrac{d-1}{d}}=\pm 1\;.
\end{align*}

\noindent Therefore it suffices to prove
\begin{multline}\label{for_Resid2G}
\sum_{j=0}^{\lfloor \frac{p-1}{d} \rfloor} 
\ph{j+1}{p-\lfloor \frac{p-1}{d} \rfloor-1}\ph{j+1}{\lfloor \frac{p-1}{d} \rfloor}
\left[1+j\hspace{1pt} \left( H_{p-\lfloor \frac{p-1}{d} \rfloor-1+j}^{(1)} +  H_{\lfloor \frac{p-1}{d} \rfloor +j}^{(1)}-2 \hspace{1pt} H_{j}^{(1)}\right)\right]\\
+ \sum_{j=\lfloor \frac{p-1}{d} \rfloor+1}^{\lfloor (d-1)\frac{p-1}{d} \rfloor} 
\ph{j+1}{p-\lfloor \frac{p-1}{d} \rfloor-1}\ph{j+1}{\lfloor \frac{p-1}{d} \rfloor}
\Bigl(\tfrac{j}{p}\Bigr)
\\
- \sum_{j=\lfloor (d-1)\frac{p-1}{d} \rfloor+1}^{p-\lfloor \tfrac{p-1}{d} \rfloor - 1} 
\ph{j+1}{p-\lfloor \frac{p-1}{d} \rfloor-1}\ph{j+1}{\lfloor \frac{p-1}{d} \rfloor}
\Bigl(\tfrac{1}{dp}\Bigr)
\equiv 0 \pmod {p}.
\end{multline}

\noindent Now
$\ph{j+1}{p-\lfloor \frac{p-1}{d} \rfloor-1}\ph{j+1}{\lfloor \frac{p-1}{d} \rfloor} \in p\mathbb{Z}_p$ for $\lfloor \frac{p-1}{d}\rfloor+1\leq j\leq p-\lfloor \frac{p-1}{d}\rfloor-1$ so
\begin{multline*}
\ph{j+1}{p-\lfloor \frac{p-1}{d} \rfloor-1}\ph{j+1}{\lfloor \frac{p-1}{d} \rfloor}
\left[1+j\hspace{1pt} \left( H_{p-\lfloor \frac{p-1}{d} \rfloor-1+j}^{(1)} +  H_{\lfloor \frac{p-1}{d} \rfloor +j}^{(1)}-2 \hspace{1pt} H_{j}^{(1)}\right)\right]\\[6pt]
\equiv
\ph{j+1}{p-\lfloor \frac{p-1}{d} \rfloor-1}\ph{j+1}{\lfloor \frac{p-1}{d} \rfloor} \hspace{1pt} \Bigl(\tfrac{j}{p}\Bigr) \pmod{p}
\end{multline*}
for $\lfloor \frac{p-1}{d}\rfloor+1\leq j\leq p-\lfloor \frac{p-1}{d}\rfloor-1$.
\noindent Thus (\ref{for_Resid2G}) becomes
\begin{multline*}
\sum_{j=0}^{p-\lfloor \frac{p-1}{d} \rfloor - 1} 
\ph{j+1}{p-\lfloor \frac{p-1}{d} \rfloor-1}\ph{j+1}{\lfloor \frac{p-1}{d} \rfloor}
\left[1+j\hspace{1pt} \left( H_{p-\lfloor \frac{p-1}{d} \rfloor-1+j}^{(1)} +  H_{\lfloor \frac{p-1}{d} \rfloor +j}^{(1)}-2 \hspace{1pt} H_{j}^{(1)}\right)\right]
\\ 
- \sum_{j=\lfloor (d-1)\frac{p-1}{d} \rfloor+1}^{p-\lfloor \frac{p-1}{d} \rfloor - 1} 
\ph{j+1}{p-\lfloor \frac{p-1}{d} \rfloor-1}\ph{j+1}{\lfloor \frac{p-1}{d} \rfloor}
\bigl(\tfrac{1}{dp}+\tfrac{j}{p}\bigr)
\equiv 0 \pmod {p}.
\end{multline*}

\noindent If $p\equiv1 \pmod{d}$ then $\lfloor (d-1)\frac{p-1}{d} \rfloor+1>p-\lfloor \tfrac{p-1}{d} \rfloor - 1$ and the second sum is vacuous. If $p\equiv d-1 \pmod{d}$ then $j=p-\frac{p+1}{d}$ and
\begin{equation*}
\tfrac{1}{dp}+\tfrac{j}{p}=\tfrac{d-1}{d} \in \mathbb{Z}_p^{*}.
\end{equation*}
Therefore, in this case, the second sum $\equiv 0 \pmod{p}$ as $\ph{j+1}{p-\lfloor \tfrac{p-1}{d} \rfloor-1}\ph{j+1}{\lfloor \tfrac{p-1}{d} \rfloor} \in p\mathbb{Z}_p$ for $\lfloor \frac{p-1}{d}\rfloor+1\leq j\leq p-\lfloor \frac{p-1}{d}\rfloor-1$.
\noindent Finally, to complete the proof, we need to show
\begin{align*}
&
\sum_{j=0}^{p-\lfloor \tfrac{p-1}{d} \rfloor - 1} 
\ph{j+1}{p-\lfloor \frac{p-1}{d} \rfloor-1}\ph{j+1}{\lfloor \frac{p-1}{d} \rfloor}
\left[1+j\hspace{1pt} \left( H_{p-\lfloor \frac{p-1}{d} \rfloor-1+j}^{(1)} +  H_{\lfloor \frac{p-1}{d} \rfloor +j}^{(1)}-2 \hspace{1pt} H_{j}^{(1)}\right)\right]
\equiv 0 \pmod {p}.
\end{align*}
\\

\noindent Note we can extend the upper limit of this sum to $j=p-1$ as $\ph{j+1}{p-\lfloor \frac{p-1}{d} \rfloor-1}\ph{j+1}{\lfloor \frac{p-1}{d} \rfloor} \in p^2\mathbb{Z}_p$ for $p-\lfloor \frac{p-1}{d}\rfloor \leq j\leq p-1.$
\noindent Define
\begin{equation*}
P(j):= \frac{d}{dj} \left[ j \hspace{1pt} \ph{j+1}{p-\lfloor \frac{p-1}{d} \rfloor-1}\ph{j+1}{\lfloor \frac{p-1}{d} \rfloor} \right] = \sum_{k=0}^{p-1} a_k j^k \;.
\end{equation*}
Then
\begin{align*}
\sum_{j=0}^{p- 1} 
\ph{j+1}{p-\lfloor \frac{p-1}{d} \rfloor-1}\ph{j+1}{\lfloor \frac{p-1}{d} \rfloor}
\left[1+j\hspace{1pt} \left( H_{p-\lfloor \frac{p-1}{d} \rfloor-1+j}^{(1)} +  H_{\lfloor \frac{p-1}{d} \rfloor +j}^{(1)}-2 \hspace{1pt} H_{j}^{(1)}\right)\right]
= \sum_{j=0}^{p-1} P(j).% = \sum_{j=0}^{p-1} \sum_{k=0}^{p-1} a_k j^k.
\end{align*}
For a positive integer $k$, we have
\begin{equation}\label{exp_sums}
\sum^{p-1}_{j=1} j^k\equiv
\begin{cases}
-1 \pmod {p}& \text{if $(p-1) \vert k$} \; ,\\
\phantom{-}0 \pmod {p}& \text{otherwise} \; .
\end{cases}
\end{equation}
Therefore,
\begin{equation*}
 \sum_{j=0}^{p-1} P(j)=\sum_{j=0}^{p-1} \sum_{k=0}^{p-1} a_k j^k =  p \hspace{1pt}a_0 + \sum_{j=1}^{p-1} \sum_{k=1}^{p-1} a_k j^k \equiv   \sum_{k=1}^{p-1} a_k \sum_{j=1}^{p-1}  j^k
\equiv -a_{p-1} \pmod{p}.
\end{equation*}
By definition of $P(j)$ we get
\begin{equation*}
\ph{j+1}{p-\lfloor \frac{p-1}{d} \rfloor-1}\ph{j+1}{\lfloor \frac{p-1}{d}\rfloor}  = \sum_{k=0}^{p-1} \frac{a_k}{k+1} j^k \;.
\end{equation*}
$P(j)$ is monic so $a_{p-1}=p\equiv 0 \pmod{p}$.
Thus
\begin{align*}
\sum_{j=0}^{p- 1} 
\ph{j+1}{p-\lfloor \frac{p-1}{d} \rfloor-1}\ph{j+1}{\lfloor \frac{p-1}{d} \rfloor}
\left[1+j\hspace{1pt} \left( H_{p-\lfloor \frac{p-1}{d} \rfloor-1+j}^{(1)} +  H_{\lfloor \frac{p-1}{d} \rfloor +j}^{(1)}-2 \hspace{1pt} H_{j}^{(1)}\right)\right]
\equiv 0 \pmod{p}
\end{align*}
as required.
\end{proof}

\begin{proof}[Proof of Theorem \ref{thm_3G}]
The proof is similar to that of Theorem \ref{thm_2G} so we omit many of the details. One easily checks the result for $p<7$. Let $p\geq7$ be a prime. We reduce Definition \ref{def_GFn} modulo $p^2$ and use Propositions \ref{prop_pGamma} and \ref{prop_pGammaCong} to expand the terms involved, taking note of (\ref{for_GammapPlus1}) and (\ref{for_GammapPlus}), to get
\begin{multline}\label{for_3G}
{_{3}G} \left( \tfrac{1}{2}, \tfrac{1}{d} , 1-\tfrac{1}{d} \right)_p
\equiv \sum_{j=0}^{\lfloor \frac{p-1}{d} \rfloor} 
\frac{(-1)^j \hspace{1pt} \biggfp{\frac{1}{2}+j}\biggfp{\frac{1}{d}+j}\biggfp{\frac{d-1}{d}+j}
\left[1+jp\hspace{1pt} A(j)\right]}
{\biggfp{\frac{1}{2}}\biggfp{\frac{1}{d}}\biggfp{\frac{d-1}{d}}{j!}^{3}} \\
+p \left[
\sum_{j=0}^{\lfloor \frac{p-1}{d} \rfloor} 
\frac{(-1)^j \hspace{1pt} \biggfp{\frac{1}{2}+j}\biggfp{\frac{1}{d}+j}\biggfp{\frac{d-1}{d}+j}}
{\biggfp{\frac{1}{2}}\biggfp{\frac{1}{d}}\biggfp{\frac{d-1}{d}}{j!}^{3}} 
\right. \\  \left.
+ \sum_{j=\lfloor \frac{p-1}{d} \rfloor+1}^{\frac{p-1}{2}}  
\frac{(-1)^j \hspace{1pt}\biggfp{\frac{1}{2}+j}\biggfp{\frac{1}{d}+j}\biggfp{\frac{d-1}{d}+j}\bigl(\frac{1}{d}+j\bigr)}
{\biggfp{\frac{1}{2}}\biggfp{\frac{1}{d}}\biggfp{\frac{d-1}{d}}{j!}^{3}}
\right] \pmod {p^2},
\end{multline}
where $$A(j):=G_1\left(\tfrac{1}{2}+j\right)+G_1\left(\tfrac{1}{d}+j\right) + G_1\left(\tfrac{d-1}{d}+j\right) - 3\hspace{1pt} G_1\bigl(1+j\bigr).$$
Applying Lemma \ref{lem_gammapd2} gives us
\begin{multline}\label{for_3F2}
{{_{3}F_2} \left[ \begin{array}{ccc} \frac{1}{2}, & \frac{1}{d}, & 1-\frac{1}{d}\vspace{.05in}\\
\phantom{\frac{1}{d}} & 1, &1 \end{array}
\Big| \; 1 \right]}_{p-1}
\equiv
\sum_{j=0}^{\lfloor \frac{p-1}{d} \rfloor} 
\frac{(-1)^j \hspace{1pt} \biggfp{\frac{1}{2}+j}\biggfp{\frac{1}{d}+j}\biggfp{\frac{d-1}{d}+j}}
{\biggfp{\frac{1}{2}}\biggfp{\frac{1}{d}}\biggfp{\frac{d-1}{d}}{j!}^{3}} 
\\
+p\sum_{j=\lfloor \frac{p-1}{d} \rfloor +1}^{\frac{p-1}{2}} 
\frac{(-1)^j \hspace{1pt} \biggfp{\frac{1}{2}+j}\biggfp{\frac{1}{d}+j}\biggfp{\frac{d-1}{d}+j}\bigl(\frac{1}{d}\bigr)}
{\biggfp{\frac{1}{2}}\biggfp{\frac{1}{d}}\biggfp{\frac{d-1}{d}}{j!}^{3}} 
\pmod {p^2}.
\end{multline}

\noindent Combining (\ref{for_3G}) and (\ref{for_3F2}) it suffices to show
\begin{multline*}
\sum_{j=0}^{\lfloor \frac{p-1}{d} \rfloor} 
\frac{(-1)^j \hspace{1pt}\biggfp{\frac{1}{2}+j}\biggfp{\frac{1}{d}+j}\biggfp{\frac{d-1}{d}+j}\left[1+j\hspace{1pt} A(j)\right]}
{\biggfp{\frac{1}{2}}\biggfp{\frac{1}{d}}\biggfp{\frac{d-1}{d}}{j!}^{3}}
\\
+ \sum_{j=\lfloor \frac{p-1}{d} \rfloor+1}^{\frac{p-1}{2}} 
\frac{(-1)^j \hspace{1pt}\biggfp{\frac{1}{2}+j}\biggfp{\frac{1}{d}+j}\biggfp{\frac{d-1}{d}+j}\bigl(j\bigr)}
{\biggfp{\frac{1}{2}}\biggfp{\frac{1}{d}}\biggfp{\frac{d-1}{d}}{j!}^{3}}
 \equiv 0 \pmod {p}.
\end{multline*}

\noindent We now examine $A(j)$, $\biggfp{\frac{1}{2}+j}\biggfp{\frac{1}{d}+j}\biggfp{\frac{d-1}{d}+j}$ and $\biggfp{\frac{1}{2}}\biggfp{\frac{1}{d}}\biggfp{\frac{d-1}{d}}$. 
Using Corollary \ref{cor_pGammaCong1} (3) and Proposition \ref{prop_pGammaG} (1) we see that
\begin{align*}
A(j)
&\equiv H_{\frac{p-1}{2}+j}^{(1)} + H_{p-\lfloor \frac{p-1}{d} \rfloor-1+j}^{(1)} +  H_{\lfloor \frac{p-1}{d} \rfloor +j}^{(1)}-3 \hspace{1pt} H_{j}^{(1)}-
\begin{cases}
0 & \text{if } 0\leq j\leq \lfloor \frac{p-1}{d}\rfloor, \\[6pt]
\frac{1}{p} & \text{if }  \lfloor \frac{p-1}{d}\rfloor+1\leq j\leq \frac{p-1}{2},\\[6pt]
\frac{2}{p} & \text{if }  \frac{p+1}{2}\leq j\leq p-\lfloor \frac{p-1}{d}\rfloor-1,\\[6pt]
\frac{3}{p} & \text{if }  p-\lfloor \frac{p-1}{d}\rfloor\leq j\leq p-1,\\[6pt]
\end{cases}
\pmod{p}.
\end{align*}
Using Proposition \ref{prop_pGamma} we get that
\begin{multline*}
\biggfp{\tfrac{1}{2}+j}\biggfp{\tfrac{1}{d}+j}\biggfp{\tfrac{d-1}{d}+j}
\equiv \left(\tfrac{p-1}{2}+j\right)! \left(p-\lfloor \tfrac{p-1}{d} \rfloor-1+j\right)! \left(\lfloor \tfrac{p-1}{d} \rfloor +j\right)! \\ \cdot (-1)^{\frac{p+1}{2}+j}
\begin{cases}
1 &\text{if } 0\leq j\leq \lfloor \frac{p-1}{d}\rfloor, \\[6pt]
\frac{1}{p} & \text{if }  \lfloor \frac{p-1}{d}\rfloor+1\leq j\leq \frac{p-1}{2},\\[6pt]
\frac{1}{p^2} & \text{if }  \frac{p+1}{2}\leq j\leq p-\lfloor \frac{p-1}{d}-1\rfloor,\\[6pt]
\frac{1}{p^3} & \text{if }  p-\lfloor \frac{p-1}{d}\rfloor\leq j\leq p-1,\\[6pt]
\end{cases}
\pmod{p},
\end{multline*}
and
\begin{align*}
\biggfp{\tfrac{1}{2}}\biggfp{\tfrac{1}{d}}\biggfp{\tfrac{d-1}{d}}=\pm \biggfp{\tfrac{1}{2}} \equiv \pm \biggfp{\tfrac{p+1}{2}} \equiv \pm (-1)^{\frac{p+1}{2}} \left(\tfrac{p-1}{2}\right)! \pmod{p}.
\end{align*}

\noindent Therefore, as gcd$\left(p,\left(\tfrac{p-1}{2}\right)!\right)=1$, it suffices to prove
\begin{multline*}
\sum_{j=0}^{\tfrac{p-1}{2}} 
\ph{j+1}{\frac{p-1}{2}} \ph{j+1}{p-\lfloor \frac{p-1}{d} \rfloor-1}\ph{j+1}{\lfloor \frac{p-1}{d} \rfloor}\\
\left[1+j\left(H_{\frac{p-1}{2}+j}^{(1)} + H_{p-\lfloor \frac{p-1}{d} \rfloor-1+j}^{(1)} +  H_{\lfloor \frac{p-1}{d} \rfloor +j}^{(1)}
%\right. \right. \\ \left. \left.
-3 \hspace{1pt} H_{j}^{(1)}\right)\right]
\equiv 0 \pmod {p}.
\end{multline*}

\noindent We extend the upper limit of this sum to $j=p-1$ as $\ph{j+1}{\frac{p-1}{2}} \ph{j+1}{p-\lfloor \frac{p-1}{d} \rfloor-1} \ph{j+1}{\lfloor \frac{p-1}{d} \rfloor}\\ \in p^2\mathbb{Z}_p$ for $\frac{p-1}{2} \leq j\leq p-1.$ We define
\begin{equation*}
P(j):= \frac{d}{dj} \left[ j \hspace{1pt} \ph{j+1}{\frac{p-1}{2}} \ph{j+1}{p-\lfloor \frac{p-1}{d} \rfloor-1}\ph{j+1}{\lfloor \frac{p-1}{d} \rfloor} \right] = \sum_{k=0}^{\frac{3(p-1)}{2}} a_k j^k \;
\end{equation*}
and show
\begin{equation*}
\sum_{j=0}^{p- 1} P(j) \equiv 0 \pmod p
\end{equation*}
in similar fashion to the proof of Theorem (\ref{thm_2G}). Noticing that
\begin{multline*}
P(j)=
\ph{j+1}{\frac{p-1}{2}} \ph{j+1}{p-\lfloor \frac{p-1}{d} \rfloor-1}\ph{j+1}{\lfloor \frac{p-1}{d} \rfloor}\\[6pt]
\left[1+j\left(H_{\frac{p-1}{2}+j}^{(1)} + H_{p-\lfloor \frac{p-1}{d} \rfloor-1+j}^{(1)} +  H_{\lfloor \frac{p-1}{d} \rfloor +j}^{(1)}-3 \hspace{1pt} H_{j}^{(1)}\right)\right]
\end{multline*}
completes the proof.
\end{proof}

\begin{proof}[Proof of Theorem \ref{thm_4G1}]
One easily checks the result for $p<7$. Let $p\geq7$ be a prime.
Assume without loss of generality that $\tfrac{1}{d_1}\leq\tfrac{1}{d_2}$. Let $p\equiv a_i \pmod{d_i}$. Then $a_i\in \{1,d_i-1\}$ as $\phi(d_i)\leq2$. Therefore, by Corollary \ref{cor_repGenformula}, $\{rep\bigl(\tfrac{1}{d_i}\bigr),rep\bigl(\tfrac{d_i-1}{d_i}\bigr)\}=\{p-\lfloor \tfrac{p-1}{d_i} \rfloor, \lfloor \tfrac{p-1}{d_i} \rfloor +1\}$, where the exact correspondence between the elements of each set depends on the choice of $p$.
We reduce Definition \ref{def_GFn} modulo $p^3$ and use Proposition \ref{prop_pGamma} to expand the terms involved, noting that $\frac{1}{1-p}\equiv 1+p+p^2 \pmod{p^3}$, to get
\begin{multline*}
 {_{4}G} \left(\tfrac{1}{d_1} , 1-\tfrac{1}{d_1}, \tfrac{1}{d_2} , 1-\tfrac{1}{d_2}\right)_p\\
\shoveleft \equiv
\sum_{j=0}^{\lfloor \frac{p-1}{d_1} \rfloor} 
\frac{\prod_{i=1}^{2} \biggfp{\frac{1}{d_i}+j+jp+jp^2}\biggfp{\frac{d_i-1}{d_i}+j+jp+jp^2}}
{\prod_{i=1}^{2} \biggfp{\frac{1}{d_i}}\biggfp{\frac{d_i-1}{d_i}}
{\biggfp{1+j+jp+jp^2}}^{4}} 
\\ 
\phantom{\equiv\:} \shoveleft  +p \left[
\sum_{j=0}^{\lfloor \frac{p-1}{d_1} \rfloor} 
\frac{\prod_{i=1}^{2} \biggfp{\frac{1}{d_i}+j+jp}\biggfp{\frac{d_i-1}{d_1}+j+jp}}
{\prod_{i=1}^{2} \biggfp{\frac{1}{d_i}}\biggfp{\frac{d_i-1}{d_i}}
{\biggfp{1+j+jp}}^{4}} 
\right. \\  \left.
+ \sum_{j=\lfloor \frac{p-1}{d_1} \rfloor+1}^{\lfloor \frac{p-1}{d_2} \rfloor} 
\frac{\prod_{i=1}^{2} \biggfp{\frac{1}{d_i}+j+jp}\biggfp{\frac{d_i-1}{d_i}+j+jp}
 \bigl(\frac{1}{d_1}+j+jp\bigr)}
{\prod_{i=1}^{2} \biggfp{\frac{1}{d_i}}\biggfp{\frac{d_i-1}{d_i}}
{\biggfp{1+j+jp}}^{4}} 
\right] \\ 
\phantom{\equiv\:} \shoveleft + p^2 \left[
\sum_{j=0}^{\lfloor \frac{p-1}{d_1} \rfloor} 
\frac{\prod_{i=1}^{2} \biggfp{\frac{1}{d_i}+j}\biggfp{\frac{d_i-1}{d_i}+j}}
{\prod_{i=1}^{2} \biggfp{\frac{1}{d_i}}\biggfp{\frac{d_i-1}{d_i}}
{\biggfp{1+j}}^{4}} 
\right. \\ \left.
+\sum_{j=\lfloor \frac{p-1}{d_1} \rfloor+1}^{\lfloor \frac{p-1}{d_2} \rfloor} 
\frac{\prod_{i=1}^{2} \biggfp{\frac{1}{d_i}+j}\biggfp{\frac{d_i-1}{d_i}+j}
\bigl(\frac{1}{d_1}+j\bigr)}
{\prod_{i=1}^{2} \biggfp{\frac{1}{d_i}}\biggfp{\frac{d_i-1}{d_i}}
{\biggfp{1+j}}^{4}} 
\right. \\ \left. 
+\sum_{j=\lfloor \frac{p-1}{d_2} \rfloor+1}^{\lfloor (d_2-1)\frac{p-1}{d_2} \rfloor} 
\frac{\prod_{i=1}^{2}\biggfp{\frac{1}{d_i}+j}\biggfp{\frac{d_i-1}{d_i}+j}
\bigl(\frac{1}{d_1}+j\bigr)\bigl(\frac{1}{d_2}+j\bigr)}
{\prod_{i=1}^{2} \biggfp{\frac{1}{d_i}}\biggfp{\frac{d_i-1}{d_i}}
{\biggfp{1+j}}^{4}} 
\right] 
\pmod{p^3}.
\end{multline*}

\noindent Choose ${m_1} \in \Bigl\{\frac{1}{d_1}, \frac{d_1-1}{d_1}\Bigr\}$ such that $rep\bigl(m_1\bigr) = Max\left(rep\bigl(\frac{1}{d_1}\bigr) ,rep\bigl(\frac{d_1-1}{d_1}\bigr)\right)$ and ${m_2} \in \Bigl\{\frac{1}{d_2}, \frac{d_2-1}{d_2}\Bigr\}$ such that $rep\bigl(m_2\bigr) = Max\left(rep\bigl(\frac{1}{d_2}\bigr) ,rep\bigl(\frac{d_2-1}{d_2}\bigr)\right)$. Let ${m_4}=1-m_1$ and ${m_3}=1-m_2$. Then $rep\bigl(m_4\bigr)\leq rep\bigl(m_3\bigr) \leq rep\bigl(m_2\bigr) \leq rep\bigl(m_1\bigr)$.
By Proposition \ref{prop_pGammaCong} we see that
\begin{align*}
\prod_{i=1}^{2} & \biggfp{\tfrac{1}{d_i}+j+jp+jp^2}\biggfp{\tfrac{d_i-1}{d_i}+j+jp+jp^2}
\equiv
\prod_{k=1}^{4} \biggfp{m_k+j+jp+jp^2}\\
&\qquad \qquad \equiv
\prod_{k=1}^{4} \biggfp{m_k+j}
\left[1+(jp+jp^2)G_1\left(m_k+j\right)+\tfrac{j^2p^2}{2}G_2\left(m_k+j\right)\right]  
\pmod{p^3},
\end{align*}
and
\begin{align*}
\biggfp{1+j+jp+jp^2}^{4}&
&\equiv \biggfp{1+j}^4
\left[1+(jp+jp^2)G_1\left(1+j\right)
+\tfrac{j^2p^2}{2}G_2\left(1+j\right)\right]^4
\pmod{p^3}.
\end{align*}

\noindent Multiplying the numerator and denominator by $${1-4(jp+jp^2)G_1\left(1+j\right)-2j^2p^2\left(G_2\left(1+j\right)-5G_1\left(1+j\right)^2\right)}$$ we get that
\begin{equation*}
\frac{\displaystyle \prod_{k=1}^{4} \left[1+(jp+jp^2)G_1\left(m_k+j\right)+\tfrac{j^2p^2}{2}G_2\left(m_k+j\right)\right] }
{\left[1+(jp+jp^2)G_1\left(1+j\right)
+\frac{j^2p^2}{2}G_2\left(1+j\right)\right]^4}
\equiv
1+(jp+jp^2)A(j)+j^2p^2B(j)
\pmod{p^3},
\end{equation*}
where
$$A(j):=\sum_{k=1}^{4} \Bigl(G_1\left(m_k+j\right)-G_1\left(1+j\right)\Bigr)$$
and
$$B(j):=\frac{1}{2}\left[A(j)^2 - \sum_{k=1}^{4} \Bigl(G_1\left(m_k+j\right)^2-G_2\left(m_k+j\right)
-G_1\left(1+j\right)^2 +G_2\left(1+j\right)\Bigr)\right].$$
We note that both $A(j)$ and $B(j)$ $\in \mathbb{Z}_p$ by Proposition \ref{prop_pGammaCong}. Applying the above and (\ref{for_GammapPlus}) we get
\begin{multline}\label{for_4G1}
{_{4}G} \left(\tfrac{1}{d_1} , 1-\tfrac{1}{d_1}, \tfrac{1}{d_2} , 1-\tfrac{1}{d_2}\right)_p
\equiv
\sum_{j=0}^{\lfloor \frac{p-1}{d_1} \rfloor} 
\frac{\prod_{k=1}^{4} \biggfp{m_k+j}}
{\prod_{k=1}^{4} \biggfp{m_k} j!^4}
\\ 
 +p \left[
\sum_{j=0}^{\lfloor \frac{p-1}{d_1} \rfloor} 
\frac{\prod_{k=1}^{4} \biggfp{m_k+j}}
{\prod_{k=1}^{4} \biggfp{m_k} j!^4}
\Bigl[1+jA(j)\Bigr]
+ \sum_{j=\lfloor \frac{p-1}{d_1} \rfloor+1}^{\lfloor \frac{p-1}{d_2} \rfloor} 
\frac{\prod_{k=1}^{4} \biggfp{m_k+j} }
{\prod_{k=1}^{4} \biggfp{m_k} j!^4}
\Bigl[\tfrac{1}{d_1}+j\Bigr]
\right] \\ 
\shoveleft +p^2 \left[
\sum_{j=0}^{\lfloor \frac{p-1}{d_1} \rfloor} 
\frac{\prod_{k=1}^{4} \biggfp{m_k+j}}
{\prod_{k=1}^{4} \biggfp{m_k} j!^4}
\Bigl[1+2jA(j)+j^2B(j)\Bigr]
\right. \\ \left. \qquad
+\sum_{j=\lfloor \frac{p-1}{d_1} \rfloor+1}^{\lfloor \frac{p-1}{d_2} \rfloor} 
\frac{\prod_{k=1}^{4} \biggfp{m_k+j}}
{\prod_{k=1}^{4} \biggfp{m_k} j!^4}
\Bigl[\Bigl(\tfrac{1}{d_1}+j\Bigr)\Bigl(1+jA(j)\Bigr)+j\Bigr]
\right. \\ \left.  \qquad
+\sum_{j=\lfloor \frac{p-1}{d_2} \rfloor+1}^{\lfloor (d_2-1)\frac{p-1}{d_2} \rfloor} 
\frac{\prod_{k=1}^{4} \biggfp{m_k+j}}
{\prod_{k=1}^{4} \biggfp{m_k} j!^4}
\Bigl[\Bigl(\tfrac{1}{d_1}+j\Bigr)\Bigl(\tfrac{1}{d_2}+j\Bigr)\Bigr]
\right] 
\pmod{p^3}.
\end{multline}

\noindent Lemma \ref{lem_gammapd2} gives us
\begin{multline}\label{for_4F31}
{{_{4}F_3} \left[ \begin{array}{cccc} \frac{1}{d_1}, & 1-\frac{1}{d_1}, & \frac{1}{d_2}, & 1-\frac{1}{d_2}\vspace{.05in}\\
\phantom{\frac{1}{d_1}} & 1, & 1, & 1 \end{array}
\Big| \; 1 \right]}_{p-1}
\equiv
\sum_{j=0}^{\lfloor \frac{p-1}{d_1} \rfloor}
\prod_{k=1}^{4} \frac{\biggfp{m_k+j}}{\biggfp{m_k} j!^4}\\
+p\sum_{j=\lfloor \frac{p-1}{d_1} \rfloor +1}^{\lfloor \frac{p-1}{d_2} \rfloor } 
\prod_{k=1}^{4} \frac{\biggfp{m_k+j}}{\biggfp{m_k} j!^4}
\Bigl(\tfrac{1}{d_1}\Bigr)
+p^2\sum_{j=\lfloor \frac{p-1}{d_2} \rfloor +1}^{p-\lfloor \frac{p-1}{d_2} \rfloor - 1} 
\prod_{k=1}^{4} \frac{\biggfp{m_k+j}}{\biggfp{m_k} j!^4}
\Bigl(\tfrac{1}{d_1 d_2}\Bigr)
\pmod {p^3}.
\end{multline}

\noindent Combining (\ref{for_4G1}) and (\ref{for_4F31}) it suffices to show
\begin{multline}\label{Resid1_4G1}
\sum_{j=0}^{\lfloor \frac{p-1}{d_1} \rfloor} 
\frac{\prod_{k=1}^{4} \biggfp{m_k+j}}
{\prod_{k=1}^{4} \biggfp{m_k} j!^4}
\Bigl[1+jA(j)\Bigr]
%\right. \\ &\left. \qquad
+ \sum_{j=\lfloor \frac{p-1}{d_1} \rfloor+1}^{\lfloor \frac{p-1}{d_2} \rfloor} 
\frac{\prod_{k=1}^{4} \biggfp{m_k+j} }
{\prod_{k=1}^{4} \biggfp{m_k} j!^4}
\Bigl[j\Bigr]
 \\ 
\shoveleft +p \left[
\sum_{j=0}^{\lfloor \frac{p-1}{d_1} \rfloor} 
\frac{\prod_{k=1}^{4} \biggfp{m_k+j}}
{\prod_{k=1}^{4} \biggfp{m_k} j!^4}
\Bigl[1+2jA(j)+j^2B(j)\Bigr]
\right. \\ \left.
+\sum_{j=\lfloor \frac{p-1}{d_1} \rfloor+1}^{\lfloor \frac{p-1}{d_2} \rfloor} 
\frac{\prod_{k=1}^{4} \biggfp{m_k+j}}
{\prod_{k=1}^{4} \biggfp{m_k} j!^4}
\Bigl[\Bigl(\tfrac{1}{d_1}+j\Bigr)\Bigl(1+jA(j)\Bigr)+j\Bigr]
\right. \\ \left.  \qquad
+\sum_{j=\lfloor \frac{p-1}{d_2} \rfloor+1}^{\lfloor (d_2-1)\frac{p-1}{d_2} \rfloor} 
\frac{\prod_{k=1}^{4} \biggfp{m_k+j}}
{\prod_{k=1}^{4} \biggfp{m_k} j!^4}
\Bigl[j^2+j \Bigl(\tfrac{1}{d_1}+\tfrac{1}{d_2}\Bigr)\Bigr]
\right. \\ \left. 
-\sum_{j=\lfloor (d_2-1)\frac{p-1}{d_2} \rfloor+1}^{p-\lfloor \tfrac{p-1}{d_2} \rfloor - 1}
\frac{\prod_{k=1}^{4} \biggfp{m_k+j}}
{\prod_{k=1}^{4} \biggfp{m_k} j!^4}
\Bigl[\Bigl(\tfrac{1}{d_1}\Bigr)\Bigl(\tfrac{1}{d_2}\Bigr)\Bigr]
\right] 
\equiv s(p) 
\pmod{p^2}.
\end{multline}
If $p\equiv 1\pmod{d_2}$ then the last sum above is vacuous. If $p\not\equiv 1 \pmod{d_2}$ then the limits of summation are equal and the sum is over one value of $j=\lfloor (d_2-1)\frac{p-1}{d_2} \rfloor+1=p-\lfloor \tfrac{p-1}{d_2} \rfloor - 1$.

We now examine $A(j)$, $B(j)$, $\prod_{k=1}^{4} \biggfp{m_k+j}$ and $\prod_{k=1}^{4} \biggfp{m_k}$ modulo $p$. 
Using Corollary \ref{cor_pGammaCong1} (3) and Proposition \ref{prop_pGammaG} (1) we see that
\begin{align*}
A(j)
&\equiv \sum_{i=1}^{2} \Bigl(H_{p-\lfloor \frac{p-1}{d_i} \rfloor-1+j}^{(1)} +  H_{\lfloor \frac{p-1}{d_i} \rfloor +j}^{(1)}-2 \hspace{1pt} H_{j}^{(1)}\Bigr)-
\begin{cases}
0 & \text{if } 0\leq j\leq \lfloor \frac{p-1}{d_1}\rfloor, \\[6pt]
\frac{1}{p} & \text{if }  \lfloor \frac{p-1}{d_1}\rfloor+1\leq j\leq \lfloor \frac{p-1}{d_2}\rfloor,\\[6pt]
\frac{2}{p} & \text{if } \lfloor \frac{p-1}{d_2}\rfloor+1\leq j\leq p-\lfloor \frac{p-1}{d_2}\rfloor-1,\\[6pt]
\frac{3}{p} & \text{if }  p-\lfloor \frac{p-1}{d_2}\rfloor\leq j\leq p-\lfloor \frac{p-1}{d_1}\rfloor-1,\\[6pt]
\frac{4}{p} & \text{if }  p-\lfloor \frac{p-1}{d_1}\rfloor\leq j\leq p-1,
\end{cases}
\pmod{p}.
\end{align*}

\noindent Similarly, using Corollary \ref{cor_pGammaCong1} (3), (4) and Proposition \ref{prop_pGammaG} (2), we have that
\begin{align*}
&\sum_{k=1}^{4}  \Bigl(G_1\left(m_k+j\right)^2-G_2\left(m_k+j\right)
-G_1\left(1+j\right)^2 +G_2\left(1+j\right)\Bigr)\\
&\equiv \sum_{i=1}^{2} \Bigl(H_{p-\lfloor \frac{p-1}{d_i} \rfloor-1+j}^{(2)} +  H_{\lfloor \frac{p-1}{d_i} \rfloor +j}^{(2)}-2 \hspace{1pt} H_{j}^{(2)}\Bigr)+
\begin{cases}
0 & \text{if } 0\leq j\leq \lfloor \frac{p-1}{d_1}\rfloor, \\[6pt]
\frac{1}{p^2} &\text{if }  \lfloor \frac{p-1}{d_1}\rfloor+1\leq j\leq \lfloor \frac{p-1}{d_2}\rfloor,\\[6pt]
\frac{2}{p^2} & \text{if }  \lfloor \frac{p-1}{d_2}\rfloor+1\leq j\leq p-\lfloor \frac{p-1}{d_2}\rfloor-1,\\[6pt]
\frac{3}{p^2} & \text{if }  p-\lfloor \frac{p-1}{d_2}\rfloor\leq j\leq p-\lfloor \frac{p-1}{d_1}\rfloor-1,\\[6pt]
\frac{4}{p^2} & \text{if }  p-\lfloor \frac{p-1}{d_1}\rfloor\leq j\leq p-1,\\[6pt]
\end{cases}
\pmod{p}.
\end{align*}

\noindent Using Proposition \ref{prop_pGamma} we get
\begin{align*}
\prod_{k=1}^{4} & \biggfp{m_k+j}\\
&\equiv \prod_{k=1}^{4} \left(rep(m_k)+j-1\right)! (-1)^{rep(m_k)+j}
\begin{cases}
1 &\text{if } 0\leq j \leq rep(m_{5-k})-1,\\
\frac{1}{p} & \text{if } rep(m_{5-k})\leq j \leq p-1,
\end{cases}\\[6pt]
&\equiv \prod_{i=1}^{2}\Bigl( \left(p-\lfloor \tfrac{p-1}{d_i} \rfloor-1+j\right)! \left(\lfloor \tfrac{p-1}{d_i} \rfloor +j\right)! \Bigr)
\begin{cases}
1 & \text{if } 0\leq j\leq \lfloor \frac{p-1}{d_1}\rfloor, \\[6pt]
\frac{1}{p} & \text{if }  \lfloor \frac{p-1}{d_1}\rfloor+1\leq j\leq \lfloor \frac{p-1}{d_2}\rfloor,\\[6pt]
\frac{1}{p^2} & \text{if } \lfloor \frac{p-1}{d_2}\rfloor+1\leq j\leq p-\lfloor \frac{p-1}{d_2}\rfloor-1,\\[6pt]
\frac{1}{p^3} & \text{if }  p-\lfloor \frac{p-1}{d_2}\rfloor\leq j\leq p-\lfloor \frac{p-1}{d_1}\rfloor-1,\\[6pt]
\frac{1}{p^4} & \text{if }  p-\lfloor \frac{p-1}{d_1}\rfloor\leq j\leq p-1,\\[6pt]
\end{cases}
\pmod{p}
\end{align*}
and
\begin{align*}
\prod_{k=1}^{4} &\biggfp{m_k} = \prod_{i=1}^{2} \biggfp{\tfrac{1}{d_i}}\biggfp{\tfrac{d_i-1}{d_i}}
=\pm1\;.
\end{align*}

\noindent We now consider
\begin{multline*}
X(j):=\sum_{j=0}^{\lfloor \frac{p-1}{d_1} \rfloor} 
\frac{\prod_{k=1}^{4} \biggfp{m_k+j}}
{\prod_{k=1}^{4} \biggfp{m_k} j!^4}
\Bigl[1+2jA(j)+j^2B(j)\Bigr]
\\
+\sum_{j=\lfloor \frac{p-1}{d_1} \rfloor+1}^{\lfloor \frac{p-1}{d_2} \rfloor} 
\frac{\prod_{k=1}^{4} \biggfp{m_k+j}}
{\prod_{k=1}^{4} \biggfp{m_k} j!^4}
\Bigl[2j+j^2A(j)\Bigr]
 %\\ 
+\sum_{j=\lfloor \frac{p-1}{d_2} \rfloor+1}^{p-\lfloor \tfrac{p-1}{d_2} \rfloor - 1} 
\frac{\prod_{k=1}^{4} \biggfp{m_k+j}}
{\prod_{k=1}^{4} \biggfp{m_k} j!^4}
\Bigl[j^2\Bigr]
\pmod{p}.
\end{multline*}

\noindent We will show that $X(j)\equiv 0 \pmod{p}$. Substituting for $A(j)$, $B(j)$, $\prod_{k=1}^{4} \biggfp{m_k+j}$ and $\prod_{k=1}^{4} \biggfp{m_k}$ modulo $p$ yields
\begin{multline*}
\pm X(j)
\equiv
\sum_{j=0}^{\lfloor \frac{p-1}{d_1} \rfloor} 
\Biggl[\prod_{i=1}^{2} \ph{j+1}{p-\lfloor \frac{p-1}{d_i} \rfloor-1}\ph{j+1}{\lfloor \frac{p-1}{d_i} \rfloor} \Biggr]
\Biggl[1+2j\sum_{i=1}^{2} \Bigl(H_{p-\lfloor \frac{p-1}{d_i} \rfloor-1+j}^{(1)} +  H_{\lfloor \frac{p-1}{d_i} \rfloor +j}^{(1)}-2 \hspace{1pt} H_{j}^{(1)}\Bigr)
\\
+\frac{j^2}{2} \Biggl[\left(\sum_{i=1}^{2} \Bigl(H_{p-\lfloor \frac{p-1}{d_i} \rfloor-1+j}^{(1)} +  H_{\lfloor \frac{p-1}{d_i} \rfloor +j}^{(1)}-2 \hspace{1pt} H_{j}^{(1)}\Bigr)\right)^2
-\sum_{i=1}^{2} \Bigl(H_{p-\lfloor \frac{p-1}{d_i} \rfloor-1+j}^{(2)} +  H_{\lfloor \frac{p-1}{d_i} \rfloor +j}^{(2)}-2 \hspace{1pt} H_{j}^{(2)}\Bigr)
\Biggr]
\Biggr]
\\
+\sum_{j=\lfloor \frac{p-1}{d_1} \rfloor+1}^{\lfloor \frac{p-1}{d_2} \rfloor}
\Biggl[\prod_{i=1}^{2} \ph{j+1}{p-\lfloor \frac{p-1}{d_i} \rfloor-1}\ph{j+1}{\lfloor \frac{p-1}{d_i} \rfloor} \Biggr]
\Biggl[ \frac{1}{p} \Biggr]
\Biggl[2j+j^2 \sum_{i=1}^{2} \Bigl(H_{p-\lfloor \frac{p-1}{d_i} \rfloor-1+j}^{(1)} +  H_{\lfloor \frac{p-1}{d_i} \rfloor +j}^{(1)}-2 \hspace{1pt} H_{j}^{(1)}\Bigr)
\\
\shoveright {-\frac{j^2}{p} \Biggr]
%\\&
+\sum_{j=\lfloor \frac{p-1}{d_2} \rfloor+1}^{p-\lfloor \tfrac{p-1}{d_2} \rfloor - 1} 
\Biggl[\prod_{i=1}^{2} \ph{j+1}{p-\lfloor \frac{p-1}{d_i} \rfloor-1}\ph{j+1}{\lfloor \frac{p-1}{d_i} \rfloor} \Biggr]
\Biggl[ \frac{1}{p^2} \Biggr]
\Biggl[ j^2 \Biggr]} \\
\equiv
\sum_{j=0}^{p-\lfloor \frac{p-1}{d_2} \rfloor - 1}  
\Biggl[\prod_{i=1}^{2} \ph{j+1}{p-\lfloor \frac{p-1}{d_i} \rfloor-1}\ph{j+1}{\lfloor \frac{p-1}{d_i} \rfloor} \Biggr]
\Biggl[1+2j\sum_{i=1}^{2} \Bigl(H_{p-\lfloor \frac{p-1}{d_i} \rfloor-1+j}^{(1)} +  H_{\lfloor \frac{p-1}{d_i} \rfloor +j}^{(1)}-2 \hspace{1pt} H_{j}^{(1)}\Bigr)
\\
+\frac{j^2}{2} \Biggl[\left(\sum_{i=1}^{2} \Bigl(H_{p-\lfloor \frac{p-1}{d_i} \rfloor-1+j}^{(1)} +  H_{\lfloor \frac{p-1}{d_i} \rfloor +j}^{(1)}-2 \hspace{1pt} H_{j}^{(1)}\Bigr)\right)^2
-\sum_{i=1}^{2} \Bigl(H_{p-\lfloor \frac{p-1}{d_i} \rfloor-1+j}^{(2)} +  H_{\lfloor \frac{p-1}{d_i} \rfloor +j}^{(2)}-2 \hspace{1pt} H_{j}^{(2)}\Bigr)
\Biggr]
\Biggr]\\[6pt]
\pmod{p}.
\end{multline*}

\noindent Note we can extend the upper limit of this sum to $j=p-1$ as $\prod_{i=1}^{2} \ph{j+1}{p-\lfloor \frac{p-1}{d_i} \rfloor-1}\ph{j+1}{\lfloor \frac{p-1}{d_i} \rfloor}\\
 \in p^3\mathbb{Z}_p$ for $p-\lfloor \frac{p-1}{d_2}\rfloor \leq j\leq p-1$. Define
\begin{equation*}
P(j):= \frac{d}{dj} \left[ j \hspace{1pt} \prod_{i=1}^{2} \ph{j+1}{p-\lfloor \frac{p-1}{d_i} \rfloor-1}\ph{j+1}{\lfloor \frac{p-1}{d_i} \rfloor} \right] = \sum_{k=0}^{2(p-1)} a_k j^k \;.
\end{equation*}
and
\begin{equation*}
Q(j):= \frac{j}{2} \frac{d^2}{dj^2} \left[ j \hspace{1pt} \prod_{i=1}^{2} \ph{j+1}{p-\lfloor \frac{p-1}{d_i} \rfloor-1}\ph{j+1}{\lfloor \frac{p-1}{d_i} \rfloor} \right] = \sum_{k=0}^{2(p-1)} b_k j^k \;.
\end{equation*}
Then
\begin{multline*}
P(j)=
\Biggl[\prod_{i=1}^{2} \ph{j+1}{p-\lfloor \frac{p-1}{d_i} \rfloor-1}\ph{j+1}{\lfloor \frac{p-1}{d_i} \rfloor} \Biggr]
\Biggl[1+j\sum_{i=1}^{2} \Bigl(H_{p-\lfloor \frac{p-1}{d_i} \rfloor-1+j}^{(1)} +  H_{\lfloor \frac{p-1}{d_i} \rfloor +j}^{(1)}-2 \hspace{1pt} H_{j}^{(1)}\Bigr)\Biggr]
\end{multline*}
and
\begin{multline*}
Q(j)=
\Biggl[\prod_{i=1}^{2} \ph{j+1}{p-\lfloor \frac{p-1}{d_i} \rfloor-1}\ph{j+1}{\lfloor \frac{p-1}{d_i} \rfloor} \Biggr]
\Biggl[j\sum_{i=1}^{2} \Bigl(H_{p-\lfloor \frac{p-1}{d_i} \rfloor-1+j}^{(1)} +  H_{\lfloor \frac{p-1}{d_i} \rfloor +j}^{(1)}-2 \hspace{1pt} H_{j}^{(1)}\Bigr)
\\
+\frac{j^2}{2} \Biggl[\left(\sum_{i=1}^{2} \Bigl(H_{p-\lfloor \frac{p-1}{d_i} \rfloor-1+j}^{(1)} +  H_{\lfloor \frac{p-1}{d_i} \rfloor +j}^{(1)}-2 \hspace{1pt} H_{j}^{(1)}\Bigr)\right)^2
-\sum_{i=1}^{2} \Bigl(H_{p-\lfloor \frac{p-1}{d_i} \rfloor-1+j}^{(2)} +  H_{\lfloor \frac{p-1}{d_i} \rfloor +j}^{(2)}-2 \hspace{1pt} H_{j}^{(2)}\Bigr)
\Biggr]
\Biggr].\\
\end{multline*}

\noindent Applying (\ref{exp_sums}) yields
\begin{equation*}
 \sum_{j=0}^{p-1} P(j)=\sum_{j=0}^{p-1} \sum_{k=0}^{2(p-1)} a_k j^k =  p \hspace{1pt}a_0 + \sum_{j=1}^{p-1} \sum_{k=1}^{2(p-1)} a_k j^k \equiv   \sum_{k=1}^{2(p-1)} a_k \sum_{j=1}^{p-1}  j^k
\equiv -a_{p-1} -a_{2p-2}\pmod{p}.
\end{equation*}
By definition of $P(j)$ we get that
\begin{equation*}
\prod_{i=1}^{2} \ph{j+1}{p-\lfloor \frac{p-1}{d_i} \rfloor-1}\ph{j+1}{\lfloor \frac{p-1}{d_i}\rfloor}  = \sum_{k=0}^{2(p-1)} \frac{a_k}{k+1} j^k \;.
\end{equation*}
$P(j)$ is a monic polynomial with integer coefficients so $a_{2p-2}=2p-1$ and $p\mid a_{p-1}$. Therefore
$a_{2p-2}\equiv -1\pmod{p}$, $a_{p-1}\equiv 0 \pmod{p}$ and
\begin{equation*}
\sum_{j=0}^{p-1} P(j) \equiv 1 \pmod{p}.
\end{equation*}

\noindent Similarly
\begin{equation*}
 \sum_{j=0}^{p-1} Q(j)=\sum_{j=0}^{p-1} \sum_{k=0}^{2(p-1)} b_k j^k =  p \hspace{1pt}a_0 + \sum_{j=1}^{p-1} \sum_{k=1}^{2(p-1)} b_k j^k \equiv   \sum_{k=1}^{2(p-1)} b_k \sum_{j=1}^{p-1}  j^k
\equiv -b_{p-1} -b_{2p-2}\pmod{p}.
\end{equation*}
By definition of $Q(j)$ we get that
\begin{equation*}
\prod_{i=1}^{2} \ph{j+1}{p-\lfloor \frac{p-1}{d_i} \rfloor-1}\ph{j+1}{\lfloor \frac{p-1}{d_i}\rfloor}  = 2\sum_{k=0}^{2(p-1)} \frac{b_k}{(k)(k+1)} j^k \;.
\end{equation*}
$Q(j)$ is a monic polynomial with integer coefficients so $2 a_{2p-2}=(2p-2)(2p-1)$ and $p\mid a_{p-1}$. Therefore
$a_{2p-2}\equiv 1\pmod{p}$, $a_{p-1}\equiv 0 \pmod{p}$ and
\begin{equation*}
\sum_{j=0}^{p-1} Q(j) \equiv -1 \pmod{p}.
\end{equation*}
So
\begin{equation}\label{for_X0}
X(j)=\pm \left(\sum_{j=1}^{p-1} P(j)+Q(j) \right)\equiv 0 \pmod{p}.
\end{equation}

\noindent Accounting for (\ref{for_X0}) in (\ref{Resid1_4G1}) means we need only show
\begin{multline*}
\sum_{j=0}^{\lfloor \frac{p-1}{d_1} \rfloor} 
\frac{\prod_{k=1}^{4} \biggfp{m_k+j}}
{\prod_{k=1}^{4} \biggfp{m_k} j!^4}
\Bigl[1+jA(j)\Bigr]
+ \sum_{j=\lfloor \frac{p-1}{d_1} \rfloor+1}^{\lfloor \frac{p-1}{d_2} \rfloor} 
\frac{\prod_{k=1}^{4} \biggfp{m_k+j} }
{\prod_{k=1}^{4} \biggfp{m_k} j!^4}
\Bigl[j\Bigr]
 \\ \quad
\shoveleft +p \left[
\sum_{j=\lfloor \frac{p-1}{d_1} \rfloor+1}^{\lfloor \frac{p-1}{d_2} \rfloor} 
\frac{\prod_{k=1}^{4} \biggfp{m_k+j}}
{\prod_{k=1}^{4} \biggfp{m_k} j!^4}
\Bigl[\Bigl(\tfrac{1}{d_1}\Bigr)\Bigl(1+jA(j)\Bigr)\Bigr]
\right. \\ \left. 
+\sum_{j=\lfloor \frac{p-1}{d_2} \rfloor+1}^{\lfloor (d_2-1)\frac{p-1}{d_2} \rfloor} 
\frac{\prod_{k=1}^{4} \biggfp{m_k+j}}
{\prod_{k=1}^{4} \biggfp{m_k} j!^4}
\Bigl[j \Bigl(\tfrac{1}{d_1}+\tfrac{1}{d_2}\Bigr)\Bigr]
\right. \\ \left. 
-\sum_{j=\lfloor (d_2-1)\frac{p-1}{d_2} \rfloor+1}^{p-\lfloor \frac{p-1}{d_2} \rfloor - 1}
\frac{\prod_{k=1}^{4} \biggfp{m_k+j}}
{\prod_{k=1}^{4} \biggfp{m_k} j!^4}
\Bigl[\Bigl(\tfrac{1}{d_1}\Bigr)\Bigl(\tfrac{1}{d_2}\Bigr)+j^2\Bigr]
\right] 
\equiv s(p) 
\pmod{p^2}.
\end{multline*}

\noindent We now convert these remaining terms to an expression involving binomial coefficients and harmonic sums and then use the results of Section 3 to simplify them. First we define 
\begin{align*}
Bin \hspace{1pt}(j) &:=  \bin{rep(m_1)-1+j}{j} \bin{rep(m_1)-1}{j}  \bin{rep(m_2)-1+j}{j} \bin{rep(m_2)-1}{j},\\[15pt]
\mathcal{H}(j)&:= H_{rep(m_1)-1+j}^{(1)}+H_{rep(m_1)-1-j}^{(1)}+H_{rep(m_2)-1+j}^{(1)}+H_{rep(m_2)-1-j}^{(1)}-4H_j^{(1)},\\[15pt]
\mathcal{A}(j)&:= \Bigl(rep(m_1)-m_1\Bigr)\left(H_{rep(m_1)-1+j}^{(1)}-H_{rep(m_{4})-1+j}^{(1)}\right)\\[6pt]
&  \qquad \qquad \qquad \qquad\qquad \qquad
+ \Bigl(rep(m_2)-m_2\Bigr)\left(H_{rep(m_2)-1+j}^{(1)}-H_{rep(m_{3})-1+j}^{(1)}\right),\\
\intertext{and}
\mathcal{B}(j)&:= \Bigl(rep(m_1)-m_1\Bigr)\left(H_{rep(m_1)-1+j}^{(2)}-H_{rep(m_{4})-1+j}^{(2)}\right)\\[6pt]
&  \qquad \qquad \qquad \qquad\qquad \qquad
+ \Bigl(rep(m_2)-m_2\Bigr)\left(H_{rep(m_2)-1+j}^{(2)}-H_{rep(m_{3})-1+j}^{(2)}\right)\;.\\[3pt]
\end{align*}

\noindent By Lemma \ref{lem_ProdGammap2} we see that for $j<rep(\frac{m_2}{d})=p-\lfloor \tfrac{p-1}{d_2} \rfloor$,
\begin{align*}
\frac{\prod_{k=1}^{4} \biggfp{m_k+j}}{\prod_{k=1}^{4}\biggfp{m_k} j!^4}
&\equiv 
\Biggl[ \prod_{i=1}^{2} \bin{rep(m_i)-1+j}{j} \bin{rep(m_i)-1}{j}\Biggr]\Biggl[\delta^{\prime}\Biggr]\\
&\quad \cdot \left[1-\sum_{i=1}^{2}\Bigl(rep(m_i)-m_i\Bigr)\left(H_{rep(m_i)-1+j}^{(1)}-H_{rep(m_{5-i})-1+j}^{(1)}-\delta_i\right)\right]\\
&\equiv Bin \hspace{1pt}(j) \Biggl[\delta^{\prime}\Biggr] \left[1-\mathcal{A}(j) + \sum_{i=1}^{2}\Bigl(rep(m_i)-m_i\Bigr) \delta_i\right]
\pmod{p^2},
\end{align*}
where 
\begin{align*}
\delta^{\prime}=
\begin{cases}
1 & \text{if } 0\leq j \leq rep(m_4)-1,\\
\frac{1}{p} & \text{if } rep(m_4) \leq j \leq rep(m_3)-1,\\
\frac{1}{p^2} & \text{if } rep(m_3) \leq j < rep(m_2),
\end{cases}
& \qquad and \qquad
\delta_i=
\begin{cases}
0 & \text{if } 0\leq j \leq rep(m_{5-i})-1,\\
\frac{1}{p} & \text{if } rep(m_{5-i}) \leq j < rep(m_i).
\end{cases}
\end{align*}

\noindent Again for $j<rep(\frac{m_2}{d})=p-\lfloor \tfrac{p-1}{d_2} \rfloor$ Lemma \ref{lem_SumGammap2} gives us 
\begin{align*}
A(j):&=\sum_{k=1}^{4} \Bigl(G_1\left(m_k+j\right)-G_1\left(1+j\right)\Bigr)\\
&\equiv \sum_{i=1}^{2} 
\left( H_{rep(m_i)-1+j}^{(1)}+H_{rep(m_i)-1-j}^{(1)}-2\hspace{1pt} H_{j}^{(1)}\right)-\alpha^{\prime}\\
&\qquad\qquad+\sum_{i=1}^{2} \left(rep(m_i)-m_i\right) 
\left(H_{rep(m_i)-1+j}^{(2)}-H_{rep(m_{5-i})-1+j}^{(2)}-\beta_{i}\right)\\
&\equiv \mathcal{H}(j) -\alpha^{\prime} + \mathcal{B}(j) - \sum_{i=1}^{2} \left(rep(m_i)-m_i\right) \beta_{i}
\pmod{p^2}
\end{align*}
where 
\begin{align*}
\alpha^{\prime}=
\begin{cases}
0 & \text{if } 0\leq j \leq rep(m_4)-1,\\
\frac{1}{p} & \text{if } rep(m_4) \leq j \leq rep(m_3)-1,\\
\frac{2}{p} & \text{if } rep(m_3) \leq j < rep(m_2),\;
\end{cases}
& \qquad and \qquad
\beta_{i}=
\begin{cases}
0 & \text{if } 0\leq j \leq rep(m_{5-i})-1,\\
\frac{1}{p^2} & \text{if } rep(m_{5-i}) \leq j < rep(m_i).
\end{cases}
\end{align*}

\noindent Reducing the above results modulo $p$ we see that for $j<rep(\frac{m_2}{d})=p-\lfloor \tfrac{p-1}{d_2} \rfloor$,
\begin{align*}
\frac{\prod_{k=1}^{4} \biggfp{m_k+j}}{\prod_{k=1}^{4}\biggfp{m_k} j!^4}
&\equiv Bin \hspace{1pt}(j) \left[\delta^{\prime}\right]
\pmod{p}
\end{align*}
and
\begin{align*}
A(j) \equiv \mathcal{H}(j) -\alpha^{\prime} 
\pmod{p}\;.
\end{align*}

\noindent Thus it suffices to show
\begin{multline*}
\sum_{j=0}^{\lfloor \frac{p-1}{d_1} \rfloor} 
Bin \hspace{1pt}(j) \Bigl[1-\mathcal{A}(j) \Bigr]
\Bigl[1+j \mathcal{H}(j)  + j \mathcal{B}(j) \Bigr]
+ \sum_{j=\lfloor \frac{p-1}{d_1} \rfloor+1}^{\lfloor \frac{p-1}{d_2} \rfloor} 
Bin \hspace{1pt}(j) \Bigl[\tfrac{j}{p}\Bigr] \left[1-\mathcal{A}(j) + \tfrac{rep(m_1)-m_1}{p}\right]
 \\  \qquad
+p \left[
\sum_{j=\lfloor \frac{p-1}{d_1} \rfloor+1}^{\lfloor \frac{p-1}{d_2} \rfloor} 
Bin \hspace{1pt}(j) \Bigl[\tfrac{1}{p}\Bigr]
\Bigl[\Bigl(\tfrac{1}{d_1}\Bigr)\Bigl(1+j\mathcal{H}(j)-\tfrac{j}{p}\Bigr)\Bigr]
+\sum_{j=\lfloor \frac{p-1}{d_2} \rfloor+1}^{\lfloor (d_2-1)\frac{p-1}{d_2} \rfloor} 
Bin \hspace{1pt}(j) \Bigl[\tfrac{1}{p^2}\Bigr]
\Bigl[j \Bigl(\tfrac{1}{d_1}+\tfrac{1}{d_2}\Bigr)\Bigr]
\right. \\ \left.  \qquad
-\sum_{j=\lfloor (d_2-1)\frac{p-1}{d_2} \rfloor+1}^{p-\lfloor \frac{p-1}{d_2} \rfloor - 1}
Bin \hspace{1pt}(j) \Bigl[\tfrac{1}{p^2}\Bigr]
\Bigl[\Bigl(\tfrac{1}{d_1}\Bigr)\Bigl(\tfrac{1}{d_2}\Bigr)+j^2\Bigr]
\right] 
\equiv s(p) 
\pmod{p^2}.
\end{multline*}

\noindent We now consider
\begin{equation*}
\sum_{j=0}^{p-\lfloor \frac{p-1}{d_2} \rfloor - 1}
Bin \hspace{1pt}(j) \Bigl[1+j \mathcal{H}(j) \Bigr]
+\sum_{j=0}^{p-\lfloor \frac{p-1}{d_2} \rfloor - 1}
Bin \hspace{1pt}(j)
\Bigl[ j \mathcal{B}(j) -\mathcal{A}(j) - j \mathcal{A}(j) \mathcal{H}(j) \Bigr]
\pmod{p^2}.\\
\end{equation*}

\noindent For $0\leq j \leq rep(m_4)-1=\lfloor \frac{p-1}{d_1} \rfloor$ we see that
\begin{multline*}
Bin \hspace{1pt}(j) \Bigl[1-\mathcal{A}(j) \Bigr]
\Bigl[1+j \mathcal{H}(j)  + j \mathcal{B}(j) \Bigr]\\[6pt]
\equiv
Bin \hspace{1pt}(j) \Bigl[1+j \mathcal{H}(j)+ j \mathcal{B}(j) -\mathcal{A}(j) - j \mathcal{A}(j) \mathcal{H}(j) \Bigr]
\pmod{p^2},
\end{multline*}
as $ \mathcal{A}(j)  \mathcal{B}(j) \in p^2 \mathbb{Z}_p$ for such $j$.
\noindent For $\lfloor \frac{p-1}{d_1} \rfloor+1=rep(m_4) \leq j \leq rep(m_3)-1=\lfloor \frac{p-1}{d_2} \rfloor$ we have
\begin{align*}
Bin \hspace{1pt}(j)& \Bigl[1+j \mathcal{H}(j)+ j \mathcal{B}(j) -\mathcal{A}(j) - j \mathcal{A}(j) \mathcal{H}(j) \Bigr] \\[9pt]
&\equiv
Bin \hspace{1pt}(j) \Bigl[1+j \mathcal{H}(j) +j \tfrac{rep(m_1)-m_1}{p^2} - \tfrac{rep(m_1)-m_1}{p}-
j\left(\tfrac{1}{p} \left(\mathcal{A}(j)- \tfrac{rep(m_1)-m_1}{p}\right)
\right.\\[9pt] &\left. \qquad \qquad \qquad \qquad \qquad \qquad \qquad \qquad  \qquad \qquad
+\tfrac{rep(m_1)-m_1}{p} \left(\mathcal{H}(j)-\tfrac{1}{p}\right) +\tfrac{rep(m_1)-m_1}{p^2}\right)\Bigr]\\[9pt]
&\equiv
Bin \hspace{1pt}(j) \Bigl[\Bigl(1+j \mathcal{H}(j)\Bigr)\left(1- \tfrac{rep(m_1)-m_1}{p}\right) +2j \tfrac{rep(m_1)-m_1}{p^2} -
\tfrac{j}{p} \mathcal{A}(j)\Bigr]\\[9pt]
&\equiv
Bin \hspace{1pt}(j) \Bigl[\Bigl(1+j \mathcal{H}(j)\Bigr)\left(\tfrac{1}{d_1}\right) +\tfrac{2j}{p} \left(1-\tfrac{1}{d_1}\right) -
\tfrac{j}{p} \mathcal{A}(j)\Bigr]\\[9pt]
&\equiv
Bin \hspace{1pt}(j) \Bigl[\Bigl(\tfrac{1}{d_1}\Bigr)\Bigl(1+j\mathcal{H}(j)-\tfrac{j}{p}\Bigr)\Bigr]+
Bin \hspace{1pt}(j) \Bigl[\Bigl(\tfrac{j}{p}\Bigr)\Bigl(2-\tfrac{1}{d_1}-\mathcal{A}(j)\Bigr)\Bigr]\\[9pt]
&\equiv
Bin \hspace{1pt}(j) \Bigl[\Bigl(\tfrac{1}{d_1}\Bigr)\Bigl(1+j\mathcal{H}(j)-\tfrac{j}{p}\Bigr)\Bigr]+
Bin \hspace{1pt}(j) \Bigl[\Bigl(\tfrac{j}{p}\Bigr)\Bigl(1-\mathcal{A}(j)+\tfrac{rep(m_1)-m_1}{p}\Bigr)\Bigr]
\pmod{p^2}.
\end{align*}

\noindent Similarly, for $\lfloor \frac{p-1}{d_2} \rfloor+1=rep(m_3) \leq j \leq rep(m_2)-1=p-\lfloor \tfrac{p-1}{d_2} \rfloor - 1$, 
\begin{align*}
Bin \hspace{1pt}(j) &\Bigl[1+j \mathcal{H}(j)+ j \mathcal{B}(j) -\mathcal{A}(j) - j \mathcal{A}(j) \mathcal{H}(j) \Bigr] \\[9pt]
&\equiv
Bin \hspace{1pt}(j) \Bigl[0+\tfrac{2j}{p}+j\tfrac{rep(m_1)-m_1+rep(m_2)-m_2}{p^2}-0-j\left(\tfrac{2\left(rep(m_1)-m_1\right)}{p^2}+\tfrac{2\left(rep(m_2)-m_2\right)}{p^2}\right)\Bigr]\\[9pt]
&\equiv
Bin \hspace{1pt}(j) \Bigl[\tfrac{2j}{p}-\tfrac{j}{p^2}\left(rep(m_1)-m_1+rep(m_2)-m_2\right)\Bigr]\\[9pt]
&\equiv
Bin \hspace{1pt}(j) \Bigl[\tfrac{2j}{p}-\tfrac{j}{p}\left(2-\left(\tfrac{1}{d_1}+\tfrac{1}{d_2}\right)\right)\Bigr]\\[9pt]
&\equiv
Bin \hspace{1pt}(j) \Bigl[\tfrac{1}{p}\Bigr]
\Bigl[j \Bigl(\tfrac{1}{d_1}+\tfrac{1}{d_2}\Bigr)\Bigr]
\pmod{p^2}.\\
\end{align*}

\noindent Therefore it now suffices to show
\begin{multline*}
\sum_{j=0}^{p-\lfloor \frac{p-1}{d_2} \rfloor - 1}
Bin \hspace{1pt}(j) \Bigl[1+j \mathcal{H}(j) \Bigr]
+\sum_{j=0}^{p-\lfloor \frac{p-1}{d_2} \rfloor - 1}
Bin \hspace{1pt}(j)
\Bigl[ j \mathcal{B}(j) -\mathcal{A}(j) - j \mathcal{A}(j) \mathcal{H}(j) \Bigr]\\
-\sum_{j=\lfloor (d_2-1)\frac{p-1}{d_2} \rfloor+1}^{p-\lfloor \frac{p-1}{d_2} \rfloor - 1}
Bin \hspace{1pt}(j) \Bigl[\tfrac{1}{p}\Bigr]
\Bigl[j^2+\Bigl(\tfrac{1}{d_1}\Bigr)\Bigl(\tfrac{1}{d_2}\Bigr)+j \Bigl(\tfrac{1}{d_1}+\tfrac{1}{d_2}\Bigr)\Bigr]
\equiv s(p) 
\pmod{p^2}.
\end{multline*}

\noindent Recall that if $p\equiv 1\pmod{d_2}$ then the last sum above is vacuous. If $p\not\equiv 1 \pmod{d_2}$ then the limits of summation are equal and the sum is over one value of $j=\lfloor (d_2-1)\frac{p-1}{d_2} \rfloor+1=p-\lfloor \tfrac{p-1}{d_2} \rfloor - 1$. In this case $p\equiv d_2-1 \pmod{d_2}$ and $j=p- \frac{p+1}{d_2}=p\left(\tfrac{d_2-1}{d_2}\right)-\tfrac{1}{d_2}$. For this $j$, 
\begin{align*}
j^2+\Bigl(\tfrac{1}{d_1}\Bigr)\Bigl(\tfrac{1}{d_2}\Bigr)+j \Bigl(\tfrac{1}{d_1}+\tfrac{1}{d_2}\Bigr)
=
\Bigl(j+\tfrac{1}{d_1}\Bigr)\Bigl(j+\tfrac{1}{d_2}\Bigr)
=
\Bigl(p\left(\tfrac{d_2-1}{d_2}\right)-\tfrac{1}{d_2}+\tfrac{1}{d_1}\Bigr)\Bigl(p\left(\tfrac{d_2-1}{d_2}\right)\Bigr) \in p \mathbb{Z}_p\;.
\end{align*}
Therefore,
\begin{equation*}
\sum_{j=\lfloor (d_2-1)\frac{p-1}{d_2} \rfloor+1}^{p-\lfloor \frac{p-1}{d_2} \rfloor - 1}
Bin \hspace{1pt}(j) \Bigl[\tfrac{1}{p}\Bigr]
\Bigl[j^2+\Bigl(\tfrac{1}{d_1}\Bigr)\Bigl(\tfrac{1}{d_2}\Bigr)+j \Bigl(\tfrac{1}{d_1}+\tfrac{1}{d_2}\Bigr)\Bigr]
\equiv 0
\pmod{p^2},
\end{equation*}
as $Bin \hspace{1pt}(j) \in p^2\mathbb{Z}_p$ for $\lfloor \frac{p-1}{d_2} \rfloor+1=rep(m_3) \leq j \leq rep(m_2)-1=p-\lfloor \tfrac{p-1}{d_2} \rfloor - 1$.\\

\noindent We now use the results of Section 3 to resolve the two remaining sums. Taking $m=rep(m_1)-1$ and $n=rep(m_2)-1$ in Corollary \ref {Cor_BinHarId1} we get that
\begin{multline*}
(-1)^{rep(m_1)+rep(m_2)-2}  =\sum_{j=0}^{rep(m_2) - 1}
Bin \hspace{1pt}(j) \Bigl[1+j \mathcal{H}(j) \Bigr] +\\
\sum_{j=rep(m_2)}^{rep(m_1)-1} (-1)^{j-rep(m_2)+1} \biggbin{rep(m_1)-1+j}{j} \biggbin{rep(m_1)-1}{j} \biggbin{rep(m_2)-1+j}{j} \Big/ \biggbin{j-1}{rep(m_2)-1} .
\end{multline*}
We see from Lemma \ref{lem_ProdGammap2} that
$$ \biggbin{rep(m_1)-1+j}{j} \biggbin{rep(m_1)-1}{j}  \in p\mathbb{Z}_p$$
for $rep(m_4) \leq j \leq rep(m_1)-1$. Also
$$  \biggbin{rep(m_2)-1+j}{j} \in p\mathbb{Z}_p$$
for $p-rep(m_2)+1=rep(m_3) \leq j \leq p-1$, and
$$\biggbin{j-1}{rep(m_2)-1} \in \mathbb{Z}_p^{*}$$
for $rep(m_2)\leq j\leq p$.
Therefore
\begin{multline*}
\sum_{j=rep(m_2)}^{rep(m_1)-1} (-1)^{j-rep(m_2)+1} \biggbin{rep(m_1)-1+j}{j} \biggbin{rep(m_1)-1}{j} \biggbin{rep(m_2)-1+j}{j} \Big/ \biggbin{j-1}{rep(m_2)-1} \\
\equiv 0
\pmod{p^2}
\end{multline*}
and
\begin{multline*}
\sum_{j=0}^{p-\lfloor \frac{p-1}{d_2} \rfloor - 1}
Bin \hspace{1pt}(j) \Bigl[1+j \mathcal{H}(j) \Bigr]\equiv (-1)^{rep(m_1)+rep(m_2)-2}  
\equiv(-1)^{\lfloor \frac{p-1}{d_1} \rfloor +\lfloor \frac{p-1}{d_2} \rfloor}
\equiv s(p) 
\pmod{p^2}.
\end{multline*}

\noindent Similarly, taking $m=rep(m_1)-1$, $n=rep(m_2)-1$, $C_1=rep(m_1)-m_1$ and $C_2=rep(m_2)-m_2$ in Corollary \ref {Cor_BinHarId2} we get that
\begin{multline*}
\sum_{j=0}^{rep(m_2) - 1}Bin \hspace{1pt}(j)
\Bigl[ j \mathcal{B}(j) -\mathcal{A}(j) - j \mathcal{A}(j) \mathcal{H}(j) \Bigr]\\
=
\sum_{j=rep(m_2)}^{rep(m_1)-1} (-1)^{j-rep(m_2)+1} \Bigg[ \biggbin{rep(m_1)-1+j}{j} \biggbin{rep(m_1)-1}{j} \biggbin{rep(m_2)-1+j}{j} \Big/ \biggbin{j-1}{rep(m_2)-1}\Bigg]\\
\cdot \left(\left(rep(m_1)-m_1\right)(H_{j+rep(m_1)-1}^{(1)} - H_{j+p-rep(m_1)}^{(1)})  + \left(rep(m_2)-m_2\right) (H_{j+rep(m_2)}^{(1)} - H_{j+p-rep(m_2)}^{(1)})\right) .\\
\end{multline*}

\noindent We've seen that
$$ \biggbin{rep(m_1)-1+j}{j} \biggbin{rep(m_1)-1}{j}  \biggbin{rep(m_2)-1+j}{j}  \Big/ \biggbin{j-1}{rep(m_2)-1} \in p^2\mathbb{Z}_p$$
for $rep(m_2) \leq j \leq rep(m_1)-1$. We note also that $\left(rep(m_1)-m_1\right) \in p\mathbb{Z}_p$. Therefore
\begin{multline*}
\sum_{j=rep(m_2)}^{rep(m_1)-1} (-1)^{j-rep(m_2)+1} \Bigg[\biggbin{rep(m_1)-1+j}{j} \biggbin{rep(m_1)-1}{j} \biggbin{rep(m_2)-1+j}{j} \Big/ \biggbin{j-1}{rep(m_2)-1}\Bigg]\\[9pt]
\cdot \left(\left(rep(m_1)-m_1\right)(H_{j+rep(m_1)-1}^{(1)} - H_{j+p-rep(m_1)}^{(1)})  + \left(rep(m_2)-m_2\right) (H_{j+rep(m_2)}^{(1)} - H_{j+p-rep(m_2)}^{(1)})\right) \\[9pt]
\equiv 0 \pmod{p^2}
\end{multline*}
and
\begin{align*}
\sum_{j=0}^{p-\lfloor \frac{p-1}{d_2} \rfloor - 1}
Bin \hspace{1pt}(j)
\Bigl[ j \mathcal{B}(j) -\mathcal{A}(j) - j \mathcal{A}(j) \mathcal{H}(j) \Bigr]
\equiv 0 \pmod{p^2}
\end{align*}
as required.
\end{proof}

\begin{proof}[Proof of Theorem \ref{thm_4G2}] The proof proceeds along similar lines to that of Theorem \ref{thm_4G1}, however, $\phi(d)=4$ introduces some added complexity.

Let $p$ be a prime which satisfies the conditions of the theorem. Note that $p\geq 7$.
Assume without loss of generality that $r<d/2$. Let $p\equiv a \pmod{d}$. Then $a \in \{1,r,d-r,d-1\}$. We note that $\{1, r, d-r, d-1\}$ forms a group under multiplication modulo $d$ and $r^2\equiv\pm1 \pmod{d}$. Then $\{1, r, d-r, d-1\} \equiv \{a, ar, d-ar, d-a\}$ modulo $d$. If we let $s:=ar-d \lfloor \frac{ar}{d} \rfloor\equiv ar \pmod{d}$ then $\{1, r, d-r, d-1\} = \{a, s, d-s, d-a\}$.

From Corollary \ref{cor_repGenformula} we know that $rep(\frac{a}{d}) = p- \lfloor \frac{p-1}{d} \rfloor$ and $rep(\frac{d-a}{d}) = \lfloor \frac{p-1}{d} \rfloor+1$.
By Lemma \ref{lem_repGenformula}, $rep(\tfrac{m}{d})=\frac{pt+m}{d}$ where $t$ is the smallest integer such that $d\mid ta+m$ and that $t<d$. If $t=d-r$ then $d\mid ta+s$ and we get that $rep(\frac{s}{d})=p-r \lfloor \tfrac{p-1}{d} \rfloor - \lfloor \tfrac{ar}{d} \rfloor$. Then, by Proposition \ref{prop_repOneminus}, $rep(\frac{d-s}{d})= r \lfloor \tfrac{p-1}{d} \rfloor +1 +\lfloor \tfrac{ar}{d} \rfloor$.

Therefore $\Bigl\{rep\left(\tfrac{1}{d}\right),rep\left(\tfrac{r}{d}\right),rep\left(\tfrac{d-r}{d}\right),rep\left(\tfrac{d-1}{d}\right)\Bigr\}=\Bigl\{\lfloor \tfrac{p-1}{d} \rfloor +1,r \lfloor \tfrac{p-1}{d} \rfloor +1 +\lfloor \tfrac{ar}{d} \rfloor, p-r \lfloor \tfrac{p-1}{d} \rfloor - \lfloor \tfrac{ar}{d} \rfloor ,p-\lfloor \tfrac{p-1}{d} \rfloor\Bigr\}$, where the exact correspondence between the elements of each set depends on the choice of $p$. If we let $m_1:=\frac{a}{d}$, $m_2:=\frac{s}{d}$, $m_3:=\frac{d-s}{d}$ and $m_4:=\frac{d-a}{d}$ then $rep\bigl(m_4\bigr)< rep\bigl(m_3\bigr) < rep\bigl(m_2\bigr) < rep\bigl(m_1\bigr)$.\\

We reduce Definition \ref{def_GFn} modulo $p^3$ and use Proposition \ref{prop_pGamma} to expand the terms involved, noting that $\frac{1}{1-p}\equiv 1+p+p^2 \pmod{p^3}$, to get
\begin{multline*}
{_{4}G} \left(\tfrac{1}{d} , \tfrac{r}{d}, 1-\tfrac{r}{d} , 1-\tfrac{1}{d}\right)_p\\
\equiv
\sum_{j=0}^{\lfloor \frac{p-1}{d} \rfloor} 
\frac{\biggfp{\frac{1}{d}+j+jp+jp^2}\biggfp{\frac{r}{d}+j+jp+jp^2}
\biggfp{\frac{d-r}{d}+j+jp+jp^2}\biggfp{\frac{d-1}{d}+j+jp+jp^2}}
{\biggfp{\frac{1}{d}}\biggfp{\frac{r}{d}}\biggfp{\frac{d-r}{d}}\biggfp{\frac{d-1}{d}}
{\biggfp{1+j+jp+jp^2}}^{4}} 
\\  
\shoveleft +p \left[
\sum_{j=0}^{\lfloor \frac{p-1}{d} \rfloor} 
\frac{\biggfp{\frac{1}{d}+j+jp}\biggfp{\frac{r}{d}+j+jp}
\biggfp{\frac{d-r}{d}+j+jp}\biggfp{\frac{d-1}{d}+j+jp}}
{\biggfp{\frac{1}{d}}\biggfp{\frac{r}{d}}\biggfp{\frac{d-r}{d}}\biggfp{\frac{d-1}{d}}
{\biggfp{1+j+jp}}^{4}} 
\right. \\ \left. \qquad
- \sum_{j=\lfloor \frac{p-1}{d} \rfloor+1}^{\lfloor \frac{r(p-1)}{d} \rfloor}
\frac{\biggfp{\frac{d+1}{d}+j+jp}\biggfp{\frac{r}{d}+j+jp}
\biggfp{\frac{d-r}{d}+j+jp}\biggfp{\frac{d-1}{d}+j+jp}}
{\biggfp{\frac{1}{d}}\biggfp{\frac{r}{d}}\biggfp{\frac{d-r}{d}}\biggfp{\frac{d-1}{d}}
{\biggfp{1+j+jp}}^{4}}
\right] \\ 
\shoveleft +p^2 \left[
\sum_{j=0}^{\lfloor \frac{p-1}{d} \rfloor} 
\frac{\biggfp{\frac{1}{d}+j}\biggfp{\frac{r}{d}+j}
\biggfp{\frac{d-r}{d}+j}\biggfp{\frac{d-1}{d}+j}}
{\biggfp{\frac{1}{d}}\biggfp{\frac{r}{d}}\biggfp{\frac{d-r}{d}}\biggfp{\frac{d-1}{d}}
{\biggfp{1+j}}^{4}} 
\right. \\ \left.
 - \sum_{j=\lfloor \frac{p-1}{d} \rfloor+1}^{\lfloor \frac{r(p-1)}{d} \rfloor}
\frac{\biggfp{\frac{d+1}{d}+j}\biggfp{\frac{r}{d}+j}
\biggfp{\frac{d-r}{d}+j}\biggfp{\frac{d-1}{d}+j}}
{\biggfp{\frac{1}{d}}\biggfp{\frac{r}{d}}\biggfp{\frac{d-r}{d}}\biggfp{\frac{d-1}{d}}
{\biggfp{1+j}}^{4}}
\right. \\ \left.  \qquad
+\sum_{j=\lfloor \frac{r(p-1)}{d} \rfloor+1}^{\lfloor (d-r)\frac{p-1}{d} \rfloor} 
\frac{\biggfp{\frac{d+1}{d}+j}\biggfp{\frac{d+r}{d}+j}
\biggfp{\frac{d-r}{d}+j}\biggfp{\frac{d-1}{d}+j}}
{\biggfp{\frac{1}{d}}\biggfp{\frac{r}{d}}\biggfp{\frac{d-r}{d}}\biggfp{\frac{d-1}{d}}
{\biggfp{1+j}}^{4}}
\right] 
\pmod{p^3}.
\end{multline*}

\noindent We now consider $\gfp{\frac{d+1}{d}+j+jp}$ and $\gfp{\frac{d+1}{d}+j}$. We first note that
\begin{equation*}
\tfrac{1}{d}+j +jp \in p\mathbb{Z}_p \Longleftrightarrow \tfrac{1}{d}+j \in p\mathbb{Z}_p \Longleftrightarrow rep\left(\tfrac{1}{d}\right)+j \in p\mathbb{Z}_p \Longleftrightarrow rep\left(\tfrac{1}{d}\right)+j =p \Longleftrightarrow j=p-rep\left(\tfrac{1}{d}\right)\;.
\end{equation*}
If $p\equiv 1 \pmod {d}$ then using Corollary \ref{cor_repGenformula} we get that 
\begin{equation*}
p-rep\left(\tfrac{1}{d}\right)=\lfloor \tfrac{p-1}{d} \rfloor <  \lfloor \tfrac{p-1}{d} \rfloor +1\;.
\end{equation*}
Similarly, if $p\equiv d-1 \pmod {d}$ then
\begin{equation*}
p-rep\left(\tfrac{1}{d}\right)=p-\lfloor \tfrac{p-1}{d} \rfloor -1> p- \lceil \tfrac{p-1}{d} \rceil -1 >  p-  \lceil {\tfrac{r(p-1)}{d}} \rceil-1 = {\lfloor (d-r)\tfrac{p-1}{d} \rfloor} \;.
\end{equation*}
If $p\equiv r \pmod{d}$ and $r^2\equiv 1 \pmod{d}$, or $p\equiv d-r \pmod{d}$ and $r^2\equiv -1 \pmod{d}$ then $rep\left(\tfrac{1}{d}\right)= p - r \lfloor \tfrac{p-1}{d} \rfloor - \lfloor \tfrac{ar}{d} \rfloor$ and
\begin{equation*}
p-rep\left(\tfrac{1}{d}\right)= r \lfloor \tfrac{p-1}{d} \rfloor + \lfloor \tfrac{ar}{d} \rfloor = r \lfloor \tfrac{p-1}{d} \rfloor + \lfloor \tfrac{r(a-1)}{d} \rfloor +1= \lfloor  \tfrac{r(p-1)}{d} \rfloor +1
\end{equation*}
in either case. If $p\equiv r \pmod{d}$ and $r^2\equiv -1 \pmod{d}$, or $p\equiv d-r \pmod{d}$ and $r^2\equiv 1 \pmod{d}$ then $rep\left(\tfrac{1}{d}\right)= r \lfloor \tfrac{p-1}{d} \rfloor +1 + \lfloor \tfrac{ar}{d} \rfloor$ and
\begin{equation*}
p-rep\left(\tfrac{1}{d}\right)= p- r \lfloor \tfrac{p-1}{d} \rfloor -1- \lfloor \tfrac{ar}{d} \rfloor > p-r \lfloor \tfrac{p-1}{d} \rfloor -1 - \lceil {\tfrac{r(a-1)}{d}} \rceil = \lfloor (d-r)\tfrac{p-1}{d} \rfloor
\end{equation*}
in either case. So we see that the only time that $\tfrac{1}{d}+j \in p\mathbb{Z}_p$ for $\lfloor \tfrac{p-1}{d} \rfloor +1 \leq j \leq \lfloor (d-r)\tfrac{p-1}{d} \rfloor$ is when $p\equiv r \pmod{d}$ and $r^2\equiv 1 \pmod{d}$ or $p\equiv d-r \pmod{d}$ and $r^2\equiv -1 \pmod{d}$ and in these cases $j=rep(m_3)-1=r \lfloor \tfrac{p-1}{d} \rfloor + \lfloor \tfrac{ar}{d}\rfloor=\lfloor  \tfrac{r(p-1)}{d} \rfloor +1$. In all other cases $rep(m_3)=\lfloor  \tfrac{r(p-1)}{d} \rfloor +1$.

\noindent Therefore, using Proposition \ref{prop_pGamma} (1) we get that for $\lfloor \tfrac{p-1}{d} \rfloor +1 \leq j \leq \lfloor (d-r)\tfrac{p-1}{d} \rfloor$,
$$\gfp{\tfrac{d+1}{d}+j}=
\begin{cases}
-\gfp{\tfrac{1}{d}+j} & \text{if } j=\lfloor \tfrac{r(p-1)}{d} \rfloor +1,\; p\equiv r \pmod{d}, \;r^2\equiv 1 \pmod{d},\\
-\gfp{\tfrac{1}{d}+j} & \text{if }j=\lfloor \tfrac{r(p-1)}{d} \rfloor +1, \;p\equiv d-r \pmod{d}, \; r^2\equiv -1 \pmod{d},\\
-\left(\tfrac{1}{d}+j\right)\gfp{\tfrac{1}{d}+j} & \textup{otherwise,}
\end{cases}
$$
and that 
$$\gfp{\tfrac{d+1}{d}+j+jp}=-\left(\tfrac{1}{d}+j+jp\right)\gfp{\tfrac{1}{d}+j+jp}$$
for $\lfloor \tfrac{p-1}{d} \rfloor +1 \leq j \leq \lfloor \tfrac{r(p-1)}{d} \rfloor$.
We next consider  $\gfp{\frac{d+r}{d}+j}$. We first note that
\begin{equation*}
\tfrac{r}{d}+j \in p\mathbb{Z}_p \Longleftrightarrow rep\left(\tfrac{r}{d}\right)+j \in p\mathbb{Z}_p \Longleftrightarrow rep\left(\tfrac{r}{d}\right)+j =p \Longleftrightarrow j=p-rep\left(\tfrac{r}{d}\right)\;.
\end{equation*}
If $p\equiv r \pmod {d}$ then using Corollary \ref{cor_repGenformula} we get that 
\begin{equation*}
p-rep\left(\tfrac{r}{d}\right)=\lfloor \tfrac{p-1}{d} \rfloor <  \lfloor \tfrac{r(p-1)}{d} \rfloor +1\;.
\end{equation*}
Similarly, if $p\equiv d-r \pmod {d}$ then
\begin{equation*}
p-rep\left(\tfrac{r}{d}\right)=p-\lfloor \tfrac{p-1}{d} \rfloor -1> p- \lceil \tfrac{p-1}{d} \rceil -1 >  p-  \lceil \tfrac{r(p-1)}{d} \rceil-1 = {\lfloor (d-r)\tfrac{p-1}{d} \rfloor} \;.
\end{equation*}
If $p\equiv 1 \pmod{d}$, then $rep\left(\tfrac{r}{d}\right)= p - r \lfloor \tfrac{p-1}{d} \rfloor - \lfloor \tfrac{ar}{d} \rfloor$ and
\begin{equation*}
p-rep\left(\tfrac{r}{d}\right)= r \lfloor \tfrac{p-1}{d} \rfloor + \lfloor \tfrac{ar}{d} \rfloor = r \lfloor \tfrac{p-1}{d} \rfloor + \lfloor \tfrac{r}{d} \rfloor =r \lfloor \tfrac{p-1}{d} \rfloor = < \lfloor  \tfrac{r(p-1)}{d} \rfloor +1\;.
\end{equation*}
If $p\equiv d-1 \pmod{d}$, then $rep\left(\tfrac{r}{d}\right)= r \lfloor \tfrac{p-1}{d} \rfloor +1 + \lfloor \tfrac{ar}{d} \rfloor$ and
\begin{equation*}
p-rep\left(\tfrac{r}{d}\right)= p- r \lfloor \tfrac{p-1}{d} \rfloor -1- \lfloor \tfrac{ar}{d} \rfloor > p-r \lfloor \tfrac{p-1}{d} \rfloor -1 - \lceil \tfrac{r(a-1)}{d} \rceil = \lfloor (d-r)\tfrac{p-1}{d} \rfloor
\end{equation*}

\noindent Therefore, using Proposition \ref{prop_pGamma} (1), we get
$$\gfp{\tfrac{d+r}{d}+j}=-\left(\tfrac{r}{d}+j\right)\gfp{\tfrac{r}{d}+j}$$\\
for $\lfloor \frac{r(p-1)}{d} \rfloor+1 \leq j \leq  {\lfloor (d-r)\frac{p-1}{d} \rfloor}$. Applying these results and substituting $\{m_k\}$ for $\{\frac{1}{d}, \frac{r}{d}, \frac{d-r}{d}, \frac{d-1}{d}\}$ yields 
\begin{multline*}
{_{4}G} \left(\tfrac{1}{d} , \tfrac{r}{d}, 1-\tfrac{r}{d} , 1-\tfrac{1}{d}\right)_p
\equiv
\sum_{j=0}^{\lfloor \frac{p-1}{d} \rfloor} 
\frac{\prod_{k=1}^{4} \biggfp{m_k+j+jp+jp^2}}
{\prod_{k=1}^{4} \biggfp{m_k}
{\biggfp{1+j+jp+jp^2}}^{4}} 
\\  
\shoveleft +p \left[
\sum_{j=0}^{\lfloor \frac{p-1}{d} \rfloor} 
\frac{\prod_{k=1}^{4} \biggfp{m_k+j+jp}}
{\prod_{k=1}^{4} \biggfp{m_k}
{\biggfp{1+j+jp}}^{4}} 
\shoveright + \sum_{j=\lfloor \frac{p-1}{d} \rfloor+1}^{\lfloor \frac{r(p-1)}{d} \rfloor}
\frac{\prod_{k=1}^{4} \biggfp{m_k+j+jp}  \bigl(\frac{1}{d}+j+jp\bigr)}
{\prod_{k=1}^{4} \biggfp{m_k}
{\biggfp{1+j+jp}}^{4}} 
\right] \\ 
\shoveleft +p^2 \left[
\sum_{j=0}^{\lfloor \frac{p-1}{d} \rfloor} 
\frac{\prod_{k=1}^{4} \biggfp{m_k+j}}
{\prod_{k=1}^{4} \biggfp{m_k}
{\biggfp{1+j}}^{4}} 
+ \sum_{j=\lfloor \frac{p-1}{d} \rfloor+1}^{\lfloor \frac{r(p-1)}{d} \rfloor}
\frac{\prod_{k=1}^{4} \biggfp{m_k+j} \bigl(\frac{1}{d}+j\bigr)}
{\prod_{k=1}^{4} \biggfp{m_k}
{\biggfp{1+j}}^{4}} 
\right. \\ \left. 
\shoveleft +\sum_{j=\lfloor \frac{r(p-1)}{d} \rfloor+1}^{rep(m_3)-1} 
\frac{\prod_{k=1}^{4} \biggfp{m_k+j} \bigl(\frac{r}{d}+j\bigr)}
{\prod_{k=1}^{4} \biggfp{m_k}
{\biggfp{1+j}}^{4}} 
\right. \\ \left.  \qquad
+\sum_{j=rep(m_3)}^{\lfloor (d-r)\frac{p-1}{d} \rfloor} 
\frac{\prod_{k=1}^{4} \biggfp{m_k+j} \bigl(\frac{1}{d}+j\bigr)\bigl(\frac{r}{d}+j\bigr)}
{\prod_{k=1}^{4} \biggfp{m_k}
{\biggfp{1+j}}^{4}} 
\right] 
\pmod{p^3},
\end{multline*}
where the second last sum is vacuous unless $p\equiv r \pmod{d}$ and $r^2\equiv 1 \pmod{d}$ or $p\equiv d-r \pmod{d}$ and $r^2\equiv -1 \pmod{d}$. Arguing as we did in the proof of Theorem \ref{thm_4G1} gives us
\begin{align*}
\frac{\prod_{k=1}^{4} \biggfp{m_k+j+jp+jp^2}}
{\biggfp{1+j+jp+jp^2}^{4}}
\equiv
\frac{\prod_{k=1}^{4} \biggfp{m_k+j}}{\biggfp{1+j}^{4}}\Bigl[1+(jp+jp^2)A(j)+j^2p^2B(j)\Bigr]
\pmod{p^3}
\end{align*}
and
\begin{align*}
\frac{\prod_{k=1}^{4} \biggfp{m_k+j+jp}}
{\biggfp{1+j+jp}^{4}}
\equiv
\frac{\prod_{k=1}^{4} \biggfp{m_k+j}}{\biggfp{1+j}^{4}}\Bigl[1+jpA(j)\Bigr]
\pmod{p^2},
\end{align*}
where
$$A(j):=\sum_{k=1}^{4} \Bigl(G_1\left(m_k+j\right)-G_1\left(1+j\right)\Bigr)$$
and
$$B(j):=\frac{1}{2}\left[A(j)^2 - \sum_{k=1}^{4} \Bigl(G_1\left(m_k+j\right)^2-G_2\left(m_k+j\right)
-G_1\left(1+j\right)^2 +G_2\left(1+j\right)\Bigr)\right].$$
We note that both $A(j)$ and $B(j)$ $\in \mathbb{Z}_p$ by Proposition (\ref{prop_pGammaCong}). Applying the above and (\ref{for_GammapPlus}) we get
\begin{multline}\label{for_4G2}
{_{4}G} \left(\tfrac{1}{d} , \tfrac{r}{d}, 1-\tfrac{r}{d} , 1-\tfrac{1}{d}\right)_p
\equiv
\sum_{j=0}^{\lfloor \frac{p-1}{d} \rfloor} 
\frac{\prod_{k=1}^{4} \biggfp{m_k+j}}
{\prod_{k=1}^{4} \biggfp{m_k} j!^4}
\\ 
\shoveleft +p \left[
\sum_{j=0}^{\lfloor \frac{p-1}{d} \rfloor} 
\frac{\prod_{k=1}^{4} \biggfp{m_k+j}}
{\prod_{k=1}^{4} \biggfp{m_k} j!^4}
\Bigl[1+jA(j)\Bigr]
+ \sum_{j=\lfloor \frac{p-1}{d} \rfloor+1}^{\lfloor \frac{r(p-1)}{d} \rfloor} 
\frac{\prod_{k=1}^{4} \biggfp{m_k+j} }
{\prod_{k=1}^{4} \biggfp{m_k} j!^4}
\Bigl[\tfrac{1}{d}+j\Bigr]
\right] \\ 
\shoveleft +p^2 \left[
\sum_{j=0}^{\lfloor \frac{p-1}{d} \rfloor} 
\frac{\prod_{k=1}^{4} \biggfp{m_k+j}}
{\prod_{k=1}^{4} \biggfp{m_k} j!^4}
\Bigl[1+2jA(j)+j^2B(j)\Bigr]
\right. \\ \left. 
+\sum_{j=\lfloor \frac{p-1}{d} \rfloor+1}^{\lfloor \frac{r(p-1)}{d} \rfloor} 
\frac{\prod_{k=1}^{4} \biggfp{m_k+j}}
{\prod_{k=1}^{4} \biggfp{m_k} j!^4}
\Bigl[\Bigl(\tfrac{1}{d}+j\Bigr)\Bigl(1+jA(j)\Bigr)+j\Bigr]
\right. \\ \left. 
+\sum_{j=\lfloor \frac{r(p-1)}{d} \rfloor+1}^{rep(m_3)-1} 
\frac{\prod_{k=1}^{4} \biggfp{m_k+j}}
{\prod_{k=1}^{4} \biggfp{m_k} j!^4}
\Bigl[\tfrac{r}{d}+j\Bigr]
\right. \\ \left. 
+\sum_{j=rep(m_3)}^{\lfloor (d-r)\frac{p-1}{d} \rfloor} 
\frac{\prod_{k=1}^{4} \biggfp{m_k+j}}
{\prod_{k=1}^{4} \biggfp{m_k} j!^4}
\Bigl[\Bigl(\tfrac{1}{d}+j\Bigr)\Bigl(\tfrac{r}{d}+j\Bigr)\Bigr]
\right] 
\pmod{p^3}.
\end{multline}
\noindent Proposition \ref{prop_gammapmd} gives us
\begin{multline}\label{for_4F32}
{{_{4}F_3} \left[ \begin{array}{cccc} \frac{1}{d}, & \frac{r}{d}, & 1-\frac{r}{d}, & 1-\frac{1}{d}\vspace{.05in}\\
\phantom{\frac{1}{d}} & 1, & 1, & 1 \end{array}
\Big| \; 1 \right]}_{p-1}\\[6pt]
\shoveleft \equiv
\sum_{j=0}^{rep(m_4)-1}
\prod_{k=1}^{4} \frac{\biggfp{m_k+j}}{\biggfp{m_k} j!^4}
+\sum_{j=rep(m_4)}^{rep(m_3)-1} 
\prod_{k=1}^{4} \frac{\biggfp{m_k+j}}{\biggfp{m_k} j!^4}
\bigl(m_1+p-rep(m_1)\bigr)\\[6pt]
\shoveright {+\sum_{j=rep(m_3)}^{rep(m_2) - 1} 
\prod_{k=1}^{4} \frac{\biggfp{m_k+j}}{\biggfp{m_k} j!^4}
\bigl(m_1+p-rep(m_1)) \bigl(m_2+p-rep(m_2))}\\
\shoveleft \equiv
\sum_{j=0}^{rep(m_4)-1}
\prod_{k=1}^{4} \frac{\biggfp{m_k+j}}{\biggfp{m_k} j!^4}
+p\sum_{j=rep(m_4)}^{rep(m_3)-1} 
\prod_{k=1}^{4} \frac{\biggfp{m_k+j}}{\biggfp{m_k} j!^4}
\Bigl(\tfrac{1}{d}\Bigr)\\[6pt]
+p^2\sum_{j=rep(m_3)}^{rep(m_2) - 1} 
\prod_{k=1}^{4} \frac{\biggfp{m_k+j}}{\biggfp{m_k} j!^4}
\Bigl(\tfrac{1}{d}\Bigr) \Bigl(\tfrac{r}{d}\Bigr)
\pmod {p^3}.\\
\end{multline}

%\pagebreak
\noindent We now note that\\[4pt]
(1) $rep(m_4)-1=\lfloor \frac{p-1}{d} \rfloor$;\\[3pt]
(2) $rep(m_3)-1=\lfloor \frac{r(p-1)}{d} \rfloor$ except when $p\equiv r \pmod{d}$ and $r^2 \equiv 1 \pmod{d}$ or $p\equiv d-r \pmod{d}$ and $r^2 \equiv -1 \pmod{d}$. In such cases $rep(m_3)-1=\lfloor \frac{r(p-1)}{d} \rfloor +1$; and\\[3pt]
(3) $rep(m_2)-1=\lfloor (d-r)\frac{(p-1)}{d} \rfloor$ except when $p\equiv d-1 \pmod{d}$, $p\equiv r \pmod{d}$ and $r^2 \equiv -1 \pmod{d}$ or $p\equiv d-r \pmod{d}$ and $r^2 \equiv 1 \pmod{d}$. In such cases $rep(m_2)-1= \lfloor (d-r)\frac{(p-1)}{d} \rfloor +1$.\\[3pt]

\noindent Therefore, combining (\ref{for_4G2}) and (\ref{for_4F32}), it suffices to prove
\begin{multline}\label{Resid1_4G2}
\sum_{j=0}^{\lfloor \frac{p-1}{d} \rfloor} 
\frac{\prod_{k=1}^{4} \biggfp{m_k+j}}
{\prod_{k=1}^{4} \biggfp{m_k} j!^4}
\Bigl[1+jA(j)\Bigr]
+ \sum_{j=\lfloor \frac{p-1}{d} \rfloor+1}^{\lfloor \frac{r(p-1)}{d} \rfloor} 
\frac{\prod_{k=1}^{4} \biggfp{m_k+j} }
{\prod_{k=1}^{4} \biggfp{m_k} j!^4}
\Bigl[j\Bigr]
\\
\shoveleft {- \sum_{j=\lfloor \frac{r(p-1)}{d} \rfloor+1}^{rep(m_3)-1} 
\prod_{k=1}^{4} \frac{\biggfp{m_k+j}}{\biggfp{m_k} j!^4}
\Bigl[\tfrac{1}{d}\Bigr]}
 +p \left[
\sum_{j=0}^{\lfloor \frac{p-1}{d} \rfloor} 
\frac{\prod_{k=1}^{4} \biggfp{m_k+j}}
{\prod_{k=1}^{4} \biggfp{m_k} j!^4}
\Bigl[1+2jA(j)+j^2B(j)\Bigr]
\right. \\ \left.
\shoveleft{+\sum_{j=\lfloor \frac{p-1}{d} \rfloor+1}^{\lfloor \frac{r(p-1)}{d} \rfloor} 
\frac{\prod_{k=1}^{4} \biggfp{m_k+j}}
{\prod_{k=1}^{4} \biggfp{m_k} j!^4}
\Bigl[\Bigl(\tfrac{1}{d}+j\Bigr)\Bigl(1+jA(j)\Bigr)+j\Bigr]}
\right. \\ \left. 
\shoveleft{+\sum_{j=\lfloor \frac{r(p-1)}{d} \rfloor+1}^{rep(m_3)-1} 
\frac{\prod_{k=1}^{4} \biggfp{m_k+j}}
{\prod_{k=1}^{4} \biggfp{m_k} j!^4}
\Bigl[\tfrac{r}{d}+j\Bigr]
+\sum_{j=rep(m_3)}^{\lfloor (d-r)\frac{p-1}{d} \rfloor} 
\frac{\prod_{k=1}^{4} \biggfp{m_k+j}}
{\prod_{k=1}^{4} \biggfp{m_k} j!^4}
\Bigl[j^2+j\Bigl(\tfrac{1}{d}+\tfrac{r}{d}\Bigr)\Bigr]}
\right. \\ \left.  \qquad
- \sum_{j=\lfloor (d-r)\frac{p-1}{d} \rfloor+1}^{rep(m_2) - 1} 
\prod_{k=1}^{4} \frac{\biggfp{m_k+j}}{\biggfp{m_k} j!^4}
\Bigl[\Bigl(\tfrac{1}{d}\Bigr) \Bigl(\tfrac{r}{d}\Bigr)\Bigr]
\right] 
\equiv s(p)
\pmod{p^2}.
\end{multline}
We now examine $A(j)$, $B(j)$, $\prod_{k=1}^{4} \biggfp{m_k+j}$ and $\prod_{k=1}^{4} \biggfp{m_k}$ modulo $p$. 
Using Corollary \ref{cor_pGammaCong1} (3) and Proposition \ref{prop_pGammaG} (1) we see that
\begin{align*}
A(j)&=\sum_{k=1}^{4} \Bigl(G_1\left(m_k+j\right)-G_1\left(1+j\right)\Bigr)\\
&\equiv \sum_{k=1}^{4} \Bigl(H_{rep(m_k)-1+j}^{(1)} - H_{j}^{(1)}\Bigr)-
\begin{cases}
0 & \text{if } 0\leq j\leq rep(m_4)-1, \\[6pt]
\frac{1}{p} &\text{if }  rep(m_4) \leq j\leq rep(m_3)-1,\\[6pt]
\frac{2}{p} & \text{if }  rep(m_3)\leq j\leq rep(m_2)-1,\\[6pt]
\frac{3}{p} & \text{if }  rep(m_2)\leq j\leq rep(m_1)-1,\\[6pt]
\frac{4}{p} & \text{if }  rep(m_1)\leq j\leq p-1,\\[6pt]
\end{cases}
\pmod{p}.
\end{align*}

\noindent Similarly, using Corollary \ref{cor_pGammaCong1} (3), (4) and Proposition \ref{prop_pGammaG} (2), we see that
\begin{align*}
&\sum_{k=1}^{4} \Bigl(G_1\left(m_k+j\right)^2-G_2\left(m_k+j\right)
-G_1\left(1+j\right)^2 +G_2\left(1+j\right)\Bigr)\\
&\equiv \sum_{k=1}^{4} \Biggl(H_{rep(m_k)+j-1}^{(2)}- \hspace{1pt} H_{j}^{(2)}\Biggr)+
\begin{cases}
0 & \text{if } 0\leq j\leq rep(m_4)-1, \\[6pt]
\frac{1}{p^2} & \text{if }  rep(m_4) \leq j\leq rep(m_3)-1,\\[6pt]
\frac{2}{p^2} & \text{if }  rep(m_3)\leq j\leq rep(m_2)-1,\\[6pt]
\frac{3}{p^2} & \text{if }  rep(m_2)\leq j\leq rep(m_1)-1,\\[6pt]
\frac{4}{p^2} & \text{if }  rep(m_1)\leq j\leq p-1,\\[6pt]
\end{cases}
\pmod{p}.
\end{align*}

\noindent Using Proposition \ref{prop_pGamma} we get that
\begin{align*}
\prod_{k=1}^{4} \biggfp{m_k+j}
&\equiv \prod_{k=1}^{4} \biggfp{rep(m_k)+j}\\
&\equiv \Biggl[\prod_{k=1}^{4} \left(rep(m_k)+j-1\right)!\Biggr]
\begin{cases}
1 & \text{if } 0\leq j\leq rep(m_4)-1, \\[6pt]
\frac{1}{p} & \text{if }  rep(m_4) \leq j\leq rep(m_3)-1,\\[6pt]
\frac{2}{p^2} & \text{if }  rep(m_3)\leq j\leq rep(m_2)-1,\\[6pt]
\frac{3}{p^3} & \text{if }  rep(m_2)\leq j\leq rep(m_1)-1,\\[6pt]
\frac{4}{p^4} & \text{if }  rep(m_1)\leq j\leq p-1,\\[6pt]
\end{cases}
\pmod{p}
\end{align*}
and
\begin{align*}
\prod_{k=1}^{4} \biggfp{m_k} &
= \prod_{k=1}^{2} (-1)^{rep(m_k)}=(-1)^{rep(m_1)+rep(m_2)}=\pm1\;.
\end{align*}

\noindent We now consider
\begin{multline*}
X(j):=\sum_{j=0}^{rep(m_4)-1} 
\frac{\prod_{k=1}^{4} \biggfp{m_k+j}}
{\prod_{k=1}^{4} \biggfp{m_k} j!^4}
\Bigl[1+2jA(j)+j^2B(j)\Bigr]
\\
+\sum_{j=rep(m_4)}^{rep(m_3)-1} 
\frac{\prod_{k=1}^{4} \biggfp{m_k+j}}
{\prod_{k=1}^{4} \biggfp{m_k} j!^4}
\Bigl[2j+j^2A(j)\Bigr]
 %\\ 
+\sum_{j=rep(m_3)}^{rep(m_2)- 1} 
\frac{\prod_{k=1}^{4} \biggfp{m_k+j}}
{\prod_{k=1}^{4} \biggfp{m_k} j!^4}
\Bigl[j^2\Bigr]
\pmod{p}.
\end{multline*}

\noindent Again similar to the proof of Theorem \ref{thm_4G1} we will show $X(j)\equiv0 \pmod{p}$. Substituting for $A(j)$, $B(j)$, $\prod_{k=1}^{4} \biggfp{m_k+j}$ and $\prod_{k=1}^{4} \biggfp{m_k}$ modulo $p$ we have
\begin{multline*}
\pm X(j)
\equiv
\sum_{j=0}^{p- 1}  
\Biggl[\prod_{k=1}^{4} \ph{j+1}{rep(m_k)-1} \Biggr]
\Biggl[1+2j\sum_{k=1}^{4} \Bigl(H_{rep(m_k)-1+j}^{(1)} -H_{j}^{(1)}\Bigr)
\\
+\frac{j^2}{2} \Biggl[\left(\sum_{k=1}^{4} \Bigl(H_{rep(m_k)-1+j}^{(1)} - H_{j}^{(1)}\Bigr)\right)^2
-\sum_{k=1}^{4} \Bigl(H_{rep(m_k)-1+j}^{(2)} - H_{j}^{(2)}\Bigr)
\Biggr]
\Biggr]
\pmod{p}.
\end{multline*}
\noindent We define
\begin{equation*}
P(j):= \frac{d}{dj} \left[ j \hspace{1pt} \prod_{k=1}^{4} \ph{j+1}{rep(m_k)-1} \right] = \sum_{k=0}^{2(p-1)} a_k j^k \;.
\end{equation*}
and
\begin{equation*}
Q(j):= \frac{j}{2} \frac{d^2}{dj^2} \left[ j \hspace{1pt} \prod_{k=1}^{4} \ph{j+1}{rep(m_k)-1} \right] = \sum_{k=0}^{2(p-1)} b_k j^k \;.
\end{equation*}
Then
\begin{equation*}
\pm X(j)\equiv \sum_{j=1}^{p-1} P(j)+Q(j) \pmod{p}.
\end{equation*}
\noindent Applying (\ref{exp_sums}) in the usual way yields
\begin{equation*}
\sum_{j=0}^{p-1} P(j) \equiv 1 \pmod{p}.
\end{equation*}
and
\begin{equation*}
\sum_{j=0}^{p-1} Q(j) \equiv -1 \pmod{p}.
\end{equation*}
So
\begin{equation}\label{for_X02}
X(j)=\pm \left(\sum_{j=1}^{p-1} P(j)+Q(j) \right)\equiv 0 \pmod{p}.
\end{equation}
\noindent Accounting for (\ref{for_X02}) in (\ref{Resid1_4G2}) means we need only show
\begin{multline}\label{Resid2_4G2}
\sum_{j=0}^{\lfloor \frac{p-1}{d} \rfloor} 
\frac{\prod_{k=1}^{4} \biggfp{m_k+j}}
{\prod_{k=1}^{4} \biggfp{m_k} j!^4}
\Bigl[1+jA(j)\Bigr]
+ \sum_{j=\lfloor \frac{p-1}{d} \rfloor+1}^{\lfloor \frac{r(p-1)}{d} \rfloor} 
\frac{\prod_{k=1}^{4} \biggfp{m_k+j} }
{\prod_{k=1}^{4} \biggfp{m_k} j!^4}
\Bigl[j\Bigr]\\
\shoveleft - \sum_{j=\lfloor \frac{r(p-1)}{d} \rfloor+1}^{rep(m_3)-1} 
\prod_{k=1}^{4} \frac{\biggfp{m_k+j}}{\biggfp{m_k} j!^4}
\Bigl[\tfrac{1}{d}\Bigr]
+p \left[
\sum_{j=\lfloor \frac{p-1}{d} \rfloor+1}^{\lfloor \frac{r(p-1)}{d} \rfloor} 
\frac{\prod_{k=1}^{4} \biggfp{m_k+j}}
{\prod_{k=1}^{4} \biggfp{m_k} j!^4}
\Bigl[\Bigl(\tfrac{1}{d}\Bigr)\Bigl(1+jA(j)\Bigr)\Bigr]
\right. \\ \left.  \qquad
+\sum_{j=\lfloor \frac{r(p-1)}{d} \rfloor+1}^{rep(m_3)-1} 
\frac{\prod_{k=1}^{4} \biggfp{m_k+j}}
{\prod_{k=1}^{4} \biggfp{m_k} j!^4}
\Bigl[\tfrac{r}{d}-j-j^2A(j)\Bigr]
%\right. \\ \left.  \qquad
+\sum_{j=rep(m_3)}^{\lfloor (d-r)\frac{p-1}{d} \rfloor} 
\frac{\prod_{k=1}^{4} \biggfp{m_k+j}}
{\prod_{k=1}^{4} \biggfp{m_k} j!^4}
\Bigl[j\Bigl(\tfrac{1}{d}+\tfrac{r}{d}\Bigr)\Bigr]
\right. \\ \left.  \qquad
- \sum_{j=\lfloor (d-r)\frac{p-1}{d} \rfloor+1}^{rep(m_2) - 1} 
\prod_{k=1}^{4} \frac{\biggfp{m_k+j}}{\biggfp{m_k} j!^4}
\Bigl[\Bigl(\tfrac{1}{d}\Bigr) \Bigl(\tfrac{r}{d}\Bigr)+j^2\Bigr]
\right] 
\equiv s(p)
\pmod{p^2}.
\end{multline}
Using the same notation as in the proof of Theorem \ref{thm_4G1} and invoking Lemmas \ref{lem_ProdGammap2} and \ref{lem_SumGammap2} in a similar manner, (\ref{Resid2_4G2}) is equivalent to 
\begin{multline*}
\sum_{j=0}^{rep(m_2) - 1}
Bin \hspace{1pt}(j) \Bigl[1+j \mathcal{H}(j) \Bigr]
+\sum_{j=0}^{rep(m_2) - 1}
Bin \hspace{1pt}(j)
\Bigl[ j \mathcal{B}(j) -\mathcal{A}(j) - j \mathcal{A}(j) \mathcal{H}(j) \Bigr]\\
-\sum_{j=\lfloor \frac{r(p-1)}{d} \rfloor+1}^{rep(m_3)-1} 
Bin \hspace{1pt}(j)
\Biggl[ \Bigl[\tfrac{1}{p}\Bigr] \left[1-\mathcal{A}(j) + \tfrac{rep(m_1)-m_1}{p}\right]
\Bigl[\tfrac{1}{d}+j\Bigr]
+\Bigl[-\tfrac{r}{d}+\left(\tfrac{1}{d}+j\right)\left(1+j\mathcal{H}(j)-\tfrac{j}{p}\right)\Bigr]
\Biggr]
\\
-\sum_{j=\lfloor (d-r)\frac{p-1}{d} \rfloor+1}^{rep(m_2) - 1}
Bin \hspace{1pt}(j) \Bigl[\tfrac{1}{p}\Bigr]
\Bigl[j^2+\Bigl(\tfrac{1}{d}\Bigr)\Bigl(\tfrac{r}{d}\Bigr)+j \Bigl(\tfrac{1}{d}+\tfrac{r}{d}\Bigr)\Bigr]
\equiv s(p) 
\pmod{p^2}.
\end{multline*}

\noindent Recall that the second sum above is vacuous unless $p\equiv r \pmod{d}$ and $r^2\equiv 1\pmod{d}$ or $p\equiv d-r \pmod{d}$ and $r^2\equiv -1\pmod{d}$. In these cases the sum is over one value of $j=rep(m_3)-1 = \lfloor \tfrac{r(p-1)}{d} \rfloor +1$. In either case $\left(\tfrac{1}{d} +j\right) = \tfrac{rp}{d}$. Also $rep(m_1)-m_1=p\left(1-\tfrac{1}{d}\right)$. So we get 
\begin{multline*}
\sum_{j=\lfloor \frac{r(p-1)}{d} \rfloor+1}^{rep(m_3)-1} 
Bin \hspace{1pt}(j)
\Biggl[ \Bigl[\tfrac{1}{p}\Bigr] \left[1-\mathcal{A}(j) + \tfrac{rep(m_1)-m_1}{p}\right]
\Bigl[\tfrac{1}{d}+j\Bigr]
+\Bigl[-\tfrac{r}{d}+\left(\tfrac{1}{d}+j\right)\left(1+j\mathcal{H}(j)-\tfrac{j}{p}\right)\Bigr]
\Biggr]\\
=
\sum_{j=\lfloor \frac{r(p-1)}{d} \rfloor+1}^{rep(m_3)-1} 
Bin \hspace{1pt}(j)
\Bigl[\tfrac{rpj}{d}\mathcal{H}(j)-\tfrac{r}{d}\mathcal{A}(j)+\tfrac{r}{d}\tfrac{d-1}{d}+\tfrac{rp}{d}-\tfrac{rj}{d}
\Bigr].
\end{multline*}

\noindent Note $Bin \hspace{1pt}(j) \in p \mathbb{Z}_p$ for $j=rep(m_3)-1$. Thus $Bin \hspace{1pt}(j) \hspace{1pt}p\hspace{1pt} \mathcal{H}(j)\equiv Bin \hspace{1pt}(j) \hspace{1pt}p \left(\frac{1}{p}\right) \pmod{p^2}$, $Bin \hspace{1pt}(j) \mathcal{A}(j) \equiv Bin \hspace{1pt}(j) \frac{rep(m_1)-m_1}{p} \equiv Bin \hspace{1pt}(j) \frac{d-1}{d}\pmod{p^2}$ and 
\begin{multline*}
\sum_{j=\lfloor \frac{r(p-1)}{d} \rfloor+1}^{rep(m_3)-1} 
Bin \hspace{1pt}(j)
\Bigl[\tfrac{rpj}{d}\mathcal{H}(j)-\tfrac{r}{d}\mathcal{A}(j)+\tfrac{r}{d}\tfrac{d-1}{d}+\tfrac{rp}{d}-\tfrac{rj}{d}
\Bigr]
\equiv
\sum_{j=\lfloor \frac{r(p-1)}{d} \rfloor+1}^{rep(m_3)-1} 
Bin \hspace{1pt}(j)
\Bigl[\tfrac{rp}{d}
\Bigr]
\equiv 0
\pmod{p^2}.
\end{multline*}

\noindent Next we examine
\begin{multline*}
\sum_{j=\lfloor (d-r)\frac{p-1}{d} \rfloor+1}^{rep(m_2) - 1}
Bin \hspace{1pt}(j) \Bigl[\tfrac{1}{p}\Bigr]
\Bigl[j^2+\Bigl(\tfrac{1}{d}\Bigr)\Bigl(\tfrac{r}{d}\Bigr)+j \Bigl(\tfrac{1}{d}+\tfrac{r}{d}\Bigr)\Bigr]\\
=\sum_{j=\lfloor (d-r)\frac{p-1}{d} \rfloor+1}^{rep(m_2) - 1}
Bin \hspace{1pt}(j) \Bigl[\tfrac{1}{p}\Bigr]
\Bigl[\Bigl(j+\tfrac{1}{d}\Bigr)\Bigl(j+\tfrac{r}{d}\Bigr)\Bigr]
\end{multline*}
modulo $p^2$. Recall that this sum is vacuous unless $p\equiv d-1\pmod{d}$, $p \equiv r \pmod{d}$ and $r^2\equiv-1\pmod{d}$ or $p \equiv d-r \pmod{d}$ and $r^2\equiv1\pmod{d}$. Then the limits of summation are equal and the sum is over one value of $j=rep(m_2)-1=p-r\lfloor \tfrac{p-1}{d} \rfloor - \lfloor \frac{ar}{d} \rfloor -1$.
\noindent If $p\equiv d-1 \pmod{d}$,
$$j=p-r\left(\tfrac{p-d+1}{d}\right)-\lfloor \tfrac{dr-r}{d} \rfloor -1=p\left(1-\tfrac{r}{d}\right)+r-\tfrac{r}{d}-\lfloor r-\tfrac{r}{d} \rfloor -1=p\left(1-\tfrac{r}{d}\right)+\tfrac{d-r}{d}-1=p\left(1-\tfrac{r}{d}\right)-\tfrac{r}{d}.$$
Then
$$\left(j+\tfrac{r}{d}\right) = p\left(1-\tfrac{r}{d}\right) \in p \mathbb{Z}_p$$
and
$$\left(j+\tfrac{1}{d}\right) = p\left(1-\tfrac{r}{d}\right) -\tfrac{r}{d} +\tfrac{1}{d}\in  \mathbb{Z}_p.$$

\noindent If $p\equiv r \pmod{d}$ and $r^2 \equiv -1 \pmod{d}$,
$$j=p-r\left(\tfrac{p-r}{d}\right)-\lfloor \tfrac{r^2}{d} \rfloor -1=p\left(1-\tfrac{r}{d}\right)+\tfrac{r^2}{d}-\lfloor \tfrac{r^2}{d} \rfloor -1=p\left(1-\tfrac{r}{d}\right)+\tfrac{d-1}{d}-1=p\left(1-\tfrac{r}{d}\right)-\tfrac{1}{d}.$$
Then
$$\left(j+\tfrac{1}{d}\right) = p\left(1-\tfrac{r}{d}\right) \in p \mathbb{Z}_p$$
and
$$\left(j+\tfrac{r}{d}\right) = p\left(1-\tfrac{r}{d}\right) -\tfrac{1}{d} +\tfrac{r}{d}\in  \mathbb{Z}_p.$$

\noindent If $p\equiv d-r \pmod{d}$ and $r^2 \equiv 1 \pmod{d}$,
$$j=p-r\left(\tfrac{p-d+r}{d}\right)-\lfloor \tfrac{dr-r^2}{d} \rfloor -1=p\left(1-\tfrac{r}{d}\right)+\tfrac{dr-r^2}{d}-\lfloor \tfrac{dr-r^2}{d} \rfloor -1=p\left(1-\tfrac{r}{d}\right)+\tfrac{d-1}{d}-1=p\left(1-\tfrac{r}{d}\right)-\tfrac{1}{d}.$$
Then
$$\left(j+\tfrac{1}{d}\right) = p\left(1-\tfrac{r}{d}\right) \in p \mathbb{Z}_p$$
and
$$\left(j+\tfrac{r}{d}\right) = p\left(1-\tfrac{r}{d}\right) -\tfrac{1}{d} +\tfrac{r}{d}\in  \mathbb{Z}_p.$$

\noindent Therefore
\begin{equation*}
\sum_{j=\lfloor (d-r)\frac{p-1}{d} \rfloor+1}^{rep(m_2) - 1}
Bin \hspace{1pt}(j) \Bigl[\tfrac{1}{p}\Bigr]
\Bigl[\Bigl(j+\tfrac{1}{d}\Bigr)\Bigl(j+\tfrac{r}{d}\Bigr)\Bigr]
\equiv 0
\pmod{p^2}
\end{equation*}
as $Bin \hspace{1pt}(j) \in p^2\mathbb{Z}_p$ for $j=rep(m_2)-1$.
\noindent Finally, taking $m=rep(m_1)-1$, $n=rep(m_2)-1$, $C_1=rep(m_1)-m_1$ and $C_2=rep(m_2)-m_2$ (where applicable) in Corollaries \ref {Cor_BinHarId1} and \ref {Cor_BinHarId2} we show that
\begin{equation*}
\sum_{j=0}^{rep(m_2)-1}
Bin \hspace{1pt}(j) \Bigl[1+j \mathcal{H}(j) \Bigr]
%\equiv (-1)^{rep(m_1)+rep(m_2)-2}
%\equiv (-1)^{rep(m_1)+rep(m_2)}\equiv \prod_{k=1}^{4} \gfp{m_k}  
\equiv s(p) 
\pmod{p^2}
\end{equation*}
and
\begin{equation*}
 \sum_{j=0}^{rep(m_2)-1}
Bin \hspace{1pt}(j)
\Bigl[ j \mathcal{B}(j) -\mathcal{A}(j) - j \mathcal{A}(j) \mathcal{H}(j) \Bigr]
\equiv 0 \pmod{p^2}
\end{equation*}
as required.
\end{proof}

%%%%%%%%%%%%%%%%%%%%%%%%%%%%%%%%%%%%%%%%%%%%%%
%%%%%%%%%%%%%%%%%%%%%%%%%%%%%%%%%%%%%%%%%%%%%%
%%%%%%%%%%%%%%%%%%%%%%%%%%%%%%%%%%%%%%%%%%%%%%
%%%%%%%%%%%%%%%%%%%%%%%%%%%%%%%%%%%%%%%%%%%%%%

\section{Proof of Theorem \ref{thm_DMCMain}}
One easily checks the result for primes $p=2,3$. Now let $p\geq7$ be a prime. Then by Theorem \ref{thm_4G2} with $d=5$ we have
\begin{align*}
{_{4}G} \left(\tfrac{1}{5} ,\hspace{2pt} \tfrac{2}{5},\hspace{2pt} \tfrac{3}{5} ,\hspace{2pt} \tfrac{4}{5}\right)_p
-s(p)\hspace{1pt} p
&\equiv
{{_{4}F_3} \left[ \begin{array}{cccc} \frac{1}{5}, & \frac{2}{5}, & \frac{3}{5}, & \frac{4}{5}\vspace{.05in}\\
\phantom{\frac{1}{d_1}} & 1, & 1, & 1 \end{array}
\Big| \; 1 \right]}_{p-1}
\pmod {p^3}
\end{align*}
where $s(p) = \gfp{\tfrac{1}{5}} \gfp{\tfrac{2}{5}}\gfp{\tfrac{3}{5}}\gfp{\tfrac{4}{5}}$. Therefore Theorem \ref{thm_DMCMain} will be established on proof of the following two results.
\begin{theorem}\label{thm_GStoMod}
For a prime $p \neq 5$, 
\begin{equation*}
-\frac{1}{p-1} \left[1+\frac{1}{p} \sum_{j=1}^{p-2}\; G_{-j}^{5} G_{5j} \; T^{-5j}(-5) \right]- s(p) \cdot p
= c(p),
\end{equation*}
where $s(p):= \gfp{\frac{1}{5}} \gfp{\frac{2}{5}} \gfp{\frac{3}{5}} \gfp{\frac{4}{5}}$, $T$ is a generator for the group of characters on $\mathbb{F}_p$ and $c(p)$ is as defined in (\ref{for_ModForm}).
\end{theorem}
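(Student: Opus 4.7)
The plan is to realise the Gauss--sum expression on the left--hand side as (essentially) the count of $\mathbb{F}_p$--points on the Dwork quintic threefold
$$X \;:\; x_1^{5} + x_2^{5} + x_3^{5} + x_4^{5} + x_5^{5} \;=\; 5\,x_1 x_2 x_3 x_4 x_5 \;\subset\; \mathbb{P}^4,$$
and then to appeal to the known modularity of the primitive middle cohomology of (a small resolution of) $X$ at level $25$, which attaches the weight--four newform $f$ of (\ref{for_ModForm}) and identifies the non--trivial part of $\#X(\mathbb{F}_p)$ with $c(p)$.

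First I would compute $\#X(\mathbb{F}_p)$ via (\ref{for_CtgPtsSplit}) and (\ref{for_CtgPts}), splitting off the ``easy'' locus where some $x_i=0$, which contributes a polynomial in $p$. On the open stratum, the additive--character expansion (\ref{for_AddtoGauss}) applied to all six monomial terms produces a sextuple sum of Gauss sums; since the five sums over $x_i^{5}$ are identical, the substitution $y=x^{5}$ turns $G_j$ into $G_{5j}$ and folds the quintuple product into $G_{-j}^{5}$, while the coefficient $5$ in front of the monomial $x_1\cdots x_5$ produces the twist $T^{-5j}(-5)$. The principal term is therefore $\frac{1}{p(p-1)}\sum_{j=0}^{p-2} G_{-j}^{5}\,G_{5j}\,T^{-5j}(-5)$, and separating the $j=0$ term (which equals $G_0^{6}=1$) yields the $1$ inside the square brackets of the statement.

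Next I would identify the correction $-s(p)\cdot p$ with the contribution of the degenerate character strata in the sum, i.e.\ those $j$ for which $5j\equiv 0\pmod{p-1}$ so that $G_{5j}=G_0=-1$. When $p\equiv 1\pmod 5$ these are $j\in\{t,2t,3t,4t\}$ with $t=(p-1)/5$, and Hasse--Davenport (Theorem \ref{thm_HD}) rearranges the product $\prod_{i=1}^{4}G(T^{it})$ so that Gross--Koblitz (Theorem \ref{thm_GrossKoblitz}) rewrites it as $s(p)\cdot p = p\,\gfp{\tfrac{1}{5}}\gfp{\tfrac{2}{5}}\gfp{\tfrac{3}{5}}\gfp{\tfrac{4}{5}}$. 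When $p\not\equiv 1\pmod 5$ there is no character of order $5$ on $\mathbb{F}_p$; here Lemmas \ref{lem_JacConsMul} and \ref{lem_JacTsum} together with Corollary \ref{cor_GaussTsum} are used to verify directly that the degenerate contribution again assembles to $-s(p)\cdot p$, with Proposition \ref{prop_GaussConj} collapsing conjugate Gauss--sum pairs into powers of $p$ as needed.

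The main obstacle will be the bookkeeping of all non--generic pieces: the coordinate--hyperplane and cone contributions in the point count, the various degenerate indices in the Gauss--sum sum, and the case split on $p \pmod 5$ each produce polynomial--in--$p$ terms that must cancel exactly, leaving only the single correction $-s(p)\cdot p$ in the identity. The delicate point is that these lower--order corrections depend non--trivially on the residue class of $p$ modulo $5$ (through the values of $\lfloor \tfrac{p-1}{5}\rfloor$, $\lfloor \tfrac{r(p-1)}{5}\rfloor$, etc.), whereas the final answer does not; tracking this invariance through the Gauss--sum manipulations is where essentially all of the work lies. Once the matching has been verified, invoking the modularity of $X$ at level $25$ to identify the remaining expression with $c(p)$ completes the proof.
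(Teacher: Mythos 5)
Your overall strategy coincides with the paper's: both count points on the quintic $x_0^5+\cdots+x_4^5-5x_0x_1x_2x_3x_4=0$ in $\mathbb{P}^4(\mathbb{F}_p)$ via (\ref{for_CtgPtsSplit}), (\ref{for_CtgPts}) and the Gauss-sum expansion (\ref{for_AddtoGauss}) of the additive character, and both take the modularity statement as input rather than proving it, in the form of the identity (\ref{for_ModtoPts}) relating $N_p$ to $c(p)$ (cited from Meyer, following Schoen). However, what you have written is an outline: the step you describe as ``bookkeeping'' of the non-generic pieces is not a routine verification but is essentially the entire proof. For $p\equiv 1\pmod 5$ the open-stratum sum $D$ splits over $125$ triples $(i,j,k)$ into six classes $D_0,\dots,D_5$; each of $D_1,\dots,D_5$ must be evaluated using Hasse--Davenport, Proposition \ref{prop_GaussConj}, the reduction of Gauss-sum products to generalised Jacobi sums (Propositions \ref{prop_JacRed}, \ref{prop_JactoGauss}, Corollary \ref{cor_JacRedSmall}) and the orbit identities of Lemmas \ref{lem_JacConsMul}, \ref{lem_JacTsum} and Corollary \ref{cor_GaussTsum}, so that the Jacobi-sum terms cancel among $D_1,\dots,D_5$, $10A$, $10B$ and $5C$ and the residue equals exactly $24p^2-100p$. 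None of this is carried out or even correctly anticipated in the proposal.

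Two of your specific claims are also wrong as stated. First, the correction $-s(p)\cdot p$ is not the contribution of the degenerate indices $j$ with $5j\equiv 0\pmod{p-1}$ (those indices sit inside $D_0$ and inside the terms $D_1,\dots,D_5$, whose Jacobi-sum parts cancel against $A$, $B$, $C$); in the paper $s(p)\cdot p$ is simply $\pm p$ by (\ref{for_Sp}) and is absorbed into the polynomial corrections of (\ref{for_ModtoPts}), making $c(p)+s(p)p=p^3+p^2+p+1-N_p$ uniformly when $p\not\equiv 1\pmod 5$. Second, you have the case split backwards: Lemmas \ref{lem_JacConsMul}, \ref{lem_JacTsum} and Corollary \ref{cor_GaussTsum} all hypothesize $p\equiv 1\pmod 5$ and are used only in that (hard) case, whereas for $p\not\equiv 1\pmod 5$ the map $x\mapsto x^5$ is a bijection, one gets $A=1$, $B=-1$, $C=1$ immediately, and the identity follows in a few lines with no Jacobi-sum machinery.
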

\begin{cor}\label{cor_GStoMod}
For a prime $p \neq 5$,
\begin{align*}
{_{4}G} \left(\tfrac{1}{5} ,\hspace{2pt} \tfrac{2}{5},\hspace{2pt} \tfrac{3}{5} ,\hspace{2pt} \tfrac{4}{5}\right)_p
-s(p)\hspace{1pt} p
&=
c(p),
\end{align*}
where $s(p) := \gfp{\tfrac{1}{5}} \gfp{\tfrac{2}{5}}\gfp{\tfrac{3}{5}}\gfp{\tfrac{4}{5}}$ and $c(p)$ is as defined in (\ref{for_ModForm}).
\end{cor}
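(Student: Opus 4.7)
The plan is to leverage Theorem \ref{thm_GStoMod} by matching ${_4G}$ directly with the Gauss-sum expression there. Since that theorem identifies $c(p) + s(p)p$ with $-\tfrac{1}{p-1}\bigl[1 + \tfrac{1}{p}\sum_{j=1}^{p-2} G_{-j}^5 G_{5j} T^{-5j}(-5)\bigr]$, Corollary \ref{cor_GStoMod} reduces to the purely algebraic identity
\[
{_4G}\left(\tfrac{1}{5}, \tfrac{2}{5}, \tfrac{3}{5}, \tfrac{4}{5}\right)_p \;=\; -\frac{1}{p-1}\left[1 + \frac{1}{p}\sum_{j=1}^{p-2} G_{-j}^5 G_{5j} T^{-5j}(-5)\right].
\]
I would prove this by a direct computation from Definition \ref{def_GFn}, using Gross-Koblitz (Theorem \ref{thm_GrossKoblitz}, with $T = \bar{\omega}$) and the distribution formula (Proposition \ref{prop_pGamma}(4)) with $m = 5$.

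The $j = 0$ contribution (which appears only in the $k = 0$ sub-sum) collapses to $-\tfrac{1}{p-1}$, matching the ``$1$'' in the bracket. For each $j \in \{1, \ldots, p-2\}$ there is a unique $k \in \{0, 1, 2, 3, 4\}$ with $j \in (\lfloor k(p-1)/5 \rfloor, \lfloor (k+1)(p-1)/5 \rfloor]$, and I would invoke Proposition \ref{prop_pGamma}(4) with
\[
x = (k+1) - \frac{5j}{p-1} = \frac{(k+1)(p-1) - 5j}{p-1};
\]
the numerator lies in $[0, p-2]$ across the $k$-th $j$-range for $j \geq 1$ (the only excluded value $r = p-1$ occurs precisely for the $j = 0$ case already handled). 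The five arguments $(x+h)/5$ for $h = 0, 1, 2, 3, 4$ recover the four gamma factors of the $(k,j)$-summand of ${_4G}$ together with one extra $\gfp{1 - j/(p-1)}$ at $h = 4-k$, so the four-fold product is consolidated into $\omega(5^{(1-x)(1-p)})\,\gfp{x}\,s(p)/\gfp{1-j/(p-1)}$.

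Next I would invoke Gross-Koblitz, obtaining $\gfp{j/(p-1)}^4 = G_j^4/\pi^{4j}$, $\gfp{1 - j/(p-1)} = G_{-j}\pi^j/p$, and $\gfp{x} = (-1)^k G_{-5j}\pi^{5j}/p^{k+1}$ (using $T^{p-1} = \varepsilon$ to identify $G_{(k+1)(p-1)-5j} = G_{-5j}$ and $\pi^{p-1} = -p$). Since $(1-x)(1-p) = k(p-1) - 5j$ and $\omega(5)^{p-1} = 1$, we also get $\omega(5^{(1-x)(1-p)}) = T^{5j}(5)$. Substituting into the $(k,j)$-summand, the factors $(-p)^k \cdot (-1)^k \cdot p^{-k}$ combine to $1$ and the powers of $\pi$ telescope, so the $k$-dependence vanishes entirely and every $(k,j)$-term reduces uniformly to $-\tfrac{G_j^4\, G_{-5j}\, T^{5j}(5)}{(p-1)\, G_{-j}}$.

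Finally I would apply Proposition \ref{prop_GaussConj} in the form $G_j G_{-j} = (-1)^j p$ (so $G_j^4/G_{-j} = p^4/G_{-j}^5$ and $G_{-5j} = (-1)^j p/G_{5j}$) and re-index by $j \mapsto p-1-j$. This bijection of $\{1, \ldots, p-2\}$ uses $T^{p-1} = \varepsilon$ to swap $G_{-j} \leftrightarrow G_j$ and $G_{-5j} \leftrightarrow G_{5j}$ and to send $T^{5j}(5)$ to $T^{-5j}(5)$; the remaining $(-1)^{j}$ is absorbed via $T^{-5j}(-5) = T^{-5j}(-1)\,T^{-5j}(5) = (-1)^j T^{-5j}(5)$, turning the sum into $\tfrac{1}{p}\sum_{j=1}^{p-2} G_{-j}^5 G_{5j} T^{-5j}(-5)$ as required. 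The hard part will be the range verification showing $(k+1)(p-1) - 5j \in [0, p-2]$ uniformly across the five $k$-ranges, so that Proposition \ref{prop_pGamma}(4) applies with the single closed-form choice of $x$ above; this reduces to an elementary case-by-case check depending on $p \bmod 5$.
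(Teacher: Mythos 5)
Your route is essentially the paper's own: expand Definition \ref{def_GFn}, consolidate the four gamma factors via Proposition \ref{prop_pGamma} (4) with $m=5$ and $x=k+1-\tfrac{5j}{p-1}$ (picking up the extra factor $\gfp{1-\tfrac{j}{p-1}}$ at $h=4-k$ and cancelling $s(p)$), convert with Gross--Koblitz, and feed the resulting Gauss-sum expression into Theorem \ref{thm_GStoMod}. Your bookkeeping of the powers of $\pi$ and $p$, the sign $(-1)^k$, the factor $\omega(5^{(1-x)(1-p)})=T^{5j}(5)$, and the $j=0$ term all check out; and the range verification is lighter than you fear: $\lfloor k(p-1)/5\rfloor < j \le \lfloor (k+1)(p-1)/5\rfloor$ immediately gives $k(p-1)<5j\le (k+1)(p-1)$, hence $0\le (k+1)(p-1)-5j\le p-2$, with no case split on $p\bmod 5$.

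The one step that is false as written is the substitution $G_{-5j}=(-1)^j p/G_{5j}$. When $p\equiv 1\pmod 5$ and $j\in\{t,2t,3t,4t\}$ with $t=\tfrac{p-1}{5}$, the character $T^{5j}$ is trivial, so Proposition \ref{prop_GaussConj} gives $G_{5j}G_{-5j}=G_0^2=1$, not $(-1)^j p$; this is exactly why the paper splits the terms $j=at$ off before invoking Proposition \ref{prop_GaussConj}. In your argument the error happens to be self-cancelling, since after the reindexing $j\mapsto p-1-j$ you apply the inverse of the same false substitution, but the intermediate expression $(-1)^j p^5 T^{5j}(5)/(G_{-j}^5 G_{5j})$ is genuinely wrong for those four values of $j$. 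The clean repair is to never apply Proposition \ref{prop_GaussConj} at the index $5j$: multiply each term by $G_j/G_j$ and use only $G_jG_{-j}=(-1)^jp$ (valid for every $1\le j\le p-2$) to get $\frac{G_j^4 G_{-5j} T^{5j}(5)}{G_{-j}}=\frac{(-1)^j}{p}\,G_j^5 G_{-5j} T^{5j}(5)$, which is precisely the $(p-1-j)$-th term of $\frac{1}{p}\sum G_{-j}^5 G_{5j} T^{-5j}(-5)$ because $T^{-5j}(-5)=(-1)^j T^{-5j}(5)$. With that adjustment (or with the paper's separate treatment of $j=at$) your proof is complete.
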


\noindent Then, using Proposition \ref{prop_GtoGHS}, the corollary below easily follows.
\begin{cor}\label{Cor_GHStoMod}
For a prime $p \equiv 1 \pmod 5$,
\begin{equation*}
-p^3 \; _4F_3 \left( \begin{array}{cccc} \chi_5, & \chi_5^2, & \chi_5^3, & \chi_5^4 \\
\phantom{\chi_5} & \varepsilon, & \varepsilon, & \varepsilon \end{array}
\big| \; 1 \right)_p -s(p)\hspace{1pt} p
= c(p),
\end{equation*}
where $\chi_5$ is a character of order 5 on $\mathbb{F}_p$, $s(p) := \gfp{\tfrac{1}{5}} \gfp{\tfrac{2}{5}}\gfp{\tfrac{3}{5}}\gfp{\tfrac{4}{5}}$ and $c(p)$ is as defined in (\ref{for_ModForm}).
\end{cor}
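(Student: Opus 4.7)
The plan is to derive Corollary \ref{Cor_GHStoMod} as a direct consequence of Corollary \ref{cor_GStoMod} by invoking Proposition \ref{prop_GtoGHS} to rewrite the $p$-adic hypergeometric series $_4G$ in terms of a Gaussian hypergeometric series. The hypothesis $p \equiv 1 \pmod{5}$ is precisely what is required so that all four denominators $d_i = 5$ appearing in ${_4G}(\tfrac{1}{5},\tfrac{2}{5},\tfrac{3}{5},\tfrac{4}{5})_p$ divide $p - 1$, which is the congruence assumption of Proposition \ref{prop_GtoGHS}.

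Applying Proposition \ref{prop_GtoGHS} with $n = 3$ and each $d_i = 5$, the associated character $\rho_i = \bar{\omega}^{(p-1)/5}$ coincides for all $i$; denote this common character by $\chi_5$. Since $\chi_5$ has exact order $5$ on $\mathbb{F}_p$, the four arguments $\rho_i^{m_i}$ with $(m_1,m_2,m_3,m_4)=(1,2,3,4)$ become the distinct nontrivial powers $\chi_5, \chi_5^2, \chi_5^3, \chi_5^4$. Proposition \ref{prop_GtoGHS} then gives
\begin{equation*}
{_4G}\left(\tfrac{1}{5},\tfrac{2}{5},\tfrac{3}{5},\tfrac{4}{5}\right)_p \;=\; -p^3 \cdot {_4F_3}\!\left(\begin{array}{cccc}\chi_5, & \chi_5^2, & \chi_5^3, & \chi_5^4 \\ \phantom{\chi_5} & \varepsilon, & \varepsilon, & \varepsilon \end{array}\Big|\;1\right)_p.
\end{equation*}
Substituting this identity into Corollary \ref{cor_GStoMod} and rearranging yields the claimed equation $-p^3 \cdot {_4F_3}(\ldots)_p - s(p)\,p = c(p)$.

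Because Proposition \ref{prop_GtoGHS} and Corollary \ref{cor_GStoMod} are already in hand, there is no genuine obstacle here: the deduction is a single substitution, and the sign $(-1)^n = -1$ for $n=3$ is the only piece of bookkeeping required. All of the substantive work has been done earlier: the framework of Section 4 (culminating in Theorem \ref{thm_4G2}) reduces the Rodriguez-Villegas truncated $_4F_3$ to the $_4G$ function modulo $p^3$, while Theorem \ref{thm_GStoMod} identifies the corresponding Gauss-sum expression with the modular Fourier coefficient $c(p)$. The present corollary merely repackages Corollary \ref{cor_GStoMod} into the classical Gaussian hypergeometric form, which is the language most natural for comparison with earlier point-counting formulas and conjectures in the literature.
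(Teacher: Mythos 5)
Your proposal is correct and matches the paper's own (very brief) derivation: the paper likewise obtains this corollary by applying Proposition \ref{prop_GtoGHS} with $n=3$ and all $d_i=5$ to Corollary \ref{cor_GStoMod}, picking up the factor $(-1)^3p^3=-p^3$. Your additional remark that any character of order $5$ may serve as $\chi_5$ is harmless here, since with all lower parameters equal to $\varepsilon$ the Gaussian ${_4F_3}$ is symmetric in its upper parameters and the set $\{\chi_5,\chi_5^2,\chi_5^3,\chi_5^4\}$ is independent of the choice of generator.
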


\begin{proof}[Proof of Theorem \ref{thm_GStoMod}]
From \cite[page 32]{Me} (following the work of Schoen \cite{S}), we have
\begin{equation}\label{for_ModtoPts}
c(p)=
\begin{cases}
p^3 + 25 p^2 -100 p + 1 - N_p & \text{if $p \equiv 1\phantom{,2} \pmod {5}$},\\
p^3 + \phantom{25} p^2 \: \;\phantom{-100 p} + 1 - N_p & \text{if $p \equiv 4\phantom{,2} \pmod {5}$}, \\
p^3 + \phantom{25} p^2 + \phantom{10}2 p + 1 - N_p & \text{if $p \equiv 2,3 \pmod {5}$} ,
\end{cases}
\end{equation}
where
$N_p$ is the number of points in $\mathbb{P}^4(\mathbb{F}_p)$ on $x_0^5+x_1^5+x_2^5+x_3^5+x_4^5-5x_0 x_1 x_2 x_3 x_4=0 \;.$
We will evaluate $N_p$ using (\ref{for_CtgPts}) and express our results in terms of Gauss sums using (\ref{for_AddtoGauss}). Using (\ref{for_CtgPtsSplit}) repeatedly we can write 
\begin{equation*}
N_p = N_p^{1}+N_p^2++N_p^3+N_p^4
\end{equation*}
 where
\begin{align*}
N_p^{1}&= \textup{number of points in }\mathbb{A}^1
(\mathbb{F}_p)\textup{ on } f_1(x_1) := 1+x_1^5=0,\\
N_p^{2}&= \textup{number of points in }\mathbb{A}^2
(\mathbb{F}_p)\textup{ on } f_2(x_1, x_2) := 1+x_1^5+x_2^5=0,\\
N_p^{3}&= \textup{number of points in }\mathbb{A}^3
(\mathbb{F}_p)\textup{ on } f_3(x_1, x_2, x_3) := 1+x_1^5+x_2^5+x_3^5=0,\textup{ and}\\
N_p^4&= \textup{number of points in }\mathbb{A}^4(\mathbb{F}_p)\textup{ on } f_4(x_1, x_2, x_3, x_4) := 1+x_1^5+x_2^5+x_3^5+x_4^5-5 x_1 x_2 x_3 x_4=0.
\end{align*}

\noindent Using (\ref{for_CtgPts}) we get that 
\begin{equation*}
p N_p^4= p^4 + \displaystyle \sum_{z \in \mathbb{F}_p^*} \sum_{x_1, x_2, x_3, x_4 \in \mathbb{F}_p}
\theta(z \: f_4(x_1, x_2, x_3, x_4)).
\end{equation*}

\noindent
Now
\begin{align*}
\sum_{x_1, x_2, x_3, x_4 \in \mathbb{F}_p} &\theta(z \: f_4(x_1, x_2, x_3, x_4)) = 
\theta(z) +
4 \sum_{x_1 \in \mathbb{F}_p^*} \theta(z \: (1+x_1^5)) + 
6 \sum_{x_1, x_2 \in \mathbb{F}_p^*} \theta(z \: (1+x_1^5 + x_2^5)) \\
&+4 \sum_{x_1, x_2, x_3 \in \mathbb{F}_p^*} \theta(z \: (1+x_1^5 + x_2^5 + x_3^5)) +
\sum_{x_1, x_2, x_3, x_4 \in \mathbb{F}_p^*} \theta(z \: f_4(x_1, x_2, x_3, x_4))
\end{align*}
\noindent and so
\begin{equation*}
p N_p^4= p^4 + \sum_{z \in \mathbb{F}_p^*} \theta(z) +4A +6B+4C+D,
\end{equation*}
where
\begin{align*}
&A= \sum_{z \in \mathbb{F}_p^*} \sum_{x_1 \in \mathbb{F}_p^*} \theta(z \: (1+x_1^5)),\\
&B= \sum_{z \in \mathbb{F}_p^*} \sum_{x_1, x_2 \in \mathbb{F}_p^*} \theta(z \: (1+x_1^5 + x_2^5)), \\
&C= \sum_{z \in \mathbb{F}_p^*}  \sum_{x_1, x_2, x_3 \in \mathbb{F}_p^*} \theta(z \: (1+x_1^5 + x_2^5 + x_3^5)),\\
\intertext{and}
&D= \sum_{z \in \mathbb{F}_p^*} \sum_{x_1, x_2, x_3, x_4 \in \mathbb{F}_p^*} \theta(z \: f_4(x_1, x_2, x_3, x_4)).
\end{align*}

\noindent By (\ref{sum_AddChar}) we have
\begin{align*}
\sum_{z \in \mathbb{F}_p^*} \theta(z) 
=-1\;.
\end{align*}

\noindent We evaluate $N_p^1$, $N_p^2$ and $N_p^3$ similarly and overall we get
\begin{equation}\label{for_Np}
pN_p=p^4+p^3+p^2+p-4+10A+10B+5C+D\;.
\end{equation}

\noindent Using properties (\ref{for_AddProp}) and (\ref{for_AddtoGauss}) of the additive character $\theta$ and the orthogonal relation (\ref{for_TOrthEl}) for $T$ we evaluate the terms $A, B, C$ and $D$.
\begin{align*}
A=\sum_{z, x_1 \in \mathbb{F}_p^*} \theta(z (1+x_1^5))
&= \sum_{z, x_1 \in \mathbb{F}_p^*} \theta(z) \theta(z x_1^5) \\
&=\frac{1}{(p-1)^2} \sum_{z, x_1 \in \mathbb{F}_p^*} \sum_{a,b = 0}^{p-2} G_{-a} G_{-b} T^{a}(z) T^b(z x_1^5) \\
&=\frac{1}{(p-1)^2} \sum_{x_1 \in \mathbb{F}_p^*} \sum_{a,b = 0}^{p-2} G_{-a} G_{-b} T^b(x_1^5) 
\sum_{z \in \mathbb{F}_p^*} T^{a+b}(z).
\end{align*}
We now apply (\ref{for_TOrthEl}) on the last summation. This yields $(p-1)$ if $T^{a+b} =\varepsilon$ which occurs if $a=b=0$ or $a=(p-1)-b$. Both of these cases are covered when $a=-b$ as $T^{(p-1)-b}=T^{-b}$. So
\begin{align*}
A&=\frac{1}{(p-1)} \sum_{x_1 \in \mathbb{F}_p^*} \sum_{b = 0}^{p-2} G_{b} G_{-b} T^b(x_1^5)\\
&=\frac{1}{(p-1)} \sum_{b = 0}^{p-2} G_{b} G_{-b} \sum_{x_1 \in \mathbb{F}_p^*} T^{5b}(x_1)\\
&=
\begin{cases}
\displaystyle \sum_{i = 0}^{4} G_{\frac{i(p-1)}{5}} G_{-\frac{i(p-1)}{5}} &  \textup{if } p \equiv 1 \pmod 5\\[18pt]
{G_0}^2 =1 & \textup{if } p \not\equiv 1 \pmod 5.
\end{cases}
\end{align*}
The last application of (\ref{for_TOrthEl}) yields $p-1$ if and only if $T^{5b} = \varepsilon$. If $p\equiv 1 \pmod 5$ this occurs if $b=0$ or a multiple of $\frac{p-1}{5}$ whereas if $p \not \equiv 1 \pmod5$ then only if $b=0$.
A similar application of (\ref{for_TOrthEl}), (\ref{for_AddProp}) and (\ref{for_AddtoGauss}) yields
\begin{equation*}
B=
\begin{cases}
\displaystyle \sum_{i,j = 0}^{4} G_{(i+j)t} G_{-it} G_{-jt}   &  \textup{if } p \equiv 1 \pmod 5,\\[18pt]
\displaystyle  {G_0}^3 = -1& \textup{if } p \not\equiv 1 \pmod 5,
\end{cases}
\end{equation*}
\begin{equation*}
C=
\begin{cases}
\displaystyle \sum_{i,j,k=0}^{4} G_{(i+j+k)t} G_{-it} G_{-jt} G_{-kt} 
& \textup{if } p \equiv 1 \pmod 5,\\[18pt]
\displaystyle {G_{0}}^4 =1
& \textup{if } p \not\equiv 1 \pmod 5,
\end{cases}
\end{equation*}
and
\begin{equation*}
D=
\begin{cases}
\displaystyle \frac{1}{(p-1)} \sum_{e= 0}^{p-2} \sum_{i,j,k=0}^4
G_{-e+(i+j+k)t} G_{-e-it} G_{-e-jt} G_{-e-kt} G_{-e} G_{5e} T^{-5e}(-5)
& \textup{if } p \equiv 1 \pmod 5,\\[18pt]
\displaystyle \frac{1}{(p-1)} \sum_{e= 0}^{p-2}  {G_{-e}^5}  G_{5e} T^{-5e}(-5)
& \textup{if } p \not\equiv 1 \pmod 5,
\end{cases}
\end{equation*}
where $t=\frac{p-1}{5}$. Noting that
\begin{equation}\label{for_Sp}
s(p)=
\begin{cases}
+1 & \text{if } p\equiv 1, 4 \pmod 5,\\
-1 & \text{if } p\equiv 2,3 \pmod 5,
\end{cases}
\end{equation}
and combining (\ref{for_ModtoPts}), (\ref{for_Np}) and the evaluations above we see that when $p \not\equiv 1 \pmod 5$
\begin{align*}
c(p)+s(p) \cdot p 
&= -\frac{1}{p}\Biggl[-4+10A+10B+5C+D\Biggr]\\
&=-\dfrac{1}{p}\left[1+\frac{1}{(p-1)} \sum_{e= 0}^{p-2} {G_{-e}^5} G_{5e} T^{-5e}(-5)\right]\\
&=-\dfrac{1}{p-1}\left[1+ \frac{1}{p} \sum_{e= 1}^{p-2} {G_{-e}^5} G_{5e} T^{-5e}(-5)\right]
\end{align*}
as required. We now consider the case when $p \equiv 1 \pmod 5$. Again combining (\ref{for_ModtoPts}), (\ref{for_Np}) and (\ref{for_Sp}) we get that
\begin{align*}
c(p)+s(p) \cdot p
&=24 p^2 -100 p -\tfrac{1}{p}\bigl[-4+10A+10B+5C+D\bigr].
\end{align*}

\noindent We first examine $D$. Recall
\begin{equation*}
D=\frac{1}{(p-1)} \sum_{e= 0}^{p-2} \sum_{i,j,k=0}^4
G_{-e+(i+j+k)t} G_{-e-it} G_{-e-jt} G_{-e-kt} G_{-e} G_{5e} T^{-5e}(-5)\:.
\end{equation*}

\noindent We split this sum into different cases depending on the values of $i,j$ and $k$ as follows (note that, as $G_{nt}=G_{mt}$ if $n\equiv m \pmod5$, when we refer to $-i,-j,-k$ and $i+j+k$ in the list below we are really considering them as basic representatives modulo $5$, i.e., elements of $\{0,1,2,3,4\}$):
\\[-9pt]
\begin{enumerate}
\item[(0)] $i=j=k=0$;\\[-6pt]
\item $i,j,k \geq1$ all distinct;\\[-6pt]
\item $i,j,k \geq 1$ and exactly three of $-i,-j,-k$ and $i+j+k$ are identical;\\[-6pt]
%when considered as basic representatives modulo $5$ (i.e., elements of $\{0,1,2,3,4\}$)
\item Exactly two of $-i,-j,-k$ and $i+j+k$ equal $0$;\\[-6pt]
\item $i,j,k \geq 1$, $i+j+k \neq 0$ less cases (1) and (2); and\\[-6pt]
\item Exactly one of $-i,-j,-k$ and $i+j+k$ equal $0$.\\[-6pt]
\end{enumerate}
These cases are mutually exclusive and cover all 125 possible triples $(i,j,k)$.
\noindent We let $D_n$ denote the part of $D$ covered in case $n$. Then
\begin{align*}
D_0=\frac{1}{(p-1)} \sum_{e= 0}^{p-2} G_{-e}^5 G_{5e} T^{-5e}(-5)
\end{align*}
and
\begin{align*}
-\frac{1}{p} \Biggl[1+D_0\Biggr]&=-\frac{1}{p}\left[1+\frac{1}{(p-1)} \sum_{e= 0}^{p-2} G_{-e}^5 G_{5e} T^{-5e}(-5)\right]\\
&=-\dfrac{1}{p-1}\left[1+ \frac{1}{p} \sum_{e= 1}^{p-2} {G_{-e}^5} G_{5e} T^{-5e}(-5)\right].
\end{align*}

\noindent So it suffices to show
\begin{equation*}
24 p^2 -100 p-\tfrac{1}{p}\Bigl[-5+10A+10B+5C+D- D_0\Bigr]=0.
\end{equation*}

\noindent By Theorem \ref{thm_HD}, with $m=5$ and $\psi=T^e$, we see that
\begin{equation*}
G_e G_{e+t} G_{e+2t} G_{e+3t} G_{e+4t} = G_{5e} T^{-5e}(5) G_{t} G_{2t} G_{3t} G_{4t}\:.
\end{equation*}
\noindent Therefore, using (\ref{for_GaussConj}) and noting that $t$ is even, we get
\begin{equation*}
D
=\frac{1}{p^2(p-1)} \sum_{e= 0}^{p-2} \sum_{i,j,k=0}^4
G_{-e+(i+j+k)t} G_{-e-it} G_{-e-jt} G_{-e-kt} G_{-e} G_e G_{e+t} G_{e+2t} G_{e+3t} G_{e+4t} T^{-5e}(-1).\\ 
\end{equation*}

\noindent We now evaluate $D_1$ to $D_5$.\\[3pt]
\noindent (1) There are 24 different triples $(i,j,k)$ in this case. If $i,j$ an $k$ are all distinct then $i+j+k \not\equiv 0 \pmod5$. Also $i+j+k$ is distinct from $-i,-j$ and $-k$ when all are considered as basic representatives modulo $5$. Therefore
\begin{equation*}
G_{-e+(i+j+k)t} G_{-e-it} G_{-e-jt} G_{-e-kt} = G_{-e+t} G_{-e+2t} G_{-e+3t} G_{-e+4t}
\end{equation*}
and
\begin{equation*}
D_1=
 \frac{24}{p^2(p-1)} \sum_{e= 0}^{p-2} 
G_{-e+t} G_{-e+2t} G_{-e+3t} G_{-e+4t} G_{-e} G_e G_{e+t} G_{e+2t} G_{e+3t} G_{e+4t} T^{-5e}(-1) \:.
\end{equation*}
By (\ref{for_GaussConj}), $G_{-e+nt} G_{e+(5-n)t}= T^{-e+nt}(-1) \:p$ if $T^{-e+nt} \neq \varepsilon$.
% $e\neq at$ for $a\in\{0,1,2,3,4\}$ then .
So, for $a\in\{0,1,2,3,4\}$,
\begin{align*}
D_1
&=\frac{24}{p^2(p-1)} \left[\sum_{\substack{e= 0\\e\neq at}}^{p-2} p^5 \; T^{-5e+10t}(-1) T^{-5e}(-1) 
+\sum_{\substack{e= 0\\e=at}}^{p-2} p^4 \;T^{10t}(-1)\right]\\
&=24 p^2 \left[p-5  \right]\\
\end{align*}
as $T^{10t}=T^{2(p-1)}=\varepsilon$.\\

\noindent (2) There are 16 different triples $(i,j,k)$ in this case. However only 4 of these give different values for the set $\{-i,-j,-k,i+j+k\}$ when the elements are expressed as basic representatives modulo $5$ and these sets can be represented by $\{k, k, k, -3k\}$ for $1\leq k\leq 4$. Therefore
\begin{equation*}
D_2=
 \frac{4}{p^2(p-1)} \sum_{e= 0}^{p-2} \sum_{k=1}^4 
G_{-e+kt}^3 G_{-e-3kt} G_{-e} G_e G_{e+t} G_{e+2t} G_{e+3t} G_{e+4t} T^{-5e}(-1) \:.
\end{equation*}

\noindent Note we can replace $G_{e+t} G_{e+2t} G_{e+3t} G_{e+4t}$ with $G_{e+kt} G_{e+2kt} G_{e+3kt} G_{e+4kt}$, as $T^{nt}=T^{mt}$ when $n\equiv m \pmod 5$ and $\mathbb{F}_5^*$ is a multiplicative group. We now split the summation depending on whether $e=at$ or not, for $a\in\{0,1,2,3,4\}$, and make this replacement in the first part to get
\begin{multline*}
D_2=
 \frac{4}{p^2(p-1)}\left[\sum_{\substack{e= 0\\e\neq at}}^{p-2} \sum_{k=1}^4 
G_{-e+kt}^3 G_{-e-3kt} G_{-e} G_e G_{e+kt} G_{e+2kt} G_{e+3kt} G_{e+4kt} T^{-5e}(-1)\right.\\
\left.+\sum_{\substack{e= 0\\e= at}}^{p-2} \sum_{k=1}^4
G_{-e+kt}^3 G_{-e-3kt} G_{-e} G_e G_{e+t} G_{e+2t} G_{e+3t} G_{e+4t} \right] \:.
\end{multline*}

\noindent We use (\ref{for_GaussConj}) to simplify this expression as follows.
\begin{align*}
D_2
&= \frac{4}{p^2(p-1)}\left[\sum_{\substack{e= 0\\e\neq at}}^{p-2} \sum_{k=1}^4 
p^3 G_{-e+kt}^2   G_{e+kt} G_{e+2kt} T^{-3e-2kt}(-1)  T^{-5e}(-1)\right. \\
& \phantom{= \frac{4}{p^2(p-1)}}\left. \qquad \qquad \qquad\qquad \quad\quad \quad \quad  -\sum_{\substack{e= 0\\e= at}}^{p-2} \sum_{k=1}^4
p^2 G_{-e+kt}^3 G_{-e-3kt} G_{-e}  \right] \\
&= \frac{4}{(p-1)}\left[p  \sum_{\substack{e= 0\\e\neq at}}^{p-2} \sum_{k=1}^4 
G_{-e+kt}^2   G_{e+kt} G_{e+2kt}  -\sum_{\substack{e= 0\\e= at}}^{p-2} \sum_{k=1}^4
 G_{-e+kt}^3 G_{-e-3kt} G_{-e} \right] .\\
\end{align*}

\noindent We can simplify the first summation and express it in terms of generalised Jacobi sums using Corollaries \ref{cor_GaussTsum} and \ref{cor_JacRedSmall} and Proposition \ref{prop_JactoGauss} to get
\begin{align*}
\sum_{\substack{e= 0\\e\neq at}}^{p-2} \sum_{k=1}^4 G_{-e+kt}^2   G_{e+kt} G_{e+2kt}
&= \sum_{k=1}^4 \sum_{e= 0}^{p-2}G_{-e+kt}^2 G_{e+kt} G_{e+2kt}
- \sum_{k=1}^4 \sum_{\substack{e= 0\\e=at}}^{p-2} G_{-e+kt}^2   G_{e+kt} G_{e+2kt}\\
&= \sum_{k=1}^4 -p(p-1)
-p \sum_{k=1}^4 \sum_{\substack{e= 0\\e=at}}^{p-2} J\left(T^{-e+kt},T^{-e+kt},T^{e+kt}\right)\\
&=-4p(p-1)-p\sum_{k=1}^4 \sum_{\substack{e= 0\\e=akt}}^{p-2} J\left(T^{-e+kt},T^{-e+kt},T^{e+kt}\right)\\
&=-4p(p-1)-p\sum_{k=1}^4 \sum_{a=0}^4 J\left(T^{(1-a)kt},T^{(1-a)kt},T^{(1+a)kt}\right)\\
&=-4p(p-1)-p\sum_{k=1}^4 \left[J\left(T^{kt},T^{kt},T^{kt}\right)+
 J\left(\varepsilon,\varepsilon,T^{2kt}\right) \right.\\ 
&\left. + J\left(T^{4kt},T^{4kt},T^{3kt}\right) +J\left(T^{3kt},T^{3kt},T^{4kt}\right)
+J\left(T^{2kt},T^{2kt},\varepsilon \right)\right]\:.\\
\end{align*}

\noindent Similarly,
\begin{align*}
\sum_{\substack{e= 0\\e= at}}^{p-2} \sum_{k=1}^4 G_{-e+kt}^3 G_{-e-3kt} G_{-e}
&=\sum_{k=1}^4 \sum_{a=0}^4 G_{(1-a)kt}^3 G_{(-a-3)kt} G_{-akt}\\
&=\sum_{k=1}^4 \left[-G_{kt}^3 G_{2kt} - G_{kt} G_{4kt} - G_{4kt}^3 G_{3kt} \right.\\
&\qquad\qquad \qquad \quad\qquad \qquad \qquad \; \;  \left. + G_{3kt}^3 G_{4kt} G_{2kt}+G_{2kt}^3 G_{3kt} G_{kt}\right]\\
&=\sum_{k=1}^4 \left[-p\:J\left(T^{kt},T^{kt},T^{kt}\right) - p  - p\: J\left(T^{4kt},T^{4kt},T^{3kt}\right) \right.\\
&\qquad \qquad \qquad \left.- p^2 J\left(T^{3kt},T^{3kt},T^{4kt}\right) -p^2 J\left(T^{2kt},T^{2kt},T^{kt}\right)\right].\\
\end{align*}

\noindent By Propositions \ref {prop_JacBasic} and \ref{prop_JacRed} and Corollary \ref{cor_JacRedSmall},
\begin{equation*}
J\left(T^{4kt},T^{4kt},T^{3kt}\right)=-J\left(T^{4kt},T^{4kt},T^{3kt},T^{4kt} \right)=J\left(T^{4kt},T^{4kt},T^{4kt} \right),
\end{equation*}
\begin{equation*}
J\left(T^{2kt},T^{2kt},\varepsilon \right)=-J\left(T^{2kt},T^{2kt},T^{kt},\varepsilon \right)
=J\left(T^{2kt},T^{2kt},T^{kt} \right)
\end{equation*}
and
\begin{equation*}
J\left(\varepsilon,\varepsilon,T^{2kt}\right)=J\left(\varepsilon,T^{2kt}\right) J\left(\varepsilon,T^{2kt}\right)
=(-1)^2=1.
\end{equation*}
Applying Lemma \ref{lem_JacConsMul} with $a=b=c=4$ and $r=4$ we see that
\begin{equation*}
\sum_{k=1}^4 J\left(T^{4kt},T^{4kt},T^{4kt}\right)=\sum_{k=1}^4 J\left(T^{16kt},T^{16kt},T^{16kt}\right)
=\sum_{k=1}^4 J\left(T^{kt},T^{kt},T^{kt}\right)\;.
\end{equation*}

\noindent Combining these yields
\begin{align*}
D_2 &= \frac{4}{(p-1)}
\left[ -4p^2(p-1)+(p-p^2)\sum_{k=1}^4 \left(1+2J\left(T^{kt},T^{kt},T^{kt}\right)\right) \right]\\
&= -8p
\left[ 2p+2+\sum_{k=1}^4 J\left(T^{kt},T^{kt},T^{kt}\right) \right]\:.\\
\end{align*}

\noindent (3) There are 24 different triples $(i,j,k)$ in this case. However only 4 of these give different values for the set $\{-i,-j,-k,i+j+k\}$ when the elements are expressed as basic representatives modulo $5$ and these sets can be represented by $\{0,0 , k, -k\}$ for $1\leq k\leq 4$. Therefore
\begin{align*}
D_3&=
 \frac{6}{p^2(p-1)} \sum_{e= 0}^{p-2} \sum_{k=1}^4
G_{-e+kt} G_{-e-kt} G_{-e}^3 G_e G_{e+t} G_{e+2t} G_{e+3t} G_{e+4t} T^{-5e}(-1).
\end{align*}
Letting $e'=e+kt$ we see that 
\begin{multline*}
D_3=
\frac{6}{p^2(p-1)} \sum_{k=1}^4 \sum_{e'= kt}^{p-2+kt} 
G_{-e'+2kt} G_{-e'} G_{-e'+kt}^3 G_{e'-kt} \\G_{e'+(1-k)t} G_{e'+(2-k)t} G_{e'+(3-k)t} G_{e'+(4-k)t} T^{-5e'+5kt}(-1) .
\end{multline*}

\noindent Now $G_{e'-kt} G_{e'+(1-k)t} G_{e'+(2-k)t} G_{e'+(3-k)t} G_{e'+(4-k)t} = G_{e'} G_{e'+t} G_{e'+2t} G_{e'+3t} G_{e'+4t}$ as $T^{nt}=T^{mt}$ when $n\equiv m \pmod 5$ and $\mathbb{F}_5$ is an additive group. Similarly $T^{a}=T^{b}$ when $a\equiv b \pmod {p-1}$ so we can change the limits of summation on $e'$ to get
\begin{align*}
D_3&=
\frac{6}{p^2(p-1)} \sum_{k=1}^4 \sum_{e'= 0}^{p-2} 
G_{-e'+2kt} G_{-e'} G_{-e'+kt}^3 G_{e'} G_{e'+t} G_{e'+2t} G_{e'+3t} G_{e'+4t}T^{-5e'}(-1)\\[6pt]
&=\frac{6}{4} \cdot D_2\\
&= -12p
\left[ 2p+2+\sum_{k=1}^4 J\left(T^{kt},T^{kt},T^{kt}\right) \right].
\end{align*}

\noindent (4) There are 12 different triples $(i,j,k)$ in this case. However only 2 of these give different values for the set $\{-i,-j,-k,i+j+k\}$ when the elements are expressed as basic representatives modulo $5$ and these sets can be represented by $\{
k, k, -k, -k\}$ for $1\leq k\leq 2$. Therefore, using (\ref{for_GaussConj}),
\begin{align*}
D_4
&= \frac{3}{p^2(p-1)} \sum_{e= 0}^{p-2} \sum_{k=1}^4
G_{-e+kt}^2 G_{-e-kt}^2 G_{-e} G_e G_{e+t} G_{e+2t} G_{e+3t} G_{e+4t} T^{-5e}(-1)\\
&= \frac{3}{(p-1)} \left[p \sum_{\substack{e= 0\\e\neq at}}^{p-2} \sum_{k=1}^4
G_{-e+kt} G_{-e-kt}  G_{e+2kt} G_{e+3kt}
- \sum_{\substack{e= 0\\e=at}}^{p-2} \sum_{k=1}^4
G_{-e+kt}^2 G_{-e-kt}^2 G_{-e} \right] \; .\\
\end{align*}

\noindent Similar to case (2) we now simplify these summations and express them in terms of generalised Jacobi sums using Corollaries \ref{cor_GaussTsum} and \ref{cor_JacRedSmall} and Proposition \ref{prop_JactoGauss} to get
\begin{align*}
\sum_{\substack{e= 0\\e\neq at}}^{p-2} \sum_{k=1}^4 &G_{-e+kt} G_{-e-kt}  G_{e+2kt} G_{e+3kt}\\
&= \sum_{k=1}^4 \sum_{e= 0}^{p-2}G_{-e+kt} G_{-e+4kt}  G_{e+2kt} G_{e+3kt}
- \sum_{k=1}^4 \sum_{\substack{e= 0\\e=at}}^{p-2}G_{-e+kt} G_{-e+4kt}  G_{e+2kt} G_{e+3kt}\\
&= \sum_{k=1}^4 -p(p-1)
+p \sum_{k=1}^4 \sum_{\substack{e= 0\\e=at}}^{p-2} J\left(T^{-e+kt},T^{-e+4kt},T^{e+2kt},T^{e+3kt}\right)\\
&= -4p(p-1)
- p \sum_{k=1}^4 \sum_{a= 0}^{4}  J\left(T^{(1-a)kt}, T^{(4-a)kt}, T^{(2+a)kt}\right)\\
&=-4p(p-1)-p\sum_{k=1}^4 \left[J\left(T^{kt},T^{4kt},T^{2kt}\right)+
 J\left(\varepsilon,T^{3kt},T^{3kt}\right) \right.\\ 
&\left. \phantom{=}+ J\left(T^{4kt},T^{2kt},T^{4kt}\right) +J\left(T^{3kt},T^{kt},\varepsilon\right)
+J\left(T^{2kt},\varepsilon,T^{kt} \right)\right]\:.\\
\end{align*}
Similarly,
\begin{align*}
\sum_{\substack{e= 0\\e=at}}^{p-2} \sum_{k=1}^4 G_{-e+kt}^2 G_{-e-kt}^2 G_{-e}
&=\sum_{k=1}^4 \sum_{a= 0}^{4} G_{(1-a)kt}^2 G_{(-1-a)kt}^2 G_{-akt}\\
&=\sum_{k=1}^4 \left[ -p^2 -p\: J\left(T^{3kt},T^{3kt},T^{4kt}\right) 
-p^2 \: J\left(T^{4kt},T^{4kt},T^{2kt}\right)\right.\\
&\left.\qquad \qquad \qquad \qquad \quad
-p^2 J\left(T^{3kt},T^{kt},T^{kt}\right) -p \: J\left(T^{2kt},T^{2kt},T^{kt}\right)\right]\:.\\
\end{align*}

\noindent By Propositions \ref {prop_JacBasic} and \ref{prop_JacRed} and Corollary \ref{cor_JacRedSmall},
\begin{equation*}
J\left(\varepsilon,T^{3kt},T^{3kt} \right)=-J\left(\varepsilon,T^{3kt},T^{3kt}, T^{4kt} \right)
=J\left(T^{3kt},T^{3kt}, T^{4kt} \right),
\end{equation*}
\begin{equation*}
J\left(T^{3kt},T^{kt}, \varepsilon \right)=-J\left(T^{3kt},T^{kt}, \varepsilon, T^{kt} \right)
=J\left(T^{3kt},T^{kt}, T^{kt} \right),
\end{equation*}
\begin{equation*}
J\left(T^{2kt}, \varepsilon,T^{kt} \right)=-J\left(T^{2kt},T^{2kt}, \varepsilon, T^{kt} \right)
=J\left(T^{2kt},T^{2kt}, T^{kt} \right)
\end{equation*}
and
\begin{equation*}
J\left(T^{kt},T^{4kt},T^{2kt} \right)=-p\:J\left(T^{kt},T^{4kt} \right)=p.
\end{equation*}
Combining these yields
\begin{align*}
D_4 &= \frac{3}{(p-1)}
\left[ -4p^2(p-1)+(p-p^2) \sum_{k=1}^4 
\left(p + J\left(T^{3kt},T^{3kt},T^{4kt}\right)+J\left(T^{2kt},T^{2kt},T^{kt}\right)\right)\right]\\
&= -3p
\left[ 8p+\sum_{k=1}^4 
\left(J\left(T^{3kt},T^{3kt},T^{4kt}\right)+J\left(T^{2kt},T^{2kt},T^{kt}\right)\right)\right].\\
\end{align*}

\noindent Applying Lemma \ref{lem_JacConsMul} with $a=b=3,c=4$ and $r=4$, we see that
\begin{equation*}
\sum_{k=1}^4 J\left(T^{3kt},T^{3kt},T^{4kt}\right)=\sum_{k=1}^4 J\left(T^{12kt},T^{12kt},T^{16kt}\right)
=\sum_{k=1}^4 J\left(T^{2kt},T^{2kt},T^{kt}\right)
\end{equation*}
and
\begin{equation*}
D_4 = -6p \left[ 4p+\sum_{k=1}^4 J\left(T^{2kt},T^{2kt},T^{kt}\right)\right].
\end{equation*}\\

\noindent (5) There are 48 different triples $(i,j,k)$ in this case. However only 4 of these give different values for the set $\{-i,-j,-k,i+j+k\}$ when the elements are expressed as basic representatives modulo $5$ and these sets can be represented by $\{0, k , 2k, 2k\}$ for $1\leq k\leq 4$. Therefore
\begin{align*}
D_5&=
 \frac{12}{p^2(p-1)} \sum_{e= 0}^{p-2} \sum_{k=1}^4
G_{-e+2kt}^2 G_{-e+kt} G_{-e}^2 G_e G_{e+t} G_{e+2t} G_{e+3t} G_{e+4t} T^{-5e}(-1).
\end{align*}
Letting $e'=e-kt$, we see that 
\begin{multline*}
D_5=
\frac{12}{p^2(p-1)} \sum_{k=1}^4 \sum_{e'= -kt}^{p-2-kt} 
G_{-e'+kt}^2 G_{-e'} G_{-e'-kt}^2 G_{e'+kt} \\G_{e'+(1+k)t} G_{e'+(2+k)t} G_{e'+(3+k)t} G_{e'+(4+k)t} T^{-5e'-5kt}(-1) .
\end{multline*}

\noindent Now $G_{e'+kt} G_{e'+(1+k)t} G_{e'+(2+k)t} G_{e'+(3+k)t} G_{e'+(4+k)t} = G_{e'} G_{e'+t} G_{e'+2t} G_{e'+3t} G_{e'+4t}$ as $T^{nt}=T^{mt}$ when $n\equiv m \pmod 5$ and $\mathbb{F}_5$ is an additive group. Similarly $T^{a}=T^{b}$ when $a\equiv b \pmod {p-1}$ so we can change the limits of summation on $e'$ to get
\begin{align*}
D_5&=
\frac{12}{p^2(p-1)} \sum_{k=1}^4 \sum_{e'= 0}^{p-2} 
G_{-e'+kt}^2 G_{-e'} G_{-e'-kt}^2 G_{e'} G_{e'+t} G_{e'+2t} G_{e'+3t} G_{e'+4t}T^{-5e'}(-1)\\[6pt]
&=4 \cdot D_4\\[6pt]
&=-24p \left[ 4p+\sum_{k=1}^4 J\left(T^{2kt},T^{2kt},T^{kt}\right)\right]\;.
\end{align*}

\noindent Putting all these cases together we get
\begin{align*}
D-D_0 &=
24p^3 -280p^2-40p
-20p \sum_{k=1}^4 J\left(T^{kt},T^{kt},T^{kt}\right)-30p \sum_{k=1}^4 J\left(T^{2kt},T^{2kt},T^{kt}\right)\; .
\end{align*}

\noindent We now examine $A$, $B$ and $C$ in a similar manner. First,
\begin{align*}
A=\sum_{i = 0}^{4} G_{it} G_{-it}&=
{G_0}^2 + \sum_{i = 1}^{4} G_{it} G_{-it}\\
&=1 +4p & \text{(using (\ref{for_GaussConj}))}.
\end{align*}
\noindent Note from the evaluation of $A$ that
\begin{equation*}
A^*:=\sum_{i= 1}^{4} G_{it} G_{-it}=4p \:,
\end{equation*}
which we will use below. Now,
\begin{align*}
B&=\sum_{i,j = 0}^{4} G_{(i+j)t} G_{-it} G_{-jt}\\
&= G_0^3 + 2\sum_{k= 1}^{4} G_0 G_{kt} G_{-kt} + \sum_{i,j = 1}^{4} G_{(i+j)t} G_{-it} G_{-jt}\\
&= -1 + 3\sum_{k= 1}^{4} G_0 G_{kt} G_{-kt} + \sum_{\substack{i,j = 1\\i+j \neq 5}}^{4} G_{(i+j)t} G_{-it} G_{-jt}\\
&=-1-3 A^* -p \sum_{\substack{i,j = 1\\i+j \neq 5}}^{4}
J\left(T^{(i+j)t}, T^{-it}, T^{-jt}\right)\\
&=-1-12 p -3p \sum_{k = 1}^{4} J\left(T^{kt}, T^{2kt}, T^{2kt}\right)
\end{align*}
as the triples $(-i,-j,i+j)$, for $1\leq i,j \leq 4$ with $i+j\neq5$, can be represented by $(k,2k,2k)$,  $1\leq k \leq 4$, with multiplicity $3$ when considered as basic representatives modulo $5$. Note from the evaluation of $B$ that
\begin{equation*}
B^*:=\sum_{i,j = 1}^{4} G_{(i+j)t} G_{-it} G_{-jt}=-4p -3p \sum_{k = 1}^{4} J\left(T^{kt}, T^{2kt}, T^{2kt}\right)
\end{equation*}
and so
\begin{align*}
C&= \sum_{i,j,k=0}^{4} G_{(i+j+k)t} G_{-it} G_{-jt} G_{-kt} \\
&=G_0^4 + 3\sum_{k= 1}^{4} G_0^2 G_{kt} G_{-kt} +3\sum_{i,j = 1}^{4} G_0 G_{(i+j)t} G_{-it} G_{-jt}
+\sum_{i,j,k=1}^{4} G_{(i+j+k)t} G_{-it} G_{-jt} G_{-kt}\\
&=1+3A^*-3B^*
+\sum_{\substack{i,j,k=1\\i+j+k \equiv 0\; (5)}}^{4} G_{(i+j+k)t} G_{-it} G_{-jt} G_{-kt}
+\sum_{\substack{i,j,k=1\\i+j+k \not\equiv 0\; (5)}}^{4} G_{(i+j+k)t} G_{-it} G_{-jt} G_{-kt}\\
&=1+3A^*-3B^*
-\sum_{\substack{i,j,=1\\i \not\equiv -j\; (5)}}^{4} G_{-it} G_{-jt} G_{(i+j)t}
+\sum_{\substack{i,j,k=1\\i+j+k \not\equiv 0\; (5)}}^{4} G_{(i+j+k)t} G_{-it} G_{-jt} G_{-kt}\\
&=1+3A^*-3B^*
-\bigl(B^*-\sum_{\substack{i,j,=1\\i \equiv -j\; (5)}}^{4} G_{-it} G_{-jt} G_{(i+j)t}\bigr)
+\sum_{\substack{i,j,k=1\\i+j+k \not\equiv 0\; (5)}}^{4} G_{(i+j+k)t} G_{-it} G_{-jt} G_{-kt}\\
&=1+3A^*-4B^*
+\sum_{i=1}^{4} G_{-it} G_{it} G_{0}
+\sum_{\substack{i,j,k=1\\i+j+k \not\equiv 0\; (5)}}^{4} G_{(i+j+k)t} G_{-it} G_{-jt} G_{-kt}\\
&=1+3A^*-4B^*-A^*
+\sum_{\substack{i,j,k=1\\i+j+k \not\equiv 0\; (5)}}^{4} G_{(i+j+k)t} G_{-it} G_{-jt} G_{-kt}\\
&=1+2A^*-4B^*
+\sum_{\substack{i,j,k=1\\i+j+k \not\equiv 0\; (5)}}^{4} G_{(i+j+k)t} G_{-it} G_{-jt} G_{-kt}.
\end{align*}
There are 52 triples $(i,j,k)$ satisfying the conditions of the last summation above, which can be split into three distinct cases as follows:\\[3pt]
(1) $i,j,k$ all distinct. This covers 24 triples $(i,j,k)$. In this case $-i, -j -k$ and $i+j+k$ are all distinct when considered as basic representatives modulo $5$ and so the set $\{-i,-j-k,i+j+k\}$ can be represented by $\{1,2,3,4\}$ for each triple.\\[3pt]
(2) Exactly three of $-i,-j,-k$ and $i+j+k$ are identical when considered as basic representatives modulo $5$. There are 16 different triples $(i,j,k)$ in this case. However only 4 of these give different values for the set $\{-i,-j,-k,i+j+k\}$ when the elements are expressed as basic representatives modulo $5$ and these sets can be represented by $\{k, k, k, 2k\}$ for $1\leq k\leq 4$.\\[3pt]
(3) This cases covers the remaining 12 triples. However, only 2 of these give different values for the set $\{-i,-j,-k,i+j+k\}$ when the elements are expressed as basic representatives modulo $5$ and these sets can be represented by $\{k, k, -k, -k\}$ for $1\leq k\leq 2$.\\

\noindent Therefore, using (\ref{for_GaussConj}), Proposition \ref{prop_JactoGauss} and Corollary \ref{cor_JacRedSmall},
\begin{align*}
\sum_{\substack{i,j,k=1\\i+j+k \not\equiv 0\; (5)}}^{4} G_{(i+j+k)t} G_{-it} G_{-jt} G_{-kt}
&=24\hspace{2pt}G_{t}G_{2t}G_{3t}G_{4t} + 4\sum_{k=1}^4 G_{kt}^3 G_{2kt} + 3\sum_{k=1}^4 G_{kt}^2 G_{-kt}^2\\
&=24\hspace{2pt}p^2 -4p\sum_{k=1}^4 J\left(T^{kt}, T^{kt}, T^{kt}, T^{2kt} \right)+3\sum_{k=1}^4 p^2\\
&=36p^2 +4p\sum_{k=1}^4 J\left(T^{kt}, T^{kt}, T^{kt} \right)
\end{align*}
and
\begin{align*}
C=1+ 36p^2+24p +12p \sum_{k = 1}^{4} J\left(T^{kt}, T^{2kt}, T^{2kt}\right)  +4p\sum_{k=1}^4 J\left(T^{kt}, T^{kt}, T^{kt} \right).
\end{align*}
Overall we get  
\begin{align*}
\frac{1}{p}\Bigl[-5+&10A+10B+5C+D- D_0\Bigr]\\
&=\frac{1}{p} \left[-5+10(1+4p)+10\left(-1-12 p -3p \sum_{k = 1}^{4} J\left(T^{kt}, T^{2kt}, T^{2kt}\right)\right)\right.\\
&\left. \qquad \quad
+5\left(1+ 36p^2+24p +12p \sum_{k = 1}^{4} J\left(T^{kt}, T^{2kt}, T^{2kt}\right)  +4p\sum_{k=1}^4 J\left(T^{kt}, T^{kt}, T^{kt} \right)\right)\right.\\
&\left. \qquad \qquad
+ 24p^3 -280p^2-40p
-20p \sum_{k=1}^4 J\left(T^{kt},T^{kt},T^{kt}\right)-30p \sum_{k=1}^4 J\left(T^{2kt},T^{2kt},T^{kt}\right)\right]\\
&=24p^2-100p
\end{align*}
as required.
\end{proof}

\begin{proof}[Proof of Corollary \ref{cor_GStoMod}]
From Definition \ref{def_GFn} we see that
\begin{equation*}
{_{4}G} \left(\tfrac{1}{5} ,\hspace{2pt} \tfrac{2}{5},\hspace{2pt} \tfrac{3}{5} ,\hspace{2pt} \tfrac{4}{5}\right)_p
= \frac{-1}{p-1} \sum_{k=0}^{4} (-p)^k \sum_{j=\lfloor m_k r_k \rfloor +1}^{\lfloor m_{k+1}r_{k+1}\rfloor} 
\frac{{\biggfp{\tfrac{j}{p-1}}}^{4}}{\biggfp{1-\tfrac{j}{p-1}}}
\frac
{\prod_{h=0}^{4} \Biggfp{\frac{k+1-\frac{5j}{p-1}+h}{5}}}
{\prod_{h=1}^{4} \biggfp{\frac{h}{5}}} \; ,
\end{equation*}
where $r_i=\frac{p-1}{d_i}$ for $0 \leq i \leq 5$, $m_0=-1$, $m_{5}=p-2$, $d_0=d_{5}=p-1$ and $m_i=i$, $d_i=5$ for $1\leq i \leq4$. Applying Proposition \ref{prop_pGamma} (4) with $m=5$ and $x= k+1-\frac{5j}{p-1}$ we have
\begin{equation*}
{_{4}G} \left(\tfrac{1}{5} ,\hspace{2pt} \tfrac{2}{5},\hspace{2pt} \tfrac{3}{5} ,\hspace{2pt} \tfrac{4}{5}\right)_p
= \frac{-1}{p-1} \sum_{k=0}^{4} (-p)^k \sum_{j=\lfloor m_k r_k \rfloor +1}^{\lfloor m_{k+1}r_{k+1}\rfloor} 
\frac{{\biggfp{\tfrac{j}{p-1}}}^{4}}{\biggfp{1-\tfrac{j}{p-1}}}
\biggfp{k+1-\tfrac{5j}{p-1}}
\omega(5^{-5j}).
\end{equation*}
We now split off the terms when $j=0$ and $j$ is a multiple of $t=\frac{p-1}{5}$, where applicable, and use Theorem \ref{thm_GrossKoblitz} to convert to an expression involving Gauss sums. If $p \not \equiv 1 \pmod{5}$ we get
\begin{equation*}
{_{4}G} \left(\tfrac{1}{5} ,\hspace{2pt} \tfrac{2}{5},\hspace{2pt} \tfrac{3}{5} ,\hspace{2pt} \tfrac{4}{5}\right)_p
= \frac{-1}{p-1} \left[1+ \sum_{j=1}^{p-2} \frac{G(\omega^{-j})^4}{G(\omega^j)} \hspace{2pt} G(\omega^{5j})\hspace{2pt}  \omega(5^{-5j}) \right].
\end{equation*}
If $p\equiv 1 \pmod{5}$ then
\begin{equation*}
{_{4}G} \left(\tfrac{1}{5} ,\hspace{2pt} \tfrac{2}{5},\hspace{2pt} \tfrac{3}{5} ,\hspace{2pt} \tfrac{4}{5}\right)_p
= \frac{-1}{p-1} \left[1+ \sum_{\substack{j=1\\j\neq at}}^{p-2} \frac{G(\omega^{-j})^4}{G(\omega^j)} \hspace{2pt} G(\omega^{5j})\hspace{2pt}  \omega(5^{-5j}) + \sum_{a=1}^{4} \frac{G(\omega^{-at})^4}{G(\omega^{at})}  \right].
\end{equation*}
In each case, applying Proposition \ref{prop_GaussConj} yields
\begin{equation*}
{_{4}G} \left(\tfrac{1}{5} ,\hspace{2pt} \tfrac{2}{5},\hspace{2pt} \tfrac{3}{5} ,\hspace{2pt} \tfrac{4}{5}\right)_p
= -\frac{1}{p-1} \left[1+\frac{1}{p} \sum_{j=1}^{p-2}\; G(\omega^{-j})^{5} G(\omega^{5j}) \; \omega^{-5j}(-5) \right].
\end{equation*}
Taking $T=\omega$ in Theorem \ref{thm_GStoMod} completes the proof. 
\end{proof}

\section*{acknowledgements}
The author would like to thank  
the UCD Ad Astra Research Scholarship programme for its financial support
and Robert Osburn for his advice during the preparation of this paper. This work was partially supported by Science Foundation Ireland 08/RFP/MTH1081.

\end{document}